\title[Waldhausen Additivity: Classical and Quasicategorical]{Waldhausen Additivity: Classical and Quasicategorical}
\author{Thomas M. Fiore}
\author{Malte Pieper}
\address{Thomas M. Fiore \\ Department of Mathematics and Statistics\\
University of Michigan-Dearborn \\ 4901 Evergreen Road \\ Dearborn,
MI 48128 \\ U.S.A. \newline and \newline
NWF I - Mathematik \\
Universit\"at Regensburg \\
Universit\"atsstra{\ss}e 31 \\
93040 Regensburg \\
Germany}
\email{tmfiore@umd.umich.edu}
\urladdr{http://www-personal.umd.umich.edu/~tmfiore/}
\address{Malte Pieper \\ Mathematisches Institut der Universit\"at Bonn\\
                Bonn International Graduate School - BIGS\\
                Endenicher Allee 60\\
                53115 Bonn \\ Germany}
\email{pieper@math.uni-bonn.de}
\theoremstyle{plain}
\newtheorem{theorem}{Theorem}[section]
\newtheorem{lemma}[theorem]{Lemma}
\newtheorem{proposition}[theorem]{Proposition}
\newtheorem{corollary}[theorem]{Corollary}
\theoremstyle{definition}
\newtheorem{definition}[theorem]{Definition}
\newtheorem{example}[theorem]{Example}
\newtheorem{remark}[theorem]{Remark}
\newtheorem{notation}[theorem]{Notation}
\global\let\c@equation=\c@theorem}
\DeclareMathAlphabet\EuR{U}{eur}{m}{n}
\SetMathAlphabet\EuR{bold}{U}{eur}{b}{n}
\newcommand{\comsquare}[8]                   
{\begin{CD}
#1 @>#2>> #3\\
@V{#4}VV @V{#5}VV\\
#6 @>#7>> #8
\end{CD}
}
\newcommand{\xycomsquareminus}[8]                   
{\xymatrix
{#1 \ar[r]^-{#2} \ar[d]_{#4} &
#3 \ar[d]^{#5}  \\
#6\ar[r]^-{#7} &
#8
}
}
\newcommand{\co}{\colon}
\newcommand{\cala}{{\mathcal A}}
\newcommand{\calb}{{\mathcal B}}
\newcommand{\calc}{{\mathcal C}}
\newcommand{\cald}{{\mathcal D}}
\newcommand{\cale}{{\mathcal E}}
\newcommand{\cali}{{\mathcal I}}
\newcommand{\calj}{{\mathcal J}}
\newcommand{\calm}{{\mathcal M}}
\newcommand{\calo}{{\mathcal O}}
\newcommand{\calr}{{\mathcal R}}
\newcommand{\bfA}{{\mathbf A}}
\newcommand{\bfC}{{\mathbf C}}
\newcommand{\bfD}{{\mathbf D}}
\newcommand{\bfK}{{\mathbf K}}
\newcommand{\higherlim}[3]{{\setbox1=\hbox{\rm lim}
        \setbox2=\hbox to \wd1{\leftarrowfill} \ht2=0pt \dp2=-1pt
        \mathop{\vtop{\baselineskip=5pt\box1\box2}}
        _{#1}}^{#2}#3}
\newcommand{\version}[1]                       
{\begin{center} last edited on #1\\
last compiled on \today\\
name of tex-file: \jobname
\end{center}
}
\begin{document}

\typeout{----------------------------  KTheoryForQuasiCats.tex  ----------------------------}

\typeout{-----------------------  Abstract  ------------------------}


\begin{abstract}
We use a simplicial product version of Quillen's Theorem A to prove classical Waldhausen Additivity of $wS_\bullet$, which says that the ``subobject'' and ``quotient'' functors of cofiber sequences induce a weak equivalence $wS_\bullet \cale(\cala,\calc,\calb) \to wS_\bullet\cala \times wS_\bullet \calb$. A consequence is Additivity for the Waldhausen $K$-theory spectrum of the associated split exact sequence, namely a stable equivalence of spectra $\mathbf{K}(\cala) \vee \mathbf{K}(\calb) \to \mathbf{K}(\cale(\cala,\calc,\calb))$. This paper is dedicated to transferring these proofs to the quasicategorical setting and developing Waldhausen quasicategories and their sequences. We also give sufficient conditions for a split exact sequence to be equivalent to a standard one. These conditions are always satisfied by stable quasicategories, so Waldhausen $K$-theory sends any split exact sequence of pointed stable quasicategories to a split cofiber sequence. Presentability is not needed. In an effort to make the article self-contained, we recall all the necessary results from the theory of quasicategories, and prove a few quasicategorical results that are not in the literature.
\end{abstract}

\maketitle


\typeout{--------------------   Section 0: Introduction --------------------------}

{\bf Note: This version of this paper is obsolete. The more current, pre-publication accepted version of the article (``authors' accepted manuscript'') is available on the website \\ \url{http://www-personal.umd.umich.edu/~tmfiore/1/research.html}. \\
The pre-publication accepted version from that website will be posted on the arXiv to replace this obsolete version after the embargo period. \\

The copy-edited version of the ``authors' accepted manuscript'' will be soon published in the {\it Journal of Homotopy and Related Structures}.\\ \url{https://link.springer.com/journal/volumesAndIssues/40062}
}

\tableofcontents

\setcounter{section}{-1}
\section{Introduction and Statement of Results}

The Additivity Theorems of Quillen \cite[\S 3, Theorem~2]{Quillen} and Waldhausen~\cite[Theorem~1.4.2]{WaldhausenAlgKTheoryI} are fundamental theorems in $K$-theory; many other results follow from them, see for instance \cite{GraysonExactSequences} and \cite{Staffeldt}.
In this paper, we follow Rognes' approach to prove classical $wS_\bullet$ Additivity using a simplicial product version of Quillen's Theorem~$A$ proved by Jones--Kim--Mhoon--Santhanam--Walker--Grayson \cite{JonesAdditivity}, called Theorem~$\widehat{A}^*$. Our first innovation is to construct via pushouts a crucial simplicial homotopy (inspired by \cite{McCarthyAdditivity}) in a way that transfers to the quasicategorical setting to prove $(S_\bullet^\infty)_\text{equiv}$ Additivity for Waldhausen quasicategories. Much of this paper is dedicated to developing Waldhausen quasicategories and transferring this proof of Additivity to this setting.  We also show how Additivity on the spectrum level for standard split exact sequences follows from Additivity on the level of $wS_\bullet$ and $(S_\bullet^\infty)_\text{equiv}$. In particular, since every split exact sequence of stable quasicategories is equivalent to a standard one, we conclude that the Waldhausen $K$-theory spectrum sends every split exact sequence of pointed stable quasicategories to a split cofiber sequence, an important property in the universal characterization of $K$-theory in the formulation of \cite{BlumbergGepnerTabuadaI}.

Our notion of Waldhausen quasicategory is equivalent to Barwick's notion of Waldhausen $\infty$-category \cite{Barwick}, but the approach, the formulations, and the proofs in our paper are different from his. In the present paper we follow Waldhausen's definition of the $K$-theory spectrum using iterations of an $S_\bullet$ construction, see Definition~\ref{def:Sbulletinfinity} for the quasicategorical variant $S_\bullet^\infty$. Barwick, on the other hand, characterizes additive theories, and defines algebraic $K$-theory as an additive theory with a universal property (the linearization of the functor ``maximal Kan subcomplex''). In his work, the $K$-theory spectrum is the ``canonical delooping of the `maximal Kan subcomplex' functor.'' Barwick handles coherence with the theory of Cartesian fibrations and defines a fibrational version of $S_\bullet^\infty$, we do not use Cartesian fibrations. Our coherence results Theorem~\ref{thm:computation_of_pushouts_objectwise}, Proposition~\ref{prop:naturality_of_object-wise_pushouts}, and Corollary~\ref{cor:main_corollary_about_pushouts} are not in \cite{Barwick}.
We construct an identity-preserving, functorial choice of pushouts in a quasicategory $\calc$ along selected maps, and construct the associated natural pushout functors in diagram categories $\calc^K$. Instead of Cartesian fibrations, the main external input is Proposition~\ref{prop:RiehlVerity_Corollary_For_Pushouts}, an analogue of Riehl--Verity's \cite[Corollary~5.2.20]{RiehlVerity}. See the paragraphs Relationship to the Literature at the end of this introduction for more information and differences with the work of Blumberg-Gepner-Tabuada \cite{BlumbergGepnerTabuadaI}.

Although the details of our proofs are mostly simplicial, categorical aspects of quasicategories play a major role. Indeed, much of this paper relies upon quasicategorical versions of the basic results of category theory, as developed by Joyal~\cite{JoyalQuadern}; any other model should also have its own versions of the basic results of category theory. Thus, we expect that the broad outline of our proofs is model independent in the sense that it can be carried out in models of higher categories that come with a suitable simplicial nerve and a suitable simplicial $S_\bullet$ construction.

Waldhausen Additivity, in its most particular form, says the following. Let $\calc$ be a Waldhausen category. This is a category equipped with a distinguished zero object, a subcategory of cofibrations, and subcategory of weak equivalences satisfying various axioms, most notably: pushouts of cofibrations exist, are cofibrations, and are invariant up to weak equivalence (=gluing lemma). Let $S_2\calc$ denote the category of cofiber sequences $A \rightarrowtail C \twoheadrightarrow B$ in $\calc$, and let $s,q\colon S_2\calc \to \calc$ denote the ``subobject'' and ``quotient'' functors which return $A$ and $B$ respectively. Waldhausen  $wS_\bullet$ Additivity for $S_2\calc$ \cite[Theorem~1.4.2]{WaldhausenAlgKTheoryI} says that the map of simplicial objects in $\mathbf{Cat}$
\begin{equation} \label{equ:intro:specific_additivity}
\xymatrix{wS_\bullet (s,q) \co wS_\bullet S_2 \mathcal{C} \ar[r] & wS_\bullet \mathcal{C} \times wS_\bullet \mathcal{C}}
\end{equation}
is a weak equivalence, that is, the diagonal of its level-wise nerve is a weak homotopy equivalence of simplicial sets. See Definition~\ref{def:S_bullet} and equation~\eqref{equ:sequence_of_cofibrations} for a recollection of Waldhausen's $S_\bullet$ construction.

More generally, if $\cala$ and $\calb$ are sub Waldhausen categories of $\calc$, in place of $S_2\calc$ we may consider the category $\cale(\cala,\calc,\calb)$, which consists of cofiber sequences $A \rightarrowtail C \twoheadrightarrow B$ in $\calc$ with $A \in \cala$ and $B \in \calb$. Waldhausen $wS_\bullet$ Additivity for $\cale(\cala,\calc,\calb)$ says that
\begin{equation} \label{equ:intro:E(A,C,B)_additivity}
\xymatrix{wS_\bullet (s,q) \co wS_\bullet \cale(\cala,\calc,\calb) \ar[r] & wS_\bullet \mathcal{A} \times wS_\bullet \mathcal{B}}
\end{equation}
is a weak equivalence of simplicial objects in $\mathbf{Cat}$. Waldhausen proved in \cite[Proposition~1.3.2]{WaldhausenAlgKTheoryI} that this apparently more general statement is equivalent to the claim that \eqref{equ:intro:specific_additivity} is a weak equivalence. In Section~\ref{sec:simplified_proof} we prove directly that \eqref{equ:intro:E(A,C,B)_additivity} is a weak equivalence.

If
\begin{equation} \label{equ:intro:split_exact_sequence}
\xymatrix{\mathcal{A} \ar[r]^-i & \mathcal{E} \ar[r]^-f \ar@/^1pc/[l]^j & \mathcal{B} \ar@/^1pc/[l]^g}
\end{equation}
is a split exact sequence of Waldhausen categories (Definitions~\ref{def:exact_sequence_of_Waldhausen_categories} and \ref{def:split-exact_sequence_of_Waldhausen_categories}) that is equivalent to a standard split exact sequence associated to $\cale(\cala,\calc,\calb)$ (see Example~\ref{examp:E(A,C,B)}, Definitions~\ref{def:Waldhausen_equivalence} and \ref{def:Waldhausen_equiv_of_sequences_quasicategorical}, and Proposition~\ref{prop:Waldhausen_equiv_of_sequences_compatible_with_j's}), then the functors $j$ and $f$ induce a weak equivalence
\begin{equation} \label{equ:intro:additivity_for_split_exact_sequence}
\xymatrix{wS_\bullet(j,f)\colon wS_\bullet \mathcal{E} \ar[r] & wS_\bullet \mathcal{A} \times wS_\bullet \mathcal{B}}
\end{equation}
and the functors $i$ and $g$ induce a stable equivalence of $K$-theory spectra
\begin{equation} \label{equ:intro:additivity_for_split_exact_sequence:spectral}
\xymatrix{\bfK(i)\vee \bfK(g)\colon \bfK(\cala) \vee \bfK(\calb) \ar[r] & \bfK(\cale).}
\end{equation}
We prove this $\bfK$ Additivity in Section~\ref{sec:split_exact_sequences_of_Waldhausen_categories} as a consequence of the weak equivalence \eqref{equ:intro:E(A,C,B)_additivity}. We give sufficient conditions in Proposition~\ref{prop:universal_split_exact_sequence} for a split exact sequence in \eqref{equ:intro:split_exact_sequence} to be Waldhausen equivalent to a standard one.

The main goal of this paper is to directly prove versions of the foregoing results for {\it Waldhausen quasicategories} in Sections~\ref{sec:Additivity_for_S_bullet_infinity} and \ref{sec:split_exact_sequences_of_Waldhausen_quasicategories}. Our strategy is to first prove these classical results in such a way that the proofs carry over to the quasicategorical context, then to develop all of the necessary properties of Waldhausen quasicategories, and finally to make the proof alterations for Waldhausen quasicategories. This strategy has several benefits: readers interested solely in classical Additivity can read our proofs of the classical results without quasicategorical distraction, genuinely quasicategorical matters are clearly separated from categorical matters, and the paper can be read with a minimum of prerequisites.

{\bf Summary of Contents.} In Section~\ref{sec:simplified_proof} we recall the notion of Waldhausen category, the $S_\bullet$ construction, and the category $\cale(\cala,\calc,\calb)$ of cofiber sequences mentioned above, and
prove $wS_\bullet$ Additivity for $\cale(\cala,\calc,\calb)$. The key step, {\it upon which the entire paper depends}, is to prove in a transferable way Lemma~\ref{lem:HardLemmaMadeEasy}: $wS_\bullet$ Additivity for $\cale(\cala,\calc,\calb)$ on the level of object simplicial sets $\mathfrak{s_\bullet}:=\text{Obj}\; S_\bullet$. In the proof of Lemma~\ref{lem:HardLemmaMadeEasy}, we construct a crucial simplicial homotopy $\rho$ via a pushout in \eqref{equ:rho_n_as_pushout}. In Section~\ref{sec:simplified_proof}, we also observe that $S_n$ is a 2-functor on $\mathbf{Wald}_2$, so that it preserves split exact sequences and equivalences, alluding to the later observation that $S_n^\infty$ is an $(\infty,2)$-functor on $\mathbf{QWald}_{\infty,2}$ that also preserves split exact sequences and equivalences.

In Section~\ref{sec:split_exact_sequences_of_Waldhausen_categories} we recall split exact sequences of Waldhausen categories, the standard split exact sequence associated to $\cale(\cala,\calc,\calb)$, and show how $wS_\bullet$ Additivity for $\cale(\cala,\calc,\calb)$ implies Waldhausen $\bfK$ Additivity for standard split exact sequences. As expected, we also conclude $wS_\bullet$ Additivity and $\bfK$ Additivity for split exact sequences equivalent to standard ones, and we give sufficient conditions for a split exact sequence to be equivalent to a standard one.

In an effort to make this paper self-contained, we discuss in Section~\ref{sec:recollections} all of the notions and results we need from the theory of quasicategories.  We prove a number of results as well that are either deep in the literature or not in the literature. Topics we recall and discuss are: quasicategories, their simplicial enrichment, simplicial exponentiation, a mapping space for a quasicategory, 0-fullness and 1-fullness of subquasicategories, the homotopy category of a quasicategory, equivalences in a quasicategory, equivalences between quasicategories, adjunctions between quasicategories, constructions of Joyal equivalences and Joyal fibrations via restriction and postcomposition, and commutative squares and pushouts in a quasicategory. Section~\ref{subsec:natural_pushout_along_cofibrations} is the main quasicategorical technicality of the paper:  a construction of an identity-preserving, functorial choice of pushouts in a quasicategory $\calc$ along selected maps, and a construction of the associated natural pushout functors in diagram categories $\calc^K$. The application of Section~\ref{subsec:natural_pushout_along_cofibrations} is the transfer of the pushout argument in \eqref{equ:rho_n_as_pushout} in Lemma~\ref{lem:HardLemmaMadeEasy} to a pushout argument in the quasicategorical context in Lemma~\ref{lem:HardLemmaMadeEasy_quasicategorical}. The main goal in Section~\ref{subsec:natural_pushout_along_cofibrations} is Corollary~\ref{cor:main_corollary_about_pushouts}, and the main external input is Proposition~\ref{prop:RiehlVerity_Corollary_For_Pushouts}, an analogue of Riehl--Verity's \cite[Corollary~5.2.20]{RiehlVerity}.

In Section~\ref{sec:Additivity_for_S_bullet_infinity} we prove the quasicategorical versions of the results in Section~\ref{sec:simplified_proof}. We introduce the notion of Waldhausen quasicategory, comment on immediate consequences and differences from the classical case, and prove the equivalence with Barwick's notion of Waldhausen $\infty$-category. We recall methods of Barwick and Blumberg--Gepner--Tabuada for obtaining Waldhausen $\infty$-categories from classical Waldhausen categories, and give the examples of stable quasicategories and diagram Waldhausen quasicategories. Thus, there are a variety of examples of Waldhausen quasicategories in Examples~\ref{examp:from_classical_to_Waldhausen_quasicategory_Barwick}, \ref{examp:from_classical_to_Waldhausen_quasicategory}, \ref{examp:stable_quasicategories_are_Waldhausen_quasicategories}, and \ref{examp:diagram_Waldhausen_quasicategories}, and Theorem~\ref{thm:BlumbergGepnerTabuada_Comparisons}. We show that Waldhausen quasicategories form both an $(\infty,2)$-category $\mathbf{QWald}_{\infty,2}$ and a 2-category $\mathbf{QWald}_{2}$, and that $S_\bullet^\infty$ is an endo-$(\infty,2)$-functor and an endo-2-functor on these respective categories. The Waldhausen quasicategory of cofiber sequences has two equivalent versions: $\cale(\cala,\calc,\calb)$ with objects commutative triangles that are cofiber sequences, and $\cale^{po}(\cala,\calc,\calb)$ with objects pushout squares that realize a cofiber sequence as a quotient. The version $\cale(\cala,\calc,\calb)$ without the full pushout data is convenient for formulations, but the version $\cale^{po}(\cala,\calc,\calb)$ with full pushout data is convenient in proofs where the data is relevant for constructions. Lemma~\ref{lem:HardLemmaMadeEasy_quasicategorical} is $(S_\bullet^\infty)_\text{equiv}$ Additivity for $\cale(\cala,\calc,\calb)$ on the level of object simplicial sets $\mathfrak{s}_\bullet^\infty:=\text{Obj}\; S_\bullet^\infty$. There, the quasicategorical issues of the transferred argument of Lemma~\ref{lem:HardLemmaMadeEasy} are presented. One difference from the classical case is that for any Waldhausen quasicategory $\cald$, the inclusion $\mathfrak{s}_\bullet^\infty \cald \hookrightarrow (S_\bullet^\infty\cald)_\text{equiv}$ is always a diagonal weak equivalence of bisimplicial sets, since the weak equivalences of a Waldhausen quasicategory are the equivalences, see Proposition~\ref{prop:D_to_D(m,w)_is_Waldhausen_equivalence}. This simplifies the proof of $(S_\bullet^\infty)_\text{equiv}$ Additivity for $\cale(\cala,\calc,\calb)$ in Theorem~\ref{thm:Additivity_for_quasicategories}.

In Section~\ref{sec:K-Theory_Space_and_Spectrum} we introduce variants of Waldhausen's $K$-theory space and $K$-theory spectrum for Waldhausen quasicategories and show how to make them strictly 1-functorial. We carefully treat the matter of selecting a distinguished zero object and implications for the structure maps of the $K$-theory spectrum.

Section~\ref{sec:split_exact_sequences_of_Waldhausen_quasicategories} is the quasicategorical version of Section~\ref{sec:split_exact_sequences_of_Waldhausen_categories}. We introduce split exact sequences of Waldhausen quasicategories, the standard split exact sequence associated to $\cale(\cala,\calc,\calb)$, and show how $(S_\bullet^\infty)_\text{equiv}$ Additivity for $\cale(\cala,\calc,\calb)$ implies Waldhausen $\bfK$ Additivity for standard pointed split exact sequences of Waldhausen {\it quasi}categories. $(S_\bullet^\infty)_\text{equiv}$ Additivity and $\bfK$ Additivity of course hold for split exact sequences equivalent to standard ones (pointed sequences for $\bfK$ Additivity), and we give sufficient conditions for a split exact sequence to be equivalent to a standard one. Importantly, these conditions hold for every split exact sequence of stable quasicategories, so $\bfK$ maps split exact sequence of pointed stable quasicategories to split cofiber sequences, see Corollary~\ref{cor:stable_qcats_are_example_for_main_theorem}.

To speed up access to the main results, we have relegated some essential material (used in the main text) to three appendices. In Section~\ref{sec:appendix1} we introduce equivalences between Waldhausen categories and Waldhausen quasicategories as equivalences in the 2-categories $\mathbf{Wald}_2$ and $\mathbf{QWald}_2$ respectively, characterize these Waldhausen equivalences as those exact equivalences of (quasi)categories which reflect (weak) equivalences and cofibrations, and show that $S_\bullet$, $wS_\bullet$, $\bfK$, respectively $S_\bullet^\infty$, $(S_\bullet^\infty)_\text{equiv}$, and quasicategorical $\bfK$, send these Waldhausen equivalences to appropriate equivalences. In Section~\ref{sec:appendix_Wald_equivalences_of_ses} we turn to sequences of Waldhausen quasicategories, and
define suitable notions of morphism and equivalence. This also makes use of the 2-category $\mathbf{QWald}_2$: the squares in a morphism of sequences are only required to commute up to iso 2-cell. More precisely, a sequence is an object of the 2-category $\text{\bf 2-Cat}_{ps}([2],\mathbf{QWald}_2)$, while a morphism of sequences is a morphism  in this 2-category. The properties of ``exact'' and ``split exact'' are preserved under Waldhausen equivalence of sequences, and in the case of split exactness, a Waldhausen equivalence is automatically compatible with splittings up to iso 2-cells. In Section~\ref{sec:appendix_simplicial_homotopy}, we review the notion of simplicial homotopy and give several methods for constructing simplicial homotopies in various contexts, especially for $\mathfrak{s}_\bullet$ and $\mathfrak{s}_\bullet^\infty$ evaluated on natural transformations between exact functors, as we need for Lemmas~\ref{lem:HardLemmaMadeEasy} and \ref{lem:HardLemmaMadeEasy_quasicategorical}.

{\bf Relationship to the Literature.} Two recent articles concern Additivity for quasicategorical versions of $K$-theory: \cite{BlumbergGepnerTabuadaI} of Blumberg--Gepner--Tabuada and \cite{Barwick} of Barwick. The work of Blumberg--Gepner--Tabuada focusses on {\it stable} quasicategories (all maps are considered cofibrations for stable quasicategories), whereas in the present article we propose a quasicategorical analogue of Waldhausen category in which the class of cofibrations can be more general, see Definition~\ref{def:Waldhausen_quasicat}. Blumberg--Gepner--Tabuada use the quasicategorical $S_\bullet$ construction denoted $S_\bullet^\infty$, like we do, however, unlike the present article, they proceed from classical Waldhausen Additivity using Morita theory, spectrally enriched categories, and strictification. Our present article remains entirely in the framework of quasicategories, and does not derive quasicategorical Additivity from classical Additivity.

The article \cite{Barwick} of Barwick remains in the framework of quasicategories, like we do. Rather than beginning with a quasicategorical $S_\bullet$ construction, he characterizes additive theories, and defines algebraic $K$-theory as an additive theory with a universal property (the linearization of the functor ``maximal Kan subcomplex''). The $K$-theory spectrum is the ``canonical delooping of the `maximal Kan subcomplex' functor.'' He proves quasicategorical versions of the basic theorems of algebraic $K$-theory, and then proves that this universal definition extends the classical one. Barwick allows the class of cofibrations to be general, like we do. In fact, his notion of Waldhausen $\infty$-category is equivalent to the notion of Waldhausen quasicategory in the present paper, see Proposition~\ref{prop:Barwick_equivalence}. Barwick also uses a quasicategorical $S_\bullet$ construction, see his Section 5 for a discussion of filtered objects. He proves in Corollary 6.9.1 that suspension in the category $\mathbf{V}_\text{add}\mathbf{Wald}_\infty$ is equivalent to the realization of the Waldhausen cocartesian fibration $\mathscr{S}\calc \to N\Delta^{\text{op}}$. Barwick told us there is an equivalence between $\mathscr{S}\calc$ and the unstraightening of the simplicial quasicategory $S_\bullet^\infty \calc$, which fiberwise is just the inclusion of the top row in each $S_n^\infty \calc$. Classical Additivity (for Waldhausen categories with good factorization properties of weak equivalences) is a consequence of quasicategorical Additivity according to \cite[Corollary~10.10.3]{Barwick}, without going through $\mathscr{B}$. Barwick does not use classical Additivity to deduce \cite[Proposition~10.10]{Barwick}, of which his 10.10.3 is a Corollary. Simple maps do not have good factorization properties, so the relative nerve does not give the proper $K$-theory when simple maps are the weak equivalences, and instead Barwick uses labelled Waldhausen $\infty$-categories for that situation. See Example~\ref{examp:from_classical_to_Waldhausen_quasicategory_Barwick} for more details on how Barwick obtains Waldhausen $\infty$-categories from classical Waldhausen categories.

Both papers \cite{Barwick} and \cite{BlumbergGepnerTabuadaI} are concerned with universal characterizations of algebraic $K$-theory via Additivity, whereas the present paper is only concerned with a proof of Additivity in quasicategories. Quasicategorical variants of Waldhausen's Approximation Theorem are treated in \cite{Barwick}, \cite{BlumbergGepnerTabuadaI}, and \cite{FioreApproximation}.

\section{Proof of $wS_\bullet$ Additivity in the Classical Case} \label{sec:simplified_proof}

We recall the notion of Waldhausen category and the $S_\bullet$ construction from \cite{WaldhausenAlgKTheoryI}, and combine elements of \cite{JonesAdditivity}, \cite{McCarthyAdditivity}, and \cite{RognesTextbook} with a pushout simplicial homotopy to prove $wS_\bullet$ Additivity for $\cale(\cala, \calc, \calb)$ via Theorem~$\widehat{A}^*$.

A {\it Waldhausen category} is a category $\mathcal{C}$ equipped with a distinguished zero object $\ast$, a subcategory $co \mathcal{C}$ of {\it cofibrations}, and a subcategory $w\mathcal{C}$ of {\it weak equivalences} that satisfy Waldhausen's axioms \cite[pages 320 and 326]{WaldhausenAlgKTheoryI}. These axioms are: all isomorphisms are both cofibrations and weak equivalences, for each object $A$ the unique map $\ast \to A$ is a cofibration, the pushout of each cofibration exists and is again a cofibration, and finally, the gluing lemma holds. Cofibrations are always indicated with a feathered arrow $\rightarrowtail$.

\begin{example}[Classical Waldhausen Categories, See \cite{WaldhausenAlgKTheoryI} and Section 8.2 of \cite{RognesTextbook}] \label{examp:classical_Waldhausen_cats} \leavevmode
\begin{enumerate}
\item \label{examp:classical_Waldhausen_cats:fin_based_sets}
The category of based finite sets and based maps is a Waldhausen category with cofibrations the injections and weak equivalences the bijections.
\item \label{examp:classical_Waldhausen_cats:fin_gen_Rmods}
For $R$ a ring, the category of finitely generated $R$-modules and $R$-module maps is a Waldhausen category with cofibrations the monomorphisms and weak equivalences the isomorphisms.
\item \label{examp:classical_Waldhausen_cats:fin_gen_proj_Rmods}
For $R$ a ring, the category of finitely generated {\it projective} $R$-modules and $R$-module maps is a Waldhausen category with cofibrations the monomorphisms $i\co P_1 \to P_2$ such that $P_2/P_1$ is finitely generated projective, and weak equivalences the isomorphisms.
\item \label{examp:classical_Waldhausen_cats:fin_gen_Rmods_chain_complexes}
For $R$ a ring, the category of bounded chain complexes of finitely generated $R$-modules and chain maps is a Waldhausen category with cofibrations the monomorphisms and weak equivalences the homology isomorphisms.
\item \label{examp:classical_Waldhausen_cats:fin_gen_proj_Rmods_chain_complexes}
For $R$ a ring, the category of bounded chain complexes of finitely generated {\it projective} $R$-modules and chain maps is a Waldhausen category with cofibrations the monomorphisms $i\co P_1 \to P_2$ such that $P_2/P_1$ is a bounded chain complex of finitely generated projective $R$-modules, and weak equivalences the homology isomorphisms.
\item
The category with objects based simplicial sets with finitely many nondegenerate simplices and morphisms based maps is a Waldhausen category. The cofibrations are the monomorphisms and the weak equivalences are the weak homotopy equivalences, that is, the maps with geometric realization a homotopy equivalence.
\item
The category with objects based simplicial sets with finitely many nondegenerate simplices and morphisms based maps is a Waldhausen category in another way as well. The cofibrations are the monomorphisms (as above), but the weak equivalences are the simple maps. A map of simplicial sets $f\co A \to B$ is {\it simple} if the preimage of each point of $\vert B \vert$ under $\vert f \vert \co \vert A \vert \to \vert B \vert$ is contractible.
\item
Every model category contains an example of a Waldhausen category. If $\calm$ is a model category, then its full subcategory $\calm_c$ on its cofibrant objects is a Waldhausen category. For a proof of the gluing lemma in this generality, see Goerss--Jardine \cite[pages 122-123 and 127]{GoerssJardine} or Hovey \cite[Lemma 5.2.6]{Hovey1999}.
\end{enumerate}
\end{example}

Let $\mathcal{B}$ and $\mathcal{C}$ be Waldhausen categories. A functor $\mathcal{B} \to \mathcal{C}$ is {\it exact} if it takes $\ast_\calb$ to $\ast_\calc$, maps cofibrations to cofibrations, maps weak equivalences to weak equivalences, and maps each pushout along a cofibration to a pushout along a cofibration.

A {\it sub Waldhausen category $\calb$ of a Waldhausen category $\mathcal{C}$} is a subcategory $\calb$ of $\calc$ that is a Waldhausen category in its own right, such that: (i) the distinguished zero object of $\calb$ is equal to the distinguished zero object of $\calc$, (ii) a morphism in the subcategory $\calb$ is a cofibration in the subcategory $\calb$ if and only if it is a cofibration in the larger category $\mathcal{C}$ {\it and} the quotient in $\calc$ is isomorphic to an object of $\calb$, (iii) every pushout square in the subcategory $\calb$ with one leg a $\calb$-cofibration is also a pushout square in the larger category $\calc$, and (iv) a morphism in the subcategory $\calb$ is a weak equivalence in $\calb$ if and only if it is a weak equivalence in $\calc$. All of this implies that the inclusion functor $\calb \to \calc$ is exact, though exactness of the inclusion is not equivalent to the statement that $\calb$ is a sub Waldhausen category (the inclusion must also reflect weak equivalences, and the inclusion must also reflect cofibrations for which the quotient is in the subcategory). The Waldhausen category of finitely generated projective $R$-modules in \ref{examp:classical_Waldhausen_cats:fin_gen_proj_Rmods} above is a sub Waldhausen category of finitely generated $R$-modules in \ref{examp:classical_Waldhausen_cats:fin_gen_Rmods} above. Similarly, Example \ref{examp:classical_Waldhausen_cats:fin_gen_proj_Rmods_chain_complexes} is a sub Waldhausen category of Example \ref{examp:classical_Waldhausen_cats:fin_gen_Rmods_chain_complexes}.


We next want to recall the $S_\bullet$ construction, a main ingredient in the Additivity Theorem. Let $\text{Ar}[n]$ be the {\it category of arrows} in $[n]$. It is the partially ordered set with elements $(i,j)$ such that $0 \leq i \leq j \leq n$, and with the order $(i,j) \leq (i',j')$ whenever $i \leq i'$ and $j \leq j'$. In other words, $\text{Ar}[n]$ is the subcategory of the square grid $[n] \times [n]$ that is full on the objects on the main diagonal and above.

\begin{definition}[Waldhausen's $S_\bullet$ Construction, Section 1.3 of \cite{WaldhausenAlgKTheoryI}] \label{def:S_bullet}
Let $\mathcal{C}$ be a Waldhausen category. An {\it object of the category $S_n\mathcal{C}$} is a functor $A \co \text{Ar}[n] \to \mathcal{C}$ such that
\begin{enumerate}
\item
For each $j \in [n]$, $A(j,j)$ is the distinguished zero object of $\mathcal{C}$,
\item
For each $i \leq j \leq k$, the morphism $A_{i,j} \to A_{i,k}$ is a cofibration,
\item
For each $i \leq j \leq k$, the diagram
$$\xymatrix{A(i,j) \ar@{>->}[r] \ar[d] & A(i,k) \ar[d] \\ A(j,j) \ar@{>->}[r] & A(j,k)}$$
is a pushout square in $\mathcal{C}$.
\end{enumerate}
The {\it morphisms of the category $S_n\mathcal{C}$} are natural transformations of diagrams of shape $\text{Ar}[n]$ in $\calc$.
\end{definition}

\begin{remark} \label{rem:Sdot_is simplicial_object}
The assignment $[n] \mapsto S_n\mathcal{C}$ is a functor $\Delta^{\text{op}} \to \mathbf{Cat}$, so that $S_\bullet\mathcal{C}$ is a simplicial object in $\mathbf{Cat}$. More precisely, if $A \co \text{Ar}[n] \to \mathcal{C}$ and $\alpha\co [k] \to [n]$, then $\alpha^* A$ is $A \circ (\alpha \times \alpha)$.
\end{remark}

The objects of $S_n\mathcal{C}$ are sequences of cofibrations in $\calc$
\begin{equation} \label{equ:sequence_of_cofibrations}
\xymatrix{\ast \ar@{>->}[r] & A_{0,1} \ar@{>->}[r] & A_{0,2} \ar@{>->}[r] & \cdots  \ar@{>->}[r] & A_{0,n}}
\end{equation}
with a choice of quotient $A_{i,j}= A_{0,j}/A_{0,i}$ for each $i \leq j$. For $0<i<n$, the face map $d_i\co S_n\mathcal{C} \to S_{n-1}\mathcal{C}$ composes the two morphisms $A_{0,i-1} \to A_{0,i} \to A_{0,i+1}$ in \eqref{equ:sequence_of_cofibrations}, which corresponds to removing object column $i$ and object row $i$ in the grid $A$. The face map $d_0\co S_n\mathcal{C} \to S_{n-1}\mathcal{C}$ removes object column $0$ and object row $0$ in the grid $A$, which means to quotient \eqref{equ:sequence_of_cofibrations} by $A_{0,1}$ and to remove the first $\ast$ to make
$$\xymatrix{\ast \ar@{>->}[r] & A_{1,2} \ar@{>->}[r] & A_{1,3} \ar@{>->}[r] & \cdots  \ar@{>->}[r] & A_{1,n}.}$$
The face map $d_n \co S_n\mathcal{C} \to S_{n-1}\mathcal{C}$ removes object column $n$ and object row $n$ in the grid $A$.
For $i \geq 0$, the degeneracy map $s_i\co S_n\mathcal{C} \to S_{n+1}\mathcal{C}$ replaces $A_{0,i}$ in \eqref{equ:sequence_of_cofibrations} by the identity morphism $A_{0,i} \to A_{0,i}$. This corresponds to inserting a column of trivial morphisms and a row of trivial morphisms in the grid $A$ in object column $i$ and object row $i$.

The category $S_n \mathcal{C}$ is a Waldhausen category. A natural transformation $f\co A \to B$ in $S_n\mathcal{C}$ is a {\it cofibration in $S_n\mathcal{C}$} if each induced morphism
\begin{equation} \label{equ:cofibration_in_S_n}
A_{0,j} \cup_{A_{0,j-1}} B_{0,j-1} \to B_{0,j}
\end{equation}
is a cofibration in $\mathcal{C}$ for each $1 \leq j \leq n$. Notice that for $j=1$, the map \eqref{equ:cofibration_in_S_n} is a cofibration if and only if $A_{0,1} \to B_{0,1}$ is a cofibration because $A_{0,0} \to B_{0,0}$ is the identity on the zero object. This definition of cofibration in $S_n\mathcal{C}$ requiring \eqref{equ:cofibration_in_S_n} to be a cofibration for each $1 \leq j \leq n$ implies, but is not equivalent to, the requirement that each $f_{i,j}\co A_{i,j} \to B_{i,j}$ be a cofibration in $\mathcal{C}$, see Rognes' notes \cite[Definition~8.3.10 and Lemma~8.3.12]{RognesTextbook}. It also implies that $A_{i,k} \cup_{A_{i,j}} B_{i,j} \to B_{i,k}$ is a cofibration for all $i \leq j \leq k$ in $[n]$. A natural transformation $f\co A \to B$ is a {\it weak equivalence in $S_n\mathcal{C}$} if each $f_{0,j}\co A_{0,j} \to B_{0,j}$ is a weak equivalence in $\mathcal{C}$. This is equivalent to requiring each $f_{i,j}$ to be a weak equivalence in $\mathcal{C}$, see \cite[8.3.15]{RognesTextbook}. Each structure map of $S_\bullet \mathcal{C}$ is exact, so $S_\bullet \mathcal{C}$ is a simplicial object in Waldhausen categories. We denote the simplicial set of the object sets of $S_\bullet \mathcal{C}$ by $\mathfrak{s}_\bullet \mathcal{C}$, that is, $\mathfrak{s}_n \mathcal{C}=\text{Obj}\; S_n \mathcal{C}$.

The objects of $S_2\mathcal{C}$ are pushouts of the form
\begin{equation} \label{equ:cofiber_sequence_pushout}
\begin{array}{c}
\xymatrix{A \ar@{>->}[r] \ar[d] & C \ar@{->>}[d] \\ \ast \ar[r] & B,}
\end{array}
\end{equation}
where the arrow $\twoheadrightarrow$ always indicates the projection to the quotient. A sequence of morphisms in $A \rightarrowtail C \twoheadrightarrow B$ in $\calc$ is called a {\it cofiber sequence} when the square \eqref{equ:cofiber_sequence_pushout} is a pushout. Since $\ast$ is a zero object, we see that the category of cofiber sequences is isomorphic to $S_2\mathcal{C}$, and we make the identification without comment.\footnote{In the context of Waldhausen quasicategories, we do not have such an isomorphism between $S_2^\infty$ and the quasicategory of cofiber sequences, rather merely an equivalence, see Notations~\ref{not:E(A,C,B)_quasicategorical} and \ref{not:E'(A,C,B)_quasicategorical}.}

The ``subobject'' and ``quotient'' functors $s,q\co S_2\mathcal{C} \to \mathcal{C}$, which assign to the cofiber sequence $A \rightarrowtail C \twoheadrightarrow B$ the objects $A$ and $B$ respectively, are exact.

\begin{remark}[The 2-Category $\mathbf{Wald}_2$ and the 2-Functor $S_n$] \label{rem:Sn_is_a_2-functor}
Waldhausen categories, exact functors, and natural transformations between them form a 2-category denoted $\mathbf{Wald}_2$. The hom category $\mathbf{Wald}_2\big((\calc,co\calc,w\calc),(\cald,co\cald,w\cald)\big)$ is the full subcategory of $\mathbf{Cat}(\calc, \cald)$ on the functors $\calc \to \cald$ that are exact. Consequently, $\mathbf{Wald}_2$ inherits its 2-category structure from  $\mathbf{Cat}$, the 2-category of small categories, functors, and natural transformations. Moreover, $S_n\co \mathbf{Wald}_2 \to \mathbf{Wald}_2$ is a 2-functor because $\mathbf{Cat}(\text{Ar}[n],-)\co\mathbf{Cat} \to \mathbf{Cat}$ is a 2-functor. In particular, for $A\co \text{Ar}[n] \to \calc$ in $S_n\calc$, exact functors $F,G\co\calc \to \cald$, and a natural transformation $\alpha\co F \Rightarrow G$ we have
$$S_nF:=F \circ A \hspace{1in} S_nG:=G \circ A  \hspace{1in} S_n\alpha:=\alpha \ast A.$$
Consequently, $S_n$ maps a natural isomorphism to a natural isomorphism, a Waldhausen equivalence to a Waldhausen equivalence (see Definition~\ref{def:Waldhausen_equivalence}), and an adjunction of Waldhausen categories to an adjunction of Waldhausen categories.\footnote{By {\it adjunction of Waldhausen categories} we simply mean an adjunction of the underlying categories.}
\end{remark}

The first version of Waldhausen Additivity says that after restricting to weak equivalences in the $S_\bullet$ construction, the Waldhausen category of cofiber sequences in $\calc$ and the product Waldhausen category $\calc \times \calc$
produce weakly equivalent simplicial objects in $\mathbf{Cat}$. Notice that both $wS_\bullet$ and its object simplicial set $\mathfrak{s}_\bullet$ preserve products,\footnote{Recall that the 2-category $\mathbf{Cat}$ is enriched Cartesian closed, so that $(-)^{\text{Ar}[n]}$ is a right adjoint and preserves limits.} so we freely write $wS_\bullet (s,q)$ to mean $(wS_\bullet s, wS_\bullet q)$, and we write $\mathfrak{s}_\bullet(s,q)$ to mean $(\mathfrak{s}_\bullet s,\mathfrak{s}_\bullet q)$.

\begin{theorem}[Waldhausen $wS_\bullet$ Additivity for $S_2\calc$, Theorem 1.4.2 of \cite{WaldhausenAlgKTheoryI}] \label{thm:Waldhausen_Additivity}
Let $\mathcal{C}$ be a Waldhausen category. Then the ``subobject'' and ``quotient'' functors induce a weak equivalence of simplicial objects in $\mathbf{Cat}$
$$\xymatrix{wS_\bullet (s,q) \co wS_\bullet S_2 \mathcal{C} \ar[r] & wS_\bullet \mathcal{C} \times wS_\bullet \mathcal{C}}.$$
That is, the diagonal of its level-wise nerve is a weak homotopy equivalence of simplicial sets.
\end{theorem}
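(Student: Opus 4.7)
The plan is to apply Theorem $\hat{A}^*$ of \cite{JonesAdditivity} to the map obtained by taking the levelwise nerve of $wS_\bullet(s,q)$, regarded as a map of bisimplicial sets into a product. Theorem $\hat{A}^*$ is tailored precisely for maps into a product of bisimplicial sets: it reduces the assertion of being a diagonal weak equivalence to checking that a certain combined fiber is contractible for each pair of vertices of the target.

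First I would identify these fibers. Fix $n \geq 0$ and objects $A, B \in S_n\mathcal{C}$, i.e.\ two sequences of cofibrations of the form \eqref{equ:sequence_of_cofibrations}. The relevant fiber is the nerve of the category $\mathcal{F}_n(A,B)$ whose objects are those $C \in S_n S_2 \mathcal{C}$ projecting under $s$ to $A$ and under $q$ to $B$, with morphisms the weak equivalences of $S_n S_2 \mathcal{C}$ that restrict to the identity on $A$ and on $B$. The goal becomes showing that $N\mathcal{F}_n(A,B)$ is contractible for every such pair.

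Next, in the style of McCarthy \cite{McCarthyAdditivity}, I would construct a natural transformation on $\mathcal{F}_n(A,B)$ from a constant functor to the identity functor, inducing the desired simplicial homotopy on nerves. The constant value is the ``split'' object $C^{\mathrm{spl}}$ whose cofiber sequences are $A_{0,j} \rightarrowtail A_{0,j} \cup_\ast B_{0,j} \twoheadrightarrow B_{0,j}$, formed from levelwise pushouts (which exist since $\ast \rightarrowtail B_{0,j}$ is a cofibration). For any $C \in \mathcal{F}_n(A,B)$, the universal property of the pushout provides a canonical map $C^{\mathrm{spl}} \to C$ over $(A,B)$, and the gluing lemma, iterated across $j$, shows that this map is a weak equivalence in $S_n S_2 \mathcal{C}$.

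The main obstacle I expect is bookkeeping: organizing the chosen pushouts into a bona fide simplicial object $C^{\mathrm{spl}}$ compatible with the face and degeneracy maps in the $n$-direction, and verifying that the comparison $C^{\mathrm{spl}} \to C$ is strictly natural in $C$ as a morphism in $\mathcal{F}_n(A,B)$. This forces careful iterated use of the gluing lemma and of the closure of cofibrations under pushout. Once these compatibilities are in place, the contractibility of every $\mathcal{F}_n(A,B)$ combined with Theorem $\hat{A}^*$ immediately delivers the theorem.
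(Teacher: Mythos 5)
Your proposal contains a fundamental error that would trivialize the theorem. You claim that for any cofiber sequence $A \rightarrowtail C \twoheadrightarrow B$, the canonical map $C^{\mathrm{spl}} = A \cup_\ast B \to C$ is a weak equivalence. This is false. Take $\calc$ to be cofibrant pointed CW complexes with cofibrations and weak homotopy equivalences: in the cofiber sequence $S^1 \rightarrowtail D^2 \twoheadrightarrow S^2$, the map $S^1 \vee S^2 \to D^2$ is not a weak equivalence, since $D^2$ is contractible and $S^1 \vee S^2$ is not. Indeed, if $A \vee B \to C$ were always a weak equivalence, then $(s,q)\co wS_2\calc \to w\calc \times w\calc$ would already be an equivalence of categories, and Additivity would not be a statement about $K$-theory at all. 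The whole content of the theorem is that $K$-theory \emph{cannot distinguish} $C$ from $A \vee B$, even though the objects themselves need not be weakly equivalent. Relatedly, your "fiber" $\calf_n(A,B)$ is not contractible in general: with weak equivalences as morphisms, its set of components is the set of weak equivalence classes of extensions of $A$ by $B$, which can have more than one element.

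There is also a conceptual mismatch with Theorem $\hat A^*$. First, as stated in the paper (Theorem~\ref{thm:A_hat_star}) it is a theorem about maps of \emph{simplicial} sets, not bisimplicial sets; the paper applies it to the object simplicial sets $\mathfrak{s}_\bullet$ (proving Lemma~\ref{lem:HardLemmaMadeEasy}, the analogue of Waldhausen's Lemma~1.4.3), and only afterwards bootstraps from $\mathfrak{s}_\bullet$ to $wS_\bullet$ using Waldhausen's $\calc^{[m]}$ trick from page 336 of \cite{WaldhausenAlgKTheoryI}. Second, and more importantly, Theorem $\hat A^*$ does not fix both coordinates and ask for a contractible "combined fiber." It fixes a single simplex $A'$ in one factor, forms the \emph{left fiber} $f/(m,A')$ as in Definition~\ref{def:left_fiber} (which is a lax slice, not a pointwise fiber: an $n$-simplex is an $n$-simplex $x$ of the source together with $\alpha\co[n]\to[m]$ with $\alpha^*A' = f(x)$), and asks that the composite $f/(m,A') \to X \xrightarrow{g} T$ to the \emph{other} factor be a weak equivalence, letting $B$ vary freely. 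The crux of the paper's proof is then to exhibit $g\circ\pi$ as a simplicial deformation retraction: the section sends $B$ to $\ast \rightarrowtail B = B$, and the contracting homotopy pushes out $C_k$ along $A_k \rightarrowtail A_n$ to get $X_k = C_k \cup_{A_k} A_n$, interpolating from the split sequence to the given one \emph{while letting the $B$-coordinate change}. Allowing $B$ to vary is precisely what makes the argument go through where yours cannot.
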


Waldhausen deduces the Additivity Theorem from a weak equivalence on the level of object simplicial sets: his Lemma 1.4.3 says that $\mathfrak{s}_\bullet(s,q)\colon \mathfrak{s}_\bullet S_2 \calc \to \mathfrak{s}_\bullet \calc \times \mathfrak{s}_\bullet \calc$ is a weak homotopy equivalence of simplicial sets. His Lemma 1.4.3 proof concerning object simplicial sets included an application of Quillen's Theorem B and a certain technical Sublemma. In \cite{McCarthyAdditivity}, McCarthy gave a new proof of Waldhausen's Lemma 1.4.3 for object simplicial sets in the spirit of Quillen's Theorem~A without using Waldhausen's Sublemma, but left verification of the simplicial homotopy identities to the reader. Later in \cite{JonesAdditivity}, Jones--Kim--Mhoon--Santhanam--Walker--Grayson proved Theorem~$\widehat{A}^*$  (recalled below in Theorem~\ref{thm:A_hat_star}), which generally allows one to convert Theorem~B style proofs into Theorem~A style proofs. However, their proof of the weak equivalence on the level of object simplicial sets in \cite[Lemma 2.4]{JonesAdditivity} uses results of Waldhausen's technical Sublemma. Rognes, on the other hand, in \cite[Lemma~8.5.11]{RognesTextbook}, applies Theorem~$\widehat{A}^*$ to $(\mathfrak{s}_\bullet s, \mathfrak{s}_\bullet q)$ and avoids Waldhausen's technical Sublemma. In a totally different approach, Grayson uses edgewise subdivision to prove Additivity for cofibration sequences of exact functors\footnote{Additivity for cofibration sequences of exact functors is equivalent to Additivity in the forms of Theorem~\ref{thm:Waldhausen_Additivity} and Theorem~\ref{thm:Additivity_(equivalent_formulation)} by \cite[Proposition~1.3.2]{WaldhausenAlgKTheoryI}.} in \cite{GraysonAdditivity}.

In the present paper, like Rognes, we use Theorem~$\widehat{A}^*$ of \cite{JonesAdditivity} {\it without} Waldhausen's Sublemma, to prove $\mathfrak{s}_\bullet$ Additivity in Lemma~\ref{lem:HardLemmaMadeEasy}, but for $\cale(\cala, \calc, \calb)$ rather than $S^2\calc$: the subobject and the quotient in the cofiber sequences are in sub Waldhausen categories $\cala$ and $\calb$ rather than $\calc$. The novelty of the present paper is our construction of a simplicial homotopy via pushouts that makes the simplicial homotopy identities apparent, {\it even in the quasicategorical context}, so that Theorem~$\widehat{A}^*$ can also be applied in the quasicategorical context. Our simplicial homotopy is inspired by the simplicial homotopy $\Gamma\simeq \text{Id}_{S_\bullet F\vert \mathcal{C}^2(-,[n])}$ in \cite{McCarthyAdditivity}.

The Additivity Theorem~\ref{thm:Additivity_(equivalent_formulation)} for $wS_\bullet$ for cofiber sequences with subobject in $\cala$ and quotient in $\calb$ follows from the object version in Lemma~\ref{lem:HardLemmaMadeEasy}. In Corollary~\ref{cor:Additivity_GeneralAndSpectral}~\ref{cor:Additivity_GeneralAndSpectral:(i)} we prove Waldhausen Additivity for split exact sequences of Waldhausen categories that are Waldhausen equivalent to standard ones, and we do the spectral version in Corollary~\ref{cor:Additivity_GeneralAndSpectral}~\ref{cor:Additivity_GeneralAndSpectral:(ii)}.

We follow Waldhausen's notations for the category of cofiber sequences in $\calc$ with the subobject and the quotient in sub Waldhausen categories $\cala$ and $\calb$.

\begin{notation}[$\mathcal{E}(\mathcal{A},\mathcal{C},\mathcal{B})$] \label{not:E(A,C,B)}
Let $\mathcal{C}$ be a Waldhausen category and $\mathcal{A}$ and $\mathcal{B}$ sub Waldhausen categories. We denote by $\mathcal{E}(\mathcal{A},\mathcal{C},\mathcal{B})$ the category of cofiber sequences in $\mathcal{C}$ of the form
$$\xymatrix{A \ar@{>->}[r] & C \ar@{->>}[r] & B}$$
with $A \in \mathcal{A}$ and $B \in \mathcal{B}$. A morphism consists of three morphisms in $\cala$, $\calc$, and $\calb$ respectively, making the two relevant squares commute. The category $\cale(\calc, \calc, \calc)$ is isomorphic to $S_2\calc$.
\end{notation}

\begin{proposition}[$\mathcal{E}(\mathcal{A},\mathcal{C},\mathcal{B})$ is a Waldhausen Category in \cite{WaldhausenAlgKTheoryI}] \label{E(A,C,B)isWaldhausenCategory}
Let $\mathcal{C}$ be a Waldhausen category and $\mathcal{A}$ and $\mathcal{B}$ sub Waldhausen categories. Then $\mathcal{E}(\mathcal{A},\mathcal{C},\mathcal{B})$ is a Waldhausen category with levelwise weak equivalences, and cofibrations the diagram morphisms from $A_1 \rightarrowtail C_1 \twoheadrightarrow B_1$ to $A_2 \rightarrowtail C_2 \twoheadrightarrow B_2$ such that $A_1 \to A_2$ is a cofibration in $\cala$ and the induced map
\begin{equation} \label{equ:cofibration_in_E(A,C,B)}
C_1 \cup_{A_1} A_2 \to C_2
\end{equation}
is a cofibration in $\calc$, see \cite[Definition~8.3.10 and Lemma 8.3.12]{RognesTextbook}. The projections to $\mathcal{A}$, $\mathcal{C}$, and $\mathcal{B}$ are exact. In particular, the three components of a cofibration are also cofibrations.
\end{proposition}

Before proving Lemma~\ref{lem:HardLemmaMadeEasy}, we recall Theorem~$\widehat{A}^*$ and left fibers in simplicial sets. Theorem~$\widehat{A}^*$ is a simplicial version of Quillen's Theorem~A adapted for the situation of products.

\begin{definition}[Left Fiber in Simplicial Sets, pages 336-337 of \cite{WaldhausenAlgKTheoryI}] \label{def:left_fiber}
For any simplicial set map $f\co X \to Y$ and any $y \in Y_m$, the {\it left fiber} $f/(m,y)$ is defined as the following pullback in simplicial sets.
$$\xymatrix{f/(m,y) \ar[r]^-{\pi} \ar[d] \ar@{}[dr]|{\text{pullback}} & X \ar[d]^f \\ \Delta[m] \ar[r]_y & Y}$$
By the universal property of the pullback and several applications of the Yoneda Lemma, an $n$-simplex $\Delta[n] \to f/(m,y)$ is the same as an $n$-simplex $x$ of $X$ together with a morphism $\alpha\co [n] \to [m]$ in $\Delta$ such that $\alpha^* y = f(x)$.
\end{definition}

\begin{theorem}[Simplicial Version of Quillen's Theorem $A$, Lemma~1.4.A of \cite{WaldhausenAlgKTheoryI}]
If $f/(m,y)$ is contractible for every $(m,y)$, then $f$ is a weak homotopy equivalence.
\end{theorem}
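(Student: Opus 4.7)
The plan is to build $Y$ up one simplex at a time and apply the gluing lemma at each stage, bootstrapping from the case $Y=\Delta[m]$. First I would record the following stability property: if $\phi\colon Y' \to Y$ is any map of simplicial sets and $\tilde f\colon X\times_Y Y' \to Y'$ denotes the pullback of $f$ along $\phi$, then every left fiber $\tilde f/(\ell,y')$ is canonically isomorphic to $f/(\ell,\phi\circ y')$ by the pasting lemma for pullback squares. In particular the contractible-left-fiber hypothesis is inherited by the restriction of $f$ to any sub-simplicial set of $Y$ and by the pullback of $f$ along the attaching map of any simplex.

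Next I would dispose of the base case $Y=\Delta[m]$. Applying the hypothesis to the universal $m$-simplex $\mathrm{id}\in(\Delta[m])_m$ gives $f/(m,\mathrm{id})=\Delta[m]\times_{\Delta[m]}X = X$, so $X$ is contractible. Since $\Delta[m]$ is contractible as well, $f$ is automatically a weak equivalence between contractible simplicial sets.

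The inductive step handles simplicial sets $Y$ with finitely many non-degenerate simplices by induction on the pair (top dimension $d$ of $Y$, number of non-degenerate $d$-simplices). Pick a non-degenerate $d$-simplex $y\colon \Delta[d]\to Y$ and let $Y'\subset Y$ be the sub-simplicial set obtained by removing the interior of $y$, so that there is a pushout
$$\xymatrix{\partial\Delta[d]\ar@{^{(}->}[r]\ar[d] & \Delta[d]\ar[d]^y \\ Y'\ar[r] & Y.}$$
Because $\mathbf{sSet}$ is a topos, pullbacks commute with pushouts, and pulling back along $f$ yields the companion pushout
$$\xymatrix{f/\partial\Delta[d]\ar@{^{(}->}[r]\ar[d] & f/(d,y)\ar[d] \\ X'\ar[r] & X,}$$
where $X'=X\times_Y Y'$. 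By the stability observation, the restricted map $X'\to Y'$ and the pulled-back map $f/\partial\Delta[d]\to\partial\Delta[d]$ both satisfy the contractible-left-fiber hypothesis; the former has strictly fewer top-dimensional simplices in its target, and the latter has target of dimension $d-1$, so both are weak equivalences by the inductive hypothesis. The base case identifies $f/(d,y)\to\Delta[d]$ as a weak equivalence. Since the horizontal left maps of both squares are monomorphisms, the gluing lemma for the Kan--Quillen model structure produces the desired weak equivalence $X\to Y$.

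Finally, an arbitrary $Y$ is the filtered colimit of its finite sub-simplicial sets; pullback along $f$ commutes with this colimit, and each finite piece has already been handled, so the standard fact that filtered colimits of weak equivalences along monomorphisms are weak equivalences finishes the proof. The main delicacy will be verifying that the left-fiber hypothesis genuinely propagates to the pulled-back map $f/\partial\Delta[d]\to\partial\Delta[d]$, but this boils down to the pasting lemma applied to two pullback squares stacked vertically, exactly as in the stability observation.
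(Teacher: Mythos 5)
Your proof is correct, but it takes a genuinely different route from Waldhausen's. Waldhausen's own proof of Lemma~1.4.A builds the bisimplicial set whose $(m,n)$-bisimplices are triples $(y,x,\alpha)$ with $y\in Y_m$, $x\in X_n$, $\alpha\colon[n]\to[m]$, $\alpha^*y=f(x)$, and invokes the realization lemma in both directions: fixing $m$ gives $\coprod_{y\in Y_m}f/(m,y)$, contractible by hypothesis, so the projection to $Y$ is a weak equivalence; fixing $n$ gives a coproduct over $X_n$ of cone-like simplicial sets, each contractible, so the projection to $X$ is a weak equivalence; these two projections fit into a commutative triangle with $f$, and $f$ is a weak equivalence by 3-for-2. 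Your argument instead reduces to the base case $Y=\Delta[m]$ (where $X\cong f/(m,\mathrm{id})$ is contractible by hypothesis) and builds up by cell attachment and filtered colimits, using the pasting-lemma stability of left fibers under base change, the fact that the base-change functor $f^*\colon\mathbf{SSet}/Y\to\mathbf{SSet}/X$ preserves pushouts, the gluing lemma in the Kan--Quillen model structure, and the closure of weak equivalences under filtered colimits along monomorphisms. Both proofs are valid: Waldhausen's is induction-free and uses only the realization lemma, whereas yours avoids bisimplicial sets entirely at the price of invoking the gluing lemma and a finitely-generated-model-structure fact. One small caution in wording: ``pullbacks commute with pushouts'' overstates what holds in a topos; what you actually need, and correctly use, is that $f^*$ has a right adjoint and hence preserves colimits, so pulling back the cell-attachment pushout along $f$ again yields a pushout.
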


\begin{theorem}[Theorem~$\widehat{A}^*$ of Jones--Kim--Mhoon--Santhanam--Walker--Grayson\footnote{In the statement of their theorem in \cite{JonesAdditivity}, Jones--Kim--Mhoon--Santhanam--Walker--Grayson use the term ``homotopy equivalence'' rather than ``weak equivalence''. In the current paper we prefer to use the term ``weak homotopy equivalence'' for maps of simplicial sets which induce a homotopy equivalence after geometric realization, in order to distinguish them from simplicial homotopy equivalences.} in \cite{JonesAdditivity}] \label{thm:A_hat_star}
Let $(f,g)\co X \to Y \times T$ be a map of simplicial sets. If the composite
$$\xymatrix{f/(m,y) \ar[r]^-{\pi} & X \ar[r]^g & T}$$ is a weak homotopy equivalence for all $m \geq 0$ and all $y \in Y_m$, then $(f,g)$ is a weak homotopy equivalence of simplicial sets.
\end{theorem}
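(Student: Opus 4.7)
The natural strategy is to apply the simplicial version of Quillen's Theorem~A (Lemma~1.4.A above) to the map $(f,g)\colon X \to Y \times T$: by that criterion it suffices to show that for every $m \geq 0$ and every $(y,t) \in Y_m \times T_m$ the left fiber $(f,g)/(m,(y,t))$ is contractible.

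I would first unpack the definitions to exhibit $(f,g)/(m,(y,t))$ as a pullback of simplicial sets
$$
\xymatrix{
(f,g)/(m,(y,t)) \ar[r] \ar[d] & f/(m,y) \ar[d]^-{(\widetilde{g},\pi')} \\
\Delta[m] \ar[r]_-{(t,\operatorname{id})} & T \times \Delta[m],
}
$$
where $\pi'\colon f/(m,y) \to \Delta[m]$ is the canonical projection, $\widetilde{g} = g \circ \pi$ is the composite appearing in the hypothesis, and $(t,\operatorname{id})$ is the graph of $t\colon \Delta[m] \to T$. Chasing $n$-simplices on either side produces the same data: a pair $(x,\alpha)$ with $\alpha\colon [n] \to [m]$, $f(x) = \alpha^* y$, and $g(x) = \alpha^* t$.

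Next I would observe that $(\widetilde{g},\pi')$ is itself a weak equivalence. Indeed, the composite $p_T \circ (\widetilde{g},\pi')$ is precisely $\widetilde{g}$, which is a weak equivalence by hypothesis, and the projection $p_T\colon T \times \Delta[m] \to T$ is a weak equivalence because $\Delta[m]$ is contractible; two-out-of-three then forces $(\widetilde{g},\pi')$ to be a weak equivalence.

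The final step, which is also the main obstacle, is to deduce contractibility of $(f,g)/(m,(y,t))$ from the weak equivalence $(\widetilde{g},\pi')$ through this pullback square. Since neither leg is a Kan fibration, one cannot simply invoke right-properness of $\mathbf{sSet}$. I would instead construct an explicit simplicial deformation retraction of $(f,g)/(m,(y,t))$ onto a chosen vertex, in the spirit of the homotopy of McCarthy~\cite{McCarthyAdditivity}: the contractibility of $\Delta[m]$ toward a vertex, together with lifts of simplices of $\Delta[m]$ through $(\widetilde{g},\pi')$ over the base $T \times \Delta[m]$, should be combined into a contracting homotopy on $(f,g)/(m,(y,t))$. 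This combinatorial construction, making the weak equivalence hypothesis effective at the level of individual simplices of the equalizer, is where the bulk of the argument lies.
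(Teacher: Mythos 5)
This statement is not proved in the paper at all; it is imported as a black box from Jones--Kim--Mhoon--Santhanam--Walker--Grayson \cite{JonesAdditivity} and used downstream in the proof of Lemma~\ref{lem:HardLemmaMadeEasy}. So there is nothing in the paper for your proposal to match against, and it has to be judged on its own.

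Your first three steps are fine. Reducing to contractibility of $(f,g)/(m,(y,t))$ via the cited simplicial Theorem~A is the natural move, the pullback square exhibiting $(f,g)/(m,(y,t))$ as the left fiber of $(\widetilde{g},\pi')$ over the $m$-simplex $(t,\operatorname{id})\in (T\times\Delta[m])_m$ is correctly identified, and $(\widetilde{g},\pi')$ is indeed a weak equivalence by two-out-of-three. The problem is the final step, and you have correctly located it but not resolved it --- and the plan you sketch for resolving it cannot work as stated. You are trying to conclude that a left fiber of the weak equivalence $(\widetilde{g},\pi')$ is contractible, but \emph{this is the converse of Theorem~A, and it is false}. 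A weak equivalence of simplicial sets need not have contractible left fibers. The standard counterexample: the vertex inclusion $\Delta[0]\to\Delta[1]$ at $0$ is a weak equivalence, yet its left fiber over the other vertex $1$ is empty. So knowing $(\widetilde{g},\pi')$ is a weak equivalence gives you, by itself, no control over $(\widetilde{g},\pi')/(m,(t,\operatorname{id}))$. Your proposed remedy --- ``lifts of simplices of $\Delta[m]$ through $(\widetilde{g},\pi')$'' combined with a contraction of $\Delta[m]$ --- would require $(\widetilde{g},\pi')$ to have some right lifting property along the maps implementing the contraction, and nothing in the hypotheses gives you that; $(\widetilde{g},\pi')$ is not a fibration of any kind, so those lifts need not exist.

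To make a proof actually go through you would have to exploit that the hypothesis is not merely about one left fiber $f/(m,y)$ but about \emph{all} of them simultaneously, over \emph{all} $(m,y)$, and in a way compatible with the maps $(m,y)\to(m',y')$. The argument in \cite{JonesAdditivity} leverages exactly this compatibility via a bisimplicial/realization-lemma mechanism rather than by inspecting a single pullback square. Put differently, the weak-equivalence datum ``$\widetilde{g}$ is a weak equivalence for each fixed $(m,y)$'' only becomes useful after you assemble it over the whole simplex category of $Y$; trying to use it one $(m,y)$ at a time, as your pullback square does, throws away the structure that makes the theorem true. So the proposal is incomplete in an essential way, not merely in need of extra detail.
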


The following has Waldhausen's \cite[Lemma 1.4.3]{WaldhausenAlgKTheoryI} as the special case $\cala=\calb=\calc$.

\begin{lemma}[Waldhausen $wS_\bullet$ Additivity for $\cale(\cala,\calc,\calb)$ on Object Simplicial Sets] \label{lem:HardLemmaMadeEasy}
Let $\mathcal{C}$ be a Waldhausen category and $\cala$ and $\calb$ sub Waldhausen categories. Then the ``subobject'' and ``quotient'' functors induce a weak homotopy equivalence of simplicial sets
$$\xymatrix{\mathfrak{s}_\bullet (s,q) \co \mathfrak{s}_\bullet \cale(\cala,\calc,\calb) \ar[r] & \mathfrak{s}_\bullet \cala \times \mathfrak{s}_\bullet \calb}.$$
\end{lemma}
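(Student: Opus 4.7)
The plan is to invoke Theorem~$\hat{A}^*$ with $(f,g) = \mathfrak{s}_\bullet(s,q)$, so that $X = \mathfrak{s}_\bullet \cale(\cala,\calc,\calb)$, $Y = \mathfrak{s}_\bullet \cala$, and $T = \mathfrak{s}_\bullet \calb$. This reduces the problem to showing that for every $m \geq 0$ and every $y \in \mathfrak{s}_m \cala$, the composite
$$\pi_y \co f/(m,y) \xrightarrow{\pi} \mathfrak{s}_\bullet \cale(\cala,\calc,\calb) \xrightarrow{\mathfrak{s}_\bullet q} \mathfrak{s}_\bullet \calb$$
is a weak equivalence. Unwinding Definition~\ref{def:left_fiber}, an $n$-simplex of $f/(m,y)$ is a pair $(\alpha, E)$ with $\alpha \co [n] \to [m]$ in $\Delta$ and $E = (A \rightarrowtail C \twoheadrightarrow B) \in S_n\cale(\cala,\calc,\calb)$ satisfying $A = \alpha^* y$.

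Next, I would exhibit a simplicial section $\sigma \co \mathfrak{s}_\bullet \calb \to f/(m,y)$ of $\pi_y$. On an $n$-simplex $B \in S_n \calb$, take $\sigma(B)$ to be the pair $(\alpha_{\bullet 0}, \ast \rightarrowtail B \twoheadrightarrow B)$, where $\alpha_{\bullet 0} \co [n] \to [m]$ is the constant map at $0 \in [m]$. Because $\alpha_{\bullet 0}^* y$ is the zero object sequence in $S_n\cala$, this lies in $f/(m,y)$, and manifestly $\pi_y \circ \sigma = \mathrm{id}_{\mathfrak{s}_\bullet \calb}$. The work therefore reduces to producing a simplicial homotopy $\Gamma \co f/(m,y) \times \Delta[1] \to f/(m,y)$ from $\sigma \circ \pi_y$ to $\mathrm{id}_{f/(m,y)}$.

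I would define $\Gamma$ by adapting McCarthy's homotopy \cite{McCarthyAdditivity}, with the modification forced on us by the fiber condition. For each simplicial operator $\phi \co [n] \to [1]$ and each $n$-simplex $(\alpha, A \rightarrowtail C \twoheadrightarrow B)$ of the left fiber, the homotopy parameter $\phi$ specifies a ``cut point'' $k = \min\{i \mid \phi(i)=1\}$ in $[n]$. Below $k$ we retain the original subobject $A$, while above $k$ we truncate it by degenerating at index $k$; the corresponding operator is $\alpha_k \co [n] \to [m]$ given by $\alpha_k(i) = \alpha(\min(i,k))$, and the subobject is $\alpha_k^* y$. The total object is obtained by the pushout $A_k \cup_{A} C$ along the truncation cofibration $A \twoheadrightarrow A_k$; cofibers are computed by existing pushouts in $\cale(\cala,\calc,\calb)$, so the quotient remains in $\calb$ by the defining property of a sub Waldhausen category. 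At $\phi = 0$ this returns $(\alpha, A \rightarrowtail C \twoheadrightarrow B)$, and at $\phi = 1$ (so $k = 0$) this returns $\sigma(\pi_y(\alpha, A, C, B))$. Naturality of the pushout construction in the $\mathrm{Ar}[n]$-variable and in the simplicial operators on $[n]$ and $[1]$ packages these choices into the required simplicial map.

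The main obstacle will be the simultaneous interpolation of the operator $\alpha$ along with the subobject truncation: in McCarthy's original setting one worked directly with $S_\bullet F$ and did not need to ensure the subobject equaled a specific $\alpha^* y$, whereas here the fiber condition forces us to vary $\alpha$ to $\alpha_k$ consistently at each stage. Verifying that the pointwise construction $(\alpha, A, C, B) \mapsto (\alpha_k, \alpha_k^* y, A_k \cup_A C, B)$ is natural in face and degeneracy operators of both $[n]$ and the homotopy parameter $[1]$, and that each intermediate datum genuinely lies in $S_n \cale(\cala,\calc,\calb)$ (i.e., the cofibration conditions (ii) and (iii) of Definition~\ref{def:S_bullet} hold for the modified object, using the gluing lemma and closure of $\cala$, $\calb$ under the operations involved) is the technical heart. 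Once this is established, Theorem~$\hat{A}^*$ delivers the weak equivalence $\mathfrak{s}_\bullet(s,q)$.
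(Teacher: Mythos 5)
Your overall strategy matches the paper's: reduce to Theorem~$\hat A^*$, exhibit a simplicial section, and produce a McCarthy-style simplicial homotopy showing the composite $\pi_y$ is a deformation retraction. But the homotopy you propose is not well-defined. You set $A_k := \alpha_k^* y$ with $\alpha_k(i) = \alpha(\min(i,k))$ and ask for the pushout $A_k \cup_A C$ ``along the truncation cofibration $A \twoheadrightarrow A_k$,'' but no morphism $A \to A_k$ exists in $S_n\cala$. Since $\alpha_k(i) = \alpha(\min(i,k)) \le \alpha(i)$ pointwise, the inequality $\alpha_k \le \alpha$ of monotone maps gives a natural transformation $\mathrm{Ar}(\alpha_k) \Rightarrow \mathrm{Ar}(\alpha)$, and composing with $y\colon \mathrm{Ar}[m]\to\cala$ produces a morphism $A_k \to A$ in $S_n\cala$ (in fact a cofibration there, the inclusion of the degenerate $k$-truncated filtration); there is nothing going $A\to A_k$, and the map that does exist is not a quotient either. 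With no left leg for the pushout $A_k \cup_A C$, the construction fails at its very first step.

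The paper's homotopy avoids this by interpolating in the opposite direction: it stabilizes the subobject filtration upward to $A_n$ rather than degenerating it downward to $A_k$. For the cut at $j$ the new subobject row is $\ast, A_1, \dots, A_{j-1}, A_n, \dots, A_n$; the canonical map from $A$ to this row is a cofibration in $S_n\cala$ built from the filtration cofibrations $A_i\rightarrowtail A_n$, and the new total object is the pushout $X_k := C_k\cup_{A_k}A_n$ along that cofibration, which exists by the Waldhausen axioms and satisfies $X_k/A_n\cong C_k/A_k = B_k$, so the quotient row $B$ is preserved throughout. Correspondingly, the paper's section $\iota$ carries the constant-at-$m$ operator (the ``top'' of the filtration), not constant-at-$0$. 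A second, smaller problem with your version: at $\phi=1$ (cut point $k=0$) your operator is $\alpha_0 = \mathrm{const}_{\alpha(0)}$ while $\sigma$ attaches $\mathrm{const}_0$; these are different maps $[n]\to[m]$ whenever $\alpha(0)\ne 0$, hence different simplices of the left fiber (the operator is part of the data of a left-fiber simplex), so even if the interior of the homotopy were defined, it would not land on $\sigma\circ\pi_y$ at $\phi=1$.
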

\begin{proof}
We begin this proof as in the notes of Rognes \cite[Lemma~8.5.11]{RognesTextbook}.
Let $(f,g)$ be $(\mathfrak{s}_\bullet s, \mathfrak{s}_\bullet q) \co \mathfrak{s}_\bullet \cale(\cala,\calc,\calb) \to \mathfrak{s}_\bullet \cala \times \mathfrak{s}_\bullet \calb$ and fix $A \in \mathfrak{s}_m \cala$. We would like to prove that the composite
\begin{equation} \label{equ:r}
\xymatrix{f/(m,A) \ar[r]^-{\pi} \ar@/_1pc/[rr]_r & \mathfrak{s}_\bullet \cale(\cala,\calc,\calb) \ar[r]^-g & \mathfrak{s}_\bullet \calb},
\end{equation}
which we denote by $r$, is a weak homotopy equivalence of simplicial sets.
An $n$-simplex $(e,\alpha)$ in $f/(m,A)$ is an element $e$ in $\mathfrak{s}_n\cale(\cala, \calc,\calb)$ together with a morphism $\alpha \co [n] \to [m]$ in $\Delta$ such that $\alpha^\ast A$ is the top grid of $e$. More specifically, $e$ is a diagram $e\co [2] \times \text{Ar}[n] \to \mathcal{C}$ of the form
\begin{equation} \label{equ:element_in_left_fiber}
e=\left( \begin{array}{c}\xymatrix{\alpha^*A \ar@{>->}[d] \\ C \ar@{->>}[d] \\B } \end{array} \right) =
\begin{array}{c}
\xymatrix{\ast \ar@{>->}[r] \ar@{>->}[d] & A(\alpha0,\alpha1) \ar@{>->}[r] \ar@{>->}[d] &  A(\alpha0,\alpha2) \ar@{>->}[r] \ar@{>->}[d] &  \cdots \ar@{>->}[r]  & A(\alpha0,\alpha n) \ar@{>->}[d] \\
\ast \ar@{>->}[r] \ar@{->>}[d] & C_1 \ar@{>->}[r] \ar@{->>}[d] & C_2 \ar@{>->}[r] \ar@{->>}[d] & \cdots \ar@{>->}[r] & C_n \ar@{->>}[d] \\
\ast \ar@{>->}[r] & B_1 \ar@{>->}[r] & B_2 \ar@{>->}[r] & \cdots \ar@{>->}[r] & B_n}
\end{array},
\end{equation}
with $C \in \mathfrak{s}_n\calc$ and $B \in \mathfrak{s}_n B$. In the right diagram for $e$, we did not indicate the chosen quotients, and maps between them, for readability. The $i$-th face of the $n$-simplex $(e,\alpha)$ in $f/(m,A)$ is
$$d_i(e,\alpha)=\Big(e \circ (\text{Id}_{[2]} \times \delta^i_n \times \delta^i_n),\, \alpha \circ \delta^i_n\Big)$$
where $\delta^i_n\co [n-1] \to [n]$ is the weakly increasing map that skips $i$. Notice the top row of $d_i(e,\alpha)$ is $$(\alpha \circ \delta^i_n)^*A=A \circ (\alpha \times \alpha)\circ (\delta^i_n \times \delta^i_n)=d_i\Big(\alpha^*A\Big).$$
The degeneracies of $(e,\alpha)$ are defined similarly.

We proceed with some explanation of the diagram \eqref{equ:element_in_left_fiber}. To recognize the general shape of the diagram $e\in \mathfrak{s}_n\cale(\cala, \calc,\calb)$, we are using the isomorphisms of categories $(\calc^{[2]})^{\text{Ar}[n]} \cong \calc^{[2] \times \text{Ar}[n]} \cong (\calc^{\text{Ar}[n]})^{[2]}$. The rows of diagram \eqref{equ:element_in_left_fiber} are sequences of cofibrations and the columns are cofiber sequences. To see that all 3 rows of maps consist entirely of cofibrations in $\cala$, $\calc$, and $\calb$ respectively, recall that an element of $\mathfrak{s}_n\cale(\cala, \calc,\calb)$ is a sequence of $n$ cofibrations in $\cale(\cala, \calc,\calb)$ with chosen quotients, and that the projections to $\cala$, $\calc$, and $\calb$ are exact by Proposition~\ref{E(A,C,B)isWaldhausenCategory}. By a similar argument, all the maps between chosen quotients parallel to these horizontal morphisms in \eqref{equ:element_in_left_fiber} form sequences of cofibrations in $\cala$, $\calc$, and $\calb$ respectively. For later, we also need the observation that the vertical maps from level 0 to level 1 together form a cofibration $\alpha^\ast A \to C$ in $S_n \calc$: in diagram \eqref{equ:element_in_left_fiber} we have a sequence of cofibrations in $\cale(\cala, \calc,\calb)$ if and only if each induced map $C_i \cup_{A(\alpha0,\alpha i)} A(\alpha0,\alpha (i+1)) \to C_{i+1}$ is a cofibration in $\calc$ as in equation \eqref{equ:cofibration_in_E(A,C,B)}, but since these are exactly the induced maps of equation \eqref{equ:cofibration_in_S_n}, this is the case if and only if the maps from level 0 to level 1 together form a cofibration $\alpha^\ast A \to C$ in $S_n \calc$. In fact, we have an isomorphism of Waldhausen categories $S_n\cale(\cala,\calc,\calb)\cong\cale(S_n\cala,S_n\calc,S_n\calb)$, though we do not need this full isomorphism.

We consider every diagram such as that in \eqref{equ:element_in_left_fiber} as a functor $[2] \times [n] \times [n] \to \calc$ which has levels 0, 1, and 2 of functors $[n] \times [n] \to \calc$ that are $\ast$ on the main diagonal and below. Every morphism in the grid between the default $\ast$'s is of course an identity.

The composite $r$ maps the $n$-simplex $e$ above to $B\in \mathfrak{s}_n\calb$, the bottom level of diagram \eqref{equ:element_in_left_fiber}. We prove that $r$ is a simplicial deformation retraction and therefore a weak homotopy equivalence of simplicial sets. That is, we display a map $\iota\co \mathfrak{s}_\bullet \calb \to f/(m,A)$ such that $r \circ \iota = \text{Id}_{\mathfrak{s}_\bullet \calb}$ and we show $\text{Id}_{f/(m,A)}$ is simplicially homotopic to a map $(\rho_n^0)_n$, which is simplicially homotopic to $\iota \circ r$. Then we are finished by Theorem~$\widehat{A}^*$, as $A$ was an arbitrary simplex in $\mathfrak{s}_\bullet \cala$.

For $B \in \mathfrak{s}_n\calb$, we define $\iota(B)$ to be the diagram
\begin{equation} \label{equ:iota(B)}
\iota(B)=\left( \begin{array}{c} \xymatrix{\ast \ar@{>->}[d] \\ B \ar@{=}[d] \\ B } \end{array}, \begin{array}{c} \text{const}_m \end{array} \right)=
\left(
\begin{array}{c}
\xymatrix{\ast \ar@{>->}[r] \ar@{>->}[d] & \ast \ar@{>->}[r] \ar@{>->}[d] & \ast \ar@{>->}[r] \ar@{>->}[d] & \cdots \ar@{>->}[r] & \ast \ar@{>->}[d] \\
\ast \ar@{>->}[r] \ar@{=}[d] & B_1 \ar@{>->}[r] \ar@{=}[d] & B_2 \ar@{>->}[r] \ar@{=}[d] & \cdots \ar@{>->}[r] & B_n \ar@{=}[d] \\
\ast \ar@{>->}[r] & B_1 \ar@{>->}[r] & B_2 \ar@{>->}[r] & \cdots \ar@{>->}[r] & B_n}
\end{array}, \begin{array}{c} \text{const}_m \end{array}
\right)
\end{equation}
where $\text{const}_m \co [n] \to [m]$ maps each $j \in [n]$ to $m$. We do not include the chosen quotients in the notation for readability. The top row has chosen quotients all $\ast$. We have $\text{const}_m^*A$ is equal to the top grid of $\iota(B)$ in \eqref{equ:iota(B)} since $A(m,m)=\ast$.

What we have said in this proof so far is in \cite[Lemma~8.5.11]{RognesTextbook} for the special case $\cala=\calb=\calc$. Our method to prove that $\text{Id}_{f/(m,A)}$ and $\iota \circ r$ are simplicially homotopic is inspired by a construction of McCarthy \cite{McCarthyAdditivity}. Our construction via pushouts, which is new, will make the simplicial identities apparent (even for chosen quotients). We use a modern reformulation of the notion of simplicial homotopy, as in \cite[Lemma~6.4.8]{RognesTextbook}. Recall that in this modern formulation of a simplicial homotopy $h\colon f \simeq g$, we have $(h_n^{n+1})_n=f$ and $(h_n^0)_n=g$. In other words, the last maps of the homotopy form the first simplicial map, and the zero-th maps of the homotopy form the second simplicial map. See Proposition~\ref{prop:simplicial_homotopy_description} for further details.

Throughout this proof, we use a selected functorial choice of pushouts in $\calc$ along cofibrations, and this choice preserves identities.\footnote{\label{footnote:functorial_choice_of_pushout}Let $\calc$ be a Waldhausen category. Let $\calj$ be the free standing commutative square $[1] \times [1]$, and let $\cali$ be the subcategory of $\calj$ consisting of the top horizontal map, the left vertical map, and the 3 necessary identities, i.e. $\cali$ has the shape $\bullet \leftarrow \bullet \rightarrow \bullet$. Then the restriction functor $\mathbf{Cat}(\calj,\calc) \to \mathbf{Cat}(\cali,\calc)$ restricts to an equivalence of categories from the full subcategory of $\mathbf{Cat}(\calj,\calc)$ spanned by the pushouts with left vertical map a cofibration to the full subcategory of $\mathbf{Cat}(\cali,\calc)$ spanned by the diagrams with left vertical map a cofibration. An inverse equivalence to this equivalence is a functorial choice of pushouts in $\calc$ along cofibrations. To modify any such functorial choice into one which preserves identities, we can redefine it on pushouts with either leg an identity to be the respective square with vertical identities as in \eqref{equ:identity_pushout}, and conjugate diagram morphism images by the relevant isomorphisms to also redefine the pushout functor on morphisms.} ``Preserves identities'' means the selected pushout functor performs the assignments in \eqref{equ:identity_pushout} when either leg is an identity. This functorial choice of pushout that preserves identities will crucially be used object-wise in the formation of the pushout $\rho^j_n(e,\alpha)$ in \eqref{equ:rho_n_as_pushout} to guarantee compatibility with face and degeneracy maps.

To show $\text{Id}_{f/(m,A)}$ and $\iota \circ r$ are homotopic, we construct a simplicial homotopy $\rho$ from $\text{Id}_{f/(m,A)}$ to a simplicial map\footnote{This simplicial map $(\rho_n^{0})_n$ is in some sense ``isomorphic'' to $\iota \circ r$.} $(\rho_n^{0})_n$, whose value on \eqref{equ:element_in_left_fiber} is pictured in \eqref{equ:rho_n+1_grid}, and then construct another simplicial homotopy $\theta$ from $(\rho_n^{0})_n$ to $\iota \circ r$. Concerning the first simplicial homotopy $\rho$, for each $n \geq 0$ and $0\leq j \leq n+1$ we will define three functions
\begin{equation} \label{equ:lambda_mu_nu_source_target}
\xymatrix{\lambda_n^j,\; \mu_n^j, \; \nu_n^j\; \co \left(f/(m,A)\right)_n \ar[r] & \left(f/(m,A)\right)_n}
\end{equation}
that form homotopies between different maps $f/(m,A)\to f/(m,A)$. Then we will build the pushout $\rho_n^j$ of these three functions $\lambda_n^j,\; \mu_n^j, \; \nu_n^j$ using a functorial choice of pushouts along cofibrations in $\calc$ that preserves identities. These $\rho_n^j$'s together form a homotopy between the two pushed out maps $f/(m,A)\to f/(m,A)$, the first of which is $\text{Id}_{f/(m,A)}$, and the second of which is $(\rho_n^{0})_n$, pictured in \eqref{equ:rho_n+1_grid}.

{\bf Definition of Simplicial Homotopies $\lambda$, $\mu$, and $\rho$.} One ingredient we will need for $\mu$ is a simplicial homotopy $NZ\co \Delta[m] \times \Delta[1] \to \Delta[m]$. Consider the functors $[m] \to [m]$ given by identity and ``constant $m$,'' denoted $\text{Id}_{[m]}$ and $\text{const}_m$, and let the functor $Z\colon [m] \times [1] \to [m]$ be the natural transformation $\text{Id}_{[m]} \Rightarrow \text{const}_m $ which in component $j$ is simply $j \to m$. The nerve $NZ\colon \Delta[m] \times \Delta[1] \to \Delta[m]$ is then a simplicial homotopy, see Example~\ref{examp:simplicial_homotopy_from_natural_transformation}. Its functions $(NZ)^j_n \colon \Delta([n],[m])\to\Delta([n],[m])$ as in Proposition~\ref{prop:simplicial_homotopy_description} and Example~\ref{examp:simplicial_homotopy_from_natural_transformation} are
$$(NZ)^j_n \begin{pmatrix}
0 & 1 & \cdots & (j-1) & j & \cdots & n \\ a_0 & a_1 & \cdots & a_{j-1} & a_{j} & \cdots & a_n  \end{pmatrix}=\begin{pmatrix}
0 & 1 & \cdots & (j-1) & j & \cdots & n \\ a_0 & a_1 & \cdots & a_{j-1} & m & \cdots & m  \end{pmatrix}.$$

We can now define $\lambda_n^j,\; \mu_n^j, \; \nu_n^j$ in \eqref{equ:lambda_mu_nu_source_target}.

$$\lambda_n^j(e,\alpha)=\left(\begin{array}{c} \xymatrix{\alpha^\ast A \ar@{=}[d] \\ \alpha^\ast A \ar@{->>}[d] \\ \ast } \end{array}, \begin{array}{c} \alpha \end{array} \right) \hspace{.5in} \mu_n^j(e,\alpha)=\left(\begin{array}{c} \xymatrix{\left((NZ)^j_n\alpha\right)^\ast A \ar@{=}[d] \\ \left((NZ)^j_n\alpha\right)^\ast A \ar@{->>}[d] \\ \ast } \end{array}, \begin{array}{c} (NZ)^j_n\alpha \end{array} \right)   $$
$$\nu_n^j(e,\alpha)=\left(\begin{array}{c}  \xymatrix{\alpha^\ast A \ar@{>->}[d] \\ C \ar@{->>}[d] \\ B } \end{array}, \begin{array}{c} \alpha \end{array} \right)\hspace{.5in} \phantom{\mu_n^j(e,\alpha)=\left(\begin{array}{c} \xymatrix{\left((NZ)^j_n\alpha\right)^\ast A \ar@{=}[d] \\ \left((NZ)^j_n\alpha\right)^\ast A \ar@{->>}[d] \\ \ast } \end{array}, \begin{array}{c} (NZ)^j_n\alpha \end{array} \right)}$$
To be explicit, we also write out $\lambda_n^j(e,\alpha)$ and $\mu_n^j(e,\alpha)$.
\begin{equation} \label{equ:lambda_rows}
\aligned
&\lambda_n^j(e,\alpha)=\\
&\begin{array}{c}
\xymatrix@C=1pc{\ast \ar@{>->}[r] \ar@{>->}[d] & A(\alpha0,\alpha1) \ar@{>->}[r] \ar@{>->}[d] &  \cdots \ar@{>->}[r] &  A(\alpha0,\alpha(j-1)) \ar@{>->}[r] \ar@{>->}[d] & A(\alpha0,\alpha j) \ar@{>->}[r] \ar@{>->}[d] & A(\alpha0,\alpha(j+1)) \ar@{>->}[r] \ar@{>->}[d] & \cdots \ar@{>->}[r] & A(\alpha0,\alpha n) \ar@{>->}[d] \\
\ast \ar@{>->}[r] \ar@{->>}[d] & A(\alpha0,\alpha1) \ar@{>->}[r] \ar@{->>}[d] &  \cdots \ar@{>->}[r] &  A(\alpha0,\alpha(j-1)) \ar@{>->}[r] \ar@{->>}[d] & A(\alpha0,\alpha j) \ar@{>->}[r] \ar@{->>}[d] & A(\alpha0,\alpha(j+1)) \ar@{>->}[r] \ar@{->>}[d] & \cdots \ar@{>->}[r] & A(\alpha0,\alpha n) \ar@{->>}[d] \\
\ast \ar@{>->}[r] & \ast \ar@{>->}[r] & \cdots \ar@{>->}[r] &  \ast \ar@{>->}[r] & \ast \ar@{>->}[r] & \ast \ar@{>->}[r] & \cdots \ar@{>->}[r] & \ast}
\end{array}
\endaligned
\end{equation}
\begin{equation} \label{equ:mu_rows}
\aligned
& \mu_n^j(e,\alpha)= \\
& \begin{array}{c}
\xymatrix@C=1pc{\ast \ar@{>->}[r] \ar@{>->}[d] & A(\alpha0,\alpha1) \ar@{>->}[r] \ar@{>->}[d] &  \cdots \ar@{>->}[r] &  A(\alpha0,\alpha(j-1)) \ar@{>->}[r] \ar@{>->}[d] & A(\alpha0,m) \ar@{=}[r] \ar@{>->}[d] & A(\alpha0,m) \ar@{=}[r] \ar@{>->}[d] & \cdots \ar@{=}[r] & A(\alpha0,m) \ar@{>->}[d] \\
\ast \ar@{>->}[r] \ar@{->>}[d] & A(\alpha0,\alpha1) \ar@{>->}[r] \ar@{->>}[d] &  \cdots \ar@{>->}[r] &  A(\alpha0,\alpha(j-1)) \ar@{>->}[r] \ar@{->>}[d] & A(\alpha0,m) \ar@{=}[r] \ar@{->>}[d] & A(\alpha0,m) \ar@{=}[r] \ar@{->>}[d] & \cdots \ar@{=}[r] & A(\alpha0,m) \ar@{->>}[d] \\
\ast \ar@{>->}[r] & \ast \ar@{>->}[r] & \cdots \ar@{>->}[r] &  \ast \ar@{>->}[r] & \ast \ar@{>->}[r] & \ast \ar@{>->}[r] & \cdots \ar@{>->}[r] & \ast}
\end{array}
\endaligned
\end{equation}
The chosen quotients in levels 0 and 1 of $\mu^j_n(e,\alpha)$ are the respective ones of $A$. In particular, levels 0 and 1 in row $j$ and below are entirely $\ast = A(\alpha0, m)/A(\alpha0, m)$. For $j=0$, we have $\mu^0_j(e,\alpha)$ is entirely $\ast$'s, that is, the zero sequence with $n+1$ entries in levels 0, 1, and 2, all with chosen quotients $\ast/\ast=\ast$. Notice that the function $\mu_n^j$ for $1 \leq j \leq n+1$ is similar to $\lambda_n^j$, but assigns to $(e,\alpha)$ the sequence $\alpha^* A$ in levels 0 and 1 up to and including column $j-1$, then we have $A(\alpha 0, m)$.

We do not draw $\nu_n^j(e,\alpha)$, as it is just $(e,\alpha)$ and we already drew $e$ in \eqref{equ:element_in_left_fiber}.

{\bf Justification of Simplicial Homotopy Identities for $\lambda$, $\mu$, and $\rho$.} Next we justify why $\lambda$, $\mu$, and $\nu$ satisfy the simplicial homotopy identities of Proposition~\ref{prop:simplicial_homotopy_description}. Notice that $\lambda^j_n(e,\alpha)$ does not depend on $j$, so that $\lambda$ defines the ``identity'' simplicial homotopy $f/(m,A)\times \Delta[1] \to f/(m,A)$ on the simplicial map which assigns to each $(e,\alpha)$ two copies of $\alpha^* A$ followed by a grid of $\ast$'s. See the discussion after Proposition~\ref{prop:simplicial_homotopy_description} and in Example~\ref{examp:simplicial_homotopy_from_natural_transformation} for an explanation of why an ``identity'' homotopy on a simplicial map $f$ simply has all $h^j_n$ for fixed $n$ equal to $f_n$.

The simplicial homotopy identities for $\mu$ are inherited from those of $NZ$. Namely, define for the moment $h^j_n(\alpha^*A):= \left((NZ)^j_n(\alpha)\right)^*A$. We conclude the face simplicial homotopy identities for $0 \leq i \leq j-1$ for $h$ from those of $NZ$ as follows.
$$\aligned d_i (NZ)^j_n(\alpha) &= (NZ)^{j-1}_{n-1}(d^i \alpha) \\ (NZ)^j_n(\alpha) \circ \delta^i_n &= (NZ)^{j-1}_{n-1}(\alpha \circ \delta^i_n) \\
\Big((NZ)^j_n(\alpha) \circ \delta^i_n\Big)^* A &= \Big((NZ)^{j-1}_{n-1}(\alpha \circ \delta^i_n)\Big)^* A \\
d_i \circ h^j_n(\alpha^*A) &=  h^{j-1}_{n-1} \circ d_i (\alpha^*A).
\endaligned$$
The face simplicial homotopy identities for $j \leq i \leq n$, and the degeneracy simplicial homotopy identities, for $h$ can be similarly verified. Thus, $\mu$ defines a simplicial homotopy $f/(m,A)\times \Delta[1] \to f/(m,A)$ from the map which assigns to each $(e,\alpha)$ two copies of $\alpha^*A$ followed by a grid of $\ast$'s, to the map which is entirely $\ast$ in levels 0, 1, and 2.

The function $\nu_n^j$ assigns to $(e,\alpha)$ simply $(e,\alpha)$, for all $0 \leq j \leq n+1$. Thus, $\nu$ defines the identity simplicial homotopy $f/(m,A)\times \Delta[1] \to f/(m,A)$ on the identity $\text{Id}_{f/(m,A)}$.


{\bf Definition of Simplicial Homotopy $\rho$.}
We now form the diagram part of $\rho_n^j(e,\alpha)$ for $0 \leq j \leq n+1$ as the pushout
\begin{equation}  \label{equ:rho_n_as_pushout}
\begin{array}{c}
\xymatrix{ \lambda_n^j(e,\alpha) \ar[r] \ar@{}[dr]|{\mathrm{p.o.}} \ar@{>->}[d] & \mu_n^j(e,\alpha) \ar@{>->}[d] \\
\nu_n^j(e,\alpha) \ar[r] & \rho_n^j(e,\alpha)}
\end{array}
\end{equation}
of diagrams $[2] \times \text{Ar}[n]\to \calc$. More specifically we would like to form this pushout object-wise in the Waldhausen category $S_n\cale(\cala,\calc,\calb)$ using our selected functorial choice of pushouts along cofibrations in $\calc$ that preserves identities. The definition of the left vertical map in the pushout diagram is clear, the top horizontal map in levels 0 and 1 is defined as the composite natural transformation
\begin{equation} \label{equ:top_horizontal_map_i}
\xymatrix{\text{Ar}[n] \times [1] \ar[r] & \text{Ar}[m] \ar[r]^-A & \cala},
\end{equation}
induced from the natural transformation $\alpha \Rightarrow (NZ)_n^j(\alpha)$, namely the top map of \eqref{equ:rho_n_as_pushout} in levels 0 and 1 is
\begin{equation} \label{equ:top_horizontal_map_ii}
(p,q) \mapsto A \Big( (\alpha p, \alpha q) \to \left((NZ)_n^j(\alpha) p, (NZ)_n^j(\alpha) q \right) \Big).
\end{equation}
The left vertical map in \eqref{equ:rho_n_as_pushout} is a cofibration in $S_n\cale(\cala,\calc,\calb)$: to check that a map is a cofibration in $S_n\cale(\cala,\calc,\calb)$, one checks that the induced map in \eqref{equ:cofibration_in_S_n} is a cofibration in $\cale(\cala, \calc, \calb)$, but that reduces to verifying that $A(\alpha0,\alpha j)\cup_{A(\alpha 0,\alpha(j-1))} C_{j-1}\to C_j$ is a cofibration, which it is because $\alpha^\ast A \to C$ is a cofibration in $S_n\calc$ as remarked near the beginning of this proof. Since the left vertical map in \eqref{equ:rho_n_as_pushout} is a cofibration in $S_n\cale(\cala,\calc,\calb)$, it is also an object-wise cofibration between diagrams $[2] \times \text{Ar}[n]\to \calc$. Thus, the pushout in \eqref{equ:rho_n_as_pushout} exists and can be formed object-wise in $\calc^{[2] \times \text{Ar}[n]}$.

We can be sure that the constructed pushout object $\rho^j_n(e,\alpha)$ in $\calc^{[2] \times \text{Ar}[n]}$ actually has level 0 in $\cala$ and level 2 in $\calb$ because our functorial choice of pushouts along cofibrations in $\calc$ preserves identities and in levels 0 and 2 we are pushing out objectwise along identities in $\cala$ and $\calb$, namely the left vertical map in \eqref{equ:rho_n_as_pushout} is object-wise the identity in level 0, and the top horizontal map in \eqref{equ:rho_n_as_pushout} is object-wise the identity in level 2.

We can also be sure that the constructed pushout object $\rho^j_n(e,\alpha)$ in $\calc^{[2] \times \text{Ar}[n]}$ is in $S_n\cale(\calc,\calc,\calc)$, as the left vertical map in \eqref{equ:rho_n_as_pushout} is a cofibration in the Waldhausen category $S_n\cale(\calc,\calc,\calc)$.

Finally, we see that our constructed $\rho^j_n(e,\alpha)$ is in $\mathfrak{s}_n\cale(\cala,\calc,\calb)$. Notice that it would not be sufficient to skip the foregoing and simply say that a pushout in \eqref{equ:rho_n_as_pushout} exists in the Waldhausen category $S_n\cale(\cala,\calc,\calb)$, because we need the pushout for each $n$ to be compatible with face and degeneracy maps, as we will soon see.

We can even explictly write down $\rho^j_n(e,\alpha)$ (not indicating the chosen quotients) and its $\Delta$ morphism.
For $1 \leq j \leq n+1$, we have from \eqref{equ:rho_n_as_pushout}
\begin{equation} \label{equ:rho_grid}
\aligned
& \rho_n^j(e,\alpha)= \\
& \begin{array}{c}
\xymatrix@C=.5pc{\ast \ar@{>->}[r] \ar@{>->}[d] & A(\alpha0,\alpha1) \ar@{>->}[r] \ar@{>->}[d] &  \cdots \ar@{>->}[r] &  A(\alpha0,\alpha(j-1)) \ar@{>->}[r] \ar@{>->}[d] & A(\alpha0,m) \ar@{=}[r] \ar@{>->}[d] & A(\alpha0,m) \ar@{=}[r] \ar@{>->}[d] & \cdots \ar@{=}[r] & A(\alpha0,m) \ar@{>->}[d] \\
\ast \ar@{>->}[r] \ar@{->>}[d] & C_1 \ar@{>->}[r] \ar@{->>}[d] & \cdots \ar@{>->}[r] &  C_{j-1} \ar@{>->}[r] \ar@{->>}[d] & \underset{A(\alpha0,\alpha j)}{C_j\bigcup A(\alpha0,m)} \ar@{>->}[r] \ar@{->>}[d] & \underset{A(\alpha0,\alpha (j+1))}{C_{j+1}\bigcup A(\alpha0,m)} \ar@{>->}[r] \ar@{->>}[d] & \cdots \ar@{>->}[r] & \underset{A(\alpha0,\alpha n)}{C_n\bigcup A(\alpha0,m)} \ar@{->>}[d] \\
\ast \ar@{>->}[r] & B_1 \ar@{>->}[r] & \cdots \ar@{>->}[r] &  B_{j-1} \ar@{>->}[r] & B_j \ar@{>->}[r] & B_{j+1} \ar@{>->}[r] & \cdots \ar@{>->}[r] & B_n}
\end{array}
\endaligned
\end{equation}
for the part $[2] \times \{0\} \times [n] \to \calc$. The chosen quotients for $\rho_n^j(e,\alpha)$ in \eqref{equ:rho_grid} are the functorial pushouts of the respective chosen quotients, as prescribed by \eqref{equ:rho_n_as_pushout}. We do not need to know the chosen quotients explicitly. From \eqref{equ:rho_grid} we see $\rho^{n+1}_n(e,\alpha)$ is $(e,\alpha)$. The $\Delta$ morphism of $\rho^j_n(e,\alpha)$ is $(NZ)^j_n(\alpha)$ because: it is defined via the pushout in \eqref{equ:rho_n_as_pushout}, while the $\Delta$ morphisms of $\lambda^j_n(e,\alpha)$, $\mu^j_n(e,\alpha)$, and $\nu^j_n(e,\alpha)$ are respectively $\alpha$, $(NZ)^j_n(\alpha)$, and $\alpha$, and the map $\lambda^j_n(e,\alpha) \to \nu^j_n(e,\alpha)$ is identity on the $\Delta$ morphisms, so on the $\Delta$ morphisms we are pushing out along identity in \eqref{equ:rho_n_as_pushout}.

For $j=0$, we have from \eqref{equ:rho_n_as_pushout} the following picture.
\begin{equation} \label{equ:rho_n+1_grid}
\begin{array}{c}
\begin{array}{c}
\rho_n^{0}(e,\alpha)=
\end{array}
\begin{array}{c}
\xymatrix{\ast \ar@{>->}[r] \ar@{>->}[d] & \ast \ar@{>->}[r] \ar@{>->}[d] & \ast\ar@{>->}[r] \ar@{>->}[d] & \cdots \ar@{>->}[r] & \ast \ar@{>->}[d] \\
\ast \ar@{>->}[r] \ar@{->>}[d] & \Bigg(\underset{A(\alpha0,\alpha1)}{C_1\;\bigcup\;\ast}\Bigg) \ar@{>->}[r] \ar@{->>}[d] & \left(\underset{A(\alpha0,\alpha2)}{C_2 \;\bigcup\; \ast}\right) \ar@{>->}[r] \ar@{->>}[d] & \cdots \ar@{>->}[r] & \left(\underset{A(\alpha0,\alpha n)}{C_n \;\bigcup\; \ast}\right) \ar@{->>}[d] \\
\ast \ar@{>->}[r] & B_1 \ar@{>->}[r] & B_2 \ar@{>->}[r] & \cdots \ar@{>->}[r] & B_n}
\end{array}
\end{array}
\end{equation}
The $\Delta$ morphism of $\rho_n^{0}(e,\alpha)$ is $\text{const}_m$.

{\bf Justification of Simplicial Homotopy Identities for $\rho$.} Now it is easy to justify why $\rho$ satisfies the simplicial homotopy identities. Recall pushouts of diagrams are formed object-wise using the functorial choice of pushouts along cofibrations, so the pushout construction in \eqref{equ:rho_n_as_pushout} is compatible with face and degeneracy maps.\footnote{The compatibility with face and degeneracy maps will need extra justification in the quasicategorical version later.} For instance, $d_i \rho_n^j(e,\alpha)$ is the pushout of $d_i \lambda_n^j(e,\alpha)$, $d_i \mu_n^j(e,\alpha)$, and $d_i \nu_n^j(e,\alpha)$. Thus, the simplicial homotopy identities for $\rho$ now follow immediately from the simplicial homotopy identities for $\lambda$, $\mu$, and $\nu$, which we have already justified.\footnote{Notice that this is a significant simplification, because the direct verification of all simplicial homotopy identities for \eqref{equ:rho_grid}, one by one, for all chosen quotients, is very tedious.}

Thus, $\rho$ is a simplicial homotopy $f/(m,A)\times \Delta[1] \to f/(m,A)$ from the map $\text{Id}_{f/(m,A)}$ to $(\rho^{0}_n)_n$ (an evaluated component of $(\rho^{0}_n)_n$ is pictured in \eqref{equ:rho_n+1_grid}). Our next step is a simplicial homotopy $\theta$ from $(\rho^{0}_n)_n$ to $\iota \circ r$.

{\bf Simplicial Homotopy $\theta\co(\rho^{0}_n)_n \simeq \iota \circ r$.} Since each column of $\rho^j_n(e,\alpha)$ is a cofiber sequence, we can in particular say that all the vertical maps of \eqref{equ:rho_n+1_grid} between level 1 and level 2 are isomorphisms. These isomorphisms provide an isomorphism from diagram \eqref{equ:rho_n+1_grid} to the diagram $\iota r(e,\alpha)$ in \eqref{equ:iota(B)}. Composing like in \eqref{equ:psi^2_4} provides us with a simplicial homotopy $\theta$ from $(\rho^{0}_n)_n$ to $\iota \circ r$ with functions
$$
\aligned
& \theta_n^j(e,\alpha)= \\
& \begin{array}{c}
\xymatrix{\ast \ar@{>->}[r] \ar@{>->}[d] & \ast \ar@{>->}[r] \ar@{>->}[d] &  \cdots \ar@{>->}[r] &  \ast \ar@{>->}[r] \ar@{>->}[d] & \ast \ar@{>->}[r] \ar@{>->}[d] & \ast \ar@{>->}[r] \ar@{>->}[d] & \cdots \ar@{>->}[r] & \ast \ar@{>->}[d] \\
\ast \ar@{>->}[r] \ar@{->>}[d] & \left(C_1\cup_{A_1}\ast\right) \ar@{>->}[r] \ar@{->>}[d]^\cong & \cdots \ar@{>->}[r] &  \left(C_{j-1}\cup_{A_{j-1}} \ast \right) \ar@{>->}[r] \ar@{->>}[d]^\cong & B_j \ar@{>->}[r] \ar@{=}[d] & B_{j+1} \ar@{>->}[r] \ar@{=}[d] & \cdots \ar@{>->}[r] & B_n \ar@{=}[d] \\
\ast \ar@{>->}[r] & B_1 \ar@{>->}[r] & \cdots \ar@{>->}[r] &  B_{j-1} \ar@{>->}[r] & B_j \ar@{>->}[r] & B_{j+1} \ar@{>->}[r] & \cdots \ar@{>->}[r] & B_n}
\end{array}
\endaligned
$$
with $\Delta$ morphism $\text{const}_m$. See Appendix 2, in particular Example~\ref{examp:nat_transf_induces_s_bullet_homotopy}, for justification of why $\theta$ is a simplicial homotopy.

{\bf Conclusion.} Finally, we have $\text{Id}_{f/(m,A)}$ is simplicially homotopic to $(\rho^{0}_n)_n$ via $\rho$, and $(\rho^{0}_n)_n$ is simplicially homotopic to $\iota \circ r$ via $\theta$. Thus $\vert \text{Id}_{f/(m,A)} \vert$ is homotopic to $\vert \iota \circ r \vert$, while $r \circ \iota = \text{Id}_{\mathfrak{s}_\bullet \calb}$, so that $r$ is a weak homotopy equivalence of simplicial sets. Theorem~$\widehat{A}^*$ now implies $\mathfrak{s}_\bullet (s,q) \co \mathfrak{s}_\bullet \cale(\cala,\calc,\calb) \to \mathfrak{s}_\bullet \cala \times \mathfrak{s}_\bullet \calb$ is a weak homotopy equivalence of simplicial sets.
\end{proof}

Waldhausen proved in \cite[Proposition~1.3.2~(1)]{WaldhausenAlgKTheoryI} that the following Additivity Theorem is equivalent to the special case $\cala=\calb=\calc$ in Theorem~\ref{thm:Waldhausen_Additivity} above. On the other hand, we conclude the following Additivity Theorem directly from Lemma~\ref{lem:HardLemmaMadeEasy}, which was already formulated for $\cale(\cala,\calc,\calb)$.

\begin{theorem}[Waldhausen $wS_\bullet$ Additivity for $\cale(\cala,\calc,\calb)$] \label{thm:Additivity_(equivalent_formulation)}
Let $\mathcal{C}$ be a Waldhausen category and $\mathcal{A}$ and $\mathcal{B}$ sub Waldhausen categories. Then the map of simplicial objects in $\mathbf{Cat}$
$$\xymatrix{wS_\bullet(s,q)\colon wS_\bullet \mathcal{E}(\mathcal{A},\mathcal{C},\mathcal{B}) \ar[r] & wS_\bullet \mathcal{A} \times wS_\bullet \mathcal{B}}$$
induced by ``subobject'' and ``quotient'' functors is a weak equivalence. That is, the diagonal of its level-wise nerve is a weak homotopy  equivalence of simplicial sets.
\end{theorem}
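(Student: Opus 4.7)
The plan is to reduce the claim to Lemma~\ref{lem:HardLemmaMadeEasy} one nerve-level at a time. Taking level-wise nerves, the asserted weak equivalence of simplicial objects in $\mathbf{Cat}$ is by definition a diagonal weak equivalence of bisimplicial sets, so by the standard realization lemma for bisimplicial sets it suffices to show that for each fixed $k \geq 0$ the induced map of simplicial sets
$$N_k wS_\bullet(s,q) \co N_k wS_\bullet \cale(\cala,\calc,\calb) \to N_k wS_\bullet \cala \times N_k wS_\bullet \calb$$
is a weak equivalence.

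For each $k \geq 0$, I would introduce the auxiliary category $w_k\calc$ whose objects are functors $[k] \to \calc$ sending every morphism to a weak equivalence (length-$k$ chains $X_0 \xrightarrow{\sim} \cdots \xrightarrow{\sim} X_k$) and whose morphisms are natural transformations. I would equip $w_k\calc$ with the Waldhausen structure whose zero object is the constant diagram at $\ast$, whose weak equivalences are level-wise weak equivalences, and whose cofibrations are level-wise cofibrations. Pushouts along cofibrations are then computed level-wise, and remain in $w_k\calc$ precisely because the gluing lemma in $\calc$ guarantees that the induced maps in the $[k]$-direction stay weak equivalences; the gluing lemma in $w_k\calc$ itself then holds level-wise as well. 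Analogously, $w_k\cala$ and $w_k\calb$ become sub Waldhausen categories of $w_k\calc$.

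Two natural identifications then connect this auxiliary construction to the bisimplicial set of interest. First, $\mathfrak{s}_\bullet w_k\calc \cong N_k wS_\bullet\calc$ as simplicial sets, because unpacking an object of $S_n w_k\calc$ yields a diagram $\text{Ar}[n] \times [k] \to \calc$ whose $\text{Ar}[n]$-restriction is an object of $S_n\calc$ and whose $[k]$-direction arrows are weak equivalences, which is exactly a $k$-chain of weak equivalences in $S_n\calc$. Second, $\cale(w_k\cala, w_k\calc, w_k\calb) = w_k\cale(\cala,\calc,\calb)$, since a weak equivalence in $\cale(\cala,\calc,\calb)$ is by definition component-wise. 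Applying Lemma~\ref{lem:HardLemmaMadeEasy} to the Waldhausen category $w_k\calc$ with sub Waldhausen categories $w_k\cala$ and $w_k\calb$ then yields a weak equivalence
$$\mathfrak{s}_\bullet \cale(w_k\cala, w_k\calc, w_k\calb) \to \mathfrak{s}_\bullet w_k\cala \times \mathfrak{s}_\bullet w_k\calb,$$
which under the above identifications is precisely $N_k wS_\bullet(s,q)$, and the realization lemma concludes the argument.

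The hard part will be the bookkeeping for the Waldhausen structure on $w_k\calc$: checking that level-wise pushouts of level-wise cofibrations of $k$-chains of weak equivalences are again $k$-chains of weak equivalences is the place where the gluing lemma of $\calc$ enters essentially. Once that auxiliary Waldhausen structure and the identification $\mathfrak{s}_\bullet w_k\calc \cong N_k wS_\bullet \calc$ are in hand, everything else is routine unpacking of definitions, and Lemma~\ref{lem:HardLemmaMadeEasy} does the heavy lifting.
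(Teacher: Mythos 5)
Your proof is correct and is essentially the same argument the paper intends: the published proof simply defers to Waldhausen's page 336 and to the quasicategorical Theorem~\ref{thm:Additivity_for_quasicategories}, and both of those run exactly the route you take, namely equipping the category $w_k\calc$ of $k$-chains of weak equivalences with a level-wise Waldhausen structure, identifying $\mathfrak{s}_\bullet w_k\calc \cong N_k wS_\bullet \calc$ and $\cale(w_k\cala, w_k\calc, w_k\calb) \cong w_k\cale(\cala,\calc,\calb)$, applying Lemma~\ref{lem:HardLemmaMadeEasy} in each simplicial degree $k$, and invoking the realization lemma. You have simply spelled out the details that the paper leaves as a pointer.
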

\begin{proof}
We conclude this from the weak equivalence of object simplicial sets in Lemma~\ref{lem:HardLemmaMadeEasy} in exactly the same way that Waldhausen, on page 336 of \cite{WaldhausenAlgKTheoryI}, concludes his Theorem 1.4.2 (recalled here in Theorem~\ref{thm:Waldhausen_Additivity}) from object simplicial sets in the case $\cala=\calb=\calc$ of Lemma~\ref{lem:HardLemmaMadeEasy}.
\end{proof}

\section{Classical Additivity and Split Exact Sequences of Waldhausen Categories} \label{sec:split_exact_sequences_of_Waldhausen_categories}

We explain in this section how Waldhausen $wS_\bullet$ Additivity for $\cale(\cala,\calc,\calb)$, as recalled in Theorem~\ref{thm:Additivity_(equivalent_formulation)}, implies Waldhausen $\bfK$ Additivity for standard split exact sequences in Theorem~\ref{thm:K-Additivity_for_standard_split_exact_sequence}, that is, the Waldhausen $K$-theory functor $\bfK$ takes standard split exact sequences to split cofiber sequences. Standard split exact sequences are constructed from $\cale(\cala,\calc,\calb)$ in Example~\ref{examp:E(A,C,B)}. We also show $wS_\bullet$ Additivity and $\bfK$ Additivity hold for split exact sequences Waldhausen equivalent to standard ones in Corollary~\ref{cor:Additivity_GeneralAndSpectral} (the notions of Waldhausen equivalence for categories and sequences are in Sections~\ref{sec:appendix1} and \ref{sec:appendix_Wald_equivalences_of_ses}). Sufficient conditions for a split exact sequence to be Waldhausen equivalent to a standard one are in Proposition~\ref{prop:universal_split_exact_sequence}. We also develop the most basic properties of the notion of split exact sequences of Waldhausen categories. Analogous results for Waldhausen {\it quasi}categories (in particular also stable quasicategories) are in Section~\ref{sec:split_exact_sequences_of_Waldhausen_quasicategories}. Blumberg--Gepner--Tabuada discuss and relate various kinds of exact sequences of stable quasicategories, spectral categories, and triangulated categories in \cite[Section~5]{BlumbergGepnerTabuadaI}.  The notions of exact sequence and split exact sequence of Waldhausen quasicategories in this paper in the stable case are the same as in \cite{BlumbergGepnerTabuadaI}.

Exact sequences of various kinds have been prominent in algebraic $K$-theory for decades, for instance in work of Grothendieck, Quillen, Waldhausen, Thomason-Trobaugh \cite{ThomasonTrobaugh}, and Neeman \cite{NeemanBook,NeemanOnThomasonTrobaugh}. Recent work on a universal characterization of quasicategorical algebraic $K$-theory \cite[Definition~6.1]{BlumbergGepnerTabuadaI}, \cite{Barwick} has distinguished Waldhausen Additivity as a key property a functor on small stable quasicategories may possess: a functor on small stable quasicategories {\it satisfies additivity} if it sends split exact sequences to split cofiber sequences.

\begin{definition}[Exact Sequence of Waldhausen Categories] \label{def:exact_sequence_of_Waldhausen_categories}
Let $\mathcal{A}$, $\cale$, and $\calb$ be Waldhausen categories.
A sequence of exact functors
\begin{equation} \label{equ:exact_seq_of_Waldhausen_categories}
\xymatrix{\mathcal{A} \ar[r]^i & \mathcal{E} \ar[r]^f & \mathcal{B}}
\end{equation}
is called {\it exact} if
\begin{enumerate}
\item
the composite $f\circ i$ is the distinguished zero object $\ast$ of $\mathcal{B}$,
\item
the exact functor $i\colon \mathcal{A}\to \mathcal{E}$ is fully faithful, and
\item \label{condition:iii}
the restricted functor $f\vert_{\cale / \cala} \colon \cale / \cala \to \mathcal{B}$ is an equivalence of categories. Here $\cale / \cala$ is the full subcategory of $\mathcal{E}$ on the objects $E \in \cale $
such that $\cale (i(A),E)$ is a point for all $A \in \cala$.
\end{enumerate}
\end{definition}

\begin{definition}[Split Exact Sequence of Waldhausen Categories] \label{def:split-exact_sequence_of_Waldhausen_categories}
An exact sequence of Waldhausen categories and exact functors as in equation \eqref{equ:exact_seq_of_Waldhausen_categories} is called {\it split} if there exist exact functors
$$\xymatrix{\mathcal{A}  & \mathcal{E}  \ar[l]_-j & \mathcal{B} \ar[l]_-g}$$
right adjoint to $i$ and $f$ respectively, such that the unit $\text{Id}_{\mathcal{A}} \to ji$ and the counit $fg \to \text{Id}_{\mathcal{B}}$ are natural isomorphisms.
\end{definition}

\begin{remark} \label{rem:f_and_g_are_inverses}
In a split exact sequence of Waldhausen categories and exact functors, the functor $g$ is actually an inverse equivalence to $f\vert_{\cale / \cala}$.  We have $\cale(i(A),g(B))\cong\calb(f(i(A)),B)=\calb(\ast,B)=\text{pt}$ for all $A$ in $\cala$, so that $g$ goes into $\cale/\cala$. The counit $(f\vert_{\cale / \cala})g \to \text{Id}_\calb$ is a natural isomorphism by hypothesis, and the unit $\text{Id}_{\cale / \cala} \to g(f\vert_{\cale / \cala})$ is a natural isomorphism since the left adjoint $f\vert_{\cale / \cala}$ is fully faithful. See also \cite[Lemma~9.26]{FiorePaoli2010}.
\end{remark}

\begin{remark} \label{rem:jg_is_isomorphic_to_zero}
In a split exact sequence of Waldhausen categories and exact functors, the composite $j \circ g$ is naturally isomorphic to the distinguished zero object. The functor $g$ goes into $\cale/\cala$ by Remark~\ref{rem:f_and_g_are_inverses}, so for all $A \in \cala$ we have $\text{pt}=\cale(i(A),g(B))\cong\cala(A,jg(B))$, so $jg(B)$ is a terminal object of $\cala$, and isomorphic to $\ast$.
\end{remark}

\begin{remark}
The 2-functor $S_n$ sends a split exact sequence of Waldhausen categories to a split exact sequence of Waldhausen categories by Remark~\ref{rem:Sn_is_a_2-functor}.
\end{remark}

\begin{example}(Standard Split Exact Sequence from $\mathcal{E}(\cala,\calc,\calb)$) \label{examp:E(A,C,B)}
Any Waldhausen category $\mathcal{C}$ with selected sub Waldhausen categories $\mathcal{A}$ and $\mathcal{B}$ produces a split exact sequence as follows. Let $\cale$ be the Waldhausen category $\mathcal{E}(\mathcal{A},\mathcal{C},\mathcal{B})$ in Notation~\ref{not:E(A,C,B)}.
We define exact functors
\begin{equation} \label{equ:E(A,C,B)}
\xymatrix{\mathcal{A} \ar[r]^-i & \mathcal{E}(\cala,\calc,\calb) \ar[r]^-q \ar@/^1.3pc/[l]^s & \mathcal{B} \ar@/^1.3pc/[l]^g}
\end{equation}
by
\begin{equation} \label{equ:canonical_split_exact_functors_categorical}
\begin{array}{c}
\aligned
i(A) &= \left( \begin{array}{c} \xymatrix{A \ar@{>->}[r]^{=} & A \ar@{->>}[r] & \ast} \end{array} \right) \\
s\left( \begin{array}{c} \xymatrix{A \ar@{>->}[r]^{\phantom{=}} & C \ar@{->>}[r] & B} \end{array} \right)&=A \\
q\left( \begin{array}{c} \xymatrix{A \ar@{>->}[r]^{\phantom{=}} & C \ar@{->>}[r] & B} \end{array} \right)&=B \\
g(B)&=\left(\begin{array}{c} \xymatrix{\ast \ar@{>->}[r] & B \ar@{->>}[r]^{=} & B}\end{array} \right).
\endaligned
\end{array}
\end{equation}

Clearly, $q\circ i = \ast$. The unit and counit for the adjunction $i \dashv s$ are $\eta_A = 1_A \colon A \to siA$ and
$$\begin{array}{c} \varepsilon_{ABC}\colon is(ACB) \to ACB \end{array} \;\; \;\;
\begin{array}{c} \xymatrix{A \ar@{>->}[r]^{=} \ar[d]_{=} & A \ar@{->>}[r] \ar[d]_{m} & \ast \ar[d] \\ A \ar@{>->}[r]_{m} & C \ar@{->>}[r] & B}
\end{array}$$
Since $s\varepsilon_A=1_A$ and $\varepsilon_{iA}=1_{iA}$, the triangle identities clearly hold. Since the unit is a natural isomorphism, the left adjoint $i$ is fully faithful.

The unit and the counit for the adjunction $q \dashv g$ are
$$\begin{array}{c} \eta_{ABC}\colon ACB \to gq(ACB) \end{array} \;\; \;\;
\begin{array}{c} \xymatrix{A \ar@{>->}[r] \ar[d] & C \ar@{->>}[r]^{n} \ar[d]^{n} & B \ar[d]^{=} \\
\ast \ar@{>->}[r] & B \ar@{->>}[r]^{=} & B
}
\end{array}$$
and $\varepsilon_B=1_B \colon qgB \to B$. Since $\eta_{gB}=1_{gB}$ and $q\eta_{ACB}=1_B$, the triangle identities clearly hold. Since the counit is a natural isomorphism, the right adjoint $g$ is fully faithful.

The subcategory $\cale / \cala$ is full on the cofiber sequences of the form
\begin{equation} \label{equ:BinE}
\xymatrix{\ast' \ar@{>->}[r] & C' \ar@{->>}[r]^-\cong & B'}
\end{equation}
with $\ast'$ a zero object of $\calc$ in $\cala$, the quotient map an isomorphism, and $B' \in \calb$, as follows.
The condition that there be only one map
\begin{equation} \label{equ:one_map}
\begin{array}{c}
\xymatrix{A \ar@{>->}[r]^= \ar[d] & A \ar@{->>}[r] \ar[d] & \ast \ar[d] \\ A' \ar@{>->}[r] & C' \ar@{->>}[r] & B'}
\end{array}
\end{equation}
for each $A \in \cala$ implies that $A'$ is a terminal object of $\cala$ (the middle map is determined by the left map), so $A'$ is then isomorphic to the distinguished zero object of $\cala$ (hence also to that of $\calc$), so $A'$ is a zero object of $\calc$ in $\cala$.  The morphism $C' \twoheadrightarrow B'$ is an isomorphism, as it is a pushout of the isomorphism $A' \to \ast$. Because of the form \eqref{equ:BinE} of objects in $\cale / \cala$, we now see that $q\vert_{\cale / \cala} \colon \cale / \cala \to \mathcal{B}$ is fully faithful (the right vertical map in a morphism of objects \eqref{equ:BinE} uniquely determines the middle vertical map via the isomorphisms). Clearly, $q\vert_{\cale / \cala}$ is surjective on objects, and we have $q\vert_{\cale / \cala}$ is an equivalence of categories. Finally, the sequence \eqref{equ:E(A,C,B)} is split exact.

Incidentally, we can also see directly that $g\colon \calb \to \cale/\cala$ is essentially surjective: the isomorphism $C' \twoheadrightarrow B'$ in \eqref{equ:BinE} is part of an isomorphism from \eqref{equ:BinE} to $\ast \rightarrowtail B' = B'$. From above, we already know $g$ is fully faithful, hence an equivalence.
\end{example}

\begin{proposition}[Sufficient Conditions for Waldhausen Equivalence with Standard Split Exact Sequence] \label{prop:universal_split_exact_sequence}
Suppose a split exact sequence of Waldhausen categories and exact functors
$$\xymatrix{\mathcal{A} \ar[r]^-i & \mathcal{E} \ar[r]^-f \ar@/^1pc/[l]^j & \mathcal{B} \ar@/^1pc/[l]^g}$$
has the following three properties.
\begin{enumerate}
\item \label{item:i:prop:universal_split_exact_sequence}
Each counit component $ij(E) \to E$ is a cofibration.
\item \label{item:i':prop:universal_split_exact_sequence}
For each cofibration $E \rightarrowtail E'$ in $\cale$, the induced map $$E \cup_{ij(E)} ij(E') \to E'$$ is a cofibration in $\cale$.
\item \label{item:ii:prop:universal_split_exact_sequence}
In every cofiber sequence in $\cala$ of the form
$A_0 \rightarrowtail A_1 \twoheadrightarrow \ast$, the first map is an isomorphism.
\end{enumerate}
Then it is Waldhausen equivalent (see Definitions~\ref{def:Waldhausen_equivalence} and \ref{def:Waldhausen_equiv_of_sequences_quasicategorical} and Proposition~\ref{prop:Waldhausen_equiv_of_sequences_compatible_with_j's}) to a split exact sequence of the form \eqref{equ:E(A,C,B)} in Example~\ref{examp:E(A,C,B)}.
\end{proposition}
\begin{proof}
Let $\calc:=\cale$ and let $\cala', \calb' \subseteq \calc$ be the essential images of $\cala$ and $\calb$ in $\calc$ under $i$ and $g$ respectively.

Since $i\co \cala \to \calc$ is fully faithful by definition of exact sequence, $i$ provides an equivalence with its essential image $\cala'$. Since the counit $fg \to \text{Id}_{\mathcal{B}}$ is a natural isomorphism by split exactness, the right adjoint $g\co \calb \to \calc$ is fully faithful and provides an equivalence with its essential image $\calb'$.
\begin{equation} \label{equ:prop:universal_split_exact_sequence:essential_images}
\begin{array}{c}
\xymatrix{\mathcal{A} \ar[r]^-i \ar[d]_i  & \mathcal{E} \ar[r]^-f \ar[d]_\Phi & \mathcal{B} \ar[d]^g \\ \mathcal{A}' \ar[r] & \mathcal{E}(\cala',\calc,\calb') \ar[r] & \mathcal{B}'}
\end{array}
\end{equation}
The vertical functors $i$ and $g$ reflect weak equivalences and cofibrations because the unit $\text{Id}_{\mathcal{A}} \to ji$ and counit $fg \to \text{Id}_{\mathcal{B}}$ are natural isomorphisms and $j$ and $f$ are exact. The vertical functors $i$ and $g$ are now Waldhausen equivalences (see Proposition~\ref{prop:exact_equivalence_is_Waldhausen_equivalence}).

Let $\Phi(E)$ be
\begin{equation} \label{equ:counit_unit_E}
\xymatrix@C=3pc{ij(E) \ar@{>->}[r]^-{\text{counit}} & E \ar[r]^-{\text{unit}} & gf(E)}.
\end{equation}
We claim that \eqref{equ:counit_unit_E} is in fact a cofiber sequence. Consider the pushout $P$ in the left diagram below. \\
\begin{equation} \label{equ:prop:universal_split_exact_sequence:pushouts}
\begin{array}{c}
\xymatrix@C=3pc@R=3pc{
      ij(E) \ar@{>->}[r]^-{\text{counit}} \ar[d] \ar@{}[dr]|{\text{pushout}}  &  E \ar[d] \ar@/^1pc/[ddr]^{\text{unit}}  &  \\
      \ast \ar@{>->}[r] \ar@/_1pc/[drr]  &  P \ar@{-->}[dr]^{\exists \text{!}}  &  \\
      &  &  gf(E) }
\end{array}
\begin{array}{c} \xymatrix@C=3.5pc@R=3pc{
      jij(E) \ar@{>->}[r]^-{j(\text{counit})} \ar[d] \ar@{}[dr]|{\text{pushout}}  &  j(E) \ar[d] \ar@/^1pc/[ddr]^{j(\text{unit})}  &  \\
      \ast \ar@{>->}[r] \ar@/_1pc/[drr]  &  j(P) \ar@{-->}[dr]^{\exists \text{!}}  &  \\
      &  &  jgf(E) }
\end{array}
\end{equation}
The outer square of the left diagram commutes because $g$ goes in $\cale/\cala$ by Remark~\ref{rem:f_and_g_are_inverses}, and $j(E) \in \cala$. The right diagram is obtained by applying the exact functor $j$ to the entire left diagram.
By a triangle identity for the $i,j$ adjunction, the map $j(\text{counit})$ in the right diagram is an isomorphism, so that $\ast \rightarrowtail j(P)$ is also an isomorphism. For all $A \in \cala$, $\cale(i(A),P)\cong\cala(A,j(P))\cong\cala(A,\ast)=\text{pt}$, so that $P \in \cale/\cala$. By Remark~\ref{rem:f_and_g_are_inverses}, there exists some $Q \in \calb$ such that $P \cong g(Q)$.

Consider now the left diagram with $P$ replaced by $g(Q)$, and its right vertical map and dashed map adjusted accordingly to have a pushout and the induced map. Apply $f$ to the entire altered left diagram to obtain
\begin{equation} \label{equ:prop:universal_split_exact_sequence:pushout_image}
\begin{array}{c}
\xymatrix@C=3.5pc@R=3pc{
      \ast \ar@{>->}[r]^-{f(\text{counit})} \ar[d] \ar@{}[dr]|{\text{pushout}}  &  f(E) \ar[d] \ar@/^1pc/[ddr]^{f(\text{unit})}  &  \\
      \ast \ar@{>->}[r] \ar@/_1pc/[drr]  &  fg(Q) \ar@{-->}[dr]^{\exists \text{!}}  &  \\
      &  &  fgf(E).}
\end{array}
\end{equation}
The vertical map $f(E) \to fg(Q)$ is an isomorphism, as it is a pushout of an isomorphism. By a triangle identity for the $f,g$ adjunction, the map $f(\text{unit})$ is an isomorphism, so the dashed map $fg(Q) \dashrightarrow fgf(E)$ is also an isomorphism by 3-for-2. Its origin, the dashed map $g(Q) \dashrightarrow gf(E)$ in the altered left diagram, is in the image of the fully faithful functor $g$, and hence in $\cale/\cala$. Since $f\vert_{\cale/\cala}$ reflects isomorphisms (it is fully faithful), the map $g(Q) \dashrightarrow gf(E)$ is also an isomorphism. Finally, the outer square of the altered left diagram is isomorphic to a pushout square and we conclude \eqref{equ:counit_unit_E} is indeed a cofiber sequence.

The functor $\Phi$ defined as in \eqref{equ:counit_unit_E} maps cofibrations to cofibrations by hypothesis \ref{item:i':prop:universal_split_exact_sequence}. It sends $\ast_\calc$ to the zero object of $\cale(\cala,\calc,\calb)$ and weak equivalences to weak equivalences because $ij$ and $gf$ preserve distinguished zero objects and weak equivalences. We will know $\Phi$ sends pushouts along cofibrations to pushouts along cofibrations as soon as we see that $\Phi$ is an equivalence of categories.

We define $\Psi\colon \cale(\cala',\calc,\calb') \to \cale$ to be the projection to $\calc=\cale$. Clearly $\Psi \circ \Phi=\text{Id}_{\cale}$, so that $\Phi$ is faithful. For fullness, a morphism $\Phi(E) \to \Phi(F)$ must come from the associated map $E \to F$ by the universality of $ij(F) \rightarrowtail F$ and the universality of $E \twoheadrightarrow gf(E)$, as well as the fully faithfulness of $i$ and $g$.

To show $\Phi\colon \cale \to \cale(\cala',\calc,\calb')$ is essentially surjective, it suffices to prove that the top row of the middle diagram
\begin{equation} \label{equ:prop:universal_split_exact_sequence:Phi_essentially_surjective}
\begin{array}{c}
\xymatrix{A \ar@{-->}[d]_{\exists \text{!}}^m & i(A) \ar@{-->}[d]_{i(m)} \ar@{>->}[r] & E \ar@{->>}[r] \ar@{=}[d] & g(B) & B \\ j(E) & ij(E) \ar@{>->}[r] & E \ar@{->>}[r] & gf(E) \ar@{-->}[u]_{g(n)} & f(E) \ar@{-->}[u]^{\exists \text{!}}_n }
\end{array}
\end{equation}
is isomorphic to the bottom row of the middle diagram via the dashed arrows coming from the universality of the counit and unit. An application of $f$ to the middle diagram maps both left horizontal arrows to $\ast \rightarrowtail f(E)$ and therefore the right horizontal arrows to isomorphisms. Thus $f(g(n))$ is an isomorphism, and $g(n)$ is an isomorphism ($f$ is an equivalence of categories on $\cale/\cala$, so reflects isomorphisms there). On the other hand, an application of $j$ to the middle diagram maps both right horizontal arrows to $j(E) \twoheadrightarrow \ast'$ by Remark~\ref{rem:jg_is_isomorphic_to_zero}, and therefore $j$ maps the left horizontal arrows to isomorphisms by hypothesis \ref{item:ii:prop:universal_split_exact_sequence}. Therefore $ji(m)$ is an isomorphism, and $m$ is also an isomorphism using the unit isomorphism of the $i,j$ adjunction.

The exact equivalence $\Phi$ reflects weak equivalences and cofibrations (simply project to $\calc=\cale$), so $\Phi$ is a Waldhausen equivalence by Proposition~\ref{prop:exact_equivalence_is_Waldhausen_equivalence}.
\end{proof}

\begin{remark}[Comments on the Conditions of Proposition~\ref{prop:universal_split_exact_sequence} for Standardness]
Conditions \ref{item:i:prop:universal_split_exact_sequence} and \ref{item:i':prop:universal_split_exact_sequence} of Proposition~\ref{prop:universal_split_exact_sequence} hold in any standard split exact sequence, so we can expect to require them for a split exact sequence to be Waldhausen equivalent to a standard one.

Condition \ref{item:ii:prop:universal_split_exact_sequence} is known to hold for several kinds of categories $\cala$.
If $\cala$ is some category of $R$-modules with monic cofibrations, then \ref{item:ii:prop:universal_split_exact_sequence} holds: if $A_0 \rightarrowtail A_1 \rightarrowtail \ast$ is a cofiber sequence, then the monomorphism $A_0 \rightarrowtail A_1$ is also surjective since $A_1 / A_0$ is the zero module, so $A_0 \rightarrowtail A_1$ is an isomorphism. If $\cala$ is the category of finite pointed sets with cofibrations the monomorphisms, then \ref{item:ii:prop:universal_split_exact_sequence} holds by similar reasoning. Or, if $\cala$ is a category in which cofibrations are summand inclusions, then \ref{item:ii:prop:universal_split_exact_sequence} holds, because the other summand is the cofiber, which is trivial.

Condition \ref{item:ii:prop:universal_split_exact_sequence} in Proposition~\ref{prop:universal_split_exact_sequence} also holds if every cofiber sequence is a fiber sequence, because then any pullback of the identity $\ast = \ast$ is an isomorphism.

Our true interest lies in the quasicategorical version of Proposition~\ref{prop:universal_split_exact_sequence} and the upcoming Corollary~\ref{cor:Additivity_GeneralAndSpectral}, namely Proposition~\ref{prop:universal_split_exact_sequence:quasicategorical} and Corollary~\ref{cor:QCat_Additivity_GeneralAndSpectral}. The quasicategorical conditions \ref{item:i:prop:universal_split_exact_sequence}, \ref{item:i':prop:universal_split_exact_sequence}, and \ref{item:ii:prop:universal_split_exact_sequence} hold for a large class of interesting Waldhausen quasicategories, which forms the domain of the additive invariants of \cite{BlumbergGepnerTabuadaI}, namely the {\it stable quasicategories}. See Definition~\ref{def:stable_quasicategories}, Example~\ref{examp:stable_quasicategories_are_Waldhausen_quasicategories}, and Corollary~\ref{cor:stable_qcats_are_example_for_main_theorem}. Condition \ref{item:ii:prop:universal_split_exact_sequence} holds in every {\it exact $\infty$-category} in the sense of \cite{BarwickThmOfTheHeart}.
\end{remark}

\begin{remark}
In the proof of Proposition~\ref{prop:universal_split_exact_sequence}, the squares
\begin{equation} \label{equ:s_q_equivalence}
\begin{array}{c}
\xymatrix{\cala \ar[d]_i & \cale \ar[l]_j \ar[r]^f \ar[d]_{\Phi} & \calb \ar[d]^g \\ \cala'  & \cale(\cala',\calc,\calb') \ar[l]_-s \ar[r]^-q & \calb' }
\end{array}
\end{equation}
commute strictly, while the squares
$$\xymatrix{\cala \ar[d]_i \ar[r]^i & \cale \ar[d]_{\Phi} & \calb \ar[d]^g \ar[l]_g
\\ \cala' \ar[r] & \cale(\cala',\calc,\calb') & \calb' \ar[l] }$$
commute only up to natural isomorphism.
\end{remark}

Let $\bfK(\calc)$ denote the Waldhausen $K$-theory spectrum of the Waldhausen category $\calc$, its $n$-th space is
$\bfK(\calc)_n:= \vert wS_\bullet \cdots S_\bullet \calc\vert$
for $n\geq0$ where $S_\bullet$ appears $n$ times. The structure maps are defined in \cite[pages 329--330]{WaldhausenAlgKTheoryI}, see also \cite[Sections~8.4, 8.6, and 8.7]{RognesTextbook}. Waldhausen proved, as a consequence of $wS_\bullet$ Additivity, this is an $\Omega$-spectrum beyond the 0-th term, that is,
$\vert w\calc\vert \to \Omega \vert wS_\bullet\calc\vert$ might not be a weak equivalence of spaces, though
$\vert wS_\bullet^{(n)}\calc\vert \to \Omega \vert wS_\bullet^{(n+1)}\calc\vert$ is a weak equivalence of spaces for $n \geq 1$.

\begin{theorem}[Waldhausen $\bfK$ Additivity for Standard Split Exact Sequences] \label{thm:K-Additivity_for_standard_split_exact_sequence}
Let $\mathcal{C}$ be a Waldhausen category and $\mathcal{A}$ and $\mathcal{B}$ sub Waldhausen categories and consider the standard split exact sequence in \eqref{equ:E(A,C,B)}. Then
$$\xymatrix{\bfK(i)\vee\bfK(g)\colon \bfK(\cala) \vee \bfK(\calb) \ar[r] & \bfK(\cale(\cala, \calc, \calb))}$$
is a stable equivalence of spectra.
\end{theorem}
\begin{proof}
First observe that an application of $S_n$ to a standard split exact sequence \eqref{equ:E(A,C,B)} for $\cala,\calb \subseteq \calc$ yields the standard split exact sequence for $S_n\cala,\;S_n\calb \subseteq S_n\calc$ because
$S_n\cale(\cala,\calc,\calb)\cong\cale(S_n\cala,\;S_n\calc,\;S_n\calb)$. So we may apply Theorem~\ref{thm:Additivity_(equivalent_formulation)} to obtain weak homotopy equivalences of spaces $(\bfK(s)_n,\bfK(q)_n)$, which then assemble to a stable equivalence of spectra (level-wise equivalence of spectra implies stable equivalence).
In spectra, the wedge product and the product are formed levelwise, and the inclusion of a wedge product into a product is a stable equivalence, so $\bfK(\cala) \vee \bfK(\calb) \hookrightarrow \bfK(\cala) \times \bfK(\calb)$ is a stable equivalence.
$$\xymatrix@C=6pc{\bfK(\cala)_n \vee \bfK(\calb)_n \ar[r]^-{\bfK(i)_n\vee\bfK(g)_n} \ar@{^{(}->}@/_2pc/[rr]_-{\text{stably w.e.}} & \bfK(\cale(\cala, \calc, \calb))_n \ar[r]^-{(\bfK(s)_n,\bfK(q)_n)}_{\text{w.e. by Thm~\ref{thm:Additivity_(equivalent_formulation)}}} & \bfK(\cala)_n \times \bfK(\calb)_n}$$
By the 3-for-2 property of stable equivalences, we now have $\bfK(i)\vee\bfK(g)$ is a stable equivalence.
\end{proof}

\begin{corollary}[Waldhausen $wS_\bullet$ Additivity and $\bfK$ Additivity for Split Exact Sequences Waldhausen Equivalent to Standards] \label{cor:Additivity_GeneralAndSpectral}
Suppose a split exact sequence of Waldhausen categories and exact functors
$$\xymatrix{\mathcal{A} \ar[r]^-i & \mathcal{E} \ar[r]^-f \ar@/^1pc/[l]^j & \mathcal{B} \ar@/^1pc/[l]^g}$$
is Waldhausen equivalent to a standard split exact sequence of the form \eqref{equ:E(A,C,B)}, for instance any split exact sequence satisfying the hypotheses of Proposition~\ref{prop:universal_split_exact_sequence}. Then the following hold.
\begin{enumerate}
\item \label{cor:Additivity_GeneralAndSpectral:(i)}
The map
$$\xymatrix{wS_\bullet(j,f)\colon wS_\bullet \mathcal{E} \ar[r] & wS_\bullet \mathcal{A} \times wS_\bullet \mathcal{B}}$$
is a weak equivalence of simplicial objects in $\mathbf{Cat}$. That is, the diagonal of its level-wise nerve is a weak homotopy  equivalence of simplicial sets.
\item \label{cor:Additivity_GeneralAndSpectral:(ii)}
The functors $i$ and $g$ induce a stable equivalence of $K$-theory spectra
\begin{equation} \label{equ:cor:Additivity_GeneralAndSpectral:(ii)}
\xymatrix{\bfK(i)\vee\bfK(g)\colon \bfK(\cala) \vee \bfK(\calb) \ar[r] & \bfK(\cale).}
\end{equation}
\end{enumerate}
\end{corollary}
\begin{proof}
\begin{enumerate}
\item
Suppose we have a Waldhausen equivalence of sequences from $\cala \to \cale \to \calb$ to a standard one $\cala' \to \cale' \to \calb'$, see Definition~\ref{def:Waldhausen_equiv_of_sequences_quasicategorical}.
The diagram
$$\xymatrix@C=6pc{wS_\bullet \mathcal{E} \ar[r]^-{wS_\bullet(j,f)} \ar[d]_{\text{w.e. by Cor~\ref{cor:Waldhausen_equivalences_induce_weak_equivalences}}} & wS_\bullet \mathcal{A} \times wS_\bullet \mathcal{B} \ar[d]^{\text{w.e. by Cor~\ref{cor:Waldhausen_equivalences_induce_weak_equivalences}}} \\
wS_\bullet \mathcal{E}' \ar[r]_-{\text{w.e. by Thm~\ref{thm:Additivity_(equivalent_formulation)}}}^-{wS_\bullet(s,q)} & wS_\bullet \mathcal{A}' \times wS_\bullet \mathcal{B}'}$$
commutes up to a natural isomorphism for each $\bullet=n$ by the categorical version of Proposition~\ref{prop:Waldhausen_equiv_of_sequences_compatible_with_j's}. The right vertical map is a weak equivalence because geometric realization and $\pi_*$ preserve finite products. The geometric realization of the left-bottom composite is $\pi_\ast$-isomorphism, so the top-right composite is also a $\pi_\ast$-isomorphism (they are homotopic). Thus the top-right composite is a weak equivalence, and the top map is a weak equivalence by the 3-for-2 property.
\item
We similarly draw a commutative square involving \eqref{equ:cor:Additivity_GeneralAndSpectral:(ii)} and its standardized version, then use Theorem~\ref{thm:K-Additivity_for_standard_split_exact_sequence}, Corollary~\ref{cor:Waldhausen_equivalences_induce_weak_equivalences}, the categorical version of Proposition~\ref{prop:Waldhausen_equiv_of_sequences_compatible_with_j's}, and the 3-for-2 property of stable equivalences.
\end{enumerate}
\end{proof}

\begin{remark}
The $K$-theory spectrum can actually be made into a symmetric spectrum, see the Appendix of Geisser--Hesselholt \cite{GeisserHesselholt}, or \cite[2.2]{BlumbergMandellDerivedKoszul}, \cite[A.5.4]{BlumbergMandellAbstHomTheory}, \cite[Section~2]{RognesRankFiltration}. The $K$-theory spectrum functor can also be made into a multi-functor, see Section~2.3.2 of the thesis of Zakharevich \cite{ZakharevichThesis}.
\end{remark}

\section{Recollections about Quasicategories} \label{sec:recollections}

We summarize the results about simplicial sets, quasicategories, Kan complexes, and groupoids that we will
freely use from Joyal's papers \cite{JoyalQCatsAndKanComplexes} and \cite{JoyalQuadern}, Lurie's books \cite{LurieHigherToposTheory} and \cite{LurieHigherAlgebra}, and the classical theory of simplicial sets.
We also prove some new results. To make the present paper as self-contained as possible, we have attempted to use only the elementary aspects of quasicategories. The main new technical result is a natural pushout functor along cofibrations that preserves identities, see Section~\ref{subsec:natural_pushout_along_cofibrations}, which relies on a corollary of Riehl--Verity. Throughout the paper we refer back to this summary in order to streamline later arguments.

\subsection{Quasicategories, Mapping Spaces, and Fullness} \label{subsec:Quasicategories}

A {\it quasicategory} is a simplicial set $X$ in which every inner horn admits a filler. That is, for any $0<k<n$ and any map $\Lambda^k[n] \to X$, there exists a map $\Delta[n] \dashrightarrow X$ such that the diagram
$$\xymatrix{\Lambda^k[n] \ar[r] \ar@{^{(}->}[d] & X  \\ \Delta[n] \ar@{-->}[ur]_{\exists} & }$$
commutes. The notion is originally due to Boardman and Vogt \cite{BoardmanVogt} under the name of {\it weak Kan complex}, and has been developed extensively by Joyal \cite{JoyalQuadern} and Lurie \cite{LurieHigherToposTheory}, \cite{LurieHigherAlgebra}. Any Kan complex, for example the singular complex of a topological space, is a quasicategory. The nerve of any category is also a quasicategory. An {\it object of a quasicategory $X$} is a vertex of $X$, that is, an element of $X_0$. A {\it map} or {\it morphism in $X$} is a 1-simplex $f$, its {\it source} is $d_1f$ and its {\it target} is $d_0f$. If $X$ is a quasicategory and $W$ is a simplicial subset of $X$ which is 0-full (see below) in $X$ on its set of vertices $W_0$, then $W$ is also a quasicategory \cite[page 275]{JoyalQuadern}. A {\it sub quasicategory} of a quasicategory is a simplicial subset which is also a quasicategory.

If $X$ is a quasicategory and $A$ is a simplicial set, then the ordinary simplicial mapping space $X^A=\text{Map}(A,X)$ is a quasicategory \cite[Corollary 2.19]{JoyalQuadern}. As usual, the {\it $n$-simplices of the ordinary simplicial mapping space} are
$$(X^A)_n:=\text{Map}(A,X)_n:=\mathbf{SSet}(A \times \Delta[n],X).$$
If $X$ is a quasicategory and $A$ is a simplicial set, we write $\text{Fun}(A,X)$ for $X^A$ and call $\text{Fun}(A,X)$ the {\it quasicategory of functors from $A$ to $X$}. The 0-simplices of $\text{Fun}(A,X)$ are the {\it functors} from $A$ to $X$, these are simply maps of simplicial sets $A \to X$. The 1-simplices of $\text{Fun}(A,X)$ are called {\it natural transformations}. These are the maps of simplicial sets $A \times \Delta[1] \to X$.
The mapping space construction in simplicial sets (and in quasicategories) is right adjoint to the product. The natural bijection in detail is the map
\begin{equation} \label{equ:Fun_adjunction}
\begin{array}{c}
\xymatrix@R=.5pc{\mathbf{SSet}(T \times U, V) \ar[r]^-{(-)^\dagger} & \mathbf{SSet}(T,V^U) \\
L \ar@{|->}[r] & L^\dagger}
\end{array}
\end{equation}
where $L^\dagger$ assigns to an $n$-simplex $t \in T_n$ the map $L^\dagger(t)\co U \times \Delta[n] \to V$, which on an $m$-simplex $(u,q)\in U_m\times \Delta([m],[n])$ is
\begin{equation} \label{equ:Fun_adjunction_formula}
L^\dagger(t)(u,q):=L_m(q^*t,u).
\end{equation}
More diagrammatically, $L^\dagger$ is
$$L^\dagger =\Bigg(\big( \xymatrix@C=1pc{\overline{t}\co \Delta[n] \ar[r] & T} \big) \xymatrix@C=1.5pc{\ar@{|->}[r] & } \big( \xymatrix@C=2.7pc{U \times \Delta[n] \ar[r]^-{\text{Id}_U \times \overline{t}} &  U \times T \ar[r]^\cong & T \times U \ar[r]^-L & V }\big) \Bigg).$$
The bijection in \eqref{equ:Fun_adjunction} is natural in the simplicial sets $T$, $U$, and $V$.

The strict 1-category of small quasicategories is enriched in quasicategories via the mapping space $\text{Fun}(X,Y)$, and thus forms an $(\infty,2)$-category denoted by $\mathbf{QCat}_{\infty,2}$. More precisely, this simplicial enrichment of $\mathbf{QCat}$ is inherited from the simplicial enrichment of $\mathbf{SSet}$ to $\mathbf{SSet}_\text{simp}$ via the internal homs $C^B$ as follows. From \cite[page 83]{GoerssJardine}, the enriched composition in $\mathbf{SSet}_\text{simp}$
\begin{equation} \label{equ:enriched_composition}
\text{comp}\co C^B \times B^A \to C^A
\end{equation}
assigns to $m$-simplices $g$ and $f$ the map $\text{comp}_m(g,f)\co A \times \Delta[m] \to C$, which on a $k$-simplex $(a,p)\in A_k \times \Delta([k],[m])$ is
\begin{equation} \label{equ:enriched_composition_formula}
\text{comp}_m(g,f)(a,p):=g_k(f_k(a,p),p).
\end{equation}
More diagrammatically,
$$\text{comp}_m(g, f) =\Big(\xymatrix@C=3.5pc{A \times \Delta[m] \ar[r]^-{\text{Id}_A \times \text{diag}} & A \times \Delta[m] \times \Delta[m] \ar[r]^-{f \times \text{Id}_{\Delta[m]}} & B \times \Delta[m] \ar[r]^-g & C} \Big).$$
Now $\mathbf{QCat}_{\infty,2}$ is the sub simplicially enriched category of $\mathbf{SSet}_\text{simp}$ that is full on objects that are quasicategories. When $X$ and $Y$ are quasicategories, then $Y^X=\text{Fun}(X,Y)$ is also a quasicategory, so the simplicial enrichment of $\mathbf{QCat}_{\infty,2}$ is actually an enrichment in $(\infty,1)$-categories, and we can properly call $\mathbf{QCat}_{\infty,2}$ an $(\infty,2)$-category.

For later use, we prove that the mapping space provides us with simplicial functors $(-)^A$ and $C^{(-)}$, derive formulas for these simplicial functors on the $n$-simplices of hom simplicial sets, and conclude that any simplicial homotopy $\alpha$ induces simplicial homotopies $\alpha^A$ and $C^\alpha$.
\begin{proposition}[Exponentiation is a Simplicial Functor in Each Variable] \label{prop:exp_is_simplicial} \leavevmode
\begin{enumerate}
\item \label{prop:exp_is_simplicial:i}
For any simplicial set $A$, ``exponentiation to $A$'' is a simplicially enriched functor $(-)^A\co \mathbf{SSet}_\text{\rm simp} \to \mathbf{SSet}_\text{\rm simp}$. Its map of hom simplicial sets
\begin{equation} \label{equ:making_exponentiation_to_A_simplicial}
\xymatrix{C^B \ar[r] & \big(C^A\big)^{\big( B^A \big)}}
\end{equation}
assigns to an $n$-simplex $g$ the $n$-simplex $g^A\co B^A \times \Delta[n] \to C^A$ defined by
\begin{equation} \label{equ:making_exponentiation_to_A_simplicial_formula}
g^A(f,q):=\text{\rm comp}_m(q^* g,f)
\end{equation}
on each $m$-simplex $(f,q) \in \big(B^A\big)_m \times \Delta([m],[n])$.
\item \label{prop:exp_is_simplicial:ii}
The restriction of ``exponentiation to $A$'' to quasicategories is a simplicially enriched functor $(-)^A\co \mathbf{QCat}_{\infty,2} \to \mathbf{QCat}_{\infty,2}$  which is an $(\infty,2)$-functor.
\item \label{prop:exp_is_simplicial:iii}
For any simplicial set $C$, ``exponentiation with base $C$'' is a contravariant simplicially enriched functor $C^{(-)}\co \mathbf{SSet}_\text{\rm simp} \to \mathbf{SSet}_\text{\rm simp}$. Its map of hom simplicial sets
\begin{equation} \label{equ:making_exponentiation_base_C_simplicial}
\xymatrix{B^A \ar[r] & \big(C^A\big)^{\big( C^B \big)}}
\end{equation}
assigns to an $n$-simplex $f$ the $n$-simplex $C^f\co C^B \times \Delta[n] \to C^A$ defined by
\begin{equation} \label{equ:making_exponentiation_base_C_simplicial_formula}
C^f(g,q):=\text{\rm comp}_m(g,q^*f)
\end{equation}
for each $m$-simplex $(g,q) \in \big(C^B\big)_m \times \Delta([m],[n])$.
\item \label{prop:exp_is_simplicial:iv}
If $C$ is a quasicategory, the restriction of ``exponentiation with base $C$'' to quasicategories is a contravariant simplicially enriched functor $C^{(-)}\co \mathbf{QCat}_{\infty,2} \to \mathbf{QCat}_{\infty,2}$ which is an $(\infty,2)$-functor.
\end{enumerate}
\end{proposition}
\begin{proof} \leavevmode
\begin{enumerate}
\item
We take $L$ in the adjunction \eqref{equ:Fun_adjunction} and \eqref{equ:Fun_adjunction_formula} to be $\text{comp}\co C^B \times B^A \to C^A$, the enriched composition in $\mathbf{SSet}_\text{simp}$ indicated in \eqref{equ:enriched_composition} and \eqref{equ:enriched_composition_formula}. From this we obtain
the map of simplicial sets $\text{comp}^\dagger \co C^B \to \big(C^A\big)^{\big( B^A \big)}$ in \eqref{equ:making_exponentiation_to_A_simplicial}. Then for $g\co B \times \Delta[n] \to C$ and an $m$-simplex $(f,q) \in \big(B^A\big)_m \times \Delta([m],[n])$ we have
$$g^A(f,q)=\text{comp}^\dagger(g)(f,q)=\text{comp}_m(q^*g,f).$$
\item
This follows directly from \ref{prop:exp_is_simplicial:i} and the fact that $X^A$ is a quasicategory when $X$ is.
\item
We take $L$ in the adjunction \eqref{equ:Fun_adjunction} and \eqref{equ:Fun_adjunction_formula} to be $\text{comp}\circ \sigma \co B^A \times C^B  \to C^A$, the enriched composition composed with the transposition $\sigma=(12)$. From this we obtain the map of simplicial sets $(\text{comp}\circ\sigma)^\dagger\co B^A \to \big(C^A\big)^{\big( C^B \big)}$ in \eqref{equ:making_exponentiation_base_C_simplicial}. Then for $f\co A \times \Delta[n] \to B$ and an $m$-simplex $(g,q) \in \big(C^B\big)_m \times \Delta([m],[n])$ we have
$$C^f(g,q)=(\text{comp}\circ\sigma)^\dagger(f)(g,q)=(\text{comp}\circ\sigma)_m(q^*f,g)=\text{comp}_m(g,q^*f).$$
\item
This follows directly from \ref{prop:exp_is_simplicial:iii} and the fact that $C^X$ is a quasicategory when $C$ is.
\end{enumerate}
\end{proof}
\begin{corollary}[Exponentiations of Simplicial Homotopies are Simplicial Homotopies] \label{cor:exponentiating_simplicial_homotopies}
If $\alpha\co D \times \Delta[1] \to E$ is a simplicial homotopy from $\alpha_0$ to $\alpha_1$, then $\alpha^A$ is a simplicial homotopy from $\alpha_0^A$ to $\alpha_1^A$, and $C^\alpha$ is a simplicial homotopy from $C^{\alpha_0}$ to $C^{\alpha_1}$.
\end{corollary}
\begin{proof}
Both \eqref{equ:making_exponentiation_to_A_simplicial} and \eqref{equ:making_exponentiation_base_C_simplicial} are maps of simplicial sets, so the direction of the simplicial homotopies $\alpha^A$ and $C^\alpha$ are the same as the direction for $\alpha$, even though $C^{(-)}$ is a contravariant simplicial functor.
\end{proof}

Having now completed the discussion of simplicial mapping spaces for the enriched categories $\mathbf{SSet}_\text{simp}$ and $\mathbf{QCat}_{\infty,2}$, we next discuss a mapping space of morphisms {\it in a single quasicategory} $X$. For any two objects $x$ and $y$ of a quasicategory $X$, we have the {\it mapping space $X(x,y)$ of the quasicategory $X$}. This simplicial set is defined as the following pullback.
\begin{equation} \label{equ:mapping_space_of_a_quasicategory}
\begin{array}{c}
\xymatrix{X(x,y) \ar[r] \ar[d] \ar@{}[dr]|{\text{pullback}} & X^{\Delta[1]} \ar[d]^{(s,t)} \\ \ast \ar[r]_-{(x,y)} & X \times X }
\end{array}
\end{equation}
By Yoneda and the universal property of the pullback, an $n$-simplex of $X(x,y)$ is a map $\alpha\co\Delta[n] \times \Delta[1] \to X$ such that $\alpha(-,0)$ is $x = x = \cdots = x$ and $\alpha(-,1)$ is $y = y = \cdots = y$.
We use Joyal's notation $X(x,y)$, see \cite[page 158]{JoyalQuadern}. Instead of $X(x,y)$, Lurie writes $\text{Hom}_S(x,y)$, see \cite[page 28]{LurieHigherToposTheory}. If $\calc$ is a category, the mapping space $(N\calc)(x,y)$ is the same as $\calc(x,y)$ viewed as a discrete simplicial set. The mapping spaces $X(x,y)$ are Kan complexes, see \cite[Proposition~6.13]{CisinskiMoerdijk} and \cite[Corollary 17.2.2]{RiehlCategoricalHomotopyTheory}. Kan complexes are $\infty$-groupoids, so in this sense quasicategories are models for $(\infty,1)$-categories.

One drawback of this choice of mapping space $X(x,y)$ as the pullback in \eqref{equ:mapping_space_of_a_quasicategory} is that there is no composition
\begin{equation} \label{equ:no_composition_for_qc_mapping_spaces}
\xymatrix{X(y,z) \times X(x,y) \ar[r] & X(x,z)}.
\end{equation}
{\it Simplicial categories} and the left adjoint $\mathfrak{C}$ to $N^{\text{simp}}$ (recalled in Section~\ref{subsec:ho_of_a_qcat}) can be used to work around this in some situations. For a treatment of mapping spaces, see Dugger--Spivak \cite{DuggerSpivakMapping}.

Suppose $A$ and $B$ are simplicial sets and $A \subseteq B$. Then $A$ is {\it 0-full in $B$} if and only if any $n$-simplex of $B$ is in $A$ exactly when all its vertices lie in $A$. Similarly, $A$ is {\it 1-full in $B$} if and only if any $n$-simplex of $B$ is in $A$ exactly when all its edges lie in $A$.\footnote{A map of simplicial sets $f \colon A \to B$ is {\it $n$-full} if the unit naturality square
$$\xymatrix{A \ar[r] \ar[d]_f & \text{cosk}_n\text{tr}_n A \ar[d]^{\text{cosk}_n\text{tr}_n f} \\ B \ar[r] & \text{cosk}_n\text{tr}_n B}$$
is a pullback \cite[Definition B.0.11]{JoyalQuadern}. Here $\text{cosk}_n$ is the right adjoint to the $n$-th truncation functor $\text{tr}_n$. A simplicial subset of a simplicial set is {\it $n$-full} if the inclusion functor is $n$-full. The map $f$ is $n$-full if and only if it has the right lifting property with respect to the inclusion $\partial\Delta[m] \hookrightarrow \Delta[m]$ for all $m > n$ and each such lifting problem has a unique solution, see \cite[Proposition~B.0.12 and Definition~C.0.22]{JoyalQuadern}. Thus, $n$-full implies $(n+1)$-full.} 1-full does not imply 0-full, but 0-full implies 1-full. As previously remarked, if $X$ is a quasicategory and $A \subseteq X$ is 0-full, then $A$ is a quasicategory. The analogous statement for 1-full sub simplicial sets of a quasicategory is false, however we can draw the desired conclusion if we additionally require that composites exist in $A$.

\begin{proposition} \label{prop:1_full+composites_implies_quasicategory}
Let $X$ be a quasicategory and $A \subseteq X$ a sub simplicial set. Suppose that $A$ is 1-full in $X$ and every inner horn $\Lambda^1[2] \to A$ admits a filler in $A$.  Then $A$ is a quasicategory.
\end{proposition}
\begin{proof}
Let $n \geq 3$ and consider an inner horn $\sigma\co\Lambda^k[n] \to A$ as an inner horn in the quasicategory $X$ via the inclusion. Then we can extend $\sigma$ to $\overline{\sigma}\co \Delta[n] \to X$. But the edges of $\Delta[n]$ are the same as the edges of $\Lambda^k[n]$, since $n \geq 3$. Hence the edges of $\overline{\sigma}$ are the same as the edges of $\sigma$, which are in $A$, so $\overline{\sigma}$ has image in $A$ by 1-fullness.
\end{proof}

\subsection{The Homotopy Category of a Quasicategory and Equivalences {\it in} a Quasicategory} \label{subsec:ho_of_a_qcat}

Let $X$ be a quasicategory and $X(x,y)$ the Kan complex defined by \eqref{equ:mapping_space_of_a_quasicategory}. Two vertices of $X(x,y)$ (morphisms of $X$) are {\it homotopic} if there is a 1-simplex of $X(x,y)$ from one to the other; this is an equivalence relation on $X(x,y)$. The path components of $X(x,y)$ are the homotopy classes of 1-morphisms in $X$ from $x$ to $y$. Since $X$ is a quasicategory, any two morphisms in $X$ are homotopic in the above sense if and only if they are {\it left homotopic}, which is the case if and only if they are {\it right homotopic}, see \cite[Definition~1.8 and Lemma~1.9]{JoyalQuadern} and \cite{BoardmanVogt}. The vertices of $X$ and the homotopy classes of morphisms of $X$ form the
{\it Boardman--Vogt homotopy category $hoX$ of the quasicategory $X$}.

Another way to describe the homotopy category $hoX$ of a quasicategory $X$ is via the left adjoint $\tau_1$ to the fully faithful nerve functor $N \co \mathbf{Cat} \to \mathbf{SSet}$. The left adjoint $\tau_1$ sends a
simplicial set $A$ to the quotient of the free category on the graph $(A_0,A_1, d_0,d_1)$ by the relations
determined by 2-simplices \cite{GabrielZisman}. If $X$ is a quasicategory, then $\tau_1X$ coincides with the
Boardman--Vogt homotopy category $hoX$ and $\pi_0X(x,y)=\left(\tau_1X\right)(x,y)$ \cite[Proposition 1.11, page 213]{JoyalQuadern}. A useful fact about the functor $\tau_1$ is that it preserves finite products \cite[B.0.15]{JoyalQuadern}.

Simplicial sets, simplical categories, and categories are connected by the adjoint functors below, as used in \cite[2.16]{DuggerSpivakMapping} and in other papers on this topic.
$$\xymatrix@C=3pc{\mathbf{SSet} \ar@/^2.5pc/[rr]^-{\tau_1}
\ar@<.5ex>[r]^-{\mathfrak{C}} &  \ar@<.5ex>[l]^-{N^{\text{simp}}} \mathbf{SimpCat} \ar@<.5ex>[r]^-{\pi_0} &
\ar@<.5ex>[l]^-{\text{disc}} \ar@/^2.5pc/[ll]^-{N} \mathbf{\mathbf{Cat}}}$$
Here $N$ is the nerve functor of Grothendieck, the {\it categorification} $\tau_1$ is its left adjoint, $N^\text{simp}$ is the {\it homotopy coherent nerve} of Cordier-Porter \cite{Cordier}, \cite{CordierPorter}, and $\mathfrak{C}$ is its left adjoint \cite[Section 1.1.5]{LurieHigherToposTheory}, sometimes called {\it rigidification}. For any simplicial set $X$, the objects of $\mathfrak{C}(X)$ are the vertices of $X$.  The functor $\pi_0$ forms the homotopy category $\pi_0\calc$ of a simplicially enriched category $\calc$, which has $\left(\pi_0\mathcal{C}\right)(a,b)=\pi_0\left(\calc(a,b)\right)$ as above. The functor $\text{disc}$ means to consider a category as a simplicial category with discrete hom simplicial sets. The equalities $N=N^{\text{simp}}\text{disc}$ and $\tau_1=\pi_0\mathfrak{C}$ hold.

A morphism $f$ in a
quasicategory $X$ is said to be an {\it equivalence} if its image in the homotopy category $\tau_1 X$ is invertible.\footnote{Joyal prefers to call such a morphism an {\it isomorphism} in \cite{JoyalQuadern}: it is the quasicategorical version of the categorical notion of isomorphism.} The equivalences in a quasicategory $X$ satisfy the 3-for-2 property. That is, if $x\colon \Delta[2] \to X$ is a 2-simplex in $X$ such that two of the morphisms $\sigma^*_{0,1}(x)$, $\sigma^*_{1,2}(x)$, $\sigma^*_{0,2}(x)$ are equivalences, then so is the third. This follows from the 3-for-2 property of isomorphisms in $\tau_1X$ and the fact that a 2-simplex in $X$ gives rise to a commutative triangle in $\tau_1X$, and every equation of the form $[g][f]=[h]$ in the homotopy category of $X$ comes from a 2-simplex with the apparent boundary.

A quasicategory $X$ is a Kan complex if and only if $\tau_1X$ is a groupoid, see Joyal \cite[Corollary 1.4]{JoyalQCatsAndKanComplexes}  and \cite[Theorem 4.14]{JoyalQuadern}.

The inclusion of small groupoids into small categories, $\mathbf{Grpd}\to \mathbf{Cat}$, admits a right adjoint $(-)_\text{iso}$. It assigns to a category $\mathcal{C}$ the maximal groupoid contained in $\mathcal{C}$. The maximal groupoid $\mathcal{C}_\text{iso}$ has the same objects as $\mathcal{C}$, but the morphisms are all isomorphisms in $\mathcal{C}$. Similarly, the inclusion of small Kan complexes into small quasicategories, $\mathbf{Kan} \to \mathbf{QCat}$, admits a right adjoint denoted $(-)_\mathrm{equiv}$.\footnote{Blumberg--Gepner--Tabuada write $(-)_\text{iso}$ instead of $(-)_\mathrm{equiv}$ in \cite{BlumbergGepnerTabuadaI}.} For a quasicategory $X$, the simplicial set $X_\mathrm{equiv}$ is the maximal Kan complex contained in $X$ \cite[Theorem 4.19]{JoyalQuadern}.

The Kan complex $X_\mathrm{equiv}$ is 1-full in the quasicategory $X$. An $n$-simplex $x\co \Delta[n] \to X$ is in $X_\mathrm{equiv}$ if and only if the morphism $\sigma_{i,j}^*(x)$ is an equivalence in $X$ for all $0 \leq i < j \leq n$, where $\sigma_{i,j}\co [1] \to [n]$ is the relevant injection with image $\{i,j\}$,  \cite[Corollary 1.5]{JoyalQCatsAndKanComplexes} and \cite[Lemma 4.18]{JoyalQuadern}. In particular, the 1-simplices of $X_{\mathrm{equiv}}$ are the equivalences in $X$. Thus, $X_\mathrm{equiv}$ is the  subcomplex of $X$ 1-full on the morphisms in $X$ which are invertible in the homotopy category $hoX$.

\begin{lemma} \label{lem:iso_commutes_with_Nerve}
If $\mathcal{C}$ is a category, then $\left(N\mathcal{C}\right)_\mathrm{equiv}=N\left(\mathcal{C}_\text{\rm iso}\right)$.
\end{lemma}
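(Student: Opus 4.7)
The plan is to show that both $(N\calc)_{\text{\rm equiv}}$ and $N(\calc_{\text{\rm iso}})$ are simplicial subsets of $N\calc$, and then verify that they contain the same $n$-simplices for every $n \geq 0$. Since both constructions are defined as full-on-certain-simplices subcomplexes of $N\calc$, this levelwise equality will suffice.

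First I would unwind what an $n$-simplex of each side is. An $n$-simplex of $N\calc$ is a functor $x \co [n] \to \calc$, or equivalently a chain of $n$ composable morphisms in $\calc$. An $n$-simplex of $N(\calc_{\text{\rm iso}})$ is such a chain in which every morphism is an isomorphism of $\calc$. On the other side, by the Joyal characterization quoted just before the lemma, $x \co \Delta[n] \to N\calc$ lies in $(N\calc)_{\text{\rm equiv}}$ iff $\sigma^*_{i,j}(x)$ is an equivalence of the quasicategory $N\calc$ for all $0 \leq i < j \leq n$.

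Next I would identify equivalences in $N\calc$ with isomorphisms in $\calc$. Recall from the text that a morphism $f$ in a quasicategory $X$ is an equivalence iff its image in $\tau_1 X$ is invertible. Since $N$ is fully faithful with left adjoint $\tau_1$, the unit $\calc \to \tau_1 N \calc$ is an isomorphism of categories, so a $1$-simplex of $N\calc$ is an equivalence iff the corresponding morphism of $\calc$ is an isomorphism. In particular the $1$-simplices of $(N\calc)_{\text{\rm equiv}}$ are precisely the isomorphisms of $\calc$, which are the $1$-simplices of $N(\calc_{\text{\rm iso}})$.

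Finally I would promote this to all dimensions. If every morphism $x(i) \to x(i+1)$ of the chain $x \co [n] \to \calc$ is an isomorphism, then every composite $x(i) \to x(j)$ with $i < j$ is also an isomorphism, so all the face morphisms $\sigma^*_{i,j}(x)$ are equivalences and $x$ lies in $(N\calc)_{\text{\rm equiv}}$. Conversely, if $x$ lies in $(N\calc)_{\text{\rm equiv}}$, then the special cases $j=i+1$ give that every consecutive arrow of the chain is an isomorphism in $\calc$, so $x$ factors through $\calc_{\text{\rm iso}}$ and defines an $n$-simplex of $N(\calc_{\text{\rm iso}})$. No step here is really hard; the only subtlety worth stating carefully is the passage between ``every consecutive arrow is invertible'' and ``$\sigma^*_{i,j}(x)$ is an equivalence for every $i<j$'', which is immediate from the $2$-out-of-$3$ property of isomorphisms in $\calc$ (or equivalently of equivalences in $N\calc$).
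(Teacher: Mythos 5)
Your proof is correct and follows essentially the same route as the paper's: both rely on the Joyal characterization of $n$-simplices of $X_{\text{equiv}}$ and the identification of equivalences in $N\mathcal{C}$ with isomorphisms in $\mathcal{C}$ via $\tau_1 N\mathcal{C} \cong \mathcal{C}$, and both handle the inclusion $(N\mathcal{C})_{\text{equiv}} \subseteq N(\mathcal{C}_{\text{iso}})$ by looking at consecutive edges $\sigma_{i,i+1}^*(x)$. The only (inessential) difference is in the other inclusion: the paper observes that $N(\mathcal{C}_{\text{iso}})$ is a Kan subcomplex, hence contained in the maximal one, whereas you check directly that all composites $\sigma^*_{i,j}(x)$ are equivalences when every consecutive arrow is an isomorphism; both are fine.
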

\begin{proof}
Since $\left(N\mathcal{C}\right)_\mathrm{equiv}$ is the maximal Kan subcomplex of $N\mathcal{C}$, it contains the Kan subcomplex $N\left(\mathcal{C}_\text{\rm iso}\right)$.

A 1-simplex in $N\mathcal{C}$ is an equivalence in $N\mathcal{C}$ if and only if its image is an isomorphism in $\tau_1 N \mathcal{C} \cong \mathcal{C}$, so the 1-simplices of $\left(N\mathcal{C}\right)_\mathrm{equiv}$ and $N\left(\mathcal{C}_\text{\rm iso}\right)$ coincide.
If $x$ is an $n$-simplex of $\left(N\mathcal{C}\right)_\mathrm{equiv}$, then it is a path of $n$ morphisms in $\mathcal{C}$ in which the composites $\sigma_{i,j}^*(x)$ are equivalences in $N\mathcal{C}$ and thus isomorphisms in $\mathcal{C}$ for all $0 \leq i < j \leq n$. In particular, each of the $n$ morphisms is invertible in $\mathcal{C}$ and $x$ is in $N \left( \mathcal{C}_\text{iso} \right)$, so that $\left(N\mathcal{C}\right)_\mathrm{equiv}\subseteq N\left(\mathcal{C}_\text{\rm iso}\right)$.
\end{proof}

If $A$ is a simplicial set such that $\tau_1 A$ is a groupoid, then every map $A \to X$ factors through the inclusion $X_{\mathrm{equiv}} \subseteq X$ \cite[Proposition 4.21]{JoyalQuadern}.

The maximal Kan subcomplex $X_{\mathrm{equiv}}$ of the quasicategory $X$ is more closely related to groupoids in the following straightforward extension of \cite[Proposition 4.22]{JoyalQuadern}. Let $J[n]$ denote the nerve of the groupoid with objects $0,1, \dots, n$ and a unique isomorphism from any object to another.

\begin{proposition} \label{prop:extending_an_n-simplex_of_equivalences}
Let $X$ be a quasicategory. An $n$-simplex $x\co \Delta[n] \to X$ is in $X_\mathrm{equiv}$ if and only if it can be extended to a map $J[n] \to X$.
\end{proposition}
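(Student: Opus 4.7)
The plan is to leverage the two main characterizations of $X_{\text{equiv}}$ recalled just above: first, that $X_{\text{equiv}}$ is the maximal Kan subcomplex of $X$; and second, Joyal's Proposition~4.21, which says that every map from a simplicial set $A$ with $\tau_1 A$ a groupoid automatically factors through $X_{\text{equiv}}$. The forward direction will be a one-line application of Joyal's result, while the reverse direction will be an extension problem that we resolve using the Kan property of $X_{\text{equiv}}$.

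For the ``if'' direction, suppose $x$ extends to a map $\tilde x \co J[n] \to X$. Because $J[n]$ is by definition the nerve of a groupoid and $\tau_1 N = \text{Id}_{\mathbf{Cat}}$, the category $\tau_1 J[n]$ is a groupoid. Joyal's Proposition~4.21, as cited above, then forces $\tilde x$ to factor through the inclusion $X_{\text{equiv}} \hookrightarrow X$. Precomposing with the inclusion $\Delta[n] \hookrightarrow J[n]$ shows that $x$ itself factors through $X_{\text{equiv}}$, hence lies in $X_{\text{equiv}}$.

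For the ``only if'' direction, view $x$ as a map $\Delta[n] \to X_{\text{equiv}}$ and try to extend it along the inclusion $\Delta[n] \hookrightarrow J[n]$. This inclusion is a monomorphism of simplicial sets, and both source and target have contractible geometric realization: $|\Delta[n]|$ is a simplex, and $|J[n]|$ is the classifying space of the indiscrete groupoid on $\{0,1,\dots,n\}$, which is equivalent to the terminal groupoid. Hence $\Delta[n] \hookrightarrow J[n]$ is a trivial cofibration in the Kan--Quillen model structure. Since $X_{\text{equiv}}$ is a Kan complex, the map $X_{\text{equiv}} \to \ast$ is a Kan fibration, so the desired extension $J[n] \to X_{\text{equiv}} \subseteq X$ exists by the lifting property.

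The main obstacle is justifying the reverse direction within the self-contained spirit of the paper, without importing the Kan--Quillen model structure wholesale. The cleanest alternative is an induction on $n$: the case $n=1$ is precisely \cite[Proposition 4.22]{JoyalQuadern}, whose proof shows that a single equivalence in $X$ extends to $J[1] \to X$. For the inductive step, one fills each edge $\sigma_{i,j}^*(x)$ of $x$ to a map $J[1] \to X_{\text{equiv}}$ using the base case, then assembles these into the required map $J[n] \to X_{\text{equiv}}$ by repeated inner horn (and, since $X_{\text{equiv}}$ is Kan, outer horn) filling. Either argument suffices; the model-categorical one is shorter, while the inductive one more transparently uses only the tools already developed in Section~\ref{sec:recollections}.
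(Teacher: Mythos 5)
Your proof is correct and takes essentially the same approach as the paper: the ``if'' direction is the same appeal to Joyal's Proposition~4.21 using $\tau_1 J[n]$ being a groupoid, and the ``only if'' direction is the same lifting argument against $X_{\text{equiv}} \to \ast$ using that $\Delta[n] \hookrightarrow J[n]$ is a trivial cofibration. The paper does use the Kan--Quillen lifting argument directly, so your worry about ``importing the model structure'' was unfounded, and the alternative inductive argument you sketch is unnecessary (though sound in spirit).
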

\begin{proof}
Suppose $x$ is in $X_\mathrm{equiv}$. Then we have the following diagram, in which the right vertical arrow is a fibration. $$\xymatrix{\Delta[n] \ar[r]^x \ar@{^{(}->}[d] & X_{\mathrm{equiv}} \ar[d] \\ J[n] \ar[r] & \ast}$$
The inclusion $\Delta[n] \hookrightarrow J[n]$ is a cofibration by definition, and a weak equivalence (both $\Delta[n]$ and $J[n]$ are contractible, as they are nerves of categories with a terminal object). Therefore the lifting problem
$\overline{x}\co J[n] \to X_\mathrm{equiv}$ can be solved.

Suppose now $x$ is in $X$ and an extension $\overline{x} \co J[n] \to X$ exists. The category $\tau_1 \left( J[n] \right)$ is isomorphic to a groupoid via the counit of the adjunction $\tau_1 \dashv N$, so $\overline{x} \co J[n] \to X$ factors through the inclusion $X_{\mathrm{equiv}} \subseteq X$.
\end{proof}

\begin{corollary} \label{cor:nat_transf_is_nat_equiv_iff_comps_equivs}
Let $X$ and $Y$ be quasicategories and $\alpha\colon X \times \Delta[1] \to Y$ a natural transformation. Then the following are equivalent.
\begin{enumerate}
\item \label{item(i)}
$\alpha$ is a natural equivalence.
\item \label{item(ii)}
$\alpha$ is invertible in the homotopy category $\tau_1\text{\rm Fun}(X,Y)=\tau_1(Y^X)$.
\item \label{item(iii)}
$\alpha$ extends to a map $X \times J[1] \to Y$.
\item \label{item(iv)}
Each component $\alpha_x:=\alpha(s_0(x), \text{Id}_{[1]})$ is an equivalence in $Y$.
\end{enumerate}
\end{corollary}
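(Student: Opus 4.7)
My plan is to establish the cycle (i) $\Leftrightarrow$ (ii), (i) $\Leftrightarrow$ (iii), (iii) $\Rightarrow$ (iv), and (iv) $\Rightarrow$ (i), with the first three implications essentially formal and the last carrying the substantive content. The equivalence (i) $\Leftrightarrow$ (ii) is just the definition of an equivalence in a quasicategory applied to the quasicategory $\text{Fun}(X,Y)=Y^X$: a 1-simplex is an equivalence by definition iff its image in the homotopy category $\tau_1 Y^X$ is invertible, as recalled earlier in this section. For (i) $\Leftrightarrow$ (iii), I apply the immediately preceding Proposition to $\text{Fun}(X,Y)$ with $n=1$: a 1-simplex $\widehat{\alpha}$ lies in $\text{Fun}(X,Y)_{\text{equiv}}$, equivalently is an equivalence of $\text{Fun}(X,Y)$, iff it extends to a map $J[1]\to \text{Fun}(X,Y)$; the exponential adjunction \eqref{equ:Fun_adjunction} translates this extension into a map $X\times J[1]\to Y$ extending $\alpha$, which is precisely (iii).

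The implication (iii) $\Rightarrow$ (iv) is routine: given the extension $\bar{\alpha}\co X\times J[1]\to Y$, precompose with the vertex inclusion $\{x\}\times J[1]\hookrightarrow X\times J[1]$ for each $x\in X_0$ to obtain a map $J[1]\to Y$ extending $\alpha_x\co \Delta[1]\to Y$; the preceding Proposition (with $Y$ in place of $X$, $n=1$) then yields that $\alpha_x$ is an equivalence in $Y$.

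The main work, and the step I expect to be the obstacle, is (iv) $\Rightarrow$ (i), equivalently (iv) $\Rightarrow$ (iii). My plan is to construct the extension $\bar{\alpha}\co X\times J[1]\to Y$ by skeletal induction on the non-degenerate simplices of $X$. For the 0-skeleton, applying the preceding Proposition to each equivalence $\alpha_x$ separately yields an extension $J[1]\to Y$, and these assemble into a map $X_0\times J[1]\to Y$ compatible with $\alpha\vert_{X_0\times \Delta[1]}$. For a non-degenerate $n$-simplex $\sigma\co\Delta[n]\to X$ with $n\geq 1$, the previously constructed data together with $\alpha$ determine a map on the pushout $(\partial\Delta[n]\times J[1])\cup_{\partial\Delta[n]\times\Delta[1]}(\Delta[n]\times\Delta[1])\to Y$, and one must extend this over $\Delta[n]\times J[1]$. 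The required fillers in $Y$ exist because of the inner horn filling property of the quasicategory $Y$, together with the filling property for outer horns whose extremal edge is an equivalence; both edges that need to be inverted in the extra simplices of $\Delta[n]\times J[1]$ descend from components of the already-inverted $\alpha_x$'s, hence are equivalences in $Y$. The technical heart of the argument is organizing this induction so that each filling problem reduces to inner or special outer horn extensions in $Y$; once completed, the resulting $\bar{\alpha}$ is the extension required by (iii), and (i) then follows from the already-established implication (iii) $\Rightarrow$ (i).
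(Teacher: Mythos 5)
Your cycle (i) $\Leftrightarrow$ (ii) $\Leftrightarrow$ (iii) and the easy direction (iii) $\Rightarrow$ (iv) are exactly the paper's, and your overall statement is of course correct. The substantive difference is in (iv) $\Rightarrow$ (iii). The paper does not work simplex-by-simplex in $X$: it transposes $\alpha$ across the exponential adjunction to $\alpha^X\colon X \to \mathrm{Fun}(\Delta[1],Y)$, observes that the restriction $i^*\colon \mathrm{Fun}(J[1],Y) \to \mathrm{Fun}(\Delta[1],Y)$ is a $0$-full inclusion (onto the subquasicategory of pointwise equivalences in $Y$), and then uses the characterization of $0$-fullness by the right lifting property against $\partial\Delta[m]\hookrightarrow\Delta[m]$ for $m\ge 1$ to conclude that lifting $\alpha^X$ along $i^*$ is purely a condition on vertices, which is (iv). This packages all the horn-filling into the $0$-full machinery the paper set up just before (the proposition about extensions $\Delta[n]\hookrightarrow J[n]$ and Corollary~\ref{cor:simplices_of_maximal_Kan_subcomplex}), rather than running an explicit induction.

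Your direct skeletal induction is a legitimate alternative route, but as written it is a sketch with the hard step left open. Concretely: you would need to show that the pushout-product inclusion
$(\partial\Delta[n]\times J[1]) \cup_{\partial\Delta[n]\times\Delta[1]} (\Delta[n]\times\Delta[1]) \hookrightarrow \Delta[n]\times J[1]$
admits a lift against $Y$ given that the relevant edges go to equivalences. This inclusion is not inner anodyne and not a categorical trivial cofibration, so naive horn-filling is not enough; you must produce a filtration by pushouts of inner horns together with special outer horns whose last (or first) edge hits an equivalence, and invoke Joyal's special outer horn theorem for those. You flag this as ``the technical heart'' without carrying it out, and there is an additional wrinkle you do not mention: $J[1]$ has nondegenerate simplices in every dimension, so $\Delta[n]\times J[1]$ is not finite-dimensional; the induction therefore cannot proceed by nondegenerate simplices of $X$ alone and needs a secondary (infinite) induction over the dimension of $\Delta[n]\times J[1]$, with a verification that the colimit still maps to $Y$. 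None of this is insurmountable --- it is essentially Joyal's Theorem~5.14, which the paper cites as ``a different proof'' --- but it is exactly the bookkeeping that the paper's transposition-plus-$0$-fullness argument is designed to avoid. If you want your route to be complete, either supply the explicit cell decomposition of the pushout-product and check the edge conditions, or cite the special outer horn / isofibration lifting results from Joyal or Lurie outright.
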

\begin{proof}
By definition, $\alpha$ is a natural equivalence if and only if it is an equivalence in $\text{Fun}(X,Y)$, which is the case if and only if $\alpha \colon \Delta[1] \to \text{Fun}(X,Y)$ extends to $\overline{\alpha} \colon J[1] \to \text{Fun}(X,Y)$. Thus, \ref{item(i)}, \ref{item(ii)}, and \ref{item(iii)} are equivalent.

The equivalence \ref{item(i)} $\Leftrightarrow$ \ref{item(iv)} is Joyal's \cite[Theorem~5.14]{JoyalQuadern}, which is quite involved to prove. We merely intuitively motivate the equivalence \ref{item(iii)} $\Leftrightarrow$ \ref{item(iv)} instead.
By the functoriality and naturality of \eqref{equ:Fun_adjunction}, $\alpha$ extends to $\overline{\alpha}$ if and only if the corresponding map $\alpha^X\colon X \to \text{Fun}(\Delta[1],Y)$ admits a lift $\overline{\alpha^X}\colon X \to \text{Fun}(J[1],Y)$ along the restriction map $i^*\colon \text{Fun}(J[1],Y) \to \text{Fun}(\Delta[1],Y)$. Assuming the restriction $i^*$ is $0$-full\footnote{One could expect that $i^*\colon \text{Fun}(J[1],Y) \to \text{Fun}(\Delta[1],Y)$ is 0-full because we know $Y_{\mathrm{equiv}} \hookrightarrow Y$ is 1-full and Proposition~\ref{prop:extending_an_n-simplex_of_equivalences}. Notice that the statement ``$i^*$ is 0-full'' is a consequence of the equivalences \ref{item(i)} $\Leftrightarrow$ \ref{item(iv)} and \ref{item(i)} $\Leftrightarrow$ \ref{item(iii)}, for if $\gamma\colon \Delta[1] \times \Delta[n] \to Y$ in the codomain has all of its vertices (``components'') $\gamma_i \colon \Delta[1] \times \{i\} \to Y$ as equivalences, then $\gamma$ is a natural equivalence, and will admit an extension to $J[1] \times \Delta[n] \to Y$ by the equivalences \ref{item(iv)} $\Leftrightarrow$ \ref{item(i)} $\Leftrightarrow$ \ref{item(iii)}.}, the existence of a lift along $i^*$ is the same as the existence of a lift on the level of $0$-simplices, which is the same as requiring each component $\alpha_x$ to be an equivalence in $Y$. Thus, assuming the intuition that $i^*$ is 0-full, we have \ref{item(iii)} $\Leftrightarrow$ \ref{item(iv)}.
\end{proof}

\subsection{Adjunctions and Equivalences between Quasicategories} \label{subsec:Adjunctions_and_Equivalences_between_Quasicategories}

We also need the notions of adjoint functors and equivalences between quasicategories. These are most easily phrased in terms of Joyal's 2-category of simplicial sets as in \cite{JoyalQuadern}, denoted by $\mathbf{SSet}^{\tau_1}$. Objects are simplicial sets, and morphisms are maps of simplicial sets. The 2-cells are given by the definition of hom categories $\mathbf{SSet}^{\tau_1}(A,B):=\tau_1(B^A)$. If $X$ and $Y$ are quasicategories, then a 2-cell between functors $j,k\colon X \to Y$ is the homotopy class of a functor $\alpha\co X \times \Delta[1] \to Y$ with $\alpha_0=j$ and $\alpha_1=k$.

\begin{proposition}[Proposition 1.27 of \cite{JoyalQuadern}] \label{prop:tau1_is_a_2-functor}
The functor $$\xymatrix{\tau_1 \colon \mathbf{SSet}^{\tau_1} \ar[r] & \mathbf{Cat}}$$ is a 2-functor.
Hence it takes an equivalence in $\mathbf{SSet}^{\tau_1}$ to an equivalence of categories, an adjunction to an adjunction, and a left respectively right adjoint to a left respectively right adjoint.
\end{proposition}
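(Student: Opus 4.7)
The plan is to first construct the action of $\tau_1$ on hom-categories, then verify 2-functoriality, and finally read off the three named consequences from general 2-category theory.

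For the action on hom-categories, I need, for each pair of simplicial sets $A,B$, a functor
$$\tau_1\colon \tau_1(B^A) \longrightarrow \mathbf{Cat}(\tau_1 A,\tau_1 B).$$
The source is the hom-category in $\mathbf{SSet}^{\tau_1}$; the target is the hom-category in $\mathbf{Cat}$. The evaluation map of simplicial sets $\mathrm{ev}\colon B^A \times A \to B$, obtained from \eqref{equ:Fun_adjunction}, gives after applying $\tau_1$ a map of categories $\tau_1(B^A \times A)\to \tau_1 B$. Since $\tau_1$ preserves finite products, this rewrites as a functor $\tau_1(B^A)\times \tau_1 A \to \tau_1 B$, which by cartesian closedness of $\mathbf{Cat}$ is the same as a functor $\tau_1(B^A)\to \mathbf{Cat}(\tau_1 A,\tau_1 B)$. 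Unwinding, it sends a morphism in $\tau_1(B^A)$ represented by a map $\alpha\colon\Delta[1]\to B^A$ (equivalently, a natural transformation $\alpha\colon A\times\Delta[1]\to B$) to the natural transformation $\tau_1\alpha\colon \tau_1 A \times [1]\to \tau_1 B$ between the two functors $\tau_1\alpha_0,\tau_1\alpha_1\colon \tau_1 A\to\tau_1 B$.

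Next I would verify the two pieces of 2-functoriality. Preservation of identity 1-cells and identity 2-cells is immediate from the definition. Preservation of vertical composition of 2-cells reduces to checking that if $\beta\colon A\times\Delta[2]\to B$ is a 2-simplex in $B^A$ whose boundary exhibits a chosen composite of two natural transformations $\alpha,\alpha'$, then $\tau_1\beta\colon \tau_1 A\times[2]\to \tau_1 B$ exhibits $\tau_1\alpha'\circ\tau_1\alpha$ as the composite in $\mathbf{Cat}(\tau_1 A,\tau_1 B)$; this is just the fact that a 2-simplex in $B^A$ witnesses a commutative triangle in $\tau_1(B^A)$, combined again with product preservation $\tau_1(A\times\Delta[2])\cong \tau_1 A\times[2]$. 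Preservation of horizontal composition (both whiskerings) is analogous: a whiskering of a 2-cell $\alpha\colon A\times\Delta[1]\to B$ by a 1-cell $g\colon B\to C$ is $g\circ\alpha$, whose image is $\tau_1 g\circ\tau_1\alpha$; and whiskering by $h\colon A'\to A$ is $\alpha\circ(h\times\mathrm{id})$, whose image is $\tau_1\alpha\circ(\tau_1 h\times\mathrm{id})$. This is where the product-preservation of $\tau_1$ is used in an essential way.

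For the second statement, I would simply invoke general 2-categorical folklore: any 2-functor $F\colon \mathcal{K}\to\mathcal{L}$ preserves adjunctions, equivalences, and left/right adjoints, because these notions are defined by the existence of 1-cells and 2-cells satisfying the triangle identities (respectively, invertibility conditions) in the relevant hom-categories, and all of this structure is transported by $F$. Applied to $\tau_1\colon\mathbf{SSet}^{\tau_1}\to\mathbf{Cat}$ this yields exactly the three consequences claimed.

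The main obstacle I anticipate is the vertical-composition axiom. It requires a careful choice of representative for a composite of 2-cells in $\tau_1(B^A)$, since composites in $\tau_1$ are only determined up to the relation generated by 2-simplices; once one fixes a filler of an inner horn $\Lambda^1[2]\to B^A$ to represent the composition, the argument reduces, via product preservation, to the tautology that $\tau_1$ applied to a 2-simplex is a commutative triangle. Everything else is bookkeeping.
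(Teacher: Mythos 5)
The paper does not prove this proposition; it cites Joyal's Quadern, and your blind proof reconstructs the standard argument. The key ingredient you isolate — that $\tau_1$ preserves finite products — is exactly the fact the paper highlights just above the statement, and the construction of the hom-category functors as the cartesian transpose of $\tau_1(\mathrm{ev})$ is the right move. Your reduction of the consequences to the 2-functoriality is the usual 2-categorical folklore. So the proof is correct and essentially follows the canonical route.

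One piece of your commentary is mislocated, though. You flag vertical composition as the ``main obstacle'' and worry about choosing horn fillers in $B^A$. But once you have defined the hom-category map $\Phi_{A,B}\colon \tau_1(B^A) \to \mathbf{Cat}(\tau_1 A,\tau_1 B)$ as the transpose of $\tau_1(\mathrm{ev})\colon \tau_1(B^A)\times\tau_1 A\to\tau_1 B$, it is a \emph{functor} by construction, and ``preservation of vertical composition'' is precisely the functoriality of $\Phi_{A,B}$ — there is nothing further to verify, and in particular no inner-horn filler to choose (indeed, $B^A$ need not be a quasicategory, since the objects of $\mathbf{SSet}^{\tau_1}$ are arbitrary simplicial sets). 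The nontrivial checks are the ones you call bookkeeping: compatibility of the $\Phi_{A,B}$ with identity $1$-cells and with the horizontal composition $\tau_1(C^B)\times\tau_1(B^A)\to\tau_1(C^A)$. Both follow from the same device: transpose, use product preservation of $\tau_1$, and reduce to the fact that $\tau_1$ applied to the evaluation/composition diagrams of $\mathbf{SSet}$ commutes because $\tau_1$ is a functor.
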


An {\it adjunction between quasicategories $X$ and $Y$} is an adjunction between $X$ and $Y$ in the 2-category $\mathbf{SSet}^{\tau_1}$. More explicitly, an adjunction consists of functors $f\colon X\to Y$ and $g \colon Y \to X$, functors $\eta\colon X \times \Delta[1] \to X$ and $\varepsilon\colon Y \times \Delta[1] \to Y$ with $\eta_0=\text{Id}_X$, $\eta_1=g \circ f$, $\varepsilon_0=f \circ g$, and $\varepsilon_1=\text{Id}_Y$, such that the triangle identities hold in the 2-category $\mathbf{SSet}^{\tau_1}$:
\begin{equation} \label{equ:qcat_triangle_identities}
(g\ast[\varepsilon]) \odot ([\eta] \ast g)=\text{Id}_g \;\;\;\;\; ([\varepsilon] \ast f) \odot (f \ast [\eta])=\text{Id}_f.
\end{equation}
Here $[\eta]$ and $[\varepsilon]$ indicate the morphisms in the categories $\tau_1(X^X)$ and $\tau_1(Y^Y)$ associated to the morphism $\eta$ and $\varepsilon$ in $\text{Fun}(X,X)$ and $\text{Fun}(Y,Y)$. An adjunction between quasicategories induces an ordinary adjunction between their homotopy categories. The nerve of an adjunction between categories is an adjunction between quasicategories.

Like the notion of adjunction between quasicategories, the notion of equivalence between quasicategories may also be phrased in terms of the 2-category $\mathbf{SSet}^{\tau_1}$. A functor $f \colon X \to Y$ between quasicategories is an {\it equivalence of quasicategories} if it is an equivalence in the 2-category $\mathbf{SSet}^{\tau_1}$. That is to say, if there exist a functor $g \colon Y \to X$, a morphism $\eta\co \text{Id}_X \to g \circ f$ in $\text{Fun}(X,X)$, and a morphism $\varepsilon \co f \circ g \to \text{Id}_Y$ in $\text{Fun}(Y,Y)$ which induce invertible 2-cells $[\eta]$ and $[\varepsilon]$ in $\mathbf{SSet}^{\tau_1}$. In other words, $\eta$ and $\varepsilon$ are natural equivalences. Every equivalence between quasicategories is a weak homotopy equivalence between the underlying simplicial sets. A simplicial set map $N\calc \to N\cald$ is an equivalence of quasicategories if and only if it is an equivalence of categories $\calc \to \cald$.

The quasicategorical notions of adjunction and equivalence between nerves of categories coincide with the respective categorical notions because for categories $\calc$ and $\cald$ the canonical simplicial set map $N(\cald^\calc)\to (N\cald)^{(N\calc)}$ is an isomorphism by \cite[Proposition~B.0.16]{JoyalQuadern}.

A functor $f \colon X \to Y$ between quasicategories is said to be {\it fully faithful} if the associated map $X(a, b) \to Y(fa, fb)$ is a weak homotopy equivalence for all objects $a$ and $b$ of $X$, recall $X(a,b)$ is defined via the pullback in equation \eqref{equ:mapping_space_of_a_quasicategory}. The functor $f$ is said to be {\it essentially surjective} if $\tau_1 f \colon \tau_1 X \to \tau_1 Y$ is essentially surjective.

\begin{proposition} \label{prop:g_fullyfaithful_iff_counit_equivalence:quasicategories}
Suppose $f\colon X \to Y$ and $g\co Y \to X$ are adjoint functors between quasicategories with unit  $\eta\co 1_{X} \Rightarrow gf$ and counit $\varepsilon\co fg \Rightarrow 1_Y$. Then the right adjoint $g$ is fully faithful if and only if every component of the counit $\varepsilon$ is an equivalence. Dually, the left adjoint $f$ is fully faithful if and only if every component of the unit $\eta$ is an equivalence.
\end{proposition}

\begin{proposition} \label{prop:qcat_equiv_iff_fullyfaithful_ess_surj}
A functor between quasicategories is an equivalence in $\mathbf{SSet}^{\tau_1}$ if and only if it is fully faithful and essentially surjective.
\end{proposition}

Every equivalence of quasicategories $f \colon X \to Y$ induces an ordinary equivalence of categories $\tau_1 f \colon \tau_1X \to \tau_1Y$. Consequently, if $\sigma$ is a 1-morphism in $X$, then $\sigma$ is an equivalence in $X$ if and only if $f(\sigma)$ is an equivalence in $Y$. That is, every equivalence of quasicategories both preserves and reflects equivalences {\it in} quasicategories.

It is well known that every equivalence in a 2-category is part of an adjoint equivalence. Thus, if $f$ is an equivalence of quasicategories, then $g$, $\eta$, and $\varepsilon$ may be chosen to make $f$ a left or right adjoint.

\subsection{The Joyal Model Structure on $\mathbf{SSet}$, its Relationship to the Kan Structure, and Constructions of Joyal Fibrations and Joyal Equivalences via Restriction}

Joyal proved in \cite{JoyalQuadern} his construction of a model structure on $\mathbf{SSet}$ in which the fibrant objects are precisely the quasicategories. Rather than going into details, we only describe Joyal fibrations and Joyal equivalences between quasicategories, compare with the Kan structure on simplicial sets, and state the main propositions on how to build Joyal fibrations and Joyal equivalences from restriction. The goal here is to merely assemble the facts needed for the present project.

A {\it Joyal cofibration} is a monomorphism in $\mathbf{SSet}$, that is, a map of simplicial sets which is injective in each degree. A {\it Joyal equivalence between quasicategories} is an equivalence between quasicategories in the 2-category $\mathbf{SSet}^{\tau_1}$, as described in Section~\ref{subsec:Adjunctions_and_Equivalences_between_Quasicategories}. A {\it Joyal fibration between quasicategories} is a map between quasicategories which has the right lifting property with respect to each $\Lambda^k[n] \hookrightarrow \Delta[n]$ for $n\geq 2$ and $0 < k < n$, and with respect to the inclusion $\{0\} \hookrightarrow \{0 \cong 1\}$.

Joyal cofibrations are the same as the cofibrations in the usual Kan model structure on simplicial sets, so we merely refer to them as {\it cofibrations}. The trivial fibrations in the Joyal model structure and in the Kan model structure are the same, since they are exactly the maps with the right lifting property with respect to the same cofibrations, so we use the term {\it trivial fibrations} to refer to the trivial fibrations of both the Joyal model structure and the Kan model structure, as Joyal elegantly does in \cite{JoyalQuadern}. Every Joyal equivalence is a Kan weak equivalence, but not vice versa. Every Kan fibration is a Joyal fibration, but not vice versa. See \cite[Proposition~6.15 and Corollary~6.16]{JoyalQuadern}.

\begin{proposition}[Corollary 4.28 of \cite{JoyalQuadern}] \label{prop:Joyal_Fibrations_btw_Kan_cxs_are_Kan}
Let $X$ and $Y$ be Kan complexes. Then every Joyal fibration $X \to Y$ is a Kan fibration.
\end{proposition}

\begin{proposition}[Construction of Joyal Equivalences and Joyal Fibrations \\ via Restriction and Postcomposition] \leavevmode \label{prop:restriction_morphisms}
\begin{enumerate}
\item \label{prop:restriction_morphisms:i}
Let $A$ and $B$ simplicial sets and $u \co A \to B$ a map of simplicial sets. Then $u$ is a Joyal equivalence (=weak categorical equivalence) if and only if for every quasicategory $X$ the restriction map $X^u\co X^B \to X^A$ is a Joyal equivalence between quasicategories. \cite[Proposition~2.27]{JoyalQuadern}
\item \label{prop:restriction_morphisms:ii}
Let $X$ be a quasicategory, $A$ and $B$ simplicial sets, and $u \co A \to B$ a monomorphism. Then the restriction map $X^u\co X^B \to X^A$ is a Joyal fibration between quasicategories. See \cite[Theorem~5.13]{JoyalQuadern} and \cite[Theorem~6.6]{JoyalQuadern}.
\item \label{prop:restriction_morphisms:iii}
Let $f\co X \to Y$ be a Joyal fibration between quasicategories, and $A$ a simplicial set. Then $f^A\co X^A \to Y^A$ is a Joyal fibration between quasicategories. See \cite[Theorem~5.13]{JoyalQuadern} and \cite[Theorem~6.6]{JoyalQuadern}.
\item
Let $u \co A \to B$ be a monomorphism that is also a Joyal equivalence. Then the restriction map $X^u\co X^B \to X^A$ is a trivial fibration.
\end{enumerate}
\end{proposition}

\subsection{Commutative Squares and Pushouts in a Quasicategory}

Even though a quasicategory $X$ is not equipped with a choice of composition, we may still speak of commutative squares in $X$, which are actually homotopy commutative squares in $X$. A {\it commutative square} in a quasicategory $X$ is a functor $\Delta[1] \times \Delta[1] \to X$. Since $\tau_1$ preserves products, a commutative square in $X$ gives rise to a truly commutative square in the homotopy category $\tau_1X$. The converse is also true.

\begin{lemma} \label{lem:commutative_squares_in_homotopy_category=commutative_squares_in_quasicategory}
Let $X$ be a quasicategory. Every commutative square $[g][f]=[k][j]$ in its homotopy category $\tau_1X$ comes from a commutative square in $X$ with boundary $g$, $f$, $k$, and $j$.
\end{lemma}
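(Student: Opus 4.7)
The plan is to lift the square by hand, exploiting the simplicity of $\Delta[1]\times\Delta[1]$. Since $\tau_1$ preserves finite products, a commutative square in $\tau_1X$ unfolds as four vertices $a,b,c,d$ of $X$ together with homotopy classes $[f]\in\pi_0X(a,b)$, $[g]\in\pi_0X(a,c)$, $[h]\in\pi_0X(b,d)$, $[k]\in\pi_0X(c,d)$ satisfying $[h]\circ[f]=[k]\circ[g]$. On the other hand, the non-degenerate simplices of $\Delta[1]\times\Delta[1]$ consist of four vertices, four boundary edges, a diagonal edge $(0,0)\to(1,1)$, and two non-degenerate 2-simplices (an upper and a lower triangle) whose $d_1$-faces are that diagonal; equivalently, $\Delta[1]\times\Delta[1]=\Delta[2]\cup_{\Delta[1]}\Delta[2]$ along the $d_1$-inclusion $\Delta[1]\hookrightarrow\Delta[2]$. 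Thus producing a lift $\Delta[1]\times\Delta[1]\to X$ amounts to choosing representatives $f,g,h,k\in X_1$ of the four classes, a diagonal $\ell\colon a\to d$, and two 2-simplices $\sigma_U,\sigma_L\in X_2$ with $d_2\sigma_U=f$, $d_0\sigma_U=h$, $d_2\sigma_L=g$, $d_0\sigma_L=k$, and $d_1\sigma_U=d_1\sigma_L=\ell$.

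First I would pick arbitrary representatives $f,g,h,k\in X_1$. Filling the inner horn $\Lambda^1[2]\to X$ with $d_0=h,\ d_2=f$ yields a 2-simplex $\sigma_U$ whose $d_1$-face is some $\ell_1\colon a\to d$ realizing $[\ell_1]=[h]\circ[f]$ in $\tau_1X$, by the definition of composition in the homotopy category. Analogously, fill $\Lambda^1[2]\to X$ with $d_0=k,\ d_2=g$ to obtain $\sigma_L$ with $d_1\sigma_L=\ell_2$ and $[\ell_2]=[k]\circ[g]$. Commutativity of the original square forces $[\ell_1]=[\ell_2]$ in $\tau_1X$, so $\ell_1$ and $\ell_2$ are homotopic 1-simplices of $X$ with the same endpoints; by the equivalence of notions of homotopy in a quasicategory recalled earlier in the paper, there is a 2-simplex $\theta\in X_2$ with $d_2\theta=\ell_1$, $d_1\theta=\ell_2$, and $d_0\theta=s_0(d)$.

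The key step is to make $\sigma_U$ and $\sigma_L$ share a common diagonal, which I would do by transporting the diagonal of $\sigma_U$ from $\ell_1$ to $\ell_2$ along $\theta$. Consider the 3-simplex on vertex list $[a,b,d,d]$ with edges $01=f$, $12=13=h$, $23=s_0(d)$, $02=\ell_1$, $03=\ell_2$. The three prescribed faces
\[d_0=s_1(h),\qquad d_1=\theta,\qquad d_3=\sigma_U\]
assemble into a well-defined inner horn $\Lambda^2[3]\to X$: the simplicial identities $d_id_j=d_{j-1}d_i$ for $i<j$ reduce every pairwise compatibility to agreement on a single edge, which is immediate from the choices above. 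Filling this inner horn and taking the $d_2$-face yields a 2-simplex $\overline{\sigma_U}$ with $d_2\overline{\sigma_U}=f$, $d_0\overline{\sigma_U}=h$, and $d_1\overline{\sigma_U}=\ell_2$. Setting $\ell:=\ell_2$, the 2-simplices $\overline{\sigma_U}$ and $\sigma_L$ now share the $d_1$-face $\ell$ and the outer vertices $a,d$, so they glue via the pushout description above to the desired commutative square $\Delta[1]\times\Delta[1]\to X$ lifting the given square in $\tau_1X$.

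The main obstacle is precisely this $\Lambda^2[3]$-filling, where the inner horn extension property of $X$ is used to align the two triangles along one diagonal. Everything else reduces to the inner horn axiom at dimension two (for constructing $\sigma_U,\sigma_L$), the definition of composition in $\tau_1X$, the product-preservation of $\tau_1$, and the standard form of a homotopy between parallel 1-simplices in a quasicategory.
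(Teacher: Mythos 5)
Your proof is correct, and the horn-compatibility bookkeeping for the $\Lambda^2[3]$ filling checks out: with $\sigma_0=s_1(h)$, $\sigma_1=\theta$, $\sigma_3=\sigma_U$, the identities $d_0\sigma_0=d_0\sigma_1=s_0(d)$, $d_2\sigma_0=d_0\sigma_3=h$, and $d_2\sigma_1=d_1\sigma_3=\ell_1$ all hold, so the horn is well-defined and its $d_2$-face $\overline{\sigma_U}$ has the faces you claim.

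You do, however, take a genuinely different route from the paper. The paper's proof simply invokes the Boardman--Vogt/Joyal characterization of composition in $\tau_1 X$: the relation $[g][f]=[h]$ holds in $\tau_1 X$ if and only if the boundary $\partial\Delta[2]\to X$ determined by those particular representatives $g,f,h$ extends to a full $2$-simplex. With that in hand, the paper picks a single representative $h$ of the common composite $[g][f]=[k][j]$ and fills \emph{both} triangles directly over that one diagonal -- there is no need to reconcile two different diagonals, and no $3$-simplex appears. Your argument instead fills two inner $\Lambda^1[2]$ horns, which produces two potentially different diagonals $\ell_1,\ell_2$, and then reconciles them via a (right) homotopy $\theta$ and a $\Lambda^2[3]$ filling; this is essentially re-deriving the ``iff'' half of the Boardman--Vogt fact rather than citing it. The trade-off: the paper's proof is shorter and leans on the cited result, while yours is more self-contained, using only inner horn filling in dimensions $2$ and $3$ together with the equivalence of homotopy relations for $1$-simplices. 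Both are valid, and your explicit pushout decomposition $\Delta[1]\times\Delta[1]\cong\Delta[2]\cup_{\Delta[1]}\Delta[2]$ makes precise what the paper compresses into ``$\Delta[1]\times\Delta[1]$ is $2$-skeletal.''
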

\begin{proof}
Since $X$ is a quasicategory, every morphism in $\tau_1X$ is a homotopy class $[f]$ of a morphism $f$ in $X$. Moreover, we have $[g][f]=[h]$ in $\tau_1X$ if and only if the boundary $\partial \Delta[2] \to X$ determined by $g$, $f$, and $h$ extends to a functor $\Delta[2] \to X$, see \cite[pages 212-213]{JoyalQuadern} which reference Boardman--Vogt \cite{BoardmanVogt}. Thus, if we have a commutative square $[g][f]=[k][j]$ in $\tau_1X$, there is an $[h]$ in $\tau_1X$ equal to both $[g][f]$ and $[k][j]$, and the corresponding two maps $\partial \Delta[2] \to X$ can be filled. Since $\Delta[1] \times \Delta[1]$ is 2-skeletal, this defines a functor $\Delta[1] \times \Delta[1] \to X$, which in turn induces the commutative square $[g][f]=[k][j]$.
\end{proof}

For each natural transformation $\alpha\co X \times \Delta[1] \to Y$ and each $f \in X_1$, the usual naturality square is a commutative square in $Y$.

We next recall the notion of pushout in a quasicategory $X$. Instead of recalling the general definition of colimit in a quasicategory from \cite[Definition~4.5]{JoyalQCatsAndKanComplexes} and \cite[page 159]{JoyalQuadern}, we work out pushouts explicitly. For more on pushouts, see \cite[Section~4.2.2]{LurieHigherToposTheory}.

An object $i$ of a quasicategory $X$ is {\it initial} if for any object $x$ in $X$ the map $X(i,x) \to \text{pt}$ is a weak homotopy equivalence. If $i$ is initial in the quasicategory $X$, then it is initial in the category $\tau_1X$ in the usual sense because
$$\text{pt} = \pi_0 X(i,x) = \pi_0 \left( (\mathfrak{C}X)(i,x) \right) = (\pi_0\mathfrak{C}X) (i, x) = (\tau_1X)(i,x),$$
since $X(i,x)$ and $(\mathfrak{C}X)(i,x)$ are connected by a natural zig-zag of weak homotopy equivalences \cite[Corollary~5.3]{DuggerSpivakMapping} and $\pi_0 \mathfrak{C} =\tau_1$. Any two initial objects $i$ and $i'$ of $X$ are equivalent: the simplicial sets $X(i,i')$, $X(i',i)$, $X(i,i)$, $X(i',i')$ are all weakly equivalent to a point, so every $f \in X(i,i')$ and $g \in X(i',i)$ satisfy $[g][f]=[\text{Id}_{i}]$ and $[f][g]=[\text{Id}_{i'}]$ as $\pi_0X(x,y)=\left(\tau_1X\right)(x,y)$.
Moreover, the 0-full sub quasicategory of $X$ on the initial objects is either a contractible Kan complex or the empty simplicial set by \cite[page 159]{JoyalQuadern} or \cite[Proposition 1.2.12.9]{LurieHigherToposTheory}.

If $C$ is a category and $i$ is an object of $C$, then $i$ is initial in the category $C$ if and only if $i$ is initial in the quasicategory $NC$, as $(NC)(i,x)$ is $C(i,x)$ viewed as a discrete simplicial set.

Let $\cali$ be the nerve of the category $b \leftarrow a \rightarrow c $ and $F\colon \cali \to X$ a functor. The quasicategory of commutative squares in $X$ which restrict to $F$ is the following pullback $X_{F/}$ in $\mathbf{SSet}$.
\begin{equation} \label{equ:X_over_F}
\begin{array}{c}
\xymatrix{X_{F/} \ar[r] \ar[d] \ar@{}[dr]|{\text{pullback}} & X^{\Delta[1] \times \Delta[1]} \ar[d]^{\text{incl}^\ast=\text{ res}}  \\ \ast \ar[r]_-{F} & X^\cali}
\end{array}
\end{equation}
The slice category used by Lurie is slightly different, but equivalent to this one $X_{F/}$ of Joyal, see \cite[Proposition~4.2.1.5]{LurieHigherToposTheory} and \cite[Proposition~2.4.13]{RiehlVerity}.
A commutative square in $X$ is a {\it pushout square of the diagram $F\colon \cali \to X$} if it is an initial object in the quasicategory $X_{F/}$. Since initial objects are unique up to equivalence, any two pushout squares of $F$ are equivalent. In particular, the lower right corner objects of two pushout squares of $F$ are equivalent. Moreover, the 0-full sub quasicategory of $X_{F/}$ on the pushouts (initial objects) is either a contractible Kan complex or the empty simplicial set.

In general, pushouts in $X$ are {\it not} the same as ordinary pushouts in $\tau_1X$. However pushouts in the nerve of a category $NC$ are the same as pushouts in the category $C$. This is because: pushouts in $C$ are defined precisely as initial objects in the categorical analogue of the pullback in \eqref{equ:X_over_F}, nerve preserves pullbacks, nerve is fully faithful,  $NC^{ND}\cong N(C^D)$ for any category $D$, in particular for $[1] \times [1]$ and $b \leftarrow a \rightarrow c $, and initial objects in a category are the same as initial objects in its nerve.

Though we do not need this, we remark that if $\calc$ is a category enriched in Kan complexes, then homotopy pushouts in $\calc$ correspond to pushouts in $N^\text{simp}\calc$, see \cite[Theorem 4.2.4.1, page 258]{LurieHigherToposTheory}.

In an ordinary category, the pushout of any isomorphism along any morphism exists and is also an isomorphism. We have the following analogue in a quasicategory.
\begin{lemma} \label{lem:pushout_along_equivalence}
A pushout of any equivalence along any morphism in a quasicategory $X$ exists, and is an equivalence in $X$.
\end{lemma}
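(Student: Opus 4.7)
The plan is to reduce to the case where $f$ is an identity edge, where the pushout is trivial, and then transport the resulting pushout back along a natural equivalence of diagrams.

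Let $F \co K \to X$ denote the span $b \overset{f}{\leftarrow} a \overset{g}{\to} c$ with $f$ an equivalence. First I would choose a quasi-inverse $\bar f \co b \to a$ together with $2$-simplex witnesses to $\bar f \circ f \simeq \id_a$ and $f \circ \bar f \simeq \id_b$. Then I would build a second diagram $F' \co K \to X$ that sends $a \mapsto a$, $b \mapsto a$, $c \mapsto c$, $f \mapsto \id_a$, $g \mapsto g$, together with a natural transformation $\phi \co F \Rightarrow F'$ whose components are $(\id_a, \bar f, \id_c)$; the chosen witnesses supply precisely the $2$-simplex data needed to assemble $\phi$ into a functor $K \times \Delta[1] \to X$. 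Since every component of $\phi$ is an equivalence in $X$, Corollary~\ref{cor:nat_transf_is_nat_equiv_iff_comps_equivs} implies that $\phi$ is a natural equivalence in $X^K$.

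Next I would argue that $\phi$ induces an equivalence of quasicategories between the strict fibers $X_{F/}$ and $X_{F'/}$ of the restriction $r \co X^{\Delta[1] \times \Delta[1]} \to X^K$. The key input is that $r$ is a Joyal isofibration between quasicategories (because $K \hookrightarrow \Delta[1] \times \Delta[1]$ is a monomorphism of simplicial sets and $X$ is a quasicategory), so the equivalence $\phi$ in $X^K$ lifts to an equivalence in $X^{\Delta[1] \times \Delta[1]}$, which upon passage to fibers identifies $X_{F/}$ with $X_{F'/}$ up to equivalence. Equivalences of quasicategories preserve and reflect initial objects (by Propositions~\ref{prop:tau1_is_a_2-functor} and~\ref{prop:qcat_equi_implies_fullyfaithful_ess_surj}), so it suffices to produce a pushout of the simplified diagram $F'$.

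For $F'$, a pushout is manifestly the degenerate square obtained from the composite $\Delta[1] \times \Delta[1] \to \Delta[1] \overset{g}{\to} X$, where the first arrow projects onto the factor carrying the $g$-edge: it carries $g$ on both vertical edges and $\id$ on both horizontal edges. Initiality in $X_{F'/}$ is elementary, because the degenerate top edge forces any competing square extending $F'$ to have its right edge homotopic to the composite of its bottom edge with $g$, so the space of squares extending $F'$ is essentially controlled by the choice of bottom edge $c \to p$ and is contractible at any fixed target. The bottom edge of this distinguished pushout is the equivalence $\id_c$. Tracking the bottom edge through the slice equivalence $X_{F'/} \simeq X_{F/}$, whose effect at the source $c$ is $\id_c$ by construction of $\phi$, shows that the bottom edge of the corresponding pushout of $F$ is also an equivalence in $X$, since an equivalence of quasicategories preserves equivalences on edges.

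The main technical obstacle is the transport step: verifying that a natural equivalence $\phi$ in $X^K$ yields an equivalence of the strict fibers $X_{F/} \simeq X_{F'/}$ of $r$. This rests on $r$ being a Joyal isofibration and on the fact that isofibrations between quasicategories transport base equivalences to equivalences of strict fibers, both belonging to the same model-categorical circle already invoked in Proposition~\ref{prop:equiv_of_quasicategories_if_ff_and_ess_surj}.
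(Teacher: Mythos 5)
Your proof takes a genuinely different route from the paper's. The paper directly exhibits a candidate pushout square: it fills a horn to produce $fj^{-1}\colon C \to B$, declares the square with fourth corner $B$, bottom edge $fj^{-1}$, and right edge $\mathrm{Id}_B$ to be a pushout, and then compares any other pushout to this one by the uniqueness of initial objects and $3$-for-$2$. You instead reduce to the degenerate span $F'$ (identity on one leg), exhibit the collapsed square as its pushout, and transport that initial object through an equivalence of the strict fibers $X_{F'/}$ and $X_{F/}$. That reduction is a reasonable and conceptually appealing idea, and the transport-of-fibers-along-an-isofibration fact you invoke is standard. But it does lean on external machinery (Joyal isofibrations, fiber transport, and implicitly cofinality of the collapsing map $K \to \Delta[1]$) that the paper deliberately avoids; the paper's direct construction is, by contrast, a one-step horn-filling argument followed by $3$-for-$2$.

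Two parts of your argument are softer than they should be. First, "initiality in $X_{F'/}$ is elementary" is not established by the observation you make: you describe the shape of competing squares, but initiality is a statement about contractibility of mapping spaces in $X_{F'/}$, not about the set of competing objects. A clean route is to observe that $\pi\colon K \to \Delta[1]$ (collapsing the $f$-edge) is cofinal and that $F'$ factors through $\pi$, but that's a nontrivial input of exactly the same weight as the claim the paper leaves as its own sketch. Second, the final tracking step misapplies a principle: "an equivalence of quasicategories preserves equivalences on edges" is a statement about edges of $X_{F'/}$ and $X_{F/}$, whereas the bottom edge of a square lives in $X$. The conclusion you want is still true, but the reason is different: the fiber-transport equivalence is realized by a zig-zag of equivalences in $X^{\Delta[1]\times\Delta[1]}$, and by Corollary~\ref{cor:nat_transf_is_nat_equiv_iff_comps_equivs} such equivalences are pointwise, so restricting along the bottom edge gives an equivalence in $X^{\Delta[1]}$ between $\mathrm{Id}_c$ and the bottom edge of the transported pushout; $3$-for-$2$ then gives the claim. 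As written, your argument asserts that the equivalence "has effect $\mathrm{Id}_c$ at the source $c$," which is not automatic from abstract fiber transport and needs the explicit pointwise model. So: the route is genuinely different and salvageable, but it is heavier than the paper's and the last step needs to be rephrased before it is correct.
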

\begin{proof}
(Sketch)
Suppose $j\co A \to C$ is an equivalence in $X$ and $f\co A \to B$ is any morphism. Let $fj^{-1}$ denote any filler for the horn determined by $f$ and any pseudo inverse to the equivalence $j$. Then the outer square pictured below is a pushout.

Suppose the inner square below is a pushout square in $X$.
$$\xymatrix@C=3pc@R=3pc{
      A \ar[r]^-{f} \ar[d]_j \ar@{}[dr]|{\text{pushout}}  &  B \ar[d]^{j'} \ar@/^1pc/[ddr]^{\text{Id}_B}  &  \\
      C \ar[r]_{f'} \ar@/_1pc/[drr]_{fj^{-1}}  &  P \ar@{-->}[dr]^{m}  &  \\
      &  &  B }$$
Since pushouts are unique up to equivalence, there is an equivalence $m$ which makes the relevant triangles commute in the homotopy category of $X$. By the 3-for-2 property of equivalences, $j'$ is then also an equivalence.
\end{proof}

\subsection{A Natural Pushout Functor Along Cofibrations in Waldhausen Quasicategories}
\label{subsec:natural_pushout_along_cofibrations}

In Footnote \ref{footnote:functorial_choice_of_pushout} concerning functorial pushouts along cofibrations in a category in the proof of Lemma~\ref{lem:HardLemmaMadeEasy}, we explained how to make a functorial choice of pushouts in a category preserving identity morphisms so that the object-wise formation of the pushout in \eqref{equ:rho_n_as_pushout} is compatible with face and degeneracy maps. In the quasicategorical version of Lemma~\ref{lem:HardLemmaMadeEasy}, namely Lemma~\ref{lem:HardLemmaMadeEasy_quasicategorical}, we need further justification for this compatibility for two reasons: 1) in a quasicategory, morphisms such as $B_1 \to B_2 \to B_3$ do not uniquely determine composites $B_1 \to B_3$, so drawings such as \eqref{equ:lambda_rows}, \eqref{equ:mu_rows}, and \eqref{equ:rho_grid} are ambiguous and constructions must also be done on higher simplices, and 2) in quasicategory theory, the universal property of pushouts does not uniquely determine a morphism, so connecting morphisms and higher simplices between objects and chosen quotients in \eqref{equ:rho_n_as_pushout} are not uniquely determined by objects and morphisms.

In this section,\footnote{We thank Emily Riehl and Dominic Verity for sketching Proposition~\ref{prop:RiehlVerity_Corollary_For_Pushouts} in an email and for explaining their Theorem~1.1 of \cite{RiehlVerityCompleteness}, as recalled here in Theorem~\ref{thm:Riehl--Verity_ExtraOrdinary_Naturality} and its subsequent discussion.} we explain how to construct for a quasicategory $\calc$ a functorial choice of pushouts along selected maps called ``cofibrations'' in $\calc$ in a way that preserves identity morphisms. That is, we show how to functorially extend each diagram $C \leftarrowtail A \to B$ in $\calc$ to a pushout diagram in $\calc$ in a way that also does the following.

\begin{equation} \label{equ:identity_pushout}
\begin{array}{c}
\xymatrix{A \ar[r] \ar@{>->}[d]_{\text{Id}_{A}} & B \\ A  & }
\end{array}
\mapsto
\begin{array}{c}
\xymatrix{A \ar[r] \ar@{>->}[d]_{\text{Id}_{A}} \ar@{}[dr]|{\text{p.o.}} & B \ar@{>->}[d]^{\text{Id}_{B}} \\ A \ar[r] & B}
\end{array}
\hspace{.25in}\text{and}\hspace{.25in}
\begin{array}{c}
\xymatrix{A \ar[r]^{\text{Id}_{A}} \ar@{>->}[d] & A \\ C & }
\end{array}
\mapsto
\begin{array}{c}
\xymatrix{A \ar[r]^{\text{Id}_{A}} \ar@{>->}[d] \ar@{}[dr]|{\text{p.o.}} & A\phantom{.} \ar@{>->}[d] \\ C \ar[r]_{\text{Id}_{C}} & C.}
\end{array}
\end{equation}
We then prove that this induces natural pushout functors in diagram categories $\calc^K$, all so that the quasicategorical version of pushout \eqref{equ:rho_n_as_pushout} is compatible with face and degeneracy maps. Our main goal is Corollary~\ref{cor:main_corollary_about_pushouts}. Of course, much of the discussion here applies to more general colimits in the situation with all maps cofibrations, but we restrict our attention to pushouts along cofibrations because of the application at hand. The main external input is Proposition~\ref{prop:RiehlVerity_Corollary_For_Pushouts}, which is an analogue of Riehl--Verity's \cite[Corollary~5.2.20]{RiehlVerity}.

\begin{notation} \label{not:cofibrations_etc}
In this section, we work with adjunctions in the 2-category of simplicial sets $\mathbf{SSet}^{\tau_1}$ as recalled in Section~\ref{subsec:Adjunctions_and_Equivalences_between_Quasicategories} (usually we have adjunctions in its sub 2-category of quasicategories). In this section, $\calc$ is a quasicategory equipped with a distinguished class of morphisms called cofibrations and denoted with feathered arrows $\rightarrowtail$. We do not assume any Waldhausen structure, instead our only assumption on the class of cofibrations in $\calc$ is that every identity morphism in $\calc$ (=degenerate 1-simplex in $\calc$) is a cofibration. Let $\calj$ be $\Delta[1] \times \Delta[1]$, the nerve of the free standing commutative square $[1] \times [1]$. Let $\cali$ be the nerve of the subcategory of $[1] \times [1]$ determined by the top horizontal map and the left vertical map. Let $\calc^{\cali}_{co}$ be the 0-full sub quasicategory of $\calc^{\cali}$ on the diagrams of the form $C_3 \leftarrowtail C_1 \to C_2$, that is, on diagrams with the left vertical map a cofibration. Similarly, we denote by $\calc^{\calj}_{co}$ the 0-full sub quasicategory of $\calc^{\calj}$ on the commutative squares in which the left vertical map is a cofibration. The quasicategory $\calc^{\calj}_{co, po}$ is the 0-full sub quasicategory of $\calc^{\calj}_{co}$ on pushout squares with left vertical map a cofibration. The inclusion $\cali \hookrightarrow \calj$ induces restriction maps $\text{res}\co \calc^{\calj} \to \calc^{\cali}$ and  $\text{res}_{co} \co \calc^{\calj}_{co} \to \calc^{\cali}_{co}$. Finally, let $\calc^\cali_\text{Id}$ be the sub simplicial set of $\calc^\cali_{co}$ that is the union of the images of
$$\xymatrix@R=.25pc{\calc^{\Delta[1]} \ar[r] & \calc^\cali_{co} \\ f \ar@{|->}[r] & \big( \overset{\text{\rm Id}}{\leftarrowtail} \, \overset{f}{\rightarrow} \big)} \hspace{.5in} \text{and} \hspace{.5in} \xymatrix@R=.25pc{\calc^{\Delta[1]}_{co} \ar[r] & \calc^\cali_{co} \\ g \ar@{|->}[r] & \big( \overset{g}{\leftarrowtail} \, \overset{\text{\rm Id}}{\rightarrow} \big)}.$$
Here $\calc^{\Delta[1]}_{co}$ is the sub quasicategory of $\calc^{\Delta[1]}$ that is 0-full on the cofibrations.
\end{notation}

A first observation about these newly notated quasicategories is that both restrictions $\text{res}\co \calc^{\calj} \to \calc^{\cali}$ and  $\text{res}_{co} \co \calc^{\calj}_{co} \to \calc^{\cali}_{co}$ are fibrations between quasicategories in Joyal's model structure on $\mathbf{SSet}$. Recall that a {\it fibration between quasicategories} in Joyal's model structure is a map between quasicategories which has the right lifting property with respect to each $\Lambda^k[n] \hookrightarrow \Delta[n]$ for $n\geq 2$ and $0 < k < n$, and with respect to the inclusion $\{0\} \hookrightarrow \{0 \cong 1\}$. The restriction $\text{res}\co \calc^\calj \to \calc^\cali $ is such a fibration by \cite[Theorem~6.6]{JoyalQuadern} recalled in Proposition~\ref{prop:restriction_morphisms}~\ref{prop:restriction_morphisms:ii}, see also \cite[Recall 2.2.8]{RiehlVerity}. The restriction $\text{res}_{co} \co \calc^{\calj}_{co} \to \calc^{\cali}_{co}$ is also such a fibration as it is the pullback along $\calc^{\cali}_{co}\hookrightarrow \calc^{\cali}$ of the fibration $\text{res}\co\calc^\calj \to \calc^\cali$.

A consequence of \cite[Corollary~5.2.20]{RiehlVerity} is that a quasicategory $\calc$ admits limits of diagrams of shape $X$ if and only if the restriction $\calc^{\mathbf{1} \star X} \to \calc^X$ admits a right adjoint right inverse, i.e., a right adjoint for which the counit is an identity 2-cell in $\mathbf{SSet}^{\tau_1}$. We are interested in the following analogue for pushouts along cofibrations instead of limits.

\begin{proposition}[Riehl-Verity Analogue] \label{prop:RiehlVerity_Corollary_For_Pushouts}
Let $\calc$ be a quasicategory equipped with a distinguished class of morphisms called cofibrations, which includes all the identity morphisms of $\calc$, i.e. includes the degenerate 1-simplices of $\calc$. We use Notation~\ref{not:cofibrations_etc}. Then $\calc$ admits pushouts along cofibrations if and only if the restriction $\text{\rm res}_{co} \co \calc^{\calj}_{co} \to \calc^{\cali}_{co}$ admits a left adjoint right inverse $F\co \calc^{\cali}_{co} \to \calc^{\calj}_{co}$, that is, if and only if this restriction admits a left adjoint $F$ for which the unit $\text{\rm Id}_{\calc^\cali_{co}} \Rightarrow \text{\rm res}_{co} \circ F$ is an identity 2-cell in $\mathbf{SSet}^{\tau_1}$. Moreover, if $\calc$ admits pushouts along cofibrations then the following is also true.
\begin{enumerate}
\item \label{RVi}
The pushout squares with left vertical map a cofibration are exactly the objects of $\calc^{\calj}_{co}$ that are equivalent to objects in the image of any left adjoint right inverse $F$.
\item \label{RVii}
The counit $\varepsilon\co F\circ \text{\rm res}_{co} \Rightarrow \text{Id}_{\calc^{\calj}_{co}}$ is the identity 2-cell except for the lower right corner. More precisely, $\text{\rm res}_{co}\,\varepsilon$ is the identity 2-cell on $\text{\rm res}_{co}$, and consequently any representative natural transformation for the counit has components homotopic to identity morphisms in all corners except the lower right one.
\item \label{RViii}
Let $F_{po} \co \calc^{\cali}_{co} \to \calc^{\calj}_{co,po}$ be $F$ with restricted codomain quasicategory, and $\text{\rm res}_{po} \co \calc^{\calj}_{co,po} \to \calc^{\cali}_{co}$ the restriction of $\text{\rm res}_{co}$ to the indicated sub quasicategory. Similarly, let $\varepsilon_{po}\co F_{po} \text{\rm res}_{po} \Rightarrow \text{\rm Id}_{\calc^\calj_{co,po}}$ be the restriction of $\varepsilon$ to $\calc^\calj_{co,po} \subseteq \calc^\calj_{co}$. Then $\varepsilon_{po}$ is an iso 2-cell in $\mathbf{SSet}^{\tau_1}$.
\item \label{RViv}
The sub adjunction $F_{po} \dashv \text{\rm res}_{po}$ of $F \dashv \text{\rm res}_{co}$ is an adjoint equivalence. Its unit is an identity 2-cell, and its counit $\varepsilon_{po}$ is the restriction of $\varepsilon$.
\end{enumerate}
\end{proposition}
\begin{proof}
We do not prove the main ``if and only if'' statement, nor \ref{RVi}. \\
\ref{RVii} The triangle identity $(\text{res}_{co}\,\varepsilon) \circ (\eta\,\text{res}_{co})=\text{Id}_{\text{res}_{co}}$ holds and the unit $\eta$ is an identity 2-cell, so $\text{res}_{co}\,\varepsilon$ must also be an identity 2-cell.\\
\ref{RViii} Let $e\co \calc^\calj_{co} \times \Delta[1] \to \calc^\calj_{co}$ be a representative natural transformation for the counit $\varepsilon$. Three components of $e$ are homotopic to identity morphisms by \ref{RVii}, so are therefore equivalences. When $e$ is evaluated at a pushout square, the lower right corner must also be an equivalence by the universal property of pushouts. Thus the restriction of $e$ to $\calc^\calj_{co,po}$ is a natural equivalence, and $\varepsilon_{po}$ is an iso 2-cell. \\
\ref{RViv} This follows from \ref{RViii} and the fact that the unit is an identity 2-cell.
\end{proof}

\begin{proposition} \label{prop:functorial_choice_of_pushouts_pres_id}
Let $\calc$ be a quasicategory equipped with a distinguished class of morphisms called cofibrations and denoted with feathered arrows $\rightarrowtail$.  We use Notation~\ref{not:cofibrations_etc}. Suppose that all pushouts along cofibrations exist in $\calc$ and that all identity morphisms in $\calc$ are cofibrations. We do not assume any Waldhausen axioms. Then there exists a map $\calc^{\cali}_{co} \to \calc^{\calj}_{co,po}$ which is a functorial choice of pushouts along cofibrations that preserves identity morphisms. More precisely, there is a solution to the following lifting problem.
\begin{equation} \label{equ:lifting_problem}
\begin{array}{c}
\xymatrix@C=2.5pc{\calc^\cali_{\underset{\phantom{.}}{\text{\rm Id}}} \ar[r]^-{\text{\rm id ext}} \ar@{^{(}->}[d] & \calc^{\calj}_{co,po} \ar[d]^{\text{\rm res}_{po}} \\
\calc^\cali_{co} \ar[r]_= \ar@{-->}[ur] & \calc^\cali_{co}}
\end{array}
\end{equation}
The top horizontal map ``{\rm id ext}'' is the extensions indicated in \eqref{equ:identity_pushout}.
\end{proposition}
\begin{proof}
The left vertical map in \eqref{equ:lifting_problem} is a monomorphism, so we can solve the lifting property as a consequence of the Joyal model structure on $\mathbf{SSet}$ if we can show that the right vertical map is a trivial fibration between quasicategories.

We first claim that the right vertical map in \eqref{equ:lifting_problem} is a Joyal fibration between quasicategories. Consider any diagram for $0 < k < n$ as on the left below (the right vertical map of \eqref{equ:lifting_problem}, which we want to show is a Joyal fibration, is now in the middle).
 \begin{equation} \label{equ:showing_horn lifts}
\begin{array}{c}
\xymatrix@C=2.5pc{\Lambda^k[n] \ar[r]  \ar@{^{(}->}[d] & \calc^{\calj}_{co,po} \ar[d]^(.6){\text{res}_{po}} \ar@{^{(}->}[r]^-{\text{0-full}} & \calc^{\calj}_{co} \ar[d]^{\text{res}_{co}} \\
\Delta[n] \ar[r] \ar@{-->}[urr] & \calc^\cali_{co} \ar[r]_= & \calc^\cali_{co} }
\end{array}
\end{equation}
As we already remarked, the right vertical map of \eqref{equ:showing_horn lifts} is a Joyal fibration, so the dashed lift exists. Since the vertices of $\Lambda^k[n]$ and $\Delta[n]$ are the same when $0 < k < n$, the commutativity of the top triangle shows that the dashed lift sends vertices of $\Delta[n]$ to $\calc^{\calj}_{co,po}$. But since, $\calc^{\calj}_{co,po} \hookrightarrow \calc^{\calj}_{co}$ is a 0-full inclusion, this implies that the dashed lift maps every simplex of $\Delta[n]$ to $\calc^{\calj}_{co,po}$ as well, and we have solved the lifting problem of the left square.

For solving lifting problems with the inclusion $\{0\} \hookrightarrow \{0 \cong 1\}$ on the left and the right vertical map of \eqref{equ:lifting_problem} on the right, suppose we have a natural equivalence of diagrams of the form $C_3 \leftarrowtail C_1 \to C_2$, where the first diagram extends to a pushout square. Then we find a lift of this equivalence to an equivalence in $\calc^\calj_{co}$ with source the pushout square, as the restriction $\text{res}_{co} \co \calc^\calj_{co} \to \calc^\cali_{co}$ is a Joyal fibration. But any commutative square equivalent to a pushout is also a pushout, hence the lifted equivalence is in $\calc^\calj_{co,po}$, and we have solved the lifting problem with the inclusion $\{0\} \hookrightarrow \{0 \cong 1\}$, so the right vertical map of \eqref{equ:lifting_problem} is a Joyal fibration between quasicategories.

The right vertical map of \eqref{equ:lifting_problem} is an equivalence by Proposition~\ref{prop:RiehlVerity_Corollary_For_Pushouts}~\ref{RViv}.

Finally, the right vertical map of \eqref{equ:lifting_problem} is a trivial fibration, the lifting problem of \eqref{equ:lifting_problem} is solved, and we obtain a pushout functor along cofibrations that preserves identities.
\end{proof}

\begin{proposition} \label{prop:any_functorial_po_extents_to_leftadjoint_rightinverse}
Suppose $\calc$ admits pushouts along cofibrations. If a map $F_{po}' \co \calc^{\cali}_{co} \to \calc^{\calj}_{co,po}$ is a right inverse to $\text{\rm res}_{po}\co \calc^{\calj}_{co,po} \to \calc^{\cali}_{co}$
\begin{equation} \label{equ:F_po_prime_right_inverse}
\text{\rm res}_{po} \circ F_{po}'=\text{\rm Id}_{\calc^\cali_{co}}\,,
\end{equation}
for instance the map in Proposition~\ref{prop:functorial_choice_of_pushouts_pres_id} is such a right inverse, then the composite $F'$
$$\xymatrix{\calc^{\cali}_{co} \ar[r]^{F_{po}'} \ar@/_1pc/[rr]_{F'} & \calc^{\calj}_{co,po} \ar[r]^{\text{\rm incl}} & \calc^{\calj}_{co}}$$
is a left adjoint right inverse to $\text{\rm res}_{co}$, like $F$ in Proposition~\ref{prop:RiehlVerity_Corollary_For_Pushouts}.
\end{proposition}
\begin{proof}
Recall from 2-category theory that we can replace a left adjoint by an isomorphic 1-cell to obtain another adjunction if we appropriately alter the unit and counit. Namely, if $F \dashv G$ with unit $\eta$ and counit $\varepsilon$ is an adjunction in a 2-category, and $\alpha\co F \Rightarrow F'$ is an iso 2-cell, then $F' \dashv G$ is an adjunction with unit $\eta':=(G \alpha) \odot \eta$ and counit $\varepsilon'=\varepsilon \odot (\alpha^{-1} G)$. The verification of the triangle identities is an exercise in 2-categorical pasting diagrams.

We apply this replacement result to the adjunction $F \dashv \text{res}_{co}$ in Proposition~\ref{prop:RiehlVerity_Corollary_For_Pushouts} with unit $\eta$ an identity, but first we have to construct $\alpha$. Consider the sub adjunction $F_{po} \dashv \text{\rm res}_{po}$ as in \ref{RViv}. Then composing \eqref{equ:F_po_prime_right_inverse} with $F_{po}$, and horizontally composing $\varepsilon_{po}$ with the identity 2-cell on $F_{po}'$, we obtain an iso 2-cell $\alpha:=\varepsilon_{po} \ast F_{po}'$.
$$\xymatrix@C=3.5pc{F_{po}=F_{po} \circ \text{res}_{po} \circ F_{po}' \ar@{=>}[r]^-{\varepsilon_{po} \ast F_{po}' } & F_{po}' }$$
This iso 2-cell is also an iso 2-cell $\alpha\co F \Rightarrow F'$. Since $\text{res}_{co}\;\alpha$ is an identity 2-cell by Theorem~\ref{prop:RiehlVerity_Corollary_For_Pushouts}~\ref{RVii}, the new unit $\eta':=(\text{res}_{co}\;\alpha) \odot \eta$ is also an identity 2-cell, so $F'$ is a left adjoint right inverse to $\text{res}_{co}$.
\end{proof}

Let $K$ be a simplicial set. We are now interested in forming pushouts along object-wise cofibrations in $\calc^K$. In this section we call a morphism $\beta\co K \times \Delta[1] \to \calc$ in $\calc^K$ an {\it object-wise cofibration} if for each object $k \in K$ the 1-simplex $\alpha(k,-): \Delta[1] \to \calc$ is a cofibration in $\calc$. Of course, every cofibration in $S_n^\infty \calc$ is also such an object-wise cofibration, but the converse is not true. In the main goal Corollary~\ref{cor:main_corollary_about_pushouts} we only need object-wise cofibrations. We next show that pushouts along object-wise cofibrations in $\calc^K$ can be constructed object-wise using any functorial choice from Proposition~\ref{prop:functorial_choice_of_pushouts_pres_id}, {\it and this can be done naturally in the variable $K$}. Notice first that $$(\calc^\cali_{co})^K\cong(\calc^K)^\cali_{co}\;\;\; \;\;\text{and}\;\;\;\;\; (\calc^\calj_{co})^K\cong(\calc^K)^\calj_{co}\;,$$ but only after the next theorem will we know $(\calc^\calj_{co,po})^K\cong(\calc^K)^\calj_{co,po}$.

\begin{theorem} \label{thm:computation_of_pushouts_objectwise}
Let $K$ be a simplicial set. If $\calc$ admits pushouts along cofibrations, then $\calc^K$ admits pushouts along object-wise cofibrations, and these can be computed object-wise.  More precisely, if $F_{po}\co \calc^\cali_{co} \to \calc^\calj_{co,po}$ is any functorial choice of pushouts in $\calc$ along cofibrations, then
$$\xymatrix{(F_{po})^K \co (\calc^\cali_{co})^K \ar[r] & (\calc^\calj_{co,po})^K}$$
$$(K \times \Delta[n] \overset{y}{\to} \calc_{co}^\cali) \mapsto (F_{po} \circ y)$$
is a functorial choice of pushouts in $\calc^K$.
\end{theorem}
\begin{proof}
Let $F_{po}\co \calc^\cali_{co} \to \calc^\calj_{co,po}$ be any functorial choice of pushouts in $\calc$ along cofibrations, i.e. a right inverse to $\text{res}_{po}$. Then by Proposition~\ref{prop:any_functorial_po_extents_to_leftadjoint_rightinverse}, the map $F:=\text{incl} \circ F_{po}$ is a left adjoint right inverse to $\text{res}_{co}$. Next, $(-)^K\co \mathbf{SSet}^{\tau_1} \to \mathbf{SSet}^{\tau_1}$ is a 2-functor since $\mathbf{SSet}^{\tau_1}$ is Cartesian closed, see 3.2.4 - 3.2.6 of \cite{RiehlVerity}. Since 2-functors map adjunctions to adjunctions, we have the adjunction $F^K \dashv (\text{res}_{co})^K$ with identity unit. But $(\text{res}_{co})^K$ is the same as the restriction $(\calc^K)^\calj_{co} \to (\calc^K)^\cali_{co}$, so may now apply Proposition~\ref{prop:RiehlVerity_Corollary_For_Pushouts} to conclude that $\calc^K$ admits pushouts along object-wise cofibrations, and that $F^K$ lands in $(\calc^K)^\calj_{co,po}$. In other words, computing pushouts along cofibrations object-wise produces a pushout in $\calc^K$ along an object-wise cofibration, and $(F_{po})^K$ is a $(F^K)_{po}$.
\end{proof}

\begin{proposition} \label{prop:naturality_of_object-wise_pushouts}
The object-wise computation of pushouts along object-wise cofibrations in a diagram category is natural in the source diagram. More precisely, if $\calc$ admits pushouts along cofibrations, $g\co K \to L$ is a map of simplicial sets, and $F_{po}\co \calc^\cali_{co} \to \calc^\calj_{co,po}$ is any functorial choice of pushouts along cofibrations, then the following diagram commutes.
$$\xymatrix{(\calc^\cali_{co})^L \ar[r]^{F^L_{po}} \ar[d]_{g^*} & (\calc^\calj_{co,po})^L \ar[d]^{g^*} \\
(\calc^\cali_{co})^K \ar[r]_{F^K_{po}} & (\calc^\calj_{co,po})^K}$$
\end{proposition}
\begin{proof}
Let $y\co L\times \Delta[n] \to \calc^\cali_{co}$ be an $n$-simplex in the upper left corner. Then clearly
$$\Big( F_{po} \circ y \Big) \circ (g \times \text{Id}_{\Delta[n]})=F_{po} \circ \Big(y \circ (g \times \text{Id}_{\Delta[n]})\Big)$$
by associativity.
\end{proof}

Finally, we can now conclude the corollary needed for the proof of quasicategorical additivity for object simplicial sets in Lemma~\ref{lem:HardLemmaMadeEasy_quasicategorical}. For Lemma~\ref{lem:HardLemmaMadeEasy_quasicategorical}, we will apply Corollary~\ref{cor:main_corollary_about_pushouts} with a functorial choice of pushouts along cofibrations {\it that preserves identity morphisms}, which is guaranteed to exist by Proposition~\ref{prop:functorial_choice_of_pushouts_pres_id}.

\begin{corollary} \label{cor:main_corollary_about_pushouts}
Suppose $\calc$ admits pushouts along cofibrations, we select a functorial choice of pushouts along cofibrations in $\calc$, and we use this as in Theorem~\ref{thm:computation_of_pushouts_objectwise} to object-wise compute pushouts along object-wise cofibrations in all quasicategories of diagrams in $\calc$. Let $T,U,V$ be diagrams in $\calc$ of the shape $\Delta[2] \times \Delta[n] \times \Delta[n]$. If
$$\xymatrix{T \ar[r] \ar[d] & U \\ V & }$$
is in $\calc^{\Delta[2]\times\Delta[n]\times\Delta[n]}$ and the vertical map is an object-wise cofibration, then the pushout object $P$, as a diagram in $\calc$ of shape $\Delta[2]\times\Delta[n]\times\Delta[n]$, is compatible with all face and degeneracy maps. In other words, $d_iP$ is the pushout of $d_iT$, $d_iU$, $d_iV$, while $s_iP$ is the pushout of $s_iT$, $s_iU$, $s_iV$. The face and degeneracy maps here are
$$d_iP:=P \circ \left( \text{\rm Id}_{\Delta[2]} \times (\delta^i)^* \times (\delta^i)^*  \right)\phantom{.}$$
$$s_iP:=P \circ \left( \text{\rm Id}_{\Delta[2]} \times (\sigma^i)^* \times (\sigma^i)^*  \right).$$
\end{corollary}

Theorem~\ref{thm:Riehl--Verity_ExtraOrdinary_Naturality} is a version of Corollary~\ref{cor:main_corollary_about_pushouts} in the case of all maps cofibrations, where one does not first select a functorial pushout in $\calc$ to construct pushouts in diagram quasicategories compatible with face and degeneracy maps. This was proposed to us by Riehl and Verity, and uses the following result in \cite{RiehlVerityCompleteness}. An alteration of the resulting pushout functors to ones that preserve identities in levels 0,1,2 would need further discussion. However, since we do not use Theorem~\ref{thm:Riehl--Verity_ExtraOrdinary_Naturality}, we will not work out any details of alterations.

\begin{theorem}[Riehl-Verity Theorem~1.1 in \cite{RiehlVerityCompleteness}] \label{thm:Riehl--Verity_ExtraOrdinary_Naturality}
Let $\underline{\text{\rm qCat}}_{\infty,2}$ denote the simplicially enriched category of quasicategories with
its simplicial enrichment inherited from $\mathbf{SSet}$ with usual internal hom $B^A$.
Let $X$ be a simplicial set. The quasicategorically enriched subcategory
of $\underline{\text{\rm qCat}}_{\infty,2}$ spanned by those quasicategories admitting (co)limits of shape X and those
functors preserving them is closed in $\underline{\text{\rm qCat}}_{\infty,2}$ under all projective cofibrant weighted
limits.
\end{theorem}

Their theorem means the following.  Let $X$ be a simplicial set, $\mathbf{D}$ a small simplicial category, $W\colon \mathbf{D} \to \mathbf{SSet}$ a projectively cofibrant simplicial functor called the {\it weight}, and $D\colon \mathbf{D} \to \underline{\text{\rm qCat}}_{\infty,2}$ a simplicial functor. Suppose that all quasicategories in the diagram $D$ admit colimits of shape $X$, and that all maps in the diagram $D$ preserve colimits of shape $X$ up to equivalence.
Then the {\it  simplicial limit $\{W,D\}$ of $D$ weighted by $W$} exists, it is a quasicategory that admits all colimits of shape $X$, and the legs $\{W,D\} \to Dd$ of the limit cone all preserve them. The statement also holds if the words ``colimits of shape $X$'' are replaced by ``limits of shape $X$.''

Now to obtain from their theorem a pushout functor in $\calc^{\Delta[2] \times \Delta[n] \times \Delta[n]}$ (cofibrations are all maps) compatible with face and degeneracy maps, we let the simplicial set $X$ be $\cali$ and $\mathbf{D}:=\Delta_2 \times \Delta \times \Delta$ where $\Delta_2$ is the full subcategory of $\Delta$ on the objects $[0]$, $[1]$, and $[2]$. The weight $W\colon \mathbf{D} \to \mathbf{SSet}$ is the represented functor
$$\xymatrix{W=\Delta[-] \times \Delta[-] \times \Delta[-] \colon \Delta_2 \times \Delta \times \Delta \ar[r] & \mathbf{SSet}},$$ and the diagram $D\colon \mathbf{D} \to \underline{\text{\rm qCat}}_{\infty,2}$ is constant $\calc$. If $\calc$ admits all pushouts, then Theorem~\ref{thm:Riehl--Verity_ExtraOrdinary_Naturality} implies that the limit $\{W,D\}$ is a quasicategory and admits pushouts. Theorem~5.2.12 of \cite{RiehlVerity} then guarantees the existence of a pushout functor $\{W,D\}^X \to \{W,D\}$. From this pushout functor on the limit $\{W,D\}$, one can construct pushout functors for each $\calc^{W(i,m,n)}=\calc^{\Delta[i] \times \Delta[m] \times \Delta[n]}$, natural in the variables $i$, $m$, and $n$.  The only caveat is that $\calc$ does not have all pushouts in our case, rather only pushouts along cofibrations. However, this is not a problem: Riehl--Verity's use of absolute liftings allows them to also handle situations in which some (but not all) pushouts exist in a quasicategory.

\section{Additivity for $(S^\infty_\bullet)_\text{equiv}$ of a Waldhausen Quasicategory} \label{sec:Additivity_for_S_bullet_infinity}

We introduce Waldhausen quasicategories, the $(\infty,2)$-category $\mathbf{QWald}_{\infty,2}$ of such, the 2-category $\mathbf{QWald}_{2}$ of such, and the endo-functor $S_n^\infty$ on these. Then we prove Additivity of $(S^\infty_\bullet)_\text{equiv}$ for the Waldhausen quasicategory $\cale(\cala,\calc,\calb)$ of cofiber sequences. We also give a variety of examples of Waldhausen quasicategories constructed by Barwick and Blumberg--Gepner--Tabuada.

\begin{definition}[Waldhausen Quasicategory] \label{def:Waldhausen_quasicat}
A {\it Waldhausen quasicategory} consists of a quasicategory $\calc$ with zero objects and a sub quasicategory $co \calc$, the 1-simplices of which are called {\it cofibrations} and denoted $\rightarrowtail$, such that
\begin{enumerate}
\item \label{def:Waldhausen_quasicat:(i)}
The sub quasicategory $co \calc$ is 1-full in $\calc$ and contains all equivalences in $\calc$,
\item \label{def:Waldhausen_quasicat:(ii)}
For each object $A$ of $\calc$ and any zero object $\ast$ of $\calc$, every morphism $\ast \to A$ is a cofibration,
\item \label{def:Waldhausen_quasicat:(iii)}
The pushout of a cofibration along any morphism exists, and every pushout of a cofibration along any morphism is a cofibration.
\end{enumerate}
\end{definition}

Several remarks about the definition of Waldhausen quasicategory are in order. We begin with comments about the equivalences. The sub quasicategory $co \calc$ contains all of $\calc_{\mathrm{equiv}}$ by \ref{def:Waldhausen_quasicat:(i)} because $\calc_{\mathrm{equiv}}$ is 1-full on the equivalences. A major difference between the classical and the quasicategorical settings is that a Waldhausen {\it quasi}category does not have an additional structure $w\calc$, rather the ``weak equivalences''  are the equivalences of the quasicategory, so $w\calc$ is always $\calc_{\mathrm{equiv}}$ in this paper.\footnote{The variant of Waldhausen quasicategory where there is a quasicategory $w\calc$ different from $\calc_{\mathrm{equiv}}$ is studied in \cite[Section 9]{Barwick} under the name ``labelled Waldhausen $\infty$-category.'' There, Barwick associates to each labelled Waldhausen $\infty$-category a ``virtual Waldhausen $\infty$-category.'' } Consequently, the  saturation axiom (3-for-2 property of weak equivalences) automatically holds for quasicategories, though classically it as an additional axiom beyond the axioms of Waldhausen category. Another difference from the classical setting is the status of the ``gluing lemma'': classically it is a basic axiom of Waldhausen category, but quasicategorically it automatically holds (any pushout in a quasicategory is automatically invariant under equivalence).

An important consequence of the Waldhausen quasicategory axioms is that $co\calc$ is homotopy replete. Recall that a sub quasicategory $\calr$ of a quasicategory $X$ is {\it homotopy replete} if for every commutative square in $X$ with vertical morphisms equivalences
\begin{equation} \label{equ:diagram_for_homotopy_repleteness}
\begin{array}{c}
\xymatrix{x \ar[r]^r \ar[d]_{\mathrm{equiv}} & y \ar[d]^{\mathrm{equiv}} \\ x' \ar[r]_{r'} & y'}
\end{array}
\end{equation}
we have $r \in \calr_1 \Leftrightarrow r' \in \calr_1$. See \cite[Definition~F.1.1]{JoyalQuadern} for this definition in a model category. Any commutative square of the form \eqref{equ:diagram_for_homotopy_repleteness} is a pushout, so in a Waldhausen quasicategory, by \ref{def:Waldhausen_quasicat:(iii)}, we have $r \in (co\calc)_1\Rightarrow r' \in (co\calc)_1$. For the other implication, we consider the image of \eqref{equ:diagram_for_homotopy_repleteness} in $\tau_1(\calc)$, reverse the isomorphisms, and then use Proposition~\ref{lem:commutative_squares_in_homotopy_category=commutative_squares_in_quasicategory} to obtain \eqref{equ:diagram_for_homotopy_repleteness} with $r$ and $r'$ exchanged. We conclude $co \calc$ is homotopy replete.

In a Waldhausen quasicategory, any morphism homotopic to a cofibration is also a cofibration. This follows from homotopy repleteness of $co \calc$: if $\calr$ is homotopy replete in $X$ and $f$ is a morphism in $\calr$, then any morphism $g$ homotopic to $f$ is also in $\calr$, as any left and right homotopies provide us with a commutative square of the following form.
$$\xymatrix{x \ar[r]^f \ar[d]_{1_x} \ar[dr]^f & y \ar[d]^{1_y} \\ x \ar[r]_{g} & y}$$

Another observation from the definition of Waldhausen quasicategory is that $\tau_1(co\calc)$ is naturally a subcategory of $\tau_1(\calc)$  by 1-fullness and \ref{def:Waldhausen_quasicat:(iii)}. Namely, since any morphism homotopic to a cofibration is also a cofibration, the homotopy class of a cofibration in $co\calc$ is the same as its homotopy class in $\calc$. By 1-fullness of $co\calc$, a relation $[g][f]=[h]$ between homotopy classes of cofibrations holds in $\tau_1(co\calc)$ if and only if it holds in $\tau_1(\calc)$, and we now have $\tau_1(co\calc)$ naturally embedded in $\tau_1(\calc)$. Moreover $\tau_1(co\calc)$ contains all the isomorphisms of $\tau_1(\calc)$.

A minor difference from the classical notion is that a Waldhausen quasicategory has zero objects {\it without} distinguishing one. This is in line with the philosophy of quasicategories that structure is not chosen, rather merely required to exist. It also simplifies some formulations. However, to construct the $K$-theory spectrum in Section~\ref{sec:K-Theory_Space_and_Spectrum} we will select a zero object and natural transformations between the selected zero object and the identity functors.

In the presence of \ref{def:Waldhausen_quasicat:(i)} and \ref{def:Waldhausen_quasicat:(iii)}, it is actually sufficient in Definition~\ref{def:Waldhausen_quasicat} to require a weaker condition in \ref{def:Waldhausen_quasicat:(ii)}: for each object $A$ of $\calc$, there is some zero object $\ast$ and some morphism $\ast \to A$ that is a cofibration. Namely, for any another zero object $\ast'$ and any morphism $f\co \ast' \to A$, we know that $\ast$ and $\ast'$ are zero objects of $\tau_1(\calc)$, so the diagram in $\tau_1 \calc$
$$\xymatrix{\ast' \ar[rr]^{[f]} \ar[dr]_{\text{isomorphism}} & & A \\ & \ast \ar[ur]_{[\text{cofibration}]} & }$$
commutes, so $[f]$ is a morphism in $\tau_1(co \calc)$ and $f$ is a cofibration.

Lastly, we remark that we are primarily interested in small Waldhausen quasicategories with only finite coproducts, since the Eilenberg swindle applies when there are infinite coproducts to make the $K$-theory trivial, see \cite[Proposition~8.1]{Barwick}.

Barwick's notion of Waldhausen $\infty$-category in \cite[Definition~2.7]{Barwick} is equivalent to our Definition~\ref{def:Waldhausen_quasicat}.

\begin{proposition}[Equivalence\footnote{We thank Clark Barwick for discussion of Proposition~\ref{prop:Barwick_equivalence}. In the statement of Proposition~\ref{prop:Barwick_equivalence}, we have subsumed Barwick's Definitions 1.10 and 1.11.1 of ``subcategory'' and ``pair'' into condition (i$^\prime$).} with Barwick's Definition 2.7 in \cite{Barwick}] \label{prop:Barwick_equivalence}
Let $\calc$ be a quasicategory and $\calc_\dagger$ a simplicial subset. Then $\calc$ is a Waldhausen quasicategory with cofibration quasicategory $co \calc=\calc_\dagger$ in the sense of Definition~\ref{def:Waldhausen_quasicat} if and only if it is a Waldhausen $\infty$-category $(\calc, \calc_\dagger)$ in the sense of \cite[Definition 2.7]{Barwick}, which means:
\begin{enumerate}
\item[(i$^\prime$)] \label{prop:Barwick_equivalence:(i')}
The simplicial subset $\calc_\dagger$ contains $\calc_\mathrm{equiv}$ and is a ``subcategory'' of $\calc$ in the sense that there exists a subcategory $\bfA$ of $\tau_1 \calc$ such that the diagram
\begin{equation} \label{equ:subcategory_diagram}
\begin{array}{c}
\xymatrix{\calc_\dagger \ar@{^{(}->}[r] \ar[d] & \calc \ar[d] \\ N\bfA \ar@{^{(}->}[r] & N\tau_1\calc}
\end{array}
\end{equation}
is a pullback of simplicial sets.
\item[(ii$^\prime$)] \label{prop:Barwick_equivalence:(ii')}
The quasicategory $\calc$ contains a zero object, and for any zero object $0$ and any object $x \in \calc_0$, any morphism $0 \to x$ is a morphism in $\calc_\dagger.$
\item[(iii$^\prime$)]  \label{prop:Barwick_equivalence:(iii')}
The pushout of a cofibration along any morphism exists, and every pushout of a cofibration along any morphism is a cofibration.
\end{enumerate}
\end{proposition}
\begin{proof}
Suppose $(\calc, \calc_\dagger)$ satisfies (i$^\prime$), (ii$^\prime$), and (iii$^\prime$). 
We first prove that $\calc_\dagger$ is a quasicategory. The bottom map $N\mathbf{A}\hookrightarrow N\tau_1 \calc$ in diagram \eqref{equ:subcategory_diagram} is a mid-fibration by \cite[Proposition 2.2]{JoyalQuadern} because it is the nerve of a functor, so its pullback $\calc_\dagger \hookrightarrow \calc$ in diagram \eqref{equ:subcategory_diagram} is also a mid-fibration. The composite $\calc_\dagger \hookrightarrow \calc \to \ast$ is a mid-fibration, so now $\calc_\dagger$ is a quasicategory.

We proceed to verify \ref{def:Waldhausen_quasicat:(i)}, \ref{def:Waldhausen_quasicat:(ii)}, and \ref{def:Waldhausen_quasicat:(iii)} of Definition~\ref{def:Waldhausen_quasicat}. For \ref{def:Waldhausen_quasicat:(i)}, we first observe that the inclusion $N\bfA \hookrightarrow N \tau_1 \calc$ is 1-full because an $n$-simplex in $N\tau_1 \calc$ is in $N\bfA$ if and only if each edge is in $N\bfA$ (which is the case if and only if the spine is in $N\bfA$ by definition of nerve). So for any $m > 1$ and any commutative left square below, there is a unique lift $f_1$ for the outer square, where $Sk^1 \Delta[m]$ denotes the 1-skeleton of $\Delta[m]$.
$$\xymatrix@C=3pc{Sk^1 \Delta[m] \ar[r] \ar@{^{(}->}[d] & \calc_\dagger \ar[r] \ar@{^{(}->}[d] \ar@{}[dr]|{\text{\rm pullback}} & N\bfA \ar@{^{(}->}[d] \\ \Delta[m] \ar[r] \ar@{-->}[urr]^(.7){f_1} \ar@{-->}[ur]^{f_2} & \calc \ar[r] & N \tau_1 \calc }$$
But since the right square is a pullback by (i$^\prime$), there exists a unique map $f_2$ which makes both the lower triangle of the first square and the triangle bounded by the two dashed lines commute. The upper triangle of the first square commutes by the universal property of the pullback and a diagram chase. Thus, the inclusion $\calc_\dagger \hookrightarrow \calc$ is 1-full. Clearly, $\calc_\dagger$ contains the equivalences because it contains $\calc_\mathrm{equiv}$.

The requirements \ref{def:Waldhausen_quasicat:(ii)} and (ii$^\prime$) are the same.

The requirements \ref{def:Waldhausen_quasicat:(iii)} and (iii$^\prime$) are the same.

Conversely, suppose $\calc$ is a Waldhausen quasicategory in the sense of Definition~\ref{def:Waldhausen_quasicat} with sub quasicategory $co\calc$ of cofibrations. Since $co\calc$ is 1-full, contains the equivalences, and $\calc_\mathrm{equiv}$ is 1-full, we have $\calc_\mathrm{equiv}\subseteq co \calc=:\calc_\dagger$. Let $\bfA:=\tau_1(co\calc)$.
We must verify \ref{equ:subcategory_diagram} is a pullback diagram. Suppose $X$ is a simplicial set and $F$ and $G$ are maps such that the outer square below commutes.
$$\xymatrix{X \ar@/^1pc/[drr]^G \ar@/_1pc/[ddr]_F \ar@{-->}[dr] & & \\ & co\calc \ar@{^{(}->}[r] \ar[d] & \calc \ar[d]^h \\ & N\tau_1(co \calc) \ar@{^{(}->}[r] & N\tau_1\calc}$$
All four maps in the inner square are identity functions in degree 0, so we may take $G_0 \co X_0 \to (co\calc)_0$ as the dashed arrow in degree 0. For degree 1, if $f \in X_1$, then $Ff=h(Gf)$ and $Gf$ is in the homotopy class $Ff$ of some cofibration. By the discussion after Definition~\ref{def:Waldhausen_quasicat}, $Gf$ is also a cofibration, so $G_1$ goes into $(co\calc)_1$, and we may take $G_1 \co X_1 \to (co\calc)_1$ as the dashed arrow in degree 1. Since $G_1$ goes into $(co\calc)_1$, every edge of every $G_n(\sigma)$ is in $co\calc$, and $G_n(\sigma)$ is in $co\calc$ by 1-fullness, and we may take $G$ for the dashed arrow. Uniqueness of the dashed map is clear because the top horizontal map is an inclusion. So (i$^\prime$) follows from \ref{def:Waldhausen_quasicat:(i)}.

As remarked above, (ii$^\prime$) and \ref{def:Waldhausen_quasicat:(ii)}, as well as (iii$^\prime$) and \ref{def:Waldhausen_quasicat:(iii)}, are the same.
\end{proof}

Examples of Waldhausen quasicategories include the following.

\begin{example}{(Waldhausen Quasicategories from Classical Ones)} \label{examp:from_classical_to_Waldhausen_quasicategory_Barwick}
If $\mathcal{C}$ is a classical Waldhausen category in which the weak equivalences are the isomorphisms, then $co(N\calc):=N(co\calc)$ makes $N\calc$ into a Waldhausen quasicategory. Pushouts in $N \calc$ are the same as pushouts in $\calc$. The classical Waldhausen $K$-theory spectrum of $\calc$ is the $K$-theory spectrum of $(N\calc,Nco\calc)$, as we will soon define in Definition~\ref{def:K-theory_spectrum_of_Waldhausen_Qcat}. See Example~\ref{examp:special_case of K=KN}.

Constructing a Waldhausen quasicategory out of a classical Waldhausen category $\calc$ is more subtle when the weak equivalences are not the isomorphisms. Three options are possible under certain hypotheses, as worked out by Barwick in \cite{Barwick}: (1) form the relative nerve $N(\calc, w\calc)$ and then define cofibrations as containing $co\calc$, or (2) form the ordinary nerves $(N\calc, N(co\calc), N(w\calc))$ as a ``labelled'' Waldhausen quasicategory and define another notion of $K$-theory for labelled Waldhausen quasicategories, or (3) form the ordinary nerves $(N\calc, N(co\calc), N(w\calc))$ and then invert the weak equivalences to make them the equivalences in a resulting quasicategory, and let the cofibrations become what they must.

Ad (1), suppose the weak equivalences of a classical Waldhausen category $\calc$ satisfy the 6-for-2 axiom, and its weak equivalences and trivial cofibrations are part of a three-arrow calculus of fractions. In the {\it relative nerve} $N(\calc, w\calc)$, define $coN(\calc, w\calc)$ to be the smallest 1-full sub quasicategory that is closed under the homotopy relation and contains the equivalences and the images of cofibrations from $\calc$. Then the relative nerve $N(\calc, w\calc)$ with cofibration sub quasicategory $coN(\calc, w\calc))$ is a Waldhausen quasicategory \cite[9.15]{Barwick}, and its $K$-theory is canonically equivalent to the classical Waldhausen $K$-theory of the classical Waldhausen category $(\calc,co\calc,w\calc)$ \cite[Corollary 10.10.3]{Barwick}.

Ad (2), suppose $\calc$ is a classical Waldhausen category, and assume no other hypotheses. The ordinary nerve of the classical Waldhausen category $\calc$ is a Waldhausen quasicategory equipped with an additional sub quasicategory $N(w\calc)$ which contains the equivalences and satisfies the ``gluing lemma''. Such a triple $(N\calc, N(co\calc), N(w\calc))$ is an example of a {\it labelled Waldhausen quasicategory}. The algebraic $K$-theory space of the labelled Waldhausen quasicategory $(N\calc, N(co\calc), N(w\calc))$ is {\it defined} as the space $K(\mathscr{B}(N\calc, N(w\calc)))$ in \cite[Definition~10.8]{Barwick}, where $\mathscr{B}$ is discussed in \cite[9.6 and beyond]{Barwick}. Essentially, $\mathscr{B}(N\calc, N(w\calc))$ is defined so that $\mathscr{B}_p(N\calc, N(w\calc))\subseteq \text{Fun}(\Delta[p], N\calc)$ is 0-full on those sequences $A\co\Delta[p] \to N\calc$ with every $A_i \to A_{i+1}$ an edge in $N(w\calc)$. Moreover, $K( \mathscr{B})$ means to do $K$-theory degreewise and take the colimit. The algebraic $K$-theory of the labelled Waldhausen quasicategory $(N\calc, N(co\calc), N(w\calc))$ defined as $K(\mathscr{B}(N\calc, N(w\calc)))$ is naturally equivalent to the classical Waldhausen $K$-theory of the classical Waldhausen category $(\calc, co\calc,w\calc)$ \cite[Corollary 10.10.1]{Barwick}, {\it without} hypotheses on the classical Waldhausen category $(\calc, co\calc,w\calc)$.

Ad (3), suppose $\calc$ is a classical Waldhausen category, and assume no other hypotheses. The ordinary nerves $(N\calc, N(co\calc), N(w\calc))$ form a labelled Waldhausen quasicategory. The inclusion of Waldhausen quasicategories into labelled Waldhausen quasicategories admits a left adjoint \cite[Lemma~9.14]{Barwick} which formally inverts the weak equivalences in a quasicategorical sense, and the Waldhausen quasicategory associated in this way to $(N\calc, N(co\calc), N(w\calc))$ is denoted by $(Nw\calc)^{-1}N\calc$. If the weak equivalences of the classical Waldhausen category $\calc$ satisfy the 6-for-2 axiom, and its weak equivalences and trivial cofibrations are part of a three-arrow calculus of fractions, then the Waldhausen quasicategory $(Nw\calc)^{-1}N\calc$ and the relative nerve $N(\calc, w\calc)$ are equivalent Waldhausen quasicategories \cite[Proposition~9.15]{Barwick}, and consequently the $K$-theory of the Waldhausen quasicategories $(Nw\calc)^{-1}N\calc$ and  $(N(\calc, w\calc),coN(\calc, w\calc))$ are the same (here $K$-theory means the canonical delooping of the maximal Kan subcomplex functor), and are also equivalent to the classical Waldhausen $K$-theory of the classical Waldhausen category $(\calc,co \calc, w\calc)$.

See  \cite[Sections 9 and 10]{Barwick} for details. The $K$-theories in (1), (2), and (3) all coincide when the classical Waldhausen category comes from a model category, see \cite[Corollary~10.10.3]{Barwick}.
\end{example}

Another version of part of the foregoing was suggested to us by David Gepner.

\begin{example}[Sketch of a Suggestion by D. Gepner] \label{examp:from_classical_to_Waldhausen_quasicategory}
If $\mathcal{C}$ is a classical Waldhausen category in which the weak equivalences are not the isomorphisms, then one may construct a Waldhausen quasicategory $\calc'$ with the same $K$-theory as $\calc$, though this Waldhausen quasicategory is not simply $N\calc$. Let $\calc'$ be the localization\footnote{See \cite[page 168]{JoyalQuadern}. Here this means the localization quasicategory $\calc'$ in the sense of a map $N\calc \to \calc'$ which takes the morphisms of $w\calc$ to equivalences in $\calc'$ in a universal way. This does not require the stronger sense of localization in terms of a fully faithful right adjoint as Lurie does in \cite[Section 5.2.7]{LurieHigherToposTheory}.} of $N\calc$ with respect to $N(w\calc)$, and let $co\calc'$ be the smallest 1-full sub quasicategory\footnote{We propose one should also require $co\calc'$ to be closed under the homotopy relation, and adjust the rest of the sketch appropriately.} of $\calc'$ which contains the cofibrations of $\calc$ and the equivalences of $\calc'$. A morphism of $\calc'$ is a concatenation of morphisms in $\calc$ and formally inverted morphisms of $w \calc$, while a 1-morphism of $co \calc'$ is a concatenation of cofibrations in $\calc$ and formally inverted morphisms of $w \calc$ (to see this, one uses that fact that $\tau_1\calc'$ is an equivalent category to $(\tau_1N\calc)[Nw\calc)^{-1}])$. By construction, axioms \ref{def:Waldhausen_quasicat:(i)} and \ref{def:Waldhausen_quasicat:(ii)} of Definition~\ref{def:Waldhausen_quasicat} hold. For \ref{def:Waldhausen_quasicat:(iii)}, if we have a diagram of the form $\bullet \leftarrow \bullet \rightarrowtail \bullet$ then we can construct an $m \times n$ grid of pushout squares, one row at a time, using pushouts along cofibrations in $\calc$ and Lemma~\ref{lem:pushout_along_equivalence}. A composite exists and is a pushout square. The composite edge across from the cofibration is also a cofibration by the description above. Under appropriate hypotheses, the $K$-theory of $\calc'$, which we define below, is the same as the $K$-theory of $\calc$ by a quasicategorical analogue of Waldhausen's Fibration Theorem.
\end{example}

Also relevant to this discussion of quasicategories associated to Waldhausen categories and $K$-theory comparisons, we quote the following two comparison results of Blumberg--Gepner--Tabuada. If $\calc$ is a category with weak equivalences that satisfies a homotopy calculus of two sided fractions in sense of Dwyer and Kan \cite[6.1]{DwyerKanHammock}, then the quasicategory associated to $\calc$ is $$N^\text{simp} ((L^H\calc)^\text{fib} ),$$ the homotopy-coherent nerve of a fibrant replacement of the simplicial category $L^H\calc$, which is the Hammock localization of $\calc$.

\begin{theorem}[Blumberg--Gepner--Tabuada] \label{thm:BlumbergGepnerTabuada_Comparisons} \leavevmode
\begin{enumerate}
\item
\cite[Proposition~2.10]{BlumbergGepnerTabuadaI} Let $\calc$ be a small category with a subcategory $w\calc$ of weak equivalences which satisfies a homotopy calculus of two-sided fractions in the sense of
Dwyer and Kan. Then there is a weak equivalence of simplicial sets
$$N(wC) \simeq N^\text{\rm simp} ((L^H\calc)^\text{\rm fib} )_\text{\rm equiv}.$$
\item
\cite[Theorem~7.8]{BlumbergGepnerTabuadaI}
Let $\calc$ be a sub simplicial category of a simplicial model category $\cala$. Suppose $\calc$ is small, simplicially full, admits all homotopy pushouts, has only cofibrant objects, and is a Waldhausen category with its maps that are cofibrations and weak equivalences in the model category. Then there is an equivalence of spectra
$$\bfK(\calc)\simeq  \bfK(N^\text{\rm simp}(\overline{\calc}^{\text{\rm cf}}))$$
which is natural in weakly exact functors. Here $\overline{\calc}$ is the simplicially full sub simplicial category of $\cala$ on the objects weakly equivalent via zig-zags to objects of $\calc$, and
$\overline{\calc}^{\text{\rm cf}}$ is the simplicially full sub simplicial category of $\overline{\calc}$ on the cofibrant-fibrant objects in $\overline{\calc}$. The right $K$-theory spectrum is constructed via $S_\bullet^\infty$ as in present Section~\ref{sec:K-Theory_Space_and_Spectrum}, but with every map of $N^\text{\rm simp}(\overline{\calc}^{\text{\rm cf}})$ considered a cofibration.
\end{enumerate}
\end{theorem}

Stable quasicategories, considered in \cite{LurieHigherAlgebra}, are an important class of examples of Waldhausen quasicategories.

\begin{definition}[Stable Quasicategory, 1.1.1.9 of \cite{LurieHigherAlgebra}] \label{def:stable_quasicategories}
A quasicategory is {\it stable} if it admits all finite limits and colimits, and pushout squares and pullback squares coincide.
\end{definition}

\begin{example}[Stable Quasicategories are Waldhausen Quasicategories] \label{examp:stable_quasicategories_are_Waldhausen_quasicategories}
Any stable quasicategory is a Waldhausen quasicategory with all morphisms cofibrations.
For instance, the quasicategory of modules for an $A_\infty$ ring spectrum $R$ satisfying finiteness conditions, or the quasicategory of complexes of quasicoherent $\calo_X$-modules over a quasicompact and quasiseparated scheme $X$, are stable quasicategories.

If $\calm$ is a stable model category, and $\calm^c$ denotes its full sub category on the cofibrant objects, then $N(\calm^c)[W^{-1}]$ is a stable quasicategory.

If $\calm$ is a stable simplicial model category, and $\calm^{cf}$ denotes its full sub simplicial category on the cofibrant-fibrant objects, then the simplicial nerve $N(\calm^{cf})$ is a stable quasicategory.

See \cite{LurieHigherAlgebra} and \cite[Section 2]{BlumbergGepnerTabuadaI} for details on stable quasicategories.

Genuine examples of Waldhausen quasicategories that are not stable are the nerves of Examples~\ref{examp:classical_Waldhausen_cats}~\ref{examp:classical_Waldhausen_cats:fin_based_sets}, \ref{examp:classical_Waldhausen_cats:fin_gen_Rmods}, and \ref{examp:classical_Waldhausen_cats:fin_gen_proj_Rmods}.
\end{example}

Diagrams of Waldhausen quasicategories also provide us with examples of Waldhausen quasicategories.

\begin{example}[Diagram Waldhausen Quasicategories $\cald^A$ and $\cald(m,w)$] \label{examp:diagram_Waldhausen_quasicategories}
Let $\cald$ be a Waldhausen quasicategory and $A$ any simplicial set. Then the quasicategory $\cald^A$ is a Waldhausen quasicategory in which the cofibrations are the natural transformations $A \times \Delta[1] \to \cald$ that are componentwise cofibrations. The sub simplicial set $co(\cald^A)$ is the 1-full sub simplicial set of $\cald^A$ on the componentwise cofibrations, this is a quasicategory by Proposition~\ref{prop:1_full+composites_implies_quasicategory}.
The equivalences of $\cald^A$ are cofibrations because a natural transformation is a natural equivalence if and only if its components are equivalences \cite[Theorem~5.14]{JoyalQuadern}, see Corollary~\ref{cor:nat_transf_is_nat_equiv_iff_comps_equivs}. Axioms \ref{def:Waldhausen_quasicat:(ii)} and \ref{def:Waldhausen_quasicat:(iii)} follow because colimits in a quasicategory are formed pointwise \cite[5.2.18]{RiehlVerity}, see also Section~\ref{subsec:natural_pushout_along_cofibrations}.

Consider the case $A=\Delta[m]$. In Proposition~\ref{prop:D_to_D(m,w)_is_Waldhausen_equivalence} we are interested in the Waldhausen quasicategory $\cald(m,w)$, which is the 0-full sub quasicategory of $\cald^{\Delta[m]}$ on the maps $\Delta[m] \to \cald$ which send each edge of $\Delta[m]$ to an equivalence in $\cald$. The sub quasicategory $co\cald(m,w)\subseteq \cald(m,w)$ is 1-full in $\cald(m,w)$ on the morphisms of $\cald(m,w)$ that are componentwise cofibrations in $\cald$.
In Proposition~\ref{prop:D_to_D(m,w)_is_Waldhausen_equivalence} we observe a natural bijection $\mathfrak{s}_n^\infty \cald(m,w)\cong \left[ (S^\infty_n\cald)_\text{equiv} \right]_m$, and use this to prove $\mathfrak{s}_\bullet^\infty \cald \hookrightarrow(S_\bullet^\infty \cald)_\text{\rm equiv}$ is a diagonal weak equivalence of bisimplicial sets.

We are also interested in the case $A=J[m]$, the nerve of the groupoid with objects $0,1, \dots, m$ and a unique isomorphism from any object to another. In Remark~\ref{rem:replacing_D(m,w)_with_D^J[m]} we observe that $\cald(m,w)$ is Waldhausen equivalent to $\cald^{J[m]}$.
\end{example}

The Waldhausen quasicategory $\cald(m,w)$ is a sub Waldhausen quasicategory of $\cald^{\Delta[m]}$, in the following sense.

\begin{definition}[Sub Waldhausen Quasicategory] \label{def:sub_Waldhausen_quasicategory}
A {\it sub Waldhausen quasicategory} $\mathcal{B}$ of a Waldhausen quasicategory $\calc$ is a sub quasicategory $\calb$ such that $\calb$ is a Waldhausen quasicategory in its own right and
\begin{enumerate}
\item \label{def:sub_Waldhausen_quasicategory:i}
every zero object of $\calb$ is also a zero object of $\calc$,
\item \label{def:sub_Waldhausen_quasicategory:ii}
a morphism in the sub quasicategory $\calb$ is a $\calb$-cofibration if and only if it is a $\calc$-cofibration  {\it and} the $\calc$-quotient is equivalent to an object of $\mathcal{B}$,
\item \label{def:sub_Waldhausen_quasicategory:iii}
every pushout square in the sub quasicategory $\calb$ with one leg a $\calb$-cofibration is also a pushout square in the larger quasicategory $\calc$.
\item
A morphism in $\calb$ is an equivalence in $\calb$ if and only if it is an equivalence in $\calc$.
\end{enumerate}
\end{definition}

We next define Waldhausen's $S_\bullet$ construction for Waldhausen quasicategories, denoted $S_\bullet^\infty$. Below, we include cofibrations in the definitions of $[n]$-complex and $S_\bullet^\infty \mathcal{C}$ from 1.2.2.2 and 1.2.2.5 of Lurie \cite{LurieHigherAlgebra}. See Blumberg--Gepner--Tabuada \cite[Corollary~7.7 and Theorem~7.8]{BlumbergGepnerTabuadaI} for a comparison of $S_\bullet^\infty$ with the $S_\bullet'$ construction of \cite{BlumbergMandellAbstHomTheory} in the case of a simplicial model category which admits all finite homotopy colimits (all maps in a model category are weak cofibrations, so in \cite{BlumbergMandellAbstHomTheory} the cofibrations do not play a role in the application of $S_\bullet^\infty$ to the simplicial nerve of the full sub-simplicial category on the fibrant-cofibrant objects). See also Barwick \cite[Section~5 and Corollary~6.9.1]{Barwick} for a quasicategorical version of the $S_\bullet$ construction called {\it the virtual Waldhausen $\infty$-category of totally filtered objects}.

\begin{definition}[$S_\bullet^\infty$ Construction] \label{def:Sbulletinfinity}
Let $\mathcal{C}$ be a Waldhausen quasicategory. An {\it $[n]$-complex} is a map of simplicial sets $A \co N\text{Ar}[n] \to \mathcal{C}$ such that
\begin{enumerate}
\item
For each $j \in [n]$, $A(j,j)$ is a zero object of $\mathcal{C}$, possibly different for each $j$,
\item
For each $i \leq j \leq k$, the morphism $A(i,j) \to A(i,k)$ is a cofibration,
\item
For each $i \leq j \leq k$, the diagram
$$\xymatrix{A(i,j) \ar@{>->}[r] \ar[d] & A(i,k) \ar[d] \\ A(j,j) \ar@{>->}[r] & A(j,k)}$$
is a pushout square in $\mathcal{C}$.
\end{enumerate}
Let $S^{\infty}_n\mathcal{C}$ be the sub quasicategory of $\mathcal{C}^{N\text{Ar}[n]}$ that is 0-full on the $[n]$-complexes.\footnote{In \cite{LurieHigherAlgebra} and \cite{BlumbergGepnerTabuadaI}, which have all maps cofibrations, $S^{\infty}_n\mathcal{C}$ is also called $\mathrm{Gap}([n], \calc)$.}
\end{definition}

Notice that all of the diagonal entries $A(j,j)$ can be different zero objects, and that a morphism $A \to A'$ in $S_n^\infty\calc$ has equivalences $A(j,j) \to B(j,j)$ on the diagonal, not equalities as in the classical case.

\begin{remark}
The $S_\bullet^\infty$ construction of $\mathcal{C}$ is the simplicial quasicategory $S_\bullet^\infty \mathcal{C}$ defined by
$$n \mapsto S_n^\infty \mathcal{C}\subseteq \mathbf{SSet}(N\left(\text{Ar}[n]\right) \times \Delta[-],\mathcal{C})=\mathcal{C}^{N\text{Ar}[n]}.$$
The face and degeneracy maps of the simplicial object $S_\bullet^\infty \mathcal{C}$ in the category $\mathbf{QCat}$ are induced via the category of arrows construction.  More precisely, if $A \co N\text{Ar}[n] \to \mathcal{C}$ and $\alpha\co [k] \to [n]$, then $\alpha^* A=A \circ (\alpha_* \times \alpha_*)\co N\text{Ar}[k] \to \mathcal{C}$. Compare with Remark~\ref{rem:Sdot_is simplicial_object}.
\end{remark}

\begin{remark}
For any Waldhausen quasicategory $\calc$, the quasicategory $S_n^\infty \calc$ is also a Waldhausen quasicategory with cofibrations defined analogously to the classical case. However, notice that cofibrations in $S_n^\infty\calc$ are not defined as the objectwise cofibrations (though every $S_n^\infty\calc$ cofibration is also an objectwise cofibration). This is different from $\calc^{\Delta[n]}$ in Example~\ref{examp:diagram_Waldhausen_quasicategories}, where the cofibrations {\it are} precisely the objectwise cofibrations.
\end{remark}

\begin{remark}[The $(\infty,2)$-Category $\mathbf{QWald}_{\infty,2}$ and the $(\infty,2)$-Functor $S_n^\infty$] \label{rem:infinity-2-category_of_Waldhausen_quasicategories}
The quasicategory-enriched strict 1-category of Waldhausen quasicategories and exact functors is denoted by $\mathbf{QWald}_{\infty,2}$. The quasicategory $\mathbf{QWald}_{\infty,2}((\calc,co\calc), (\cald,co\cald))$ is the sub quasicategory of $\cald^\calc$ that is 0-full on the exact functors $\calc \to \cald$. This quasicategory enrichment of $\mathbf{QWald}_{\infty,2}$ is inherited from the quasicategory enrichment of $\mathbf{QCat}_{\infty,2}$, which is a full sub simplicial category of $\mathbf{SSet}_\text{simp}$, as described in Section~\ref{subsec:Quasicategories}.  Though we will not use it, we can obtain a quasicategory of Waldhausen quasicategories by taking first the maximal Kan subcomplexes of the hom quasicategories of $\mathbf{QWald}_{\infty,2}$ and then the homotopy coherent nerve of the resulting Kan-enriched category, the resulting quasicategory of Waldhausen quasicategories coincides with $\mathbf{Wald}_\infty$ introduced in \cite[Notation~2.13]{Barwick}. Instead we prefer to work in $\mathbf{QWald}_{\infty,2}$ in order to discuss adjunctions and split exact sequences of Waldhausen quasicategories.

In this $(\infty,2)$-context we can also quickly verify that $S_n^\infty$ extends to an $(\infty,2)$-functor, that is, $S_n^\infty$ is a quasicategory-enriched strict endo-functor on a quasicategory-enriched strict 1-category. If we take $A$ in Proposition~\ref{prop:exp_is_simplicial}~\ref{prop:exp_is_simplicial:i} and \ref{prop:exp_is_simplicial:ii} to be $N\text{Ar}[n]$, then the exponentiation $(-)^{N\text{Ar}[n]}$ is a simplicially enriched endo-functor on $\mathbf{QCat}_{\infty,2}$, so an $(\infty,2)$-functor. As $S_n^\infty \calc$ is 0-full in $\calc^{N\text{Ar}[n]}$ and is itself a Waldhausen quasicategory, we also have that $S_n^\infty$ is an $(\infty,2)$-endofunctor on $\mathbf{QWald}_{\infty,2}$. We can explicitly describe $S_n^\infty$ on exact functors and natural transformations. If $f\co \calc \to \cald$ is an exact functor (see Definition~\ref{def:exact_functor}), then the functor $S^\infty_nf\co S^{\infty}_n\mathcal{C} \to S^{\infty}_n\mathcal{D}$ is the sub functor of postcomposition $f_\ast \co \text{Fun}(N\text{Ar}[n], \mathcal{C})\to \text{Fun}(N\text{Ar}[n], \mathcal{D})$ on $S_n^\infty\calc$. For a natural transformation $\alpha \co \calc \times \Delta[1] \to \cald$ of exact functors, the natural transformation $S_n^\infty \alpha \co S_n^\infty \calc \times \Delta[1] \to S_n^\infty \cald$ is in level $m$ the appropriate sub map of
$$\xymatrix{\big(\calc^{N\text{Ar}[n]}\big)_m \times \big( \Delta[1] \big)_m \ar[r] & \big( \cald^{N\text{Ar}[n]} \big)_m}$$
$$\xymatrix{\Big( A \co N\text{Ar}[n] \times \Delta[m] \to \calc, \;\; q\co [m] \to [1] \Big) \ar@{|->}[r] & \text{comp}_m(q^\ast \alpha,A) }$$
by Proposition~\ref{prop:exp_is_simplicial}~\ref{prop:exp_is_simplicial:i} and \ref{prop:exp_is_simplicial:ii}.
If $\alpha$ is a natural equivalence, then so is $S_n^\infty \alpha$.

An equivalence respectively adjunction in $\mathbf{QWald}_{\infty,2}$ is an equivalence respectively adjunction in the 2-category $\mathbf{QWald}_2$, compare with Section~\ref{subsec:Adjunctions_and_Equivalences_between_Quasicategories}.
\end{remark}

\begin{remark}[The 2-Category $\mathbf{QWald}_2$ and the 2-Functor $S_n^\infty$] \label{rem:QWald2}
Since the left adjoint $\tau_1$ to the nerve preserves finite products, we may apply it to the hom simplicial sets of $\mathbf{QWald}_{\infty,2}$ and obtain the 2-category $\mathbf{QWald}_2$ of Waldhausen quasicategories. Its hom categories are
$$\mathbf{QWald}_2((\calc,co\calc), (\cald,co\cald))=\tau_1\mathbf{QWald}_{\infty,2}((\calc,co\calc), (\cald,co\cald)).$$
An equivalence in the 2-category $\mathbf{QWald}_2$ is precisely a Waldhausen equivalence between Waldhausen quasicategories in the sense of Definition~\ref{def:Waldhausen_equivalence:qcats}. Adjunctions in $\mathbf{QWald}_2$ feature prominently in split exact sequences of Waldhausen quasicategories in Definition~\ref{def:split-exact_sequence_of_Waldhausen_quasicategories}. The $(\infty,2)$-functor $S_n^\infty$ of Remark~\ref{rem:infinity-2-category_of_Waldhausen_quasicategories} induces via $\tau_1$ on hom simplicial sets a 2-functor $S_n^\infty \co \mathbf{QWald}_2 \to \mathbf{QWald}_2$. It will always be clear from the context whether we mean by $S_n^\infty$ the $(\infty,2)$-functor or the 2-functor.
\end{remark}

\begin{example}[$NS_n$ and $S_n^\infty N$ are Equivalent when Weak Equivalences are Isomorphisms] \label{examp:special_case of K=KN}
%
If $\mathcal{C}$ is a (classical) Waldhausen category in which the weak equivalences are the isomorphisms, then its nerve $N\calc$ is a Waldhausen quasicategory with $co(N\calc)=N(co\calc)$. The inclusion $NS_n\calc \hookrightarrow S^\infty_n N\calc$ is an equivalence of quasicategories which is the nerve of an equivalence of categories, namely $S_n\calc$ includes essentially surjectively and fully faithfully into the variant of $S_n\calc$ which allows any zero objects on the diagonal, not just the distinguished one of $\calc$. This inclusion $NS_n\calc \hookrightarrow S^\infty_n N\calc$ similarly restricts to an essentially surjective and fully faithful inclusion $$NwS_n\mathcal{C}=(NS_n\mathcal{C})_\text{equiv}\hookrightarrow\left(S^\infty_n N\mathcal{C}\right)_\mathrm{equiv}$$ by Lemma~\ref{lem:iso_commutes_with_Nerve}. Notice that we are also using the facts that
$$\aligned
\mathbf{Cat}(\text{Ar}[n]\times [m], \mathcal{C})
& \cong\mathbf{SSet}(N\left(\text{Ar}[n]\times [m]\right),N\mathcal{C}) \\
& \cong\mathbf{SSet}(N\left(\text{Ar}[n]\right)\times \Delta[m],N\mathcal{C})
\endaligned$$
and pushout squares and zero objects in $N\calc$ are the same as pushout squares and zero objects in $\calc$. Thus, when the weak equivalences of a Waldhausen category $\calc$ are the isomorphisms, the nerve $N\calc$ is a Waldhausen quasicategory with $K$-theory space and spectrum (see Section~\ref{sec:K-Theory_Space_and_Spectrum}) the same as the classical $K$-theory space and spectrum of $\calc$. For instance, this applies to Examples~\ref{examp:classical_Waldhausen_cats}~\ref{examp:classical_Waldhausen_cats:fin_based_sets}, \ref{examp:classical_Waldhausen_cats:fin_gen_Rmods}, and \ref{examp:classical_Waldhausen_cats:fin_gen_proj_Rmods}, none of which have nerves stable quasicategories.
\end{example}


Moving towards Additivity, we now consider the quasicategory of cofiber sequences with subobject and quotient in specified sub quasicategories.

\begin{notation}[Quasicategorical $\cale(\cala,\calc,\calb)$]  \label{not:E(A,C,B)_quasicategorical}
Recall Notation~\ref{not:E(A,C,B)} and Example~\ref{examp:E(A,C,B)}.
Let $\calc$ be a Waldhausen quasicategory and $\cala$ and $\calb$ sub Waldhausen quasicategories.
A {\it cofiber sequence $A \rightarrowtail C \twoheadrightarrow B$ in $\calc$} is a map $\Delta[2] \to \calc$ such that $A \rightarrowtail C$ is a cofibration, and for which there exists an extension to a pushout square $\Delta[1] \times \Delta[1]\to \calc$ of the form
\begin{equation} \label{equ:cofiber_sequence_quasicategorical}
\begin{array}{c}
\xymatrix{A \ar@{>->}[r] \ar[d] \ar[dr] & C \ar[d] \\ \ast \ar[r] & B}
\end{array}
\end{equation}
with $*$ some zero object and the upper triangle the datum $\Delta[2] \to \calc$. In other words, a chosen composite is part of the data of a cofiber sequence, but the rest of the pushout square is not. The {\it quasicategory of cofiber sequences in $\calc$} is denoted $\cale(\calc, \calc, \calc)$. It is the sub quasicategory of $\calc^{\Delta[2]}$ that is 0-full on the cofiber sequences $A \rightarrowtail C \twoheadrightarrow B$. We have ``subobject'' and ``quotient'' maps $s,q\co \calc^{\Delta[2]} \to \calc$, which on an $n$-simplex $\sigma\co \Delta[2] \times \Delta[n] \to \calc$ are
$$s(\sigma)=\sigma(0,-) \hspace{.75in} \text{and} \hspace{.75in} q(\sigma)=\sigma(2,-)$$
as pictured in \eqref{equ:canonical_split_exact_functors_categorical}.
The notation $\sigma(0,-)$ is shorthand for the precomposition of $\sigma$ with the map $\Delta[n] \to \Delta[2] \times \Delta[n]$ which is the nerve of $[n] \to [2] \times [n]$, $i \mapsto (0,i)$, etc.
Then the {\it quasicategory of cofiber sequences $A \rightarrowtail C \twoheadrightarrow B$ with $A \in \cala$ and $B \in \calb$}, denoted $\cale(\cala, \calc, \calb)$, is the following pullback in $\mathbf{SSet}$.
$$\xymatrix@C=3pc{\cale(\cala, \calc, \calb) \ar[r] \ar[d] \ar@{}[dr]|{\text{pullback}} & \cale(\calc,\calc,\calc)
\ar[d]^{(s,q)} \\ \cala \times \calb \ar[r]_{\text{incl}} & \calc \times \calc }$$

A diagram morphism $(A_1,C_1,B_1)\to (A_2,C_2,B_2)$ in $\cale(\cala, \calc, \calb)$ is a {\it cofibration} if $A_1 \to A_2$ is a cofibration in $\cala$ and if any (equivalently every) ``induced map'' $C_1 \cup_{A_1} A_2 \to C_2$ is a cofibration in $\calc$, (this is well defined because any map homotopic to a cofibration is a cofibration, and because any cofibration composed with an equivalence is a cofibration). Then $co\cale(\cala, \calc, \calb)$ is defined to be the 1-full sub simplicial set of $\cale(\cala, \calc, \calb)$ on the cofibrations. The composite of two cofibrations is a cofibration by an argument similar to \cite[Lemma~1.1.1]{WaldhausenAlgKTheoryI}, so $co\cale(\cala, \calc, \calb)$ is a quasicategory by Proposition~\ref{prop:1_full+composites_implies_quasicategory}. Every equivalence of $\cale(\cala, \calc, \calb)$ is a cofibration because the pushout of an equivalence is an equivalence, so the ``induced map'' $C_1 \cup_{A_1} A_2 \to C_2$ will also be an equivalence by the 3-for-2 property of equivalences. Requirement \ref{def:Waldhausen_quasicat:(ii)} of Definition~\ref{def:Waldhausen_quasicat} is clear, and \ref{def:Waldhausen_quasicat:(iii)} holds by an argument similar to \cite[Lemma~1.1.1]{WaldhausenAlgKTheoryI}, so $\cale(\cala, \calc, \calb)$ is a Waldhausen quasicategory.
\end{notation}

\begin{remark}
In Notation~\ref{not:E(A,C,B)_quasicategorical} we did not include the pushout data in $\cale(\cala, \calc, \calb)$. We next define an equivalent version $\cale^{po}(\cala, \calc, \calb)$ which does include the pushout data in \eqref{equ:cofiber_sequence_quasicategorical} with a fixed selected zero object $\ast$. We need the pushout data to construct the left vertical map of \eqref{equ:rho_n_as_pushout:quasicategorical} and the homotopy $\theta$ in Example~\ref{examp:nat_transf_induces_s_bullet_homotopy_quasicategorical}, which is used near the end of the proof of Lemma~\ref{lem:HardLemmaMadeEasy_quasicategorical}. Of course, the restriction $\cale^{po}(\cala,\calc,\calb) \to \cale(\cala,\calc,\calb)$ is a Waldhausen equivalence.
 \end{remark}

\begin{notation}[Quasicategorical $\cale^{po}(\cala,\calc,\calb)$ with Pushout Data]  \label{not:E'(A,C,B)_quasicategorical}
Recall Notation~\ref{not:E(A,C,B)}, Example~\ref{examp:E(A,C,B)}, and Notation~\ref{not:E(A,C,B)_quasicategorical}. Let $\calc$ be a Waldhausen quasicategory with a selected zero object $\ast$ and $\cala$ and $\calb$ sub Waldhausen quasicategories.
An object of $\cale^{po}(\calc,\calc,\calc)$ is a map $\Delta[1] \times \Delta[1]\to \calc$ of the form \eqref{equ:cofiber_sequence_quasicategorical}
such that the square is a pushout square, $\ast$ is the selected zero object, and the map $A \rightarrowtail C$ is a cofibration. In particular, the data of an object includes morphisms $A \to \ast$, $\ast \to B$, a diagonal morphism, and two 2-simplices with the indicated boundaries. The quasicategory $\cale^{po}(\calc,\calc,\calc)$ is the 0-full sub quasicategory of $\calc^{\Delta[1] \times \Delta[1]}$ on the objects just defined.

An $n$-simplex of $\cale^{po}(\cala, \calc, \calb)$ is a map $\sigma\co \Delta[1] \times \Delta[1] \times \Delta[n] \to \calc$ such that $\sigma(0,0,-)$ goes into $\cala$, $\sigma(1,1,-)$ goes into $\calb$, for all $0\leq \ell \leq n$ the object $\sigma(1,0,\ell)$ is the selected zero object, and each square $\sigma(-,-,\ell)$ is a pushout square.

In more detail, consider the objects $\{(0,0)\}$ and $\{(1,1)\}$ in $[1] \times [1]$ respectively. Via nerve and precomposition, these induce ``subobject'' and ``quotient'' functors $s,q\colon \calc^{\Delta[1] \times \Delta[1]} \to \calc$, which on an $n$-simplex $\sigma\co \Delta[1] \times \Delta[1] \times \Delta[n] \to \mathcal{C}$ are defined as
$$
s(\sigma)=\sigma(0,0,-) \hspace{.75in} \text{and} \hspace{.75in} q(\sigma)=\sigma(1,1,-).
$$
Then the quasicategory $\cale^{po}(\cala, \calc, \calb)$ is the following pullback in $\mathbf{SSet}$.
$$\xymatrix@C=3pc{\cale^{po}(\cala, \calc, \calb) \ar[r] \ar[d] \ar@{}[dr]|{\text{pullback}} & \cale^{po}(\calc,\calc,\calc)
\ar[d]^{(s,q)} \\ \cala \times \calb \ar[r]_{\text{incl}} & \calc \times \calc }$$

A diagram morphism $(A_1,C_1,B_1)\to (A_2,C_2,B_2)$ in $\cale^{po}(\cala, \calc, \calb)$ is a {\it cofibration} if $A_1 \to A_2$ is a cofibration in $\cala$ and if any (equivalently every) ``induced map'' $C_1 \cup_{A_1} A_2 \to C_2$ is a cofibration in $\calc$. Then $co\cale^{po}(\cala, \calc, \calb)$ is defined to be the 1-full sub simplicial set of $\cale^{po}(\cala, \calc, \calb)$ on the cofibrations. As in Notation~\ref{not:E(A,C,B)_quasicategorical}, $co\cale^{po}(\cala, \calc, \calb)$ is a quasicategory and $\cale^{po}(\cala, \calc, \calb)$ is a Waldhausen quasicategory.
\end{notation}

In his proof of Additivity, Waldhausen uses the object part $\mathfrak{s}_\bullet$ of the $S_\bullet$ construction, as we recalled in Section~\ref{sec:simplified_proof}. We will do the same in the quasicategory context, where it means to truncate to the 0-part. Notice that both $(S_\bullet^\infty)_\text{equiv}$ and its object simplicial set $\mathfrak{s}_\bullet^\infty$ preserve products,\footnote{Recall that the simplicially enriched category $\mathbf{SSet}_\text{simp}$ is enriched Cartesian closed, so that $(-)^{N\text{Ar}[n]}$ is an enriched right adjoint and preserves enriched limits.} so we freely write $S_\bullet^\infty (s,q)_{\text{equiv}}$ to mean $((S_\bullet s)_\text{equiv}, (S_\bullet q)_{\text{equiv}} )$, and we write $\mathfrak{s}_\bullet^\infty(s,q)$ to mean $(\mathfrak{s}_\bullet^\infty s,\mathfrak{s}_\bullet^\infty q)$.

\begin{definition}[The Simplicial Set $\mathfrak{s}_\bullet^\infty \mathcal{C}$]
Let $\mathcal{C}$ be a Waldhausen quasicategory. The simplicial set $\mathfrak{s}_\bullet^\infty \mathcal{C}$ is the object part of the simplicial quasicategory $S_\bullet^\infty \mathcal{C}$.
$$\mathfrak{s}_n^\infty \mathcal{C}=(S_n^\infty \mathcal{C})_0 \subseteq \mathbf{SSet}(N\text{Ar}[n], \mathcal{C})$$
The $n$-simplices of $\mathfrak{s}_\bullet^\infty \mathcal{C}$ are the $[n]$-complexes in Definition~\ref{def:Sbulletinfinity}.
\end{definition}

\begin{lemma}[Waldhausen $(S_\bullet^\infty)_\text{equiv}$ Additivity for $\cale(\cala,\calc,\calb)$ on Object Simplicial Sets] \label{lem:HardLemmaMadeEasy_quasicategorical}
Let $\mathcal{C}$ be a Waldhausen quasicategory and $\cala$ and $\calb$ sub Waldhausen quasicategories. Then the ``subobject'' and ``quotient'' functors induce a weak homotopy equivalence of simplicial sets
\begin{equation} \label{equ:HardLemmaMadeEasy_quasicategorical}
\xymatrix{\mathfrak{s}_\bullet^\infty (s,q) \co \mathfrak{s}_\bullet^\infty \cale(\cala,\calc,\calb) \to \mathfrak{s}_\bullet^\infty \cala \times \mathfrak{s}_\bullet^\infty \calb}.
\end{equation}
\end{lemma}
\begin{proof}
The proof of Lemma~\ref{lem:HardLemmaMadeEasy} carries over to the quasicategorical context using some further justifications. We want to prove that $r$, the composite of the quasicategorical analogue of \eqref{equ:r}, is a weak equivalence.

Fix a zero object $\ast$ of $\calc$ that is in both $\cala$ and $\calb$ (if there is not a common zero object,  we select a zero object of $\calc$, add it and the necessary simplices to $\cala$ and $\calb$ to make Waldhausen equivalent quasicategories $\cala'$ and $\calb'$, prove the Lemma for $\cala'$ and $\calb'$, and then conclude the Lemma for the original $\cala$ and $\calb$ without a common zero object by the 3-for-2 property). We work with $\cale^{po}(\cala,\calc,\calb)$ from Notation~\ref{not:E'(A,C,B)_quasicategorical} and prove that
$$\xymatrix{\mathfrak{s}_\bullet^\infty (s,q) \co \mathfrak{s}_\bullet^\infty \cale^{po}(\cala,\calc,\calb) \to \mathfrak{s}_\bullet^\infty \cala \times \mathfrak{s}_\bullet^\infty \calb}$$
is a weak homotopy equivalence of simplicial sets. Then \eqref{equ:HardLemmaMadeEasy_quasicategorical} is also a weak homotopy equivalence by the 3-for-2 property of weak homotopy equivalences (recall $\cale^{po}(\cala,\calc,\calb) \to \cale(\cala,\calc,\calb)$ is a Waldhausen equivalence).

To work with the same diagrams as in \eqref{equ:element_in_left_fiber}, but with chosen composites as part of the data, we note the isomorphisms of simplicial sets $$\Big(\calc^{\Delta[1] \times \Delta[1]}\Big)^{N\text{Ar}[n]} \cong \calc^{\Delta[1] \times \Delta[1] \times N\text{Ar}[n]} \cong \Big(\calc^{N\text{Ar}[n]}\Big)^{\Delta[1] \times \Delta[1]}.$$
We now have that $(e,\alpha)\in f/(m,A)$ consists of a map $e\co \Delta[1] \times \Delta[1] \times N\text{Ar}[n] \to \calc$ and a weakly increasing map $\alpha\co[m] \to [n]$ such that $\alpha^*A$ is row $(0,0)$ of $e$.

The analogous diagrams of Lemma~\ref{lem:HardLemmaMadeEasy} are now defined in the quasicategorical context as follows. We first select, once and for all, two natural transformations $\varphi,\psi \co \Delta[1] \times \calc \to \calc$, one with $\varphi_0=\ast$ and $\varphi_1=\text{Id}_\calc$, the other with $\psi_0=\text{Id}_\calc$ and $\psi_1=\ast$, which exist because $\ast$ is a zero object in $\calc$. For $B \in \mathfrak{s}_n\calb$, we define $\iota(B)\in \mathfrak{s}^\infty_n\cale^{po}(\cala,\calc,\calb)$ in diagram \eqref{equ:iota(B)} as follows.
$$\xymatrix@C=3.5pc{\Delta[1]\times\Delta[1] \times N\text{Ar}[n] \ar[r]^-{ \text{pr}_2 \times B} \ar@/_1pc/[rr]_{\iota(B)} & \Delta[1]\times\calc \ar[r]^-\varphi & \calc}$$
Then $\iota\co \mathfrak{s}_\bullet \calb \to f/(m,A)$ is compatible with face and degeneracy maps because for instance
$$\iota(d_i B)=\iota(B \circ ((\delta^i_n)_* \times (\delta^i_n)_*)=\iota(B)\circ \Big(\text{Id}_{\Delta[1]\times\Delta[1]}\times (\delta^i_n)_* \times  (\delta^i_n)_* \Big)=d_i \iota(B).$$
For an $n$-simplex $(e,\alpha)$ in $f/(m,A)$, we define $\lambda_n^j(e,\alpha)$ in diagram \eqref{equ:lambda_rows} as follows, with $\Delta$ morphism $\alpha$.
$$\xymatrix@C=4.5pc{\Delta[1]\times\Delta[1] \times N\text{Ar}[n] \ar[r]^-{\text{pr}_1 \times (\alpha^*A)  } \ar@/_1pc/[rr]_{\lambda_n^j(e,\alpha)} & \Delta[1]\times\calc  \ar[r]^-\psi & \calc}$$
This is independent of $j$, and we see that for fixed $j$, we have that $(\lambda^j_n)_n$ is a map of simplicial sets, using the same argument as for $\iota$ above.
For an $n$-simplex $(e,\alpha)$ in $f/(m,A)$, we define $\mu_n^j(e,\alpha)$ in diagram \eqref{equ:mu_rows} as follows, with $\Delta$ morphism $(NZ)^j_n(\alpha)$.
$$\xymatrix@C=8pc{\Delta[1] \times \Delta[1] \times N\text{Ar}[n] \ar[r]^-{\text{pr}_1 \times \big(\left((NZ)^j_n(\alpha)\right)^*A\big)} \ar@/_1pc/[rr]_{\mu_n^j(e,\alpha)} & \Delta[1]\times\calc  \ar[r]^-\psi & \calc}$$
The definition of $\nu^j_n(e,\alpha)$ is simply $(e,\alpha)$.

We now make a functorial choice of pushouts along cofibrations in $\calc$ which preserves identity morphisms. This is guaranteed to exist by Proposition~\ref{prop:functorial_choice_of_pushouts_pres_id}. Anytime we form a pushout or write a pushout in $\calc$, we mean this choice. In particular, we form for each simplex $(e,\alpha)$ in $f/(m,A)$ the pushout
\begin{equation}  \label{equ:rho_n_as_pushout:quasicategorical}
\begin{array}{c}
\xymatrix{ \lambda_n^j(e,\alpha) \ar[r] \ar@{}[dr]|{\mathrm{p.o.}} \ar@{>->}[d] & \mu_n^j(e,\alpha) \ar@{>->}[d] \\
\nu_n^j(e,\alpha) \ar[r] & \rho_n^j(e,\alpha)}
\end{array}
\end{equation}
in $\calc^{\Delta[2] \times \Delta[n] \times \Delta[n]}$ object-wise using this choice. The top horizontal map in \eqref{equ:rho_n_as_pushout:quasicategorical} is defined completely analogously to \eqref{equ:top_horizontal_map_i} and \eqref{equ:top_horizontal_map_ii}. The left vertical map in \eqref{equ:rho_n_as_pushout:quasicategorical} is induced by the natural transformation on $\cale^{po}(\cala, \calc,\calb)$ sketched by
$$\begin{array}{c}
\xymatrix{A_j \ar@{>->}[d] \\ C_j \ar@{->>}[d] \\ B_j }
\end{array}
\begin{array}{c}
\xymatrix{\ar@{|->}[r] & }
\end{array}
\begin{array}{c}
\xymatrix{A_j \ar@{>->}[d] & A_j \ar@{>->}[l]_{\text{Id}} \ar@{=}[d]  \\ C_j \ar@{->>}[d] \ar@{}[dr]|{\text{p.o.}} & A_j \ar@{>->}[l] \ar[d]  \\ B_j & \ast \ar@{>->}[l] }
\end{array}
$$
where the square labelled ``p.o.'' is from the data of the vertical cofiber sequences, see \eqref{equ:cofiber_sequence_quasicategorical}. By Corollary~\ref{cor:main_corollary_about_pushouts}, these pushouts \eqref{equ:rho_n_as_pushout:quasicategorical} are compatible with face and degeneracy maps, and $\rho_n^j$ therefore satisfies the simplicial homotopy identities. We can also conclude that each $\rho^j_n(e,\alpha)$ is in $\mathfrak{s}_n\cale^{po}(\cala,\calc,\calb)$ by arguments exactly like those after diagram \eqref{equ:rho_n_as_pushout}. Thus we have a homotopy from $\text{Id}_{f/(m,A)}$ to $(\rho^0_n)_n$, depicted in \eqref{equ:rho_n+1_grid}.

The justification that the homotopy $\theta\co(\rho^{0}_n)_n \simeq \iota \circ r$ from Lemma~\ref{lem:HardLemmaMadeEasy} also works analogously in the quasicategorical case is contained in Examples~\ref{examp:nat_transf_induces_s_bullet_homotopy} and \ref{examp:nat_transf_induces_s_bullet_homotopy_quasicategorical}.

We have $\text{Id}_{f/(m,A)}\simeq (\rho^{0}_n)_n \simeq \iota \circ r$ and $r \circ \iota = \text{Id}_{\mathfrak{s}_\bullet \calb}$, so that $r$ is a weak homotopy equivalence of simplicial sets. Finally, $\mathfrak{s}_\bullet (s,q) \co \mathfrak{s}_\bullet \cale^{po}(\cala,\calc,\calb) \to \mathfrak{s}_\bullet \cala \times \mathfrak{s}_\bullet \calb$ is a weak homotopy equivalence of simplicial sets by Theorem~$\widehat{A}^*$, and also \eqref{equ:HardLemmaMadeEasy_quasicategorical} is a weak homotopy equivalence of simplicial sets by 3-for-2.
\end{proof}

Next we move from the object-set additivity for $\mathfrak{s}_\bullet^\infty$ in Lemma~\ref{lem:HardLemmaMadeEasy_quasicategorical} to the additivity of $(S_\bullet^\infty)_{\text{equiv}}$ for $\cale(\cala,\calc,\calb)$. The idea is due to Waldhausen \cite[page~336]{WaldhausenAlgKTheoryI}, but the implementation in quasicategories requires some discussion.

\begin{proposition}\label{prop:D_to_D(m,w)_is_Waldhausen_equivalence}
Let $\cald$ be a Waldhausen quasicategory and $\cald(m,w) \subseteq \cald^{\Delta[m]}$ the 0-full sub quasicategory on the maps $\Delta[m] \to \cald$ which send each edge of $\Delta[m]$ to an equivalence in $\cald$, as in Example~\ref{examp:diagram_Waldhausen_quasicategories}. Then the following hold.
\begin{enumerate}
\item \label{prop:D_to_D(m,w)_is_Waldhausen_equivalence:i}
The inclusion $i\co \cald \hookrightarrow \cald(m,w)$ defined by $x \mapsto (x=x=\cdots=x)$, or more precisely $$\big(\xymatrix@C=1pc{\Delta[n] \ar[r]^-d & \cald}\big) \xymatrix{ \ar@{|->}[r] &}  \Big(\xymatrix@C=1.5pc{ \Delta[m] \times \Delta[n] \ar[r]^-{pr_2} & \Delta[n] \ar[r]^-d  &  \cald} \Big),$$ is a Waldhausen equivalence.
\item \label{prop:D_to_D(m,w)_is_Waldhausen_equivalence:ii}
For fixed $n$ and $m$, we have a bijection of sets $\mathfrak{s}_n^\infty \cald(m,w) \cong \big[(S_n^\infty \cald)_\text{\rm equiv}\big]_m$ natural in $n$ and $m$.
\item \label{prop:D_to_D(m,w)_is_Waldhausen_equivalence:iii}
For fixed $m$, the inclusion
\begin{equation} \label{equ:D_to_D(m,w)_is_Waldhausen_equivalence:fixed_m}
\xymatrix{\mathfrak{s}_\bullet^\infty \cald \ar@/_1pc/@{^{(}->}[rr] \ar@{^{(}->}[r]^-{\mathfrak{s}_\bullet^\infty i} &
\mathfrak{s}_\bullet^\infty \cald(m,w) \ar[r]^-{\cong} &
\big[(S_\bullet^\infty \cald)_\text{\rm equiv}\big]_m
}
\end{equation}
is a weak homotopy homotopy equivalence of simplicial sets.
\item \label{prop:D_to_D(m,w)_is_Waldhausen_equivalence:iv}
The simplicial maps of \eqref{equ:D_to_D(m,w)_is_Waldhausen_equivalence:fixed_m} for varying $m$ assemble to a diagonal weak equivalence
\begin{equation} \label{equ:s_into_Sequiv_is_diag_we}
\xymatrix{ \mathfrak{s}_\bullet^\infty \cald \ar@{^{(}->}[r] & (S_\bullet^\infty \cald)_\text{\rm equiv} }
\end{equation}
of bisimplicial sets.
\end{enumerate}
\end{proposition}
\begin{proof} \leavevmode
\begin{enumerate}
\item
We prove directly that the inclusion $i\co \cald \hookrightarrow \cald(m,w)$ is an equivalence of quasicategories by identifying the inclusion $\cald \hookrightarrow \cald^{\Delta[m]}$ with $\cald^{N\rho}$ in an adjunction that restricts to an adjoint equivalence on $\cald(m,w)$.  \\
Consider the adjunction of categories on the left below, where $\iota$ is the inclusion of 0 and $\rho$ is the projection to 0.
\begin{equation} \label{equ:D_to_D(m,w)_is_Waldhausen_equivalence:adjunction}
\begin{array}{c}
\xymatrix@C=2pc{[0] \ar@/^.75pc/[r]^{\iota}  \ar@{}[r]|{\bot} & \ar@/^.75pc/[l]^{\rho} [m]}
\end{array}
\xymatrix@C=3pc{\ar@{|->}[r]^{\cald^{(-)} \circ N } &  }
\begin{array}{c}
\xymatrix@C=2pc{\cald^{\Delta[0]} \ar@{<-}@/^.75pc/[r]^{\cald^{N\iota}}  \ar@{}[r]|{\top} & \ar@{<-}@/^.75pc/[l]^{\cald^{N\rho}} \cald^{\Delta[m]}}
\end{array}
\end{equation}
We have trivial unit $\text{Id}_{[0]}=\rho \iota$ and counit the natural transformation $\varepsilon\co \iota \rho =\text{const}_0 \Rightarrow \text{Id}_{[m]}$ which in component $j$ is the unique map $0 \to j$. Applying nerve and exponentiation base $\cald$ to the adjunction of categories on the left yields the adjunction of quasicategories on the right (recall Proposition~\ref{prop:exp_is_simplicial}~\ref{prop:exp_is_simplicial:iii} and \ref{prop:exp_is_simplicial:iv} and Corollary~\ref{cor:exponentiating_simplicial_homotopies}). The adjunction of quasicategories on the right also has a trivial unit by functoriality of nerve and $\cald^{(-)}$.

We prove that the natural transformation $\cald^{N\varepsilon}\co \cald^{\Delta[m]} \times \Delta[1] \to \cald^{\Delta[m]}$ is a natural equivalence when restricted to $\cald(m,w) \subseteq \cald^{\Delta[m]}$ by computing the component of $\cald^{N\varepsilon}$ at an object $h\co \Delta[m] \to \cald$ of $\cald(m,w)$, recall Corollary~\ref{cor:nat_transf_is_nat_equiv_iff_comps_equivs}~\ref{item(i)}$\Leftrightarrow$\ref{item(iv)}. The degeneracy $s_0(h)$ is $h \circ \text{pr}_1\co \Delta[m] \times \Delta[1] \to \cald$, so the component of $\cald^{N\varepsilon}$ at $h$ is
\begin{equation} \label{equ:checking_CNalpha_natural_equivalence_i}
\cald^{N\varepsilon}(s_0(h), \text{Id}_{[1]})=\text{comp}_1(h \circ \text{pr}_1, N\varepsilon)\in \big(\cald^{\Delta[m]}\big)_1,
\end{equation}
as we see from equation \eqref{equ:making_exponentiation_base_C_simplicial_formula} by taking $A=B=\Delta[m]$, $f=N\varepsilon$, $g=h \circ \text{pr}_1$, $q=\text{Id}_{[1]}$, and $m=1$ there (but not here). To see the component \eqref{equ:checking_CNalpha_natural_equivalence_i} is an equivalence in $\cald^{\Delta[m]}$, we check that the components of \eqref{equ:checking_CNalpha_natural_equivalence_i} are equivalences in $\cald$. For this, we apply formula \eqref{equ:enriched_composition_formula} with $a$ the degeneracy $s_0(j)=\big(j=j\big)$ of any vertex $j$ of $\Delta[m]$ and $p=\text{Id}_{[1]}$.
\begin{equation} \label{equ:checking_CNalpha_natural_equivalence_ii}
\aligned
\text{comp}_1(h \circ \text{pr}_1, N\varepsilon)(s_0(j),\text{Id}_{[1]})&=\big(h \circ \text{pr}_1\big)_1((N\varepsilon)_1(s_0(j),\text{Id}_{[1]}),\text{Id}_{[1]})\\
&=\big(h \circ \text{pr}_1\big)_1(0\to j,\text{Id}_{[1]})\\
&=h(0\to j)
\endaligned
\end{equation}
But $h$ is an object of $\cald(m,w)$, so $h(0 \to j)$ is an equivalence in $\cald$. Thus $\cald^{N\varepsilon}$ is a natural equivalence on $\cald(m,w)$, and the restriction of the right adjunction in equation  \eqref{equ:D_to_D(m,w)_is_Waldhausen_equivalence:adjunction} to $\cald(m,w)\subseteq \cald^{\Delta[m]}$ is an adjoint equivalence. The restriction of $\cald^{N\rho}$, which is an equivalence of quasicategories, is $i\co\cald \hookrightarrow \cald(m,w)$.

It is fairly clear that $i\co\cald \hookrightarrow \cald(m,w)$ is exact in the sense of Definition~\ref{def:exact_functor}, and reflects cofibrations, so $i$ is a Waldhausen equivalence by Proposition~\ref{prop:exact_equivalence_is_Waldhausen_equivalence:qcats}.
\item
In $\mathfrak{s}_n^\infty$ and $S_n^\infty$ recall that any zero objects are on the main diagonal, so in $\mathfrak{s}_n^\infty \cald(m,w)$ any sequence $\ast\cong \ast' \cong \cdots \cong \ast^{(m)}$ can be a diagonal entry. The rest follows from the standard adjunction formulae $$\mathbf{SSet}(N\text{Ar}[n], \cald^{\Delta[m]})\cong\mathbf{SSet}(N\text{Ar}[n]\times\Delta[m], \cald)\cong\mathbf{SSet}(\Delta[m], \cald^{N\text{Ar}[n]}).$$
\item
From \ref{prop:D_to_D(m,w)_is_Waldhausen_equivalence:i} and Example~\ref{examp:nat_transf_induces_s_bullet_homotopy_quasicategorical} we see that $\mathfrak{s}_\bullet^\infty i$ is a simplicial homotopy equivalence.
\item
This follows from \ref{prop:D_to_D(m,w)_is_Waldhausen_equivalence:iii} by the Realization Lemma: if a map of bisimplicial sets is a level-wise weak homotopy equivalence of simplicial sets, then its diagonal is a weak homotopy equivalence of simplicial sets.
\end{enumerate}
\end{proof}

\begin{remark}[Replacing $\cald(m,w)$ by $\cald^{J[m]}$] \label{rem:replacing_D(m,w)_with_D^J[m]}
Recall $J[m]$ denotes the nerve of the groupoid with objects $0,1, \dots, m$ and a unique isomorphism from any object to another, and that a morphism $J[m] \to \cald$ is the same as a map $\Delta[m] \to \cald$ with all edges equivalences, and with selected coherent inverses (see Proposition~\ref{prop:extending_an_n-simplex_of_equivalences}). The equivalence of quasicategories $\Delta[0] \hookrightarrow  J[m]$, which comes from an equivalence of categories, exponentiates to an equivalence of quasicategories $\cald^{J[m]} \to \cald$ by Proposition~\ref{prop:restriction_morphisms}~\ref{prop:restriction_morphisms:i} or by Corollary~\ref{cor:exponentiating_simplicial_homotopies}. The restriction $\cald^{J[m]} \to \cald(m,w)$ is an equivalence of quasicategories by the 3-for-2 property of equivalences of quasicategories.
$$\xymatrix@C=2.5pc{\cald^{J[m]} \ar[r] \ar@/_1pc/[rr]_-{\text{equiv}} & \cald(m,w) \ar[r]^-{\text{equiv}} & \cald}$$
\end{remark}

\begin{remark}[Derivator $K$-Theory]
In the context of $K$-theory of {\it pointed right derivators}, Muro-Raptis proved a version of Proposition~\ref{prop:D_to_D(m,w)_is_Waldhausen_equivalence}~\ref{prop:D_to_D(m,w)_is_Waldhausen_equivalence:iv} where the weak equivalences are not required to be the weak isomorphisms, see Section 4.2 and Proposition 4.2.1 of \cite{MuroRaptis_KforPointedRightDerivators}. The idea is that the homotopy pushouts in the $S_\bullet$ construction in their context already contain enough information to make the inclusion of 0-simplices into $S_\bullet$ into a weak equivalence in $K$-theory. Their work \cite{MuroRaptis_KforPointedRightDerivators} is a follow-up on \cite{MuroRaptis_SameDerKDifferentWaldK}, in which they give an example of two differential graded algebras with the same {\it triangulated derivator $K$-theory} but different Waldhausen $K$-groups. For pointed right derivators that arise from good enough Waldhausen categories, the $K$-theory defined in \cite{MuroRaptis_KforPointedRightDerivators} agrees with Waldhausen $K$-theory. The $K$-theory of pointed right derivators is not invariant under general derivator equivalence, while Maltsiniotis' original definition of $K$-theory for triangulated derivators in \cite{Maltsiniotis_KOfTriangulatedDerivator} ``is the best approximation to Waldhausen $K$-theory by a functor that is invariant under equivalences of derivators'' as proved in \cite{MuroRaptis_KforPointedRightDerivators}. We also mention that Additivity for derivator $K$-theory of triangulated derivators (using Maltsiniotis' original definition) was proved by Cisinski-Neeman in \cite{CisinskiNeeman_AdditivityForTriangulatedDerivators}.
\end{remark}

\begin{theorem}[Waldhausen $(S_\bullet^\infty)_\text{equiv}$ Additivity for $\cale(\cala,\calc,\calb)$] \label{thm:Additivity_for_quasicategories}
Let $\mathcal{C}$ be a Waldhausen quasicategory and $\cala$ and $\calb$ sub Waldhausen quasicategories. Then $(s,q)$ induces a diagonal weak equivalence
\begin{equation} \label{equ:Additivity_for_quasicategories}
\xymatrix{S^\infty_\bullet(s,q)_\mathrm{equiv}\co (S^\infty_\bullet \cale(\cala,\calc,\calb))_\mathrm{equiv} \ar[r] & (S^\infty_\bullet \cala)_\mathrm{equiv} \times (S^\infty_\bullet \calb)_\mathrm{equiv}}
\end{equation}
of bisimplicial sets.
\end{theorem}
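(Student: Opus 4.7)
The plan is to reduce this to the object-level equivalence of Lemma~\ref{lem:HardLemmaMadeEasy_quasicategorical} via the bisimplicial analogue of Waldhausen's passage from $\mathfrak{s}_\bullet$ to $wS_\bullet$ on page 336 of \cite{WaldhausenAlgKTheoryI}. Both sides of \eqref{equ:Additivity_for_quasicategories} are bisimplicial sets whose $(n,m)$-entry is $((S^\infty_n(-))_\text{\rm equiv})_m$. I will show that for each fixed $m \geq 0$ the induced map of simplicial sets in the $n$-direction is a weak equivalence; since level-wise weak equivalences of bisimplicial sets induce diagonal weak equivalences, this suffices.

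For each $m \geq 0$, Corollary~\ref{cor:simplices_of_maximal_Kan_subcomplex} together with the exponential adjunction \eqref{equ:Fun_adjunction} identifies $((S^\infty_n\calc)_\text{\rm equiv})_m$ with the set of those maps $N\text{Ar}[n] \to \calc^{J[m]}$ whose restriction to each vertex $j \in J[m]_0$ lies in $S^\infty_n\calc$. Equipping $\calc^{J[m]} = \text{Fun}(J[m],\calc)$ with distinguished zero object the constant functor at $\ast$ and with cofibrations the 1-full subquasicategory generated by the pointwise cofibrations and the equivalences, this set is precisely $\mathfrak{s}^\infty_n(\calc^{J[m]})$. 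The Waldhausen quasicategory axioms for $\calc^{J[m]}$ hold because pushouts in $\calc^{J[m]}$ are computed pointwise and the pointwise zero object is a zero object of the functor quasicategory.

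Analogous identifications hold for $\cala^{J[m]}$ and $\calb^{J[m]}$, which are sub Waldhausen quasicategories of $\calc^{J[m]}$. Moreover, there is a canonical isomorphism
\[
\cale(\cala^{J[m]},\calc^{J[m]},\calb^{J[m]}) \cong \cale(\cala,\calc,\calb)^{J[m]},
\]
since a functor $J[m] \to \cale(\cala,\calc,\calb)$ amounts to a diagram $A \rightarrowtail C \twoheadrightarrow B$ in $\calc^{J[m]}$ whose pointwise evaluations are cofiber sequences with $A$-part in $\cala$ and $B$-part in $\calb$ (again using pointwise pushouts). Applying $\mathfrak{s}^\infty_\bullet$ and using these identifications, the $m$-th row of \eqref{equ:Additivity_for_quasicategories} becomes the map $\mathfrak{s}^\infty_\bullet(s,q)$ for the triple $(\cala^{J[m]},\calc^{J[m]},\calb^{J[m]})$, which is a weak equivalence by Lemma~\ref{lem:HardLemmaMadeEasy_quasicategorical}.

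The main obstacle is the careful verification that $\calc^{J[m]}$ inherits a Waldhausen quasicategory structure and that $\cale(\cala,\calc,\calb)^{J[m]}$ agrees with $\cale(\cala^{J[m]},\calc^{J[m]},\calb^{J[m]})$; both reduce to the fact that pushouts in a functor quasicategory are computed pointwise. A secondary routine point is the naturality of the row-wise identifications with respect to the maps $(s,q)$.
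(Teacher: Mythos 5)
Your proposal is correct and follows essentially the same route as the paper's own proof: both identify the $m$-th row of each bisimplicial set with $\mathfrak{s}^\infty_\bullet$ of the functor Waldhausen quasicategories $\cala^{J[m]}$, $\calc^{J[m]}$, $\calb^{J[m]}$ (using Corollary~\ref{cor:simplices_of_maximal_Kan_subcomplex}, the exponential adjunction, and the isomorphism $\cale(\cala^{J[m]},\calc^{J[m]},\calb^{J[m]}) \cong \cale(\cala,\calc,\calb)^{J[m]}$), apply Lemma~\ref{lem:HardLemmaMadeEasy_quasicategorical} levelwise, and conclude via the Realization Lemma. Your explicit verification that $\calc^{J[m]}$ inherits a Waldhausen quasicategory structure with pointwise cofibrations and pushouts is a useful elaboration of a point the paper handles more tersely, but it is not a different method.
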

\begin{proof}
Consider the commutative diagram of bisimplicial set maps below.
$$\xymatrix@C=5pc@R=3pc{\mathfrak{s}^\infty_\bullet \cale(\cala,\calc,\calb) \ar[r]^{\mathfrak{s}^\infty_\bullet(s,q)} \ar@{^{(}->}[d]_{\eqref{equ:s_into_Sequiv_is_diag_we}}^{\text{w.e.}} & \mathfrak{s}^\infty_\bullet \cala \times \mathfrak{s}^\infty_\bullet \calb \ar@{^{(}->}[d]^{\eqref{equ:s_into_Sequiv_is_diag_we}}_{\text{w.e.}} \\
(S^\infty_\bullet \cale(\cala,\calc,\calb))_\mathrm{equiv} \ar[r]_-{S^\infty_\bullet(s,q)_\mathrm{equiv}} & (S^\infty_\bullet \cala)_\mathrm{equiv} \times (S^\infty_\bullet \calb)_\mathrm{equiv} } $$
The left and right vertical maps are diagonal weak equivalences by Proposition~\ref{prop:D_to_D(m,w)_is_Waldhausen_equivalence}~\ref{prop:D_to_D(m,w)_is_Waldhausen_equivalence:iv}. The top horizontal map is a diagonal weak equivalence by Lemma~\ref{lem:HardLemmaMadeEasy_quasicategorical} and the Realization Lemma. Thus, the bottom map, which is \eqref{equ:Additivity_for_quasicategories}, is a diagonal weak equivalence by the 3-for-2 property.
\end{proof}

%
%

\section{The $K$-Theory Space and $K$-Theory Spectrum of a Waldhausen Quasicategory} \label{sec:K-Theory_Space_and_Spectrum}

We introduce the Waldhausen $K$-theory space and spectrum of a Waldhausen quasicategory, and discuss how to make them 1-functorial. Throughout this section we use the notion of {\it exact functor} introduced in Definition~\ref{def:exact_functor}. Recall that Waldhausen quasicategories do not have a distinguished zero object, so we must carefully treat basepoints and homotopy independence of choices when constructing $K$-theory spaces and spectra.

\begin{definition}[$K$-Theory Space of a Waldhausen Quasicategory]  \label{def:K-theory_space_of_Waldhausen_Qcat}
Let $(\calc, co\calc)$ be a Waldhausen quasicategory and select a zero object $\ast$. The {\it Waldhausen $K$-theory space of $(\calc, co\calc,\ast)$} is
\begin{equation} \label{equ:K-theory_space_for_quasicategories}
K(\calc, co\calc,\ast):=\Omega \big\vert \big(S_\bullet^\infty \calc\big)_{\text{equiv}} \big\vert,
\end{equation}
where the loops are based at the selected zero object $\ast$ of $\calc$. Usually we simply write $K(\calc)$ for $K(\calc, co\calc,\ast)$.
\end{definition}

\begin{remark}[Making the $K$-Theory Space 1-Functorial]
Choosing a different zero object gives a homotopy equivalent $K$-theory space (recall that the diagonal entries of $S_n^\infty \calc$ are any zero objects of $\calc$, so changing the selected zero object $\ast$ has no effect on the space $\vert (S_\bullet^\infty \calc)_{\text{equiv}} \vert$ in \eqref{equ:K-theory_space_for_quasicategories}, rather only an effect on $\Omega \vert (S_\bullet^\infty \calc)_{\text{equiv}} \vert$\,). The algebraic $K$-theory space $K(-)$ is not 1-functorial in exact functors because exact functors are not required to preserve a selected zero object. However, the algebraic $K$-theory space construction is a strict 1-functor on the 1-category $\mathbf{QWald}^{\text{zero}}$ of Waldhausen quasicategories with a distinguished zero object and morphisms the exact functors strictly preserving the distinguished zero objects.
\end{remark}

To define the structure maps $\Sigma\bfK(\calc)_n \to \bfK(\calc)_{n+1}$ of the $K$-theory spectrum associated to a Waldhausen quasicategory $\calc$ (following Waldhausen's original construction \cite[pages 329--330]{WaldhausenAlgKTheoryI} for categories), we need not only a selection of a zero object $\ast_\calc$, but also a selection of natural transformations $\alpha\co \text{Id}_\calc \Rightarrow \ast_\calc$ and $\beta \co \ast_\calc \Rightarrow \text{Id}_\calc$ with component $\ast_\calc=\ast_\calc$ at the object $\ast_\calc$.

\begin{definition}[Pointed Waldhausen Quasicategory and Pointed Exact Functor]
A {\it pointed} Waldhausen quasicategory $(\calc,co\calc,\ast_\calc,\alpha^\calc,\beta^\calc)$ is a Waldhausen quasicategory $(\calc,co\calc)$ with a distinguished zero object $\ast_\calc \in \calc_0$ and a selection of two natural transformations $\alpha^\calc\co \text{Id}_\calc \Rightarrow \ast_\calc$ and $\beta^\calc \co \ast_\calc \Rightarrow \text{Id}_\calc$, both with component $\ast_\calc=\ast_\calc$ at the object $\ast_\calc$. \\ A {\it pointed} exact functor $f\co (\calc,co\calc,\ast_\calc,\alpha^\calc,\beta^\calc) \to (\cald,co\cald,\ast_\cald,\alpha^\cald,\beta^\cald)$ of pointed Waldhausen quasicategories is an exact functor strictly preserving the zero object and natural transformations, that is
$$f(\ast_\calc)=\ast_\cald, \;\;\;\;\; f\circ \alpha^\calc =\alpha^\cald \circ (f \times \text{Id}_{\Delta[1]}), \;\;\;\;\; \text{ and }  \;\;\;\;\; f\circ \beta^\calc =\beta^\cald \circ (f \times \text{Id}_{\Delta[1]}).$$
\end{definition}

\begin{remark} \label{rem:D_pointed_implies_SnD_pointed}
If $\cald$ is a pointed Waldhausen category, then $S_n^\infty \cald$ is a pointed Waldhausen category with selected zero object $\ast_{S_n^\infty \cald}$ the grid of $\ast_\cald$ and equalities, and the natural transformations are $S_n^\infty \alpha$ and $S_n^\infty \beta$ (recall $S_n^\infty$ is an $(\infty,2)$-functor from Remark~\ref{rem:infinity-2-category_of_Waldhausen_quasicategories}.)
\end{remark}

\begin{definition}[0-th Structure Map of $\bfK(\cald)$]
Let $\cald$ be a pointed Waldhausen category. The {\it 0-th structure map of the $K$-theory spectrum $\mathbf{K}(\cald)$} is the composite
\begin{equation} \label{equ:0th_structure_map}
\xymatrix@C=3pc{\sum\big\vert \cald_\text{equiv} \big\vert \ar@{^{(}->}[r] &
\sum\big\vert \big(S_1^\infty\cald\big)_\text{equiv} \big\vert \ar@{^{(}->}[r] &
\big\vert \big(S_\bullet^\infty\cald\big)_\text{equiv} \big\vert }.
\end{equation}
The first map of \eqref{equ:0th_structure_map} is the reduced suspension of the realization of the quasicategory map
\begin{equation} \label{equ:first_part_of_K-theory_spectrum_structure_map_on_qcatlevel}
\xymatrix@C=3pc{\cald_\text{equiv} \ar@{^{(}->}[r] &
\big(S_1^\infty\cald\big)_\text{equiv} }
\end{equation}
$$d \xymatrix@C=3pc{\ar@{|->}[r] & } \begin{array}{c} \xymatrix@C=1pc@R=1pc{\ast_\cald \ar[r]^{\beta_d^\cald} & d \ar[d]^{\alpha_d^\cald} \\ & \ast_\cald} \end{array}.$$
More precisely, \eqref{equ:first_part_of_K-theory_spectrum_structure_map_on_qcatlevel} sends an $n$-simplex $\gamma \co \Delta[n] \to \cald_\text{equiv}$ to the $n$-simplex $N\text{Ar}[1]\times \Delta[n] \to \cald$ which on the top face and side face respectively are
$$\xymatrix@C=4.5pc{\Delta[n] \times \Delta[1] \ar[r]^-{\beta^\cald \circ (\gamma \times \text{Id}_{\Delta[1]})} & \cald} \hspace{.5in} \text{and} \hspace{.5in} \xymatrix@C=4.5pc{\Delta[n] \times \Delta[1] \ar[r]^-{\alpha^\cald \circ (\gamma \times \text{Id}_{\Delta[1]})} & \cald}.$$
For an explanation of the second inclusion in \eqref{equ:0th_structure_map}, see \cite[Section~8.4]{RognesTextbook}.
\end{definition}

\begin{definition}[$K$-Theory Spectrum of a Pointed Waldhausen Quasicategory]  \label{def:K-theory_spectrum_of_Waldhausen_Qcat}
Let $\calc$ be a pointed Waldhausen quasicategory. The {\it Waldhausen $K$-theory spectrum of $\calc$} has $n$-th based space
\begin{equation} \label{equ:nth_space_of_K-theory_spectrum}
\bfK(\calc,co\calc,\ast_\calc)_n:= \Big| \Big(\big[S_\bullet^\infty\big]^n \calc \Big)_\mathrm{equiv}\Big|
\end{equation}
with basepoint $\ast_\calc$, where $n \geq 0$ and $\big[S_\bullet^\infty\big]^n\calc\co \Delta^{\times n} \to \mathbf{QWald}$ is the multisimplicial Waldhausen quasicategory obtained by $n$ applications of $S_\bullet^\infty$ to $\calc$. The structure map
$$\xymatrix@C=3pc{\Sigma\bfK(\calc,co\calc,\ast_\calc)_n \ar[r] & \bfK(\calc,co\calc,\ast_\calc)_{n+1}}$$
is the composite
$$\xymatrix@C=3pc{\sum\Big| \Big(\big[S_\bullet^\infty\big]^n \calc \Big)_\mathrm{equiv}\Big| \ar@{^{(}->}[r] &
\sum\Big| \Big(S_1^\infty \big[S_\bullet^\infty\big]^{n} \calc \Big)_\mathrm{equiv}\Big| \ar@{^{(}->}[r] &
\Big| \Big(S_\bullet^\infty \big[S_\bullet^\infty\big]^{n} \calc \Big)_\mathrm{equiv}\Big| }$$
where the first map on the quasicategory level is \eqref{equ:first_part_of_K-theory_spectrum_structure_map_on_qcatlevel} in each fixed multisimplicial degree, making use of Remark~\ref{rem:D_pointed_implies_SnD_pointed} to form \eqref{equ:first_part_of_K-theory_spectrum_structure_map_on_qcatlevel}. Usually we write $\bfK(\calc)$ for the spectrum $\bfK(\calc,co\calc,\ast)$, and we notationally suppress the dependence of the structure map on $\alpha$ and $\beta$.
\end{definition}

\begin{remark}[1-Functoriality of the $K$-Theory Spectrum for Pointed Exact Functors] \label{rem:making_K-theory_spectrum_functorial}
The {\it underlying space} of $\bfK(\calc)_n$ in \eqref{equ:nth_space_of_K-theory_spectrum} is 1-functorial on Waldhausen quasicategories and exact functors, without any selections of zero objects nor requirements on exact functors, as the underlying space of \eqref{equ:nth_space_of_K-theory_spectrum} makes no use of $\ast_\calc$, $\alpha$, $\beta$. However, the structure maps of the spectrum $\mathbf{K}(\calc)$ are not compatible with arbitrary exact functors because \eqref{equ:first_part_of_K-theory_spectrum_structure_map_on_qcatlevel} is not compatible with arbitrary exact functors (for instance, $f(\beta_c^\calc)$ may not be $\beta_{f(c)}^\cald$).  Nevertheless, the algebraic $K$-theory spectrum $\bfK(-)$ is a strictly 1-functorial for pointed exact functors. That is, $\bfK(-)$ is a strict 1-functor on the 1-category $\mathbf{QWald}^{\text{pointed}}$ of pointed Waldhausen quasicategories and pointed exact functors.
\end{remark}

\begin{proposition}[Replacement of an Unpointed Exact Functor by a Pointed Exact Functor] \label{prop:exact_is_natequivalent_to_pointed_exact}
Let $(\calc,co\calc,\ast_\calc,\alpha^\calc,\beta^\calc)$ and $(\cald,co\cald,\ast_\cald,\alpha^\cald,\beta^\cald)$ be pointed Waldhausen quasicategories. Let $f\co (\calc,co\calc) \to (\cald,co\cald)$ be any exact functor. Then $f$ is naturally equivalent to a pointed exact functor $g\co (\calc,co\calc,\ast_\calc,\alpha^\calc,\beta^\calc) \to (\cald,co\cald,\ast_\cald,\alpha^\cald,\beta^\cald)$ that agrees with $f$ away from $\ast_\calc$.
\end{proposition}
\begin{proof}
Let $\calc \backslash \ast_\calc$ be the 0-full sub quasicategory of $\calc$ on $\calc_0 \backslash \ast_\calc$. Any lift in the diagram below constructs $g$ and a natural equivalence from $f$ to $g$.
$$\xymatrix@C=5pc{\Big(\calc \times \{0\}\Big)\; \bigcup \; \Big((\calc\backslash \ast_\calc) \cup \text{im}(\alpha) \cup \text{im}(\beta) \Big)\times J[1] \ar[r] \ar@{^{(}->}[d]_{\text{Joyal trival cofibration}} & \cald \ar[d]^{\text{Joyal fibration}} \\ \calc \times J[1] \ar[r] \ar@{-->}[ur] & \ast }$$
The left vertical map is a Joyal trivial cofibration (=mono map that is also a Joyal equivalence) because
$\{0\} \hookrightarrow J[1]$ is a monomorphic equivalence of quasicategories, and
$$\xymatrix{(\calc\backslash \ast_\calc) \cup \text{im}(\alpha) \cup \text{im}(\beta) \ar@{^{(}->}[r] & \calc}$$
is a monomorphism. This implies the left vertical map is a Joyal trivial cofibration because the Joyal model structure is Cartesian closed, see \cite[Theorem~6.6]{JoyalQuadern} and \cite[Recall~2.2.8]{RiehlVerity}, and \cite[pages 232--235]{JoyalQuadern} for similar statements for related classes of maps. The right vertical map is a Joyal fibration, since $\cald$ is a quasicategory, and quasicategories are exactly the fibrant objects in the Joyal model structure.

To justify that $g$ is compatible with the natural transformations, we next explain what the top horizontal map is. On $\calc \times \{0\}$ it is simply $f$. On $\calc \backslash \ast_\calc \times J[1]$ it is $f\vert_{\calc \backslash \ast_\calc}$, independent of $J[1]$. On the simplicial set $\text{im}(\alpha) \times J[1]$, it assigns to the commutative square $(\alpha_A^\calc \co A \to \ast_\calc,\; 0\to 1)$ with $A \neq \ast_\calc$, the following commutative square.
$$\xymatrix{f(A) \ar[r] \ar[d]_{f(\alpha^\calc_A)} \ar[dr]^{\alpha^\cald_A} & g(A) \ar[d]^{\alpha^\cald_A=g(\alpha^\calc_A)} \\ f(\ast_\calc) \ar[r]_{\text{equiv}} & \ast_\cald = g(\ast_\calc) }$$
On the simplicial set $\text{im}(\beta) \times J[1]$, there is a similar definition.

All that remains is to justify why a functor $g$, naturally equivalent to an exact functor $f$, is also exact. The functor $g$ sends zero objects to zero objects because any object equivalent to a zero object is also a zero object, and $g$ sends cofibrations to cofibrations by homotopy repleteness of $co\cald$, see \eqref{equ:diagram_for_homotopy_repleteness}. Pushout squares are mapped to pushout squares because pushouts are invariant under equivalence.

\end{proof}

\begin{remark}[A Spectrum Map $\bfK(f)$ for an exact Kan Fibration $f$]
If one is working with a specific exact functor $f\co \calc \to \cald$ that is a Kan fibration, then one can obtain a (strict) spectrum map $\bfK(f)$ without alteration to $f$ by making appropriate choices of zero objects and natural transformations in $\calc$ and $\cald$. In particular, we select any zero object $\ast_\calc$ of $\calc$ and define $\ast_\cald := f(\ast_\calc)$. Then we select $\alpha^\cald$ and $\beta^\cald$ which have $\ast_\cald=\ast_\cald$ at the component of $\ast_\cald$. Then $\alpha^\calc$ is defined to be any lift in the diagram below.
\begin{equation}
\xymatrix@C=4pc{(\calc \times \{0\}) \cup (\ast_\calc \times \Delta[1]) \ar[rr]^-{\text{Id}_\calc \, \cup \, (\ast_\calc)} \ar[d]_{\text{anodyne}} & & \calc \ar[d]^{f \text{ Kan fibration}} \\
\calc \times \Delta[1] \ar[r]_-{f\times \text{Id}_{\Delta[1]}} \ar@{-->}[rru]^{\alpha^\calc} & \cald \times \Delta[1] \ar[r]_-{\alpha^\cald} & \cald }
\end{equation}
The left vertical map is anodyne because $\{0\} \hookrightarrow \Delta[1]$ is anodyne and $\ast_\calc \hookrightarrow \calc$ is a monomorphism, see \cite[Theorem~2.17]{JoyalQuadern}. A lift $\alpha_\calc$ exists because of the Kan model structure on $\mathbf{SSet}$ by Quillen. A natural transformation $\beta^\calc$ compatible with $f$ and $\beta^\cald$ is constructed similarly.
\end{remark}

\begin{remark}
It is not a major loss of generality to work with pointed Waldhausen quasicategories and pointed exact functors because any exact functor is naturally equivalent to a pointed one by Proposition~\ref{prop:exact_is_natequivalent_to_pointed_exact}. Also, choosing different $\alpha^\calc$ and $\beta^\calc$ in the $K$-theory spectrum yields homotopic structure maps. Choosing a different zero object yields a homotopy equivalent $K$-theory spectrum.
\end{remark}

\begin{remark}
Even if an exact functor $f\co \calc \to \cald$ is unpointed, it still induces continuous maps $\bfK(f)_n\co \bfK(\calc)_n \to \bfK(\cald)_n$ via formula \eqref{equ:nth_space_of_K-theory_spectrum}, though these maps are not compatible with the spectrum structure maps. Nevertheless, these $\bfK(f)_n$ are levelwise homotopic to continuous maps that do assemble to a morphism of spectra, see Proposition~\ref{prop:exact_is_natequivalent_to_pointed_exact}.
\end{remark}

As a consequence of $(S_\bullet^\infty)_\text{equiv}$ Additivity (see Theorem~\ref{thm:Additivity_for_quasicategories}), the $K$-theory spectrum is an $\Omega$-spectrum beyond the 0-th term, and the $K$-theory space is an infinite loop space. However, in this paper we do not make use of either of these facts, so we do not prove them. For a different approach to defining the spectrum structure maps via a section of a trivial fibration $S_1^\infty\calc \to \calc$, and an approach to proving the spectrum is an $\Omega$-spectrum beyond the 0-th term, see Definition~3.2.6 and Section~5.2 of \cite{PieperMastersThesis}.

Of course, $K$ and $\mathbf{K}$ extend to $(\infty,1)$-functors, as will be treated in \cite{FioreApproximation}.

\section{Quasicategorical Additivity and Split Exact Sequences of Waldhausen Quasicategories} \label{sec:split_exact_sequences_of_Waldhausen_quasicategories}

In this section, we develop the most basic properties of the notion of split exact sequences of Waldhausen quasicategories, associate a standard split exact sequence to $\cale(\cala,\calc,\calb)$ and $\cale^{po}(\cala,\calc,\calb)$ depending on choice of pointing for $\calc$, give sufficient conditions for a split exact sequence to be equivalent to a standard one, and prove $(S^\infty_\bullet)_\text{equiv}$ Additivity and $\bfK$ Additivity for split exact sequences equivalent to standard ones, such as any stable split exact sequence. We indicate when pointed is required, e.g. when dealing with spectra. Blumberg--Gepner--Tabuada discuss and relate various kinds of exact sequences of stable quasicategories, spectral categories, and triangulated categories in \cite[Section~5]{BlumbergGepnerTabuadaI}.

\begin{definition}[Exact Functor] \label{def:exact_functor}
Let $\calc$ and $\cald$ be Waldhausen quasicategories.
A functor $f\colon \calc \to \cald$ is called {\it exact} if it sends zero objects of $\calc$ to zero objects of $\cald$, it preserves cofibration sub quasicategories in the sense that $f(co\calc) \subseteq co \cald$, and it maps each pushout square along a cofibration to a pushout square along a cofibration.
\end{definition}

Note that by 1-fullness of the sub quasicategory of cofibrations, it suffices to check that $f$ maps cofibrations to cofibrations to conclude that $f(co\calc) \subseteq co \cald$. Note also that every functor of quasicategories maps equivalences to equivalences, so $f(\calc_\mathrm{equiv}) \subseteq \cald_\mathrm{equiv}$ by 1-fullness, so there is no need to require preservation of equivalences in the definition of exact functor.

\begin{definition}[Exact Sequence of Waldhausen Quasicategories] \label{def:exact_sequence_of_Waldhausen_quasicategories}
Let $\mathcal{A}$, $\cale$, and $\calb$ be Waldhausen quasicategories.
A sequence of exact morphisms
\begin{equation} \label{equ:exact_seq_of_quasicategories}
\xymatrix{\mathcal{A} \ar[r]^i & \mathcal{E} \ar[r]^f & \mathcal{B}}
\end{equation}
is called {\it exact} if
\begin{enumerate}
\item
the composite $f\circ i$ is naturally equivalent to some zero object of $\mathcal{B}$,
\item
the exact morphism $i\colon \mathcal{A}\to \mathcal{E}$ is fully faithful, and
\item \label{condition:iii:quasicategories}
the restricted morphism $f\vert_{\cale / \cala} \colon \cale / \cala \to \mathcal{B}$ is an equivalence of quasicategories. Here $\cale / \cala$ is the 0-full sub quasicategory of $\mathcal{E}$ on the objects $E \in \cale $ such that the simplicial set $\cale(i(A),E)$ is weakly contractible for all $A \in \cala$.
\end{enumerate}
\end{definition}

\begin{definition}[Split Exact Sequence of Waldhausen Quasicategories] \label{def:split-exact_sequence_of_Waldhausen_quasicategories}
An exact sequence of Waldhausen quasicategories as in equation \eqref{equ:exact_seq_of_quasicategories} is called {\it split} if there exist exact morphisms
$$\xymatrix{\mathcal{A}  & \mathcal{E}  \ar[l]_-j & \mathcal{B} \ar[l]_-g}$$
right adjoint to $i$ and $f$ respectively, such that the unit $\text{Id}_{\mathcal{A}} \to ji$ and the counit $fg \to \text{Id}_{\mathcal{B}}$ are natural equivalences.
\end{definition}

\begin{remark} \label{rem:f_and_g_are_inverses:quasicategories}
In a split exact sequence of Waldhausen quasicategories and exact functors, the functor $g$ is actually an inverse equivalence to $f\vert_{\cale / \cala}$.  We first see that $g$ goes into $\cale/\cala$, as there is a zig-zag of weak homotopy equivalences between $\cale(i(A),g(B))$ and $\calb(f(i(A)),B)=\calb(\ast,B)$, which is weakly equivalent to a point. The counit $(f\vert_{\cale / \cala})g \to \text{Id}_\calb$ is a natural equivalence by hypothesis, and the unit $\text{Id}_{\cale / \cala} \to g(f\vert_{\cale / \cala})$ is a natural equivalence since the left adjoint $f$ is fully faithful by Proposition~\ref{prop:g_fullyfaithful_iff_counit_equivalence:quasicategories} and Proposition~\ref{prop:qcat_equiv_iff_fullyfaithful_ess_surj}.
\end{remark}

\begin{remark}
The $(\infty,2)$-functor $S_n^\infty$ sends a split exact sequence of Waldhausen quasicategories to
a split exact sequence of Waldhausen quasicategories by Remarks~\ref{rem:infinity-2-category_of_Waldhausen_quasicategories} and \ref{rem:QWald2}.
\end{remark}

\begin{example}[Standard Split Exact Sequence from $\mathcal{E}(\cala,\calc,\calb)$] \label{examp:E(A,C,B)_quasicategorical}
Any Waldhausen quasicategory $\calc$ with selected sub Waldhausen quasicategories $\cala$ and $\calb$ with a selected common zero object produces a split exact sequence
\begin{equation} \label{equ:E(A,C,B)_quasicategorical}
\xymatrix{\mathcal{A} \ar[r]^-i & \mathcal{E}(\cala,\calc,\calb) \ar[r]^-q \ar@/^1.3pc/[l]^s & \mathcal{B} \ar@/^1.3pc/[l]^g}
\end{equation}
of Waldhausen quasicategories as follows.\footnote{If there is not a common zero object in $\cala$ and $\calb$, we may simply equivalently redefine $\cala$ and $\calb$ to include a common zero object.} Recall Notation~\ref{not:E(A,C,B)_quasicategorical} in which $\mathcal{E}(\cala,\calc,\calb)$, $s$, and $q$ are defined for quasicategories.

Select and fix a zero object $\ast$ of $\calc$ that is in both $\cala$ and $\calb$. Select and fix two natural transformations $\alpha, \beta \colon \Delta[1] \times \calc  \to \calc$ with $\alpha_0=\text{Id}_\calc$ and $\alpha_1=\ast$, and $\beta_0=\ast$ and $\beta_1=\text{Id}_\calc$. For $A\colon \Delta[n] \to \cala$ and $B\colon \Delta[n] \to \calb$, we define $i(A)$ and $g(B)$ to be the composites
$$\xymatrix@C=4pc{\Delta[2] \times \Delta[n] \ar[r]_{(s^0)_\ast \times A} \ar@/^1.5pc/[rr]^{i(A)} & \Delta[1] \times \cala \ar[r]_-{\alpha\vert\cala} & \calc}$$
$$\xymatrix@C=4pc{\Delta[2] \times \Delta[n] \ar[r]^{(s^1)_\ast \times B} \ar@/_1.5pc/[rr]_{g(B)} & \Delta[1] \times \calb \ar[r]^-{\beta\vert\calb} & \calc}$$
where $s^0\co [2] \to [1]$ is the surjective weakly increasing map that hits 0 twice, and $s^1\co [2] \to [1]$ is the surjective weakly increasing map that hits 1 twice. Then $i$ and $g$ are the analogues of \eqref{equ:canonical_split_exact_functors_categorical}.

Clearly $q \circ i = \ast$. Also $\text{Id}_\cala=si$, and $qg=\text{Id}_\calb$, so the unit of the adjunction $i \dashv s$ and the counit of the adjunction $q \dashv g$ are the identity natural transformations so $i$ and $g$ are fully faithful (see Proposition~\ref{prop:g_fullyfaithful_iff_counit_equivalence:quasicategories}). The counit and unit of these respective adjunctions can be defined as in Example~\ref{examp:E(A,C,B)}, and the triangle identities are also proved similarly (recall the discussion of adjunctions for quasicategories in Section~\ref{subsec:Adjunctions_and_Equivalences_between_Quasicategories}).

We next claim that the sub quasicategory $\cale / \cala$ is 0-full on the cofiber sequences of the form in equation \eqref{equ:BinE} with $\ast'$ a zero object of $\calc$ in $\cala$, the quotient map an isomorphism, and $B' \in \calb$. Suppose $A' \to C' \to B'$ is an object of $\cale/\cala$. Then $\cale(i(A),A'C'B')$ is weakly contractible, so $(\tau_1 \cale) (i(A),A'C'B')=\pi_0\cale(i(A),A'C'B')$ consists of a single point for every $A \in \cala_0$. Consideration of diagram \eqref{equ:one_map} in $\tau_1\cale$ shows that $A'$ is a terminal object of $\tau_1\cala$ (note that the commutative squares in $\tau_1\cale$ induced by cofiber sequences are not necessarily pushouts in $\tau_1\cale$). But $\ast$ is also terminal in $\tau_1\cala$, so there is an isomorphism in $\tau_1\cala$ between $A'$ and $\ast$, and therefore an equivalence in $\cala$ between $A'$ and $\ast$, so $A'$ is also a zero object of $\cala$. Finally, $C' \to B'$ is an equivalence, as it is a pushout in $\calc$ of an equivalence $A' \to \ast$.

From this we also see that $g\co \calb \to \cale/\cala$ is essentially surjective. The equivalence $C' \to B'$ is part of an equivalence in $\cale$ from $A' \to C' \to B'$ to $\ast \to B' = B'$, which is $g(B')$.

We now know $g\co \calb \to \cale/\cala$ is essentially surjective and fully faithful. Consequently, $g$ is an equivalence of quasicategories (see Proposition~\ref{prop:qcat_equiv_iff_fullyfaithful_ess_surj}). By the uniqueness of adjoints in a 2-category (here $\mathbf{SSet}^{\tau_1}$), and the fact that every equivalence is part of an adjoint equivalence (on either side), the left adjoint $q\vert_{\cale/\cala}\co \cale/\cala \to \calb$ to $g$ is also an equivalence of quasicategories, and we have shown that \eqref{equ:E(A,C,B)_quasicategorical} is a split exact sequence of quasicategories.
\end{example}

\begin{example}[Standard Split Exact Sequence from $\mathcal{E}^{po}(\cala,\calc,\calb)$] \label{examp:E'(A,C,B)_quasicategorical}
Recall Notation~\ref{not:E'(A,C,B)_quasicategorical} in which $\mathcal{E}^{po}(\cala,\calc,\calb)$, $s$, and $q$ are defined for quasicategories.

Select and fix a zero object $\ast$ of $\calc$ that is in both $\cala$ and $\calb$ (if there is not a common zero object, we may simply equivalently redefine $\cala$ and $\calb$ to include a common zero object). Select and fix two natural transformations $\alpha, \beta \colon \Delta[1] \times \calc \to \calc$ with $\alpha_0=\text{Id}_\calc$ and $\alpha_1=\ast$, and $\beta_0=\ast$ and $\beta_1=\text{Id}_\calc$. For $A\colon \Delta[n] \to \cala$ and $B\colon \Delta[n] \to \calb$, we define $i(A)$ and $g(B)$ to be the composites
$$\xymatrix@C=4pc{\Delta[1] \times \Delta[1] \times \Delta[n] \ar[r]_-{\text{pr}_1 \times A} \ar@/^2pc/[rr]^{i(A)} & \Delta[1] \times \cala \ar[r]_-{\alpha\vert\cala} & \calc }$$
$$\xymatrix@C=4pc{\Delta[1] \times \Delta[1] \times \Delta[n] \ar[r]^-{\text{pr}_2 \times B} \ar@/_2pc/[rr]_{g(B)} & \Delta[1] \times \calb \ar[r]^-{\beta\vert\calb} & \calc }$$
as pictured in \eqref{equ:canonical_split_exact_functors_categorical}.

Then one can prove
\begin{equation} \label{equ:E'(A,C,B)_quasicategorical}
\xymatrix{\mathcal{A} \ar[r]^-i & \mathcal{E}^{po}(\cala,\calc,\calb) \ar[r]^-q \ar@/^1.3pc/[l]^s & \mathcal{B} \ar@/^1.3pc/[l]^g}
\end{equation}
is a split exact sequence as in Example~\ref{examp:E(A,C,B)_quasicategorical}.
\end{example}

\begin{proposition}[Sufficient Conditions for Waldhausen Equivalence with Standard Split Exact Sequence] \label{prop:universal_split_exact_sequence:quasicategorical}
Suppose a split exact sequence of Waldhausen quasicategories and exact functors
$$\xymatrix{\mathcal{A} \ar[r]^-i & \mathcal{E} \ar[r]^-f \ar@/^1pc/[l]^j & \mathcal{B} \ar@/^1pc/[l]^g}$$
has the following three properties.
\begin{enumerate}
\item \label{item:i:prop:universal_split_exact_sequence:quasicategorical}
Each counit component $ij(E) \to E$ is a cofibration.
\item \label{item:i':prop:universal_split_exact_sequence:quasicategorical}
For each cofibration $E \rightarrowtail E'$ in $\cale$, the induced map $$E \cup_{ij(E)} ij(E') \to E'$$ is a cofibration in $\cale$.
\item \label{item:ii:prop:universal_split_exact_sequence:quasicategorical}
In every cofiber sequence in $\cala$ of the form
$A_0 \rightarrowtail A_1 \twoheadrightarrow \ast$, the first map is an equivalence.
\end{enumerate}
Then it is Waldhausen equivalent (see Definitions~\ref{def:Waldhausen_equivalence:qcats} and \ref{def:Waldhausen_equiv_of_sequences_quasicategorical} and Proposition~\ref{prop:Waldhausen_equiv_of_sequences_compatible_with_j's})
to a split exact sequence of the form \eqref{equ:E(A,C,B)_quasicategorical} in Example~\ref{examp:E(A,C,B)_quasicategorical}.
\end{proposition}
\begin{proof}
Let $\calc:=\cale$. Since $i\co \cala \to \calc$ is fully faithful by definition of exact sequence, its essential image $\cala'$ is the sub quasicategory of $\calc$ that is 0-full on objects equivalent to those in the image of $i$, and $i$ provides an equivalence to $\cala'$. Similarly, since the counit $fg \to \text{Id}_{\mathcal{B}}$ is a natural equivalence by split exactness, the right adjoint $g$ is fully faithful, its essential image is the sub quasicategory of $\calc$ that is 0-full on objects equivalent to those in the image of $g$, and $g$ provides an equivalence with $\calb'$. See diagram \eqref{equ:prop:universal_split_exact_sequence:essential_images}.

We would like to define a Waldhausen equivalence $\Phi\co \cale \to \cale(\cala',\calc,\calb')$. In the quasicategory $\cale^\cale$, first select a composite natural transformation of $ij \Rightarrow \text{Id}_\cale \Rightarrow gf$, that is, a composite of the $i\dashv j$ counit followed by the $f\dashv g$ unit. This composite $\Delta[2] \to \cale^\cale$ provides us with the desired functor $\Phi\co \cale \to \cale^{\Delta[2]}$, which makes the quasicategorical version of \eqref{equ:s_q_equivalence} strictly commute.

For any object $E \in \calc$, we claim that $\Phi(E)$
\begin{equation} \label{equ:quasicategorical_Phi(E)}
\xymatrix@C=3pc{ij(E) \ar@{>->}[r]^-{\text{counit}} \ar@/_1pc/[rr] & E \ar[r]^-{\text{unit}} & gf(E)}
\end{equation}
extends to a pushout square of the form \eqref{equ:cofiber_sequence_quasicategorical}. We select a zero object $\ast$ in $\calc$ and any morphisms $ij(E) \to \ast$ and $\ast \to gf(E)$, and draw the square below.
\begin{equation} \label{equ:pushout_to_be}
\begin{array}{c}
\xymatrix@R=3pc@C=3pc{ij(E) \ar@{>->}[r]^-{\text{counit}} \ar[d] \ar[dr] & E \ar[d]^-{\text{unit}} \\ \ast \ar@{>->}[r] & gf(E)}
\end{array}
\end{equation}
The upper triangle commutes (i.e. is a 2-simplex in $\cale$) by our selection of the composite, and the lower triangle commutes because $\cale(ij(E),gf(E))$ is weakly equivalent to a point, see Remark~\ref{rem:f_and_g_are_inverses:quasicategories}.

We claim that \eqref{equ:pushout_to_be} is a pushout square in the quasicategory $\calc=\cale$. Let $P$ be a pushout of the counit $ij(E) \rightarrowtail E$ along $ij(E) \to \ast$, as pictured on the left of diagram \eqref{equ:prop:universal_split_exact_sequence:pushouts}. There is a (homotopically unique) natural transformation from the $P$-square to \eqref{equ:pushout_to_be}. Applying $j$ to this natural transformation gives us the quasicategorical version of the right diagram in \eqref{equ:prop:universal_split_exact_sequence:pushouts}. Arguing as in that passage, using the 3-for-2 property of equivalences in $\calc$ and the fact that pushouts of equivalences are equivalences, we have $P \in \cale/\cala$. Then by Remark~\ref{rem:f_and_g_are_inverses:quasicategories}, there exists a $Q \in \calb$ and an equivalence $P \to g(Q)$. Proceeding as in diagram \eqref{equ:prop:universal_split_exact_sequence:pushout_image}, we obtain a natural equivalence between \eqref{equ:pushout_to_be} and a pushout square. Thus \eqref{equ:quasicategorical_Phi(E)} is indeed a cofiber sequence.
Hypothesis \ref{item:i':prop:universal_split_exact_sequence:quasicategorical} implies that $\Phi$ maps cofibrations to cofibrations. We can also conclude that $\Phi$ is exact by arguments as in the categorical case Proposition~\ref{prop:universal_split_exact_sequence} (using that $\Phi$ is an equivalence of quasicategories to argue that pushouts along cofibrations are mapped to pushouts along cofibrations).

One can show that the functor $\Phi$ is fully faithful.

To show $\Phi\colon \cale \to \cale(\cala',\calc,\calb')$ is essentially surjective it suffices to prove that the top row of the middle diagram in \eqref{equ:prop:universal_split_exact_sequence:Phi_essentially_surjective} is isomorphic in $\tau_1\cale$ to the bottom row of the middle diagram in \eqref{equ:prop:universal_split_exact_sequence:Phi_essentially_surjective} (every commutative cube in $\tau_1\cale$ comes from a ``commutative'' cube in $\cale$ as in Lemma~\ref{lem:commutative_squares_in_homotopy_category=commutative_squares_in_quasicategory}, so we can extend the three isomorphisms to get a natural equivalence of squares in $\cale$).
Recall from Proposition~\ref{prop:tau1_is_a_2-functor} that $\tau_1$ is a 2-functor, so that $i \dashv j$ and $f \dashv g$ induce ordinary adjunctions between the respective homotopy categories, and we can use the universality of the counit and unit on the level of homotopy categories. Therefore, morphisms $[m]$ and $[n]$ exist as in diagram \eqref{equ:prop:universal_split_exact_sequence:Phi_essentially_surjective}. Then $m$ and $n$ are seen to be equivalences, arguing via applications of $f$ and $j$ on the quasicategory level as in the passage after \eqref{equ:prop:universal_split_exact_sequence:Phi_essentially_surjective} with the word ``isomorphism'' replaced by ``equivalence'' (this is where hypothesis \ref{item:ii:prop:universal_split_exact_sequence:quasicategorical} on $\cala$ is used).

Lastly, $\Phi$ is a Waldhausen equivalence of quasicategories by Proposition~\ref{prop:exact_equivalence_is_Waldhausen_equivalence:qcats}.
\end{proof}

\begin{corollary}[Stable Split Exact Sequences are Standard]
Every split exact sequence of stable quasicategories, considered as Waldhausen quasicategories with all maps cofibrations, is equivalent to a standard split exact sequence of the form \eqref{equ:E(A,C,B)_quasicategorical} in Example~\ref{examp:E(A,C,B)_quasicategorical}. See Definition~\ref{def:stable_quasicategories} and Example~\ref{examp:stable_quasicategories_are_Waldhausen_quasicategories}.
\end{corollary}

\begin{theorem}[Waldhausen $\bfK$ Additivity for Standard Pointed Split Exact Sequences, Quasicategorical] \label{thm:K-Additivity_Pointed_Standard_Quasicategorical}
Let $\mathcal{C}$ be a pointed Waldhausen quasicategory and $\mathcal{A}$ and $\mathcal{B}$ sub pointed Waldhausen quasicategories, in particular $\ast_\calc=\ast_\cala=\ast_\calb$ and $\alpha^\cala$, $\alpha^\calb$ are restrictions of $\alpha^\calc$, and similarly for the $\beta$'s. Consider the standard split exact sequence in \eqref{equ:E(A,C,B)_quasicategorical}, which is consequently a pointed split exact sequence. Then
$$\xymatrix{\bfK(i)\vee\bfK(g)\colon \bfK(\cala) \vee \bfK(\calb) \ar[r] & \bfK(\cale(\cala, \calc, \calb))}$$
is a stable equivalence of spectra.
\end{theorem}
\begin{proof}
The argument is the same as in Theorem~\ref{thm:K-Additivity_for_standard_split_exact_sequence}.
We have an isomorphism $S^\infty_n \cale(\cala,\calc,\calb) \cong \cale( S^\infty_n \cala,S^\infty_n \calc,S^\infty_n \calb)$ of quasicategories.\footnote{Recall that the $S_\bullet^\infty$ construction in Definition~\ref{def:Sbulletinfinity} allowed any zero objects in the diagonal entries. This is important for the indicated isomorphism, for if we allowed only a selected zero object in the diagonal entries, then the inclusion
$S^\infty_n \cale(\cala,\calc,\calb) \hookrightarrow \cale( S^\infty_n \cala,S^\infty_n \calc,S^\infty_n \calb)$ would not be surjective, not even in level 0, as any equivalences $\ast \simeq \ast \simeq \ast$ are vertices in the codomain, but only $\ast = \ast = \ast$ is a vertex in the domain. In the 1-category case this difference does not occur, because there is only one map $\ast \to \ast$. } We also have $(S_\bullet^\infty)_\text{equiv}$ Additivity for ``subobject'' and ``quotient'' in Theorem~\ref{thm:Additivity_for_quasicategories}.
\end{proof}

\begin{corollary}[Waldhausen $(S^\infty_\bullet)_{\text{equiv}}$ Additivity and $\bfK$ Additivity for Split Exact Sequences
Waldhausen Equivalent to Standards] \label{cor:QCat_Additivity_GeneralAndSpectral}
Suppose a split exact sequence of Waldhausen quasicategories and exact functors
\begin{equation} \label{equ:cor:QCat_Additivity_GeneralAndSpectral:ses}
\xymatrix{\mathcal{A} \ar[r]^-i & \mathcal{E} \ar[r]^-f \ar@/^1pc/[l]^j & \mathcal{B} \ar@/^1pc/[l]^g}
\end{equation}
is Waldhausen equivalent to a standard split exact sequence of the form \eqref{equ:E(A,C,B)_quasicategorical}, for instance any split exact sequence satisfying the hypotheses of Proposition~\ref{prop:universal_split_exact_sequence:quasicategorical}.
Then the following hold.
\begin{enumerate}
\item \label{item:general_quasicategorical_additivity}
The map
$$\xymatrix{S_\bullet^\infty(j,f)_\mathrm{equiv} \colon \left( S_\bullet^\infty \mathcal{E} \right)_\mathrm{equiv} \ar[r] & \left( S_\bullet^\infty \mathcal{A} \right)_\mathrm{equiv} \times \left( S_\bullet^\infty \mathcal{B}\right)_\mathrm{equiv} }$$
is a diagonal weak equivalence of bisimplicial sets.
\item \label{item:spectral_quasicategorical_additivity}
Suppose now additionally that \eqref{equ:cor:QCat_Additivity_GeneralAndSpectral:ses} is a pointed split exact sequence. Then it is also Waldhausen equivalent to a pointed standard split exact sequence, and the functors $i$ and $g$ induce a stable equivalence of $K$-theory spectra
$$\xymatrix{\bfK(i)\vee\bfK(g)\colon \bfK(\cala) \vee \bfK(\calb) \ar[r] & \bfK(\cale).}$$
\end{enumerate}
\end{corollary}
\begin{proof}
The proof is just like the proof of Corollary~\ref{cor:Additivity_GeneralAndSpectral}, where we now use Theorem~\ref{thm:Additivity_for_quasicategories}, Corollary~\ref{cor:Waldhausen_equivalences_induce_weak_equivalences:qcats}, and Theorem~\ref{thm:K-Additivity_Pointed_Standard_Quasicategorical}, in combination with 3-for-2.

To conclude in \ref{item:spectral_quasicategorical_additivity} that Waldhausen equivalence of a pointed split exact sequence with an unpointed standard split exact sequence implies Waldhausen equivalence with a pointed standard split exact sequence, one first alters the unpointed standard split exact sequence to make it pointed standard, and then replaces the three Waldhausen equivalences using Proposition~\ref{prop:exact_is_natequivalent_to_pointed_exact}.
\end{proof}

\begin{corollary}[Additivity for Stable Quasicategories] \label{cor:stable_qcats_are_example_for_main_theorem}
Suppose
\begin{equation} \label{equ:cor:stable_qcats_are_example_for_main_theorem:stable_ses}
\xymatrix{\mathcal{A} \ar[r]^-i & \mathcal{E} \ar[r]^-f \ar@/^1pc/[l]^j & \mathcal{B} \ar@/^1pc/[l]^g}
\end{equation}
is a split exact sequence of stable quasicategories and exact functors. Then
\begin{enumerate}
\item
The map
$$\xymatrix{S_\bullet^\infty(j,f)_\mathrm{equiv} \colon \left( S_\bullet^\infty \mathcal{E} \right)_\mathrm{equiv} \ar[r] & \left( S_\bullet^\infty \mathcal{A} \right)_\mathrm{equiv} \times \left( S_\bullet^\infty \mathcal{B}\right)_\mathrm{equiv} }$$
is a diagonal weak equivalence of bisimplicial sets.
\item
If the sequence \eqref{equ:cor:stable_qcats_are_example_for_main_theorem:stable_ses} is additionally pointed, then the pointed exact functors $i$ and $g$ induce a stable equivalence of $K$-theory spectra
$$\xymatrix{\bfK(i)\vee\bfK(g)\colon \bfK(\cala) \vee \bfK(\calb) \ar[r] & \bfK(\cale).}$$
\end{enumerate}
\end{corollary}
\begin{proof}
Recall stable quasicategories from Definition~\ref{def:stable_quasicategories} and Example~\ref{examp:stable_quasicategories_are_Waldhausen_quasicategories}. Every split exact sequence in which $\cala$, $\cale$, and $\calb$ are stable quasicategories satisfies the hypotheses of Proposition~\ref{prop:universal_split_exact_sequence:quasicategorical} and is equivalent to a standard split exact sequence: \ref{item:i:prop:universal_split_exact_sequence:quasicategorical} and \ref{item:i':prop:universal_split_exact_sequence:quasicategorical} hold because every map is a cofibration, \ref{item:ii:prop:universal_split_exact_sequence:quasicategorical} holds because the pushout square associated to $A_0\rightarrowtail A_1\rightarrowtail \ast$ is also a pullback square, so the pullback $A_0\rightarrowtail A_1$ of the identity morphism $\ast=\ast$ is an equivalence.

The two claims now follow from Corollary~\ref{cor:QCat_Additivity_GeneralAndSpectral}.
\end{proof}

\begin{remark}
Small stable quasicategories and exact maps\footnote{Here an ``exact'' map between stable quasicategories is a map which preserves finite limits and finite colimits. Given stable quasicategories, it suffices for the map to preserve either finite limits or finite colimits by \cite[Proposition~1.1.4.1]{LurieHigherAlgebra}.} between them form the quasicategory $\mathrm{Cat}^\mathrm{ex}_\infty$, which is the domain of the {\it additive invariants} in the universal characterization of $K$-theory in \cite{BlumbergGepnerTabuadaI}. The quasicategory $\mathrm{Cat}^\mathrm{ex}_\infty$ \cite[1.1.4]{LurieHigherAlgebra} is a ``subcategory'' of $\mathrm{Cat}_\infty$ in the sense recalled in Proposition~\ref{prop:Barwick_equivalence:(i')}. Here $\mathrm{Cat}_\infty$ is the quasicategory of small quasicategories, which is the simplicial nerve of the Kan-complex enriched category with objects small quasicategories and hom-complexes $(Y^X)_\mathrm{equiv}$ \cite[page~164]{JoyalQuadern}. Any map in $\mathrm{Cat}_\infty$ homotopic to an exact functor is exact, so $\mathrm{Cat}^\mathrm{ex}_\infty$ is closed under the homotopy relation, as we would expect of any ``subcategory'' of a quasicategory.
\end{remark}

\section{Appendix 1: Waldhausen Equivalences of Waldhausen (Quasi)Categories} \label{sec:appendix1}

We characterize Waldhausen equivalences as those functors that are exact, reflect weak equivalences and cofibrations, and are equivalences of categories. Then we show that Waldhausen equivalences induce weak equivalences of $\mathbf{Cat}$-simplicial objects $wS_\bullet\calc \to wS_\bullet\cald$, and level-wise weak equivalences of spectra $\bfK(\calc) \to \bfK(\cald)$. We prove these statements first for Waldhausen categories, and then turn to the easier proofs for Waldhausen quasicategories. The differences in the proofs of the categorical and quasicategorical versions are due to the fact that the weak equivalences in a Waldhausen quasicategory are exactly the equivalences of the underlying quasicategory.

These results are needed in Corollary~\ref{cor:Additivity_GeneralAndSpectral} and Corollary~\ref{cor:QCat_Additivity_GeneralAndSpectral}.

\begin{definition}[Categorical Waldhausen Equivalence] \label{def:Waldhausen_equivalence}
An exact functor $F\colon \calc \to \cald$ between Waldhausen categories is a {\it Waldhausen equivalence} if there is an exact functor $G\colon \cald \to \calc$ such that $GF$ and $FG$ are naturally isomorphic to the respective identities. In other words, a Waldhausen equivalence is precisely an equivalence in the 2-category $\mathbf{Wald}_2$ of Remark~\ref{rem:Sn_is_a_2-functor}.
\end{definition}

The following (and its quasicategorical version Proposition~\ref{prop:exact_equivalence_is_Waldhausen_equivalence:qcats}) may be considered a weak variant of Waldhausen's Approximation Theorem \cite[Theorem~1.6.7]{WaldhausenAlgKTheoryI}.

\begin{proposition} \label{prop:exact_equivalence_is_Waldhausen_equivalence}
If a functor $F\colon \calc \to \cald$ between Waldhausen categories is exact, reflects weak equivalences and cofibrations, and is an equivalence of categories, then any inverse equivalence $G\colon \cald \to \calc$ is exact (after replacing $G(\ast_\cald)=\ast_\calc'$ by $\ast_\calc$). Thus, $F$ is a Waldhausen equivalence in the sense of Definition~\ref{def:Waldhausen_equivalence}.
\end{proposition}
\begin{proof}
Suppose $F\colon \calc \to \cald$ is exact, reflects weak equivalences and cofibrations, and is an equivalence of categories, and $G\colon \cald \to \calc$ is a (not necessarily exact) functor equipped with two natural isomorphisms $GF \cong \text{Id}_\calc$ and $FG \cong \text{Id}_\cald$.

We have $\ast_\calc \cong GF(\ast_\calc) = G(\ast_\cald)$.

Suppose $n\colon d \to d'$ is a weak equivalence in $\cald$. We claim that $G(n)$ is a weak equivalence. There exist $c$ and $c'$ in $\calc$ and isomorphisms
$d \cong Fc$ and $d' \cong Fc'$ in $\cald$, as well as a unique morphism $m\colon c \to c'$ in $\calc$ such that $F(m)$ is the composite
\begin{equation} \label{equ:proving_exact_reflection_equiv_has_exact_inv}
\xymatrix{Fc \cong d \ar[r]^-n & d' \cong Fc'.}
\end{equation}
Then $F(m)$ is a weak equivalence, so $m$ is also a weak equivalence. The natural isomorphism $GF \cong \text{Id}_\calc$ then shows that $GF(m)$ is also a weak equivalence by the naturality diagram. The morphism $G(n)$ is also a weak equivalence, as it is the composite of weak equivalences
$$\xymatrix@C=3pc{Gd \cong GFc \ar[r]^-{GF(m)} & GF(c') \cong Gd'}$$
(apply $G$ to \eqref{equ:proving_exact_reflection_equiv_has_exact_inv} and invert the isomorphisms).

Note that we did not assume the 3-for-2 property for weak equivalences in order to prove $G$ preserves weak equivalences, rather, we only used the axioms that weak equivalences are closed under composition and contain all isomorphisms. Since the class of cofibrations also satisfies these two axioms, the same proof shows that $G$ preserves cofibrations.

The functor $G$ sends pushouts along cofibrations to pushouts along cofibrations, as it is an equivalence.

Thus $G$ is an exact functor (after replacing $G(\ast_\cald)=\ast_\calc'$ by $\ast_\calc$).
\end{proof}

\begin{proposition}[Characterization of Categorical Waldhausen Equivalences]
A functor $F\colon \calc \to \cald$ between Waldhausen categories is a Waldhausen equivalence if and only if it is exact, reflects weak equivalences and cofibrations, and is an equivalence of categories.
\end{proposition}
\begin{proof}
Proposition~\ref{prop:exact_equivalence_is_Waldhausen_equivalence} is the direction ``$\Leftarrow$'', so it only remains to prove that every Waldhausen equivalence reflects weak equivalences and cofibrations.

If $F$ is a Waldhausen equivalence and $G$ is an inverse Waldhausen equivalence, then the naturality diagram
\begin{equation} \label{equ:reflection_from_naturality}
\begin{array}{c}
\xymatrix@C=3pc{GF(c) \ar[r]^{GF(m)} \ar[d]_\cong & GF(c') \ar[d]^\cong \\ c \ar[r]_m & c'}
\end{array}
\end{equation}
shows that $F(m)$ is weak equivalence or cofibration if and only if $m$ is.
\end{proof}

\begin{proposition} \label{prop:Waldhausen_equivalence_induces_equivalence of_wS_n}
Let $F\colon \calc \to \cald$ be a Waldhausen equivalence of Waldhausen categories. Then
\begin{enumerate}
\item \label{prop:i:Waldhausen_equivalence_induces_equivalence of_wS_n}
$wF\colon w\calc \to w\cald$ is an equivalence of categories,
\item \label{prop:ii:Waldhausen_equivalence_induces_equivalence of_wS_n}
$S_nF\colon S_n\calc \to S_n\cald$ is a Waldhausen equivalence of Waldhausen categories, and
\item \label{prop:iii:Waldhausen_equivalence_induces_equivalence of_wS_n}
For all $n \geq 0$, the functor $wS_nF\colon wS_n\calc \to wS_n\cald$ is an equivalence of categories.
\end{enumerate}
\end{proposition}
\begin{proof}
\begin{enumerate}
\item
If $G$ is an inverse Waldhausen equivalence to $F$, the functor $wG$ is an inverse equivalence to $wF$ using the same natural isomorphisms (every isomorphism is a weak equivalence).
\item
Recall from Remark~\ref{rem:Sn_is_a_2-functor} that $S_n$ is a 2-functor, so it maps equivalences in $\mathbf{Wald}_2$ to equivalences in $\mathbf{Wald}_2$.
\item
This follows directly from \ref{prop:i:Waldhausen_equivalence_induces_equivalence of_wS_n} and \ref{prop:ii:Waldhausen_equivalence_induces_equivalence of_wS_n}.
\end{enumerate}
\end{proof}

\begin{corollary} \label{cor:Waldhausen_equivalences_induce_weak_equivalences}
If $F\colon \calc \to \cald$ is a Waldhausen equivalence, then
$$\xymatrix{wS_\bullet F\colon wS_\bullet \calc \ar[r] & wS_\bullet\cald}$$
is a weak equivalence of simplicial objects in $\mathbf{Cat}$. That is, the diagonal of its level-wise nerve is a weak homotopy equivalence of simplicial sets. Moreover,
$$\xymatrix{\bfK(F)\co \bfK(\calc) \ar[r] & \bfK(\cald)}$$
is a level-wise weak equivalence of spectra.
\end{corollary}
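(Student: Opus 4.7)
The plan is to reduce the corollary to Proposition~\ref{prop:Waldhausen_equivalence_induces_equivalence of_wS_n} by a standard bisimplicial argument. First, I would observe that for each $n \geq 0$, the functor $wS_nF\colon wS_n\calc \to wS_n\cald$ is an equivalence of categories by Proposition~\ref{prop:Waldhausen_equivalence_induces_equivalence of_wS_n}. Applying the nerve functor $N\colon \mathbf{Cat} \to \mathbf{SSet}$ level-wise produces a map of bisimplicial sets
$$N(wS_\bullet F)\colon N(wS_\bullet \calc) \to N(wS_\bullet \cald)$$
which in each simplicial degree $n$ is the map $N(wS_nF)\colon N(wS_n\calc) \to N(wS_n\cald)$.

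Next I would invoke the well-known fact that the nerve of an equivalence of categories is a weak homotopy equivalence of simplicial sets (it is even a simplicial homotopy equivalence, realized by the unit and counit isomorphisms). Hence each $N(wS_nF)$ is a weak equivalence of simplicial sets, so $N(wS_\bullet F)$ is a level-wise weak equivalence of bisimplicial sets.

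Finally, I would apply the Realization Lemma (also known as the Bousfield--Friedlander or Reedy lemma for bisimplicial sets): a level-wise weak equivalence of bisimplicial sets induces a weak equivalence on diagonals. This immediately yields that the diagonal of the level-wise nerve of $wS_\bullet F$ is a weak equivalence of simplicial sets, which is precisely the definition of a weak equivalence of simplicial objects in $\mathbf{Cat}$ as used in the statement. No step here is an obstacle: Proposition~\ref{prop:Waldhausen_equivalence_induces_equivalence of_wS_n} does the actual work, and the remainder is a formal consequence of the Realization Lemma.
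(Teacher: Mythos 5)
Your proof is correct and takes exactly the same route as the paper, which simply cites the Realization Lemma together with Proposition~\ref{prop:Waldhausen_equivalence_induces_equivalence of_wS_n}; you have merely spelled out the intermediate step that the nerve of an equivalence of categories is a (simplicial homotopy, hence weak) equivalence.
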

\begin{proof}
The Realization Lemma in combination with Proposition~\ref{prop:Waldhausen_equivalence_induces_equivalence of_wS_n}~\ref{prop:iii:Waldhausen_equivalence_induces_equivalence of_wS_n} proves the first statement. The second statement follows from the first.
\end{proof}

Having reached the goal for Waldhausen categories, we now turn to the quasicategorical analogues. Since every equivalence of quasicategories reflects equivalences, we will not require reflection of (weak) equivalences in the quasicategorical analogues.

\begin{definition}[Quasicategorical Waldhausen Equivalence] \label{def:Waldhausen_equivalence:qcats}
An exact functor $F\colon \calc \to \cald$ between Waldhausen quasicategories is a {\it Waldhausen equivalence} if there is an exact functor $G\colon \cald \to \calc$ such that $GF$ and $FG$ are naturally equivalent to the respective identities.  In other words, a Waldhausen equivalence is precisely an equivalence in the 2-category $\mathbf{QWald}_2$ of Remark~\ref{rem:QWald2}.
\end{definition}

\begin{proposition} \label{prop:exact_equivalence_is_Waldhausen_equivalence:qcats}
If a functor $F\colon \calc \to \cald$ between Waldhausen quasicategories is exact, reflects cofibrations, and is an equivalence of quasicategories, then any inverse equivalence $G\colon \cald \to \calc$ is exact. Thus, $F$ is a Waldhausen equivalence in the sense of Definition~\ref{def:Waldhausen_equivalence:qcats}.
\end{proposition}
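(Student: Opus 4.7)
The plan is to mirror the proof of Proposition~\ref{prop:exact_equivalence_is_Waldhausen_equivalence}, replacing natural isomorphisms by natural equivalences and using two quasicategorical facts: the subquasicategory $co\calc$ is homotopy replete (as recorded just after Definition~\ref{def:Waldhausen_quasicat}), and equivalences of quasicategories preserve and reflect equivalences, cofibrations (in the present Waldhausen setting), and pushouts.

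For the zero object, evaluating the natural equivalence $GF \simeq \text{Id}_\calc$ at $\ast$ together with $F(\ast)=\ast$ gives $G(\ast) \simeq \ast$, and I would replace $G$ by an equivalent functor sending $\ast$ to $\ast$ strictly. For cofibrations, given $n\colon d \rightarrowtail d'$ in $\cald$, use that $F$ is essentially surjective and fully faithful (Proposition~\ref{prop:qcat_equi_implies_fullyfaithful_ess_surj}) to pick $c,c' \in \calc$, equivalences $Fc \simeq d$, $Fc' \simeq d'$, and a lift $m\colon c \to c'$ whose image $F(m)$ is homotopic to the composite $Fc \simeq d \xrightarrow{n} d' \simeq Fc'$. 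Homotopy repleteness of $co\cald$ makes $F(m)$ a cofibration, hence $m$ is a cofibration because $F$ reflects cofibrations. The natural equivalence $GF \simeq \text{Id}_\calc$ assembles into a commutative square in $\calc$ (using Lemma~\ref{lem:commutative_squares_in_homotopy_category=commutative_squares_in_quasicategory}) whose vertical edges are $m$ and $GF(m)$ and whose horizontal edges are equivalences; applying $G$ to the equivalences $Fc \simeq d$, $Fc' \simeq d'$ produces an analogous square linking $GF(m)$ to $G(n)$ by equivalences, so two invocations of homotopy repleteness give $G(n) \in co\calc$.

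For pushouts along cofibrations, the key is that $G$ is itself an equivalence of quasicategories (any equivalence in the 2-category $\mathbf{SSet}^{\tau_1}$ extends to an adjoint equivalence whose adjoint is again an equivalence), and any equivalence of quasicategories preserves colimits: a pushout of a diagram $H\colon K \to \calc$ is an initial object in the slice quasicategory $\calc_{H/}$ of~\eqref{equ:X_over_F}, the equivalence induces an equivalence of slice quasicategories, and equivalences preserve initial objects. Combined with the cofibration step, $G$ sends every pushout along a cofibration to a pushout along a cofibration, so $G$ is exact and $F$ is a Waldhausen equivalence. The main obstacle I expect is the homotopy-coherent bookkeeping in the cofibration step—splicing successive natural equivalences into commutative squares in $\calc$ with the correct edges so that homotopy repleteness of $co\calc$ can be applied—rather than any genuinely new phenomenon in the quasicategorical setting.
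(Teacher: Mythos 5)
Your proof is correct, but in the cofibration step it takes the longer route, translating the classical Proposition~\ref{prop:exact_equivalence_is_Waldhausen_equivalence} verbatim, whereas the paper's own proof is shorter. The paper works entirely in $\cald$: it takes the naturality square of the counit-like natural equivalence $FG \simeq \text{Id}_\cald$ at the given cofibration $n$, applies homotopy repleteness of $co\cald$ once to conclude that $FGn$ is a cofibration, and then uses the hypothesis that $F$ reflects cofibrations to conclude $Gn$ is a cofibration. You instead lift $n$ to a morphism $m$ in $\calc$ via essential surjectivity and fully faithfulness of $F$, show $m$ is a cofibration, and then transport $m$ back along $GF \simeq \text{Id}_\calc$ and along the image under $G$ of your chosen equivalences — this needs three invocations of homotopy repleteness (one in $\cald$ for $Fm$, two in $\calc$), compared to the paper's single invocation in $\cald$ plus reflection. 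Both arguments are valid; the paper's is preferable because the hypothesis ``$F$ reflects cofibrations'' is exactly what lets you stay in $\cald$ and skip the lifting step, a simplification you could have also applied to streamline the classical proof you were mirroring. Your treatment of $\ast$ and of pushouts along cofibrations matches the paper (the paper simply asserts the pushout claim since $G$ is an equivalence; your sketch via initial objects in slice quasicategories is a fine justification).
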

\begin{proof}
Suppose $F\colon \calc \to \cald$ is exact, reflects cofibrations, and is an equivalence of quasicategories, and $G\colon \cald \to \calc$ is a (not necessarily exact) functor equipped with two natural equivalences $GF \simeq \text{Id}_\calc$ and $FG \simeq \text{Id}_\cald$.

We have $\ast_\calc \simeq GF(\ast_\calc) \simeq G(\ast_\cald)$.

Suppose $n \co d \to d'$ is a cofibration in $\cald$. We have a commutative square in $\cald$
$$\xymatrix@C=3pc{FGd \ar[r]^{FGn} \ar[d]_{\mathrm{equiv}} & FGd' \ar[d]^{\mathrm{equiv}} \\ d \ar[r]_n & d'}$$
so that $FGn$ is a cofibration by the homotopy repleteness of $co\cald$ in $\cald$. But since $F$ reflects cofibrations, $Gn$ is also a cofibration, so $G$ sends cofibrations to cofibrations.

The functor $G$ sends pushouts along cofibrations to pushouts along cofibrations, as it is an equivalence.

Thus $G$ is an exact functor.
\end{proof}

\begin{proposition}[Characterization of Quasicategorical Waldhausen Equivalences]
A functor $F\colon \calc \to \cald$ between Waldhausen quasicategories is a Waldhausen equivalence if and only if it is exact, reflects cofibrations, and is an equivalence of categories.
\end{proposition}
\begin{proof}
Proposition~\ref{prop:exact_equivalence_is_Waldhausen_equivalence:qcats}  is the direction ``$\Leftarrow$'', so it only remains to prove that every Waldhausen equivalence of quasicategories reflects cofibrations.
Reflection of cofibrations follows from homotopy repleteness, compare with diagram \eqref{equ:reflection_from_naturality}.
\end{proof}

\begin{proposition} \label{prop:Waldhausen_equivalence_induces_equivalence of_S_n_equiv}
Let $F\colon \calc \to \cald$ be a Waldhausen equivalence of Waldausen quasicategories. Then
\begin{enumerate}
\item \label{prop:i:Waldhausen_equivalence_induces_equivalence of_S_n_equiv}
$(F)_\text{\rm equiv}\colon \calc_\text{\rm equiv} \to \cald_\text{\rm equiv}$ is an equivalence of quasicategories.
\item \label{prop:ii:Waldhausen_equivalence_induces_equivalence of_S_n_equiv}
$S_n^\infty F\colon S_n^\infty \calc \to S_n^\infty \cald$ is a Waldhausen equivalence of Waldhausen quasicategories, and
\item \label{prop:iii:Waldhausen_equivalence_induces_equivalence of_S_n_equiv}
For all $n \geq 0$, the map $(S_n^\infty F)_\text{\rm equiv}\colon (S_n^\infty \calc)_\text{\rm equiv} \to (S_n^\infty\cald)_\text{\rm equiv}$ is an equivalence of quasicategories.
\end{enumerate}
\end{proposition}
\begin{proof}
\begin{enumerate}
\item
If $G$ is an inverse Waldhausen equivalence to $F$, the functor $G_\text{\rm equiv}$ is an inverse equivalence to $F_\text{\rm equiv}$ using the same natural equivalences.
\item
Recall from Remark~\ref{rem:QWald2} that $S_n^\infty$ is a 2-functor, so it maps equivalences in $\mathbf{QWald}_2$ to equivalences in $\mathbf{QWald}_2$.
\item
This follows directly from \ref{prop:i:Waldhausen_equivalence_induces_equivalence of_S_n_equiv} and \ref{prop:ii:Waldhausen_equivalence_induces_equivalence of_S_n_equiv}.
\end{enumerate}
\end{proof}

\begin{corollary} \label{cor:Waldhausen_equivalences_induce_weak_equivalences:qcats}
If $F\colon \calc \to \cald$ is a Waldhausen equivalence of quasicategories, then
$$\xymatrix{\left( S_\bullet^\infty F \right)_\mathrm{equiv} \colon \left(S_\bullet^\infty \calc\right)_\mathrm{equiv} \ar[r] & \left(S_\bullet^\infty\cald\right)_\mathrm{equiv}}$$
is a diagonal weak equivalence of bisimplicial sets. Moreover, if $\calc$, $\cald$, and $F$ are pointed, then $$\xymatrix{\bfK(F)\co \bfK(\calc) \ar[r] & \bfK(\cald)}$$
is a level-wise weak equivalence of spectra.
\end{corollary}
\begin{proof}
The first result follows from Proposition~\ref{prop:Waldhausen_equivalence_induces_equivalence of_S_n_equiv}~\ref{prop:iii:Waldhausen_equivalence_induces_equivalence of_S_n_equiv}, the Realization Lemma, and the fact that every equivalence of quasicategories is a homotopy equivalence of simplicial sets. The second statement is a consequence of the first.
\end{proof}

\section{Appendix 2: Waldhausen Equivalences of Sequences} \label{sec:appendix_Wald_equivalences_of_ses}

In this section we focus on quasicategorical definitions and results about morphisms and equivalences of short sequences of Waldhausen quasicategories. We prove that the properties of ``exactness'' and ``split exactness'' are preserved under Waldhausen equivalence of sequences, and we show that Waldhausen equivalences between split exact sequences are compatible with splittings. Analogous statements for categories are obtained by simply replacing the 2-category $\mathbf{QWald}_2$ by the 2-category $\mathbf{Wald}_2$ from Remark~\ref{rem:Sn_is_a_2-functor}. Similar results also hold for pointed Waldhausen quasicategories. We are aware that some of the results in this section can be formulated for a general 2-category in place of $\mathbf{QWald}_2$ and $\mathbf{Wald}_2$, but we work with $\mathbf{QWald}_2$ for concreteness.

The categorical versions of results in this section are needed in Corollary~\ref{cor:Additivity_GeneralAndSpectral}, and the stated quasicategorical results are needed in Corollary~\ref{cor:QCat_Additivity_GeneralAndSpectral}.

For 2-categories $\bfC$ and $\bfD$, let $\text{\bf 2-Cat}_{ps}(\bfC,\bfD)$ be the strict 2-category of strict 2-functors $\bfC \to \bfD$, {\it pseudo} natural transformations, and modifications. Recall that an equivalence in this 2-category is a pseudo natural transformation with every component an equivalence in the 2-category $\bfD$. Such a pseudo natural transformation is called a {\it pseudo natural equivalence}. Recall the 2-category $\mathbf{QWald}_2$ of Waldhausen quasicategories from Remark~\ref{rem:QWald2}. An object of $\text{\bf 2-Cat}_{ps}([2],\mathbf{QWald}_2)$ is a (not necessarily exact) sequence $\xymatrix@1@C=1pc{\cala \ar[r] \ar@/_.75pc/[rr] & \cale \ar[r] & \calc}$ of Waldhausen quasicategories and exact functors.


Recall that every iso 2-cell in $\mathbf{QWald}_2$ is represented by a natural equivalence between functors of quasicategories, see Corollary~\ref{cor:nat_transf_is_nat_equiv_iff_comps_equivs}~\ref{item(i)}$\Leftrightarrow$\ref{item(ii)}$\Leftrightarrow$\ref{item(iv)}.

\begin{definition}[Morphism and Waldhausen Equivalence of Quasicategorical Sequences] \label{def:Waldhausen_equiv_of_sequences_quasicategorical}
A {\it morphism $\Phi$ of sequences of Waldhausen quasicategories} is a morphism $\Phi$ in $\text{\bf 2-Cat}_{ps}([2],\mathbf{QWald}_2)$, as pictured in \eqref{equ:def:Waldhausen_equiv_of_sequences_quasicategorical}. This consists of three exact maps $\Phi_\cala$, $\Phi_\cale$, and $\Phi_\calb$ and three iso 2-cells (represented by natural equivalences) between the respective composites, compatible in the sense that the two smaller iso 2-cells paste together to give the third.
\begin{equation} \label{equ:def:Waldhausen_equiv_of_sequences_quasicategorical}
\begin{array}{c}
\xymatrix{\cala \ar[r]^i \ar[d]_{\Phi_\cala} \ar@/^1.5pc/[rr]^{f\circ i} & \cale \ar[r]^f \ar[d] \ar@{=>}[dl]_\cong & \calb \ar[d]^{\Phi_\calb} \ar@{=>}[dl]^\cong \ar@{=>}[dll]_(.3)\cong \\ \cala' \ar[r]_{i'} \ar@/_1.5pc/[rr]_{f' \circ i'} & \cale' \ar[r]_{f'} & \calb' }
\end{array}
\end{equation}
A morphism of sequences is a {\it Waldhausen equivalence of sequences} if it is an equivalence in the 2-category $\text{\bf 2-Cat}_{ps}([2],\mathbf{QWald}_2)$, that is, if each of the 3 vertical maps in \eqref{equ:def:Waldhausen_equiv_of_sequences_quasicategorical} is a Waldhausen equivalence in the sense of Definition~\ref{def:Waldhausen_equivalence:qcats}. A {\it morphism of (split) exact sequences of Waldhausen quasicategories} is a morphism of the underlying sequences. A {\it Waldhausen equivalence of (split) exact sequences} is a Waldhausen equivalence of the underlying sequences.
\end{definition}

Since the iso 2-cell in \eqref{equ:def:Waldhausen_equiv_of_sequences_quasicategorical} stretching across two squares is determined as the pasting composite of the other two iso 2-cells, we no longer indicate it.

Recall the notions of exact sequence and split exact sequence in Definitions~\ref{def:exact_sequence_of_Waldhausen_quasicategories} and \ref{def:split-exact_sequence_of_Waldhausen_quasicategories}.

\begin{proposition} \label{prop:exactness_and_split_exactness_preserved_under_W_equiv_of_sequences}
The properties ``exactness'' and ``split exactness'' of sequences of Waldhausen quasicategories are preserved under Waldhausen equivalence of sequences in Definition~\ref{def:Waldhausen_equiv_of_sequences_quasicategorical}. That is, if $\Phi\co (\cala,\cale,\calb) \to (\cala',\cale',\calb')$ is a morphism of sequences, then $(\cala,\cale,\calb)$ is exact, respectively split exact, if and only if $(\cala',\cale',\calb')$ is exact, respectively split exact.
\end{proposition}
\begin{proof}
Suppose $\xymatrix@1@C=1pc{\cala \ar[r]^i & \cale \ar[r]^f & \calb}$ is exact, and let $\Psi\co (\cala',\cale',\calb') \to (\cala,\cale,\calb)$ be a pseudo inverse to $\Phi$ pictured in \eqref{equ:def:Waldhausen_equiv_of_sequences_quasicategorical}. \begin{enumerate}
\item
In the 2-category $\mathbf{QWald}_2$ we have an iso 2-cell $f \circ i \cong \ast$, so can construct $f' \circ i' \cong \ast'$ as follows. The outer rectangle of \eqref{equ:def:Waldhausen_equiv_of_sequences_quasicategorical} yields the following iso 2-cells via 2-categorical pastings of 2-cells.
$$\aligned \Phi_\calb \circ f \circ i & \cong f' \circ i' \circ \Phi_\cala \\
\ast':=\Phi(\ast) & \cong  f' \circ i' \circ \Phi_\cala \\
\ast' \circ \Psi_\cala & \cong (f' \circ i' \circ \Phi_\cala) \circ \Psi_\cala \\
\ast' & \cong f' \circ i' \circ \text{Id}_{\cala'} \\
\ast' & \cong f' \circ i' \\
\endaligned
$$
\item
The left square of \eqref{equ:def:Waldhausen_equiv_of_sequences_quasicategorical} shows that $i'$ is naturally equivalent in $\mathbf{QWald}_{\infty,2}$ to the fully faithful exact functor $\Phi_\cale \circ i \circ \Psi_\cala$, so $i'$ is also fully faithful.
\item
Let $\Phi_{\cale /\cala}$ be the restriction of $\Phi_\cale$ to $\cale /\cala$. We claim that $\Phi_{\cale /\cala}$ is an equivalence $\cale /\cala \to \cale' /\cala'$ of quasicategories. Suppose $\Phi_\cala(A) \simeq A' $ and $\Phi_\cale(E)  \simeq E'$ are equivalences. Since $\Phi_\cale$ is fully faithful, the map $\cale(iA,E) \to \cale'(\Phi_\cale iA,\Phi_\cale E)$ is a weak homotopy equivalence. But this latter simplicial set has the same homotopy type as $\cale'(i'A' ,E')$ because of the equivalences  $\Phi_\cale iA \simeq  i'\Phi_\cala A \simeq  i'A'$ and $\Phi_\cale(E)\simeq E'$.

Thus $\cale(iA,E)$ and $\cale'(i'A' ,E')$ have the same homotopy type when $\Phi_\cala(A) \simeq A' $ and $\Phi_\cale(E)  \simeq E'$. From this and the fully faithfulness of $\Phi_\cale$ we can see $\Phi_{\cale /\cala}$ is an equivalence of quasicategories as follows.
\begin{enumerate}
\item
We first show the codomain of $\Phi_{\cale/\cala}$ is indeed $\cale'/ \cala'$.
Let $E \in \cale/\cala$ and take $\Phi_\cale(E) \simeq E'$ to be the identity. Let $A' \in \cala'$ be arbitrary, then there exists some $A \in \cala$ and some equivalence $\Phi_\cala A \simeq A'$. Thus $\cale'(i'A' ,\Phi_\cale E)$ is weakly equivalent to $\cale(iA ,E)$, which is weakly contractible, and $\Phi_\cale E \in \cale'/ \cala'$.
\item
For essential surjectivity of $\Phi_{\cale/\cala}$, now let $E' \in \cale'/ \cala'$. Then there exists some $E \in \cale$ and some equivalence $\Phi_\cale E \simeq E'$. We need to show $E \in \cale/\cala$. Let $A \in \cala$ and take $\Phi(A) \simeq A'$ to be the identity. Then $\cale(iA,E)$ has the same homotopy type as $\cale'(i'\Phi(A),E')$, which is weakly contractible, so $E \in \cale/\cala$.
\end{enumerate}
Suppose now that the exact sequence $\xymatrix@1@C=1pc{\cala \ar[r]^i & \cale \ar[r]^f & \calb}$ is split with splitting $\xymatrix@1@C=1pc{\cala \ar@{<-}[r]^j & \cale \ar@{<-}[r]^g & \calb}$. We claim the exact functors $j':= \Phi_\cala j \Psi_\cale$ and $g':=\Phi_\cale g \Psi_\calb$ are splittings for $\xymatrix@1@C=1pc{\cala' \ar[r]^{i'} & \cale' \ar[r]^{f'} & \calb'}$. Recall that in any 2-category, a given equivalence is both a left and a right adjoint equivalence (for possibly different adjoints). To see $i' \dashv j'$, we consider $\Phi_\cala$ as a right adjoint equivalence with (not necessarily exact) left adjoint $\psi_\cala$, and we consider $\Phi_\cale$ as a left adjoint equivalence with (not necessarily exact) right adjoint $\psi_\cale$.
$$\xymatrix{\cala' \ar@/^.5pc/[r]^{\psi_\cala} \ar@{}[r]|{\perp}  & \cala \ar@/^.5pc/[l]^{\Phi_\cala} \ar@/^.5pc/[r]^i \ar@{}[r]|{\perp} & \ar@/^.5pc/[l]^j \cale \ar@/^.5pc/[r]^{\Phi_\cale} \ar@{}[r]|{\perp} & \cale' \ar@/^.5pc/[l]^{\psi_\cale} }$$
Adjunctions in a 2-category compose, so the top composite, which is isomorphic to $i'$, is left adjoint to the bottom composite, which is isomorphic to $j'$. Hence $i' \dashv j'$. To see its unit is a natural equivalence, one again uses 2-categorical pastings of iso 2-cells.

An analogous argument shows $f' \dashv g'$ with counit a natural equivalence.

For the converse part of the ``if and only if'' statement, we repeat the above argument with the roles of $\Phi$ and $\Psi$ reversed.
\end{enumerate}
\end{proof}

\begin{proposition}[Compatibility of Waldhausen Equivalences with Splittings] \label{prop:Waldhausen_equiv_of_sequences_compatible_with_j's}
Suppose $\Phi\co (\cala,\cale,\calb) \to (\cala',\cale',\calb')$ is a Waldhausen equivalence of sequences and the domain sequence $(\cala,\cale,\calb)$ is split exact. Then $(\cala',\cale',\calb')$ is split exact and $\Phi$ is compatible with any choices of splittings $(j,g)$ and $(j',g')$ in the sense that iso 2-cells exist in the following diagram.
\begin{equation} \label{equ:prop:Waldhausen_equiv_of_sequences_compatible_with_j's}
\begin{array}{c}
\xymatrix{\cala \ar@{<-}[r]^j \ar[d]_{\Phi_\cala} \ar@{=>}[dr]^\cong & \cale \ar@{<-}[r]^g \ar[d] \ar@{=>}[dr]^\cong \ar[d]^{\Phi_\cale} & \calb \ar[d]^{\Phi_\calb} \\ \cala' \ar@{<-}[r]_{j'} & \cale' \ar@{<-}[r]_{g'} & \calb' }
\end{array}
\end{equation}
\end{proposition}
\begin{proof}
Select a splitting $(j,g)$ of $(\cala,\cale,\calb)$. By Proposition~\ref{prop:exactness_and_split_exactness_preserved_under_W_equiv_of_sequences}, the codomain sequence $(\cala',\cale',\calb')$ is also split with $j':=\Phi_\cala j \Psi_\cale$ and $g':=\Phi_\cale g \Psi_\calb$. The iso 2-cell in the left square of \eqref{equ:prop:Waldhausen_equiv_of_sequences_compatible_with_j's} comes from $\text{Id}_\cale \cong \Psi_\cale \circ \Phi_\cale$, namely
$$\Phi_\cala \circ j =\Phi_\cala \circ j \circ \text{Id}_\cale \cong \Phi_\cala \circ j \circ (\Psi_\cale \circ \Phi_\cale)=(\Phi_\cala \circ j \circ \Psi_\cale) \circ \Phi_\cale \overset{\text{def }j'}{=} j' \circ \Phi_\cale.$$
The iso 2-cell in the right square of \eqref{equ:prop:Waldhausen_equiv_of_sequences_compatible_with_j's} comes from
$\text{Id}_\calb \cong \Psi_\calb \circ \Phi_\calb$, namely
$$\Phi_\cale \circ g =\Phi_\cale \circ g \circ \text{Id}_\calb \cong \Phi_\cale \circ g \circ (\Psi_\calb \circ \Phi_\calb)=(\Phi_\cale \circ g \circ \Psi_\calb) \circ \Phi_\calb \overset{\text{def }g'}{=} g' \circ \Phi_\calb.$$
If one makes different choices of splittings, then they are isomorphic to the ones above by uniqueness of adjoints in a 2-category, so the analogous iso 2-cells in \eqref{equ:prop:Waldhausen_equiv_of_sequences_compatible_with_j's} are obtained by pasting with the ones we have already constructed for the selected splittings.
\end{proof}

\begin{remark}
Notice that in the proof of Proposition~\ref{prop:Waldhausen_equiv_of_sequences_compatible_with_j's} we used the pseudo inverse equivalence $\Psi$. If $\Phi$ is merely a morphism of sequences and both domain and codomain are split exact with selected splittings, then one can construct not-necessarily-iso 2-cells in the directions pictured in \eqref{equ:prop:Waldhausen_equiv_of_sequences_compatible_with_j's} using the units and counits of the adjunctions $i \dashv j$, $i' \dashv j'$, $f \dashv g$, $f' \dashv g'$ and the iso 2-cells of \eqref{equ:def:Waldhausen_equiv_of_sequences_quasicategorical}.
\end{remark}

\section{Appendix 3: Simplicial Homotopy} \label{sec:appendix_simplicial_homotopy}

We recall the notion of simplicial homotopy and show to construct simplicial homotopies for $\mathfrak{s}_\bullet$ and $\mathfrak{s}_\bullet^\infty$ from natural transformations, as needed for Lemmas~\ref{lem:HardLemmaMadeEasy} and \ref{lem:HardLemmaMadeEasy_quasicategorical}.

\begin{definition}[Simplicial Homotopy]
A {\it simplicial homotopy} $H\colon f \simeq g$ for simplicial maps $f,g\colon X \to Y$ is a map $H\colon X \times \Delta[1] \to Y$ such that $H(-,0)=f$ and $H(-,1)=g$.
\end{definition}

The following equivalent description of a simplicial homotopy is easier to work with, and we use it extensively in Lemma~\ref{lem:HardLemmaMadeEasy}.

\begin{proposition}[Lemma~6.4.8 of \cite{RognesTextbook}] \label{prop:simplicial_homotopy_description}
A simplicial homotopy $H\colon f \simeq g$ is equivalently described by a collection of functions
$h^j_n\colon X_n \to Y_n$
for $0 \leq j \leq n+1$ and $n \geq 0$ such that:
\begin{itemize}
\item
for $n \geq 1$,
$$
d_i \circ h^j_n=
        \begin{cases}
           h^{j-1}_{n-1} \circ d_i & \quad 0\leq i \leq j-1 \\
           h^{j}_{n-1} \circ d_i & \quad j \leq i \leq n
        \end{cases}
$$
\item
for $n \geq 0$,
$$
s_i \circ h^j_n=
        \begin{cases}
           h^{j+1}_{n+1} \circ s_i & \quad 0\leq i \leq j-1 \\
           h^{j}_{n+1} \circ s_i & \quad j \leq i \leq n
        \end{cases}
$$
\item
and for all $n \geq 0$, we have $h_n^{n+1}=f_n$ and $h_n^0=g_n$.
\end{itemize}
The relation between the two descriptions of a simplicial homotopy is given by
\begin{equation} \label{equ:simp_homotopy_equivalent_descriptions}
h^j_n(x)=H_n(x,\zeta^j_n),
\end{equation}
where $H_n\co X_n \times \Delta([n],[1]) \to Y_n$ is the degree $n$ component of $H\co X \times \Delta[1] \to Y$ and $\zeta^j_n\co[n] \to [1]$ sends $0,1,\dots,j-1$ to $0$ and $j,j+1, \dots n$ to $1$.
\end{proposition}

Notice the functions here have codomain $Y_n$, while in the equivalent formulation of May~\cite{MaySimplicial} the functions have codomain $Y_{n+1}$ and satisfy different identities.

We next take another viewpoint on the correspondence \eqref{equ:simp_homotopy_equivalent_descriptions} in order to make simplicial homotopies from natural transformations apparent, and, importantly, to construct simplicial homotopies in the context of $\mathfrak{s}_\bullet$ and $\mathfrak{s}_\bullet^\infty$ from natural transformations. Consider, for $n \geq 0$ and $0 \leq j \leq n+1$, the functions $\psi_n^j \colon [n] \to [n] \times [1]$ which embed $[n]$ into $[n] \times [1]$, with start and finish at two of the four corners, and pass diagonally through the $j$-th square (the first square is square 1). In particular, $\psi_n^0$ passes through no square, so it is entirely the bottom path $[n] \times \{1\}$, while $\psi_n^{n+1}$ is entirely the top path $[n] \times \{0\}$. Below is a picture of $\psi^2_4$.
\begin{equation}  \label{equ:psi^2_4}
\begin{array}{c}
\xymatrix@C=3pc@R=3pc{ \ar@{-->}[r] \ar[d]  & \ar[r] \ar[d] \ar@{-->}[dr] & \ar[r] \ar[d] & \ar[r] \ar[d] & \ar[d] \\ \ar[r] & \ar[r] & \ar@{-->}[r] & \ar@{-->}[r] & }
\end{array}
\end{equation}
An explicit formula for $\psi^j_n\co [n] \to [n] \times [1]$ is
\begin{equation}  \label{equ:defn_of_psi}
\begin{array}{c}
\psi^j_n(i)=\begin{cases} (i,0) & 0 \leq i \leq j-1 \\
(i,1) & j \leq i \leq n.
\end{cases}
\end{array}
\end{equation}
In other words, $\psi^j_n=(\text{Id}_{[n]},\zeta^j_n)$ where $\zeta^j_n \co [n] \to [1]$ is defined in Proposition~\ref{prop:simplicial_homotopy_description}

\begin{corollary} \label{cor:simplicial_homotopy_using_psi}
Under the Yoneda identifications $$X_n\cong \mathbf{SSet}(\Delta[n],X)\hspace{.5in} \text{and} \hspace{.5in} Y_n\cong \mathbf{SSet}(\Delta[n],Y),$$ the maps $h^j_n\co X_n \to Y_n$ describing a simplicial homotopy $H\co X \times \Delta[1] \to Y$ are
\begin{equation} \label{equ:simplicial_homotopy_using_psi}
\Big( \overline{x} \co \Delta[n] \to X \Big) \xymatrix{ \ar@{|->}[r] &} \Big(H \circ (\overline{x} \times \text{\rm Id}_{\Delta[1]}) \circ (\psi^j_n)_*\co \Delta[n] \to Y \Big).
\end{equation}
\end{corollary}
\begin{proof}
First notice from \eqref{equ:defn_of_psi} that $\psi^j_n$ is $i \mapsto (1_{[n]} (i), \zeta^j_n(i))$. Recall by the Yoneda Lemma, $x \in X_n$ corresponds to the map $\overline{x}\co \Delta[n] \to X$ with $1_{[n]} \mapsto x$ in degree $n$. Evaluating the right hand side of \eqref{equ:simplicial_homotopy_using_psi} in degree $n$ on $1_{[n]}$ we have
$$\xymatrix@C=3.5pc@R=.5pc{\Delta([n],[n]) \ar[r]^-{(\psi^j_n)_\ast} & \Delta([n],[n]) \times \Delta([n],[1]) \ar[r]^-{\overline{x}_n \times \text{Id}} & X_n \times \Delta([n],[1]) \ar[r]^-{H_n} & Y_n\phantom{.} \\
1_{[n]} \ar@{|->}[r] & (1_{[n]},\zeta^j_n) \ar@{|->}[r] & (x,\zeta^j_n) \ar@{|->}[r] & H_n(x,\zeta^j_n). }$$
By Yoneda Lemma again, we see the assignment \eqref{equ:simplicial_homotopy_using_psi} corresponds to the assignment $h^j_n$ as in \eqref{equ:simp_homotopy_equivalent_descriptions}.
\end{proof}

As is well known, the nerve of a natural transformation is a simplicial homotopy because nerve commutes with finite products. The functions $h^j_n$ assign to a path of morphisms in the domain the dotted path of morphisms in the naturality diagram in the codomain indicated by \eqref{equ:psi^2_4}, as we see in the next example.

\begin{example}[A Simplicial Homotopy of Nerves of Functors from a Natural Transformation] \label{examp:simplicial_homotopy_from_natural_transformation}
Let $Z\colon\mathcal{C} \times [1] \to \mathcal{D}$ be a natural transformation. Then the induced simplicial homotopy
$NZ \co N\calc \times \Delta[1] \to N \cald$ has maps $h^j_n\colon \big( N \calc \big)_n \to \big( N \cald\big)_n$ which take an $n$-simplex $\alpha \colon [n] \to \calc$ to $Z \circ (\alpha \times \text{Id}_{[1]}) \circ \psi^j_n$, as follows from \eqref{equ:simplicial_homotopy_using_psi} and the fully faithfulness of the nerve. In particular, we see that for an identity natural transformation $F \Rightarrow F$, the associated maps $h^j_n$ for fixed $n$ are all the same, namely $h^j_n=(NF)_n$. Similarly, the homotopy $\lambda$ in the proof of Lemma~\ref{lem:HardLemmaMadeEasy} is an ``identity'' homotopy on a simplicial map.
\end{example}

We are also interested in homotopies of double nerves of functors in order to construct a second homotopy $\theta$ in Lemmas~\ref{lem:HardLemmaMadeEasy} and \ref{lem:HardLemmaMadeEasy_quasicategorical} in the context of the  $\mathfrak{s}_\bullet$ and $\mathfrak{s}_\bullet^\infty$ constructions. An $n$-simplex of the ordinary nerve $N\calc$ is of course a functor $[n] \to \calc$, while a simplex of the {\it double nerve} $N_d\,\calc$ is a functor $[n] \times [n] \to \calc$. Various nerves of double categories are considered in \cite{FiorePaoli2010} and \cite{FiorePaoliPronk2008}. We consider the double nerve as a simplicial set via pre-composition with $\delta^i_n \times \delta^i_n$ and $\sigma^i_n \times \sigma^i_n$. Clearly $N_d\co \mathbf{Cat} \to \mathbf{SSet}$ is a functor. We now define functions $\varphi^j_n \co [n] \times [n] \to [n] \times [n]\times [1]$ analogous to $\psi^j_n$ in \eqref{equ:psi^2_4} and \eqref{equ:defn_of_psi}.
\begin{equation}  \label{equ:defn_of_varphi}
\begin{array}{c}
\varphi^j_n(i,k)=\begin{cases} (i,k,0) & 0 \leq i \leq j-1 \; \text{ and } \; 0 \leq  k \leq j-1 \\
(i,k,1) & j \leq i\leq n \; \text{ or } \; j \leq k \leq n.
\end{cases}
\end{array}
\end{equation}

\begin{example}[A Simplicial Homotopy of Double Nerves of Functors from a Natural Transformation] \label{examp:simp_homotopy_double_nerve}
Let $Q\co \calc \times [1] \to \cald$ be a natural transformation. We induce a homotopy $N_d \, \calc \times \Delta[1] \to N_d \, \cald$ as follows. Its maps $h^j_n\colon \big( N_d\, \calc \big)_n \to \big( N_d\, \cald\big)_n$ take an $n$-simplex $\alpha \colon [n] \times [n] \to \calc$ to $Q \circ (\alpha \times \text{Id}_{[1]}) \circ \varphi^j_n$. These maps $h^j_n$ satisfy the identities required in Proposition~\ref{prop:simplicial_homotopy_description} because Example~\ref{examp:simplicial_homotopy_from_natural_transformation} does. Namely, restricting to any row or column of $[n] \times [n]$ brings us into the situation of Example~\ref{examp:simplicial_homotopy_from_natural_transformation}.
\end{example}

\begin{example}[A Simplicial Homotopy of $\mathfrak{s}_\bullet$ of Exact Functors from a Natural Transformation] \label{examp:nat_transf_induces_s_bullet_homotopy}
From Example~\ref{examp:simp_homotopy_double_nerve} we can now draw a similar conclusion for $\mathfrak{s}_\bullet$ by simply restricting $\varphi^j_n$ to $\text{Ar}[n] \subseteq [n] \times [n]$. Let $\cale$ and $\cale'$ be Waldhausen categories, and suppose that $Q \co \cale \times [1] \to \cale'$ is a natural transformation such that $Q_0$ and $Q_1$ are exact functors. Then $Q$ induces a simplicial homotopy from $\mathfrak{s}_\bullet Q_0$ to $\mathfrak{s}_\bullet Q_1$ as in Example~\ref{examp:simp_homotopy_double_nerve}.

A particular choice of $Q$ constructs the simplicial homotopy $\theta\co (\rho_n^0)_n \simeq \iota \circ r$ at the end of the proof of Lemma~\ref{lem:HardLemmaMadeEasy}. Consider the natural transformation $Q$ of functors $\cale(\cala,\calc,\calb)\to \cale(\cala,\calc,\calb)$ given by
\begin{equation} \label{equ:E_nat_transf_for_theta}
\begin{array}{c}
Q\co
\end{array}
\begin{array}{c}
\xymatrix{A \ar@{>->}[d]_i \\ C \ar@{->>}[d]_f \\ B }
\end{array}
\begin{array}{c}
\xymatrix{\ar@{|->}[r] & }
\end{array}
\begin{array}{c}
\xymatrix{\ast \ar@{>->}[d] \ar[r] & \ast \ar@{>->}[d] \\ C \cup_A \ast \ar@{->>}[d] \ar@{->>}[r] & B \ar@{=}[d] \\ B \ar[r] & B }
\end{array},
\end{equation}
where the assigned morphism on the right of \eqref{equ:E_nat_transf_for_theta} is induced by the selected functorial choice of pushout in $\calc$ (which preserves identities) applied to the following.
\begin{equation} \label{equ:E_nat_transformation_as_pushout}
\begin{array}{c}
\xymatrix{A \ar@{>->}[d]_i & A \ar@{>->}[l]_{\text{Id}} \ar[r] \ar@{=}[d] \ar[dl]_i & \ast \ar@{=}[d] \\ C \ar@{->>}[d]_f \ar@{}[dr]|{\text{p.o.}} & A \ar@{>->}[l]_i \ar[r] \ar[d] & \ast \ar@{=}[d] \\ B & \ast \ar@{>->}[l] \ar[r]^{\text{Id}} & \ast }
\end{array}
\begin{array}{c}
\xymatrix{\ar[r] & }
\end{array}
\begin{array}{c}
\xymatrix{\ast \ar[d] & \ast \ar@{=}[d] \ar@{>->}[l] \ar[r]^{\text{Id}} & \ast \ar@{=}[d] \\ B \ar@{=}[d] & \ast \ar@{=}[d] \ar@{>->}[l] \ar[r]^{\text{Id}} & \ast \ar@{=}[d] \\ B & \ast \ar@{>->}[l] \ar[r]^{\text{Id}} & \ast }
\end{array}
\end{equation}
Then the resulting simplical homotopy is $\theta\co s_\bullet Q_0 \simeq s_\bullet Q_1$ at the end of the proof of Lemma~\ref{lem:HardLemmaMadeEasy}.
\end{example}

\begin{example}[A Simplicial Homotopy of $\mathfrak{s}_\bullet^\infty$ of Exact Functors from a Natural Transformation] \label{examp:nat_transf_induces_s_bullet_homotopy_quasicategorical}
If $Q \co \cale \times \Delta[1] \to \cale'$ is a natural transformation of exact functors between Waldhausen quasicategories, then we obtain a simplicial homotopy from $\mathfrak{s}_\bullet^\infty Q_0$ to $\mathfrak{s}_\bullet^\infty Q_1$. Its maps $h^j_n\co \mathfrak{s}^\infty_n \cale \to \mathfrak{s}^\infty_n \cale'$ take $\alpha\co N\text{Ar}[n] \to \cale$ to $Q \circ (\alpha \times \text{Id}_{\Delta[1]}) \circ \varphi^j_n\vert_{N\text{Ar}[n]}$.

In particular, the quasicategorical version of \eqref{equ:E_nat_transf_for_theta} and \eqref{equ:E_nat_transformation_as_pushout} goes through for $Q\co \cale^{po}(\cala,\calc,\calb)\times \Delta[1] \to \cale^{po}(\cala,\calc,\calb)$ to construct the simplicial homotopy $\theta\co (\rho_n^0)_n \simeq \iota \circ r$ at the end of the proof of Lemma~\ref{lem:HardLemmaMadeEasy_quasicategorical}. All of the zero objects $\ast$ in \eqref{equ:E_nat_transformation_as_pushout} are the selected common zero object (if there is not a common zero object in $\cala$ and $\calb$, we may simply equivalently redefine $\cala$ and $\calb$ to include a common zero object), while the horizontal morphisms $\to \ast$ in the first part of \eqref{equ:E_nat_transformation_as_pushout} are from the selected natural transformation $\text{Id}_\calc \Rightarrow \ast$, and the  horizontal morphisms $\leftarrow \ast$ in the second part of \eqref{equ:E_nat_transformation_as_pushout} are from the selected natural transformation $\ast \Rightarrow \text{Id}_\calc$ (both of these natural transformations can be selected to have identity component on $\ast$). In the second part of \eqref{equ:E_nat_transformation_as_pushout}, the right arrows $\ast \to \ast$ marked with $\text{Id}$ are from the identity natural transformation on $\ast$. The square labelled ``p.o.'' in \eqref{equ:E_nat_transformation_as_pushout} is the commutative square  \eqref{equ:cofiber_sequence_quasicategorical} that is part of the data of the cofiber sequence in \eqref{equ:E_nat_transf_for_theta}. We now have the simplicial homotopy $\theta$ also for Lemma~\ref{lem:HardLemmaMadeEasy_quasicategorical}.
\end{example}

\section*{Acknowledgements}

{\bf Scientific Acknowledgements.}
We thank David Gepner and Andrew Blumberg for helpful discussions in explaining their work \cite{BlumbergGepnerTabuadaI}, some suggestions that lead to Propositions~\ref{prop:universal_split_exact_sequence} and \ref{prop:universal_split_exact_sequence:quasicategorical}, and discussions relating to some results in Section~\ref{sec:K-Theory_Space_and_Spectrum}. We also thank Clark Barwick for patiently answering our questions about his work \cite{Barwick} and for some comments about the present paper. We thank Emily Riehl and Dominic Verity for explaining to us their results \cite[Corollary~5.2.20]{RiehlVerity} and \cite[Theorem~1.1]{RiehlVerityCompleteness}; their explanations positively impacted Section~\ref{subsec:natural_pushout_along_cofibrations}.

We also thank Wolfgang L\"uck, Niko Naumann, Ulrich Bunke, Georgios Raptis, John Lind, Justin Noel, Mike Shulman, Marcy Robertson, and Parker Lowrey for suggestions, interesting discussions, or the occasional email.

Thomas Fiore thanks the Regensburg {\it Sonderforschungsbereich 1085: Higher Invariants} and the Regensburg Mathematics Department for a very stimulating working environment.

Finally, Thomas Fiore also thanks the organizers of the 2007--2008 thematic programme on Homotopy Theory and Higher Categories at the Centre de Recerca Mathem\`{a}tica in Bellaterra: Andr\'{e} Joyal, Carles Casacuberta, Joachim Kock, Amnon Neeman, and Frank Neumann. Andr\'{e} Joyal's {\it Advanced Course on Simplicial Methods in Higher Categories} and his accompanying foundational text \cite{JoyalQuadern} have been a tremendous influence on the present paper. At the Universitat Aut\`{o}noma de Barcelona in 2007--2008, Thomas Fiore was supported on grant SB2006-0085 of the Spanish Ministerio de Educaci\'{o}n y Ciencia.

{\bf Financial Acknowledgements.} Thomas Fiore gratefully acknowledges support from several sources during the genesis of this paper. A Humboldt Research Fellowship for Experienced Researchers supported Thomas Fiore during his 2015-2016 visit at Universit{\"a}t Regensburg. The Max-Planck-Institut f\"ur Mathematik supported his research stays in Bonn in May-June 2011 and July 2013. A Small Grant for Faculty Research from the University of Michigan-Dearborn supported some travel costs to Bonn, and a Rackham Faculty Research Grant of the University of Michigan made possible his weekend trip to Regensburg in July, 2011 to discuss with David Gepner. Malte Pieper was supported by the ERC Advanced Grant ``KL2MG-interactions'' (no.  662400) granted by the European Research Council to Wolfgang L\"{u}ck.




\end{document}